\setlist[itemize]{leftmargin=2em}
\setlist[enumerate]{leftmargin=2em}
\definecolor{purple}{rgb}{0.56, 0.0, 1}
\newcommand{\qbinom}[2]{\begin{bmatrix} #1\\#2 \end{bmatrix}}
\newcommand{\qfactorial}[1]{\left[#1\right]!}
\newcommand{\qnumber}[1]{\left[#1\right]}
\newcommand{\qfalling}[2]{\left[#1\right]_{#2}}
\newcommand{\csf}[1]{X_{#1}({\bf x})}
\newcommand{\csft}[2]{X_{#1}({\bf x}, #2)}
\DeclareMathOperator{\LLT}{LLT}
\newcommand{\tcr}{\textcolor{red}}
\newcommand{\tcb}{\textcolor{blue}}
\newcommand{\qhit}[3]{H_{#1}^{#2}(#3)}
\newcommand{\qrook}[2]{R_{#1}(#2)}
\newcommand{\nqhit}[3]{\widetilde{H}_{#1}^{#2}(#3)}
\newcommand{\ac}[4]{a_{#1}^{#2}(#3,#4)}
\newcommand{\del}[3]{{#1}/^{#2}\, {#3}}
\DeclareMathOperator{\inv}{inv}
\DeclareMathOperator{\asc}{asc}
\DeclareMathOperator{\stat}{stat}
\DeclareMathOperator{\des}{des}
\DeclareMathOperator{\maj}{maj}
\DeclareMathOperator{\dstat}{stat_D}
\DeclareMathOperator{\cross}{cross}
\numberwithin{equation}{section}
\newtheorem{theorem}{Theorem}[section]
\newtheorem{lemma}[theorem]{Lemma}
\newtheorem{proposition}[theorem]{Proposition}
\newtheorem{corollary}[theorem]{Corollary}
\newtheorem{conjecture}[theorem]{Conjecture}
\newtheorem*{lemma*}{Lemma}
\newtheorem{definition}[theorem]{Definition}
\newtheorem{example}[theorem]{Example}
\newtheorem{remark}[theorem]{Remark}
\newtheorem{notation}[theorem]{Notation}
\newcommand{\n}{\mathsf{n}}
\newcommand{\e}{\mathsf{e}}
\newcommand{\thmref}[1]{\hyperref[#1]{Theorem~\ref*{#1}}}
\newcommand{\lemref}[1]{\hyperref[#1]{Lemma~\ref*{#1}}}
\newcommand{\propref}[1]{\hyperref[#1]{Proposition~\ref*{#1}}}
\newcommand{\corref}[1]{\hyperref[#1]{Corollary~\ref*{#1}}}
\newcommand{\conjref}[1]{\hyperref[#1]{Conjecture~\ref*{#1}}}
\newcommand{\defref}[1]{\hyperref[#1]{Definition~\ref*{#1}}}
\newcommand{\exref}[1]{\hyperref[#1]{Example~\ref*{#1}}}
\newcommand{\remref}[1]{\hyperref[#1]{Remark~\ref*{#1}}}
\newcommand{\secref}[1]{\hyperref[#1]{Section~\ref*{#1}}}
\newcommand{\figref}[1]{\hyperref[#1]{Figure~\ref*{#1}}}
\title[Chromatic symmetric functions of Dyck paths  and $q$-rook theory]{Chromatic symmetric functions \\of Dyck paths  and $q$-rook theory }
\author[Colmenarejo]{Laura Colmenarejo}
\address[L. Colmenarejo]{Department of Mathematics and Statistics, UMass Amherst, U.S.A}
\email{laura.colmenarejo.hernando@gmail.com}
\urladdr{https://sites.google.com/view/l-colmenarejo/home}
\author[Morales]{Alejandro H. Morales}
\address[A. H. Morales]{Department of Mathematics and Statistics, UMass Amherst, U.S.A}
\email{ahmorales@math.umass.edu}
\urladdr{https://people.math.umass.edu/~ahmorales/}
\author[Panova]{Greta Panova}
\address[G. Panova]{Department of Mathematics, University of Southern California, U.S.A.}
\email{gpanova@usc.edu}
\urladdr{https://sites.google.com/usc.edu/gpanova/home}
\thanks{L. Colmenarejo was partially supported by the AMS-Simons Travel Grant and by MTM2016-75024-P, A. H. Morales was partially supported by the NSF grant DMS-1855536, and G. Panova was partially supported by the NSF grant DMS-1939717.}
\begin{document}

\begin{abstract}
The chromatic symmetric function (CSF) of Dyck paths of Stanley  and its Shareshian--Wachs $q$-analogue have important  connections to  Hessenberg varieties, diagonal harmonics and LLT polynomials. In the, so called, abelian case they are also curiously related to placements of non-attacking rooks by results of Stanley--Stembridge (1993) and Guay-Paquet (2013). For the $q$-analogue, these  results have been generalized by  Abreu--Nigro (2020) and Guay-Paquet (private communication), using  $q$-hit numbers.  Among our main results is a new proof of Guay-Paquet's elegant identity expressing the $q$-CSFs in a CSF basis with $q$-hit coefficients.  We further show its equivalence to the Abreu--Nigro identity expanding the $q$-CSF in the elementary symmetric functions. In the course of our work we establish that the $q$-hit numbers in these expansions differ from the originally assumed Garsia-Remmel $q$-hit numbers by certain powers of $q$.  We  prove new identities for these $q$-hit numbers, and establish connections between the three different variants.
\end{abstract}

\maketitle

\textsc{keywords}: chromatic symmetric functions, abelian Hessenberg varieties, Dyck paths, $q$-hit numbers, $q$-rook numbers.

\section{Introduction}
Ever since their introduction in 1995 in \cite{St1}, the chromatic symmetric functions have been this mysterious object combining the misleading simplicity of graphs with the powerful tools of symmetric functions. Graph colorings present some of the hardest problems in combinatorics\footnote{Informally, but also formally as an NP-complete problem.}, and nice formulas there qualify as miracles rather than general rules. It is thus even more appealing that the chromatic symmetric functions, and their $q$-generalizations, are a source of beautiful results and striking conjectures\footnote{Most notably the $e$-positivity Conjecture~\ref{conj: StaSteShWa} of Stembridge-Stanley~\cite{StSt}, refined further by Shareshian-Wachs~\cite{ShW2}.}. 
The chromatic symmetric functions have found significant connections beyond combinatorics --  to \emph{Hessenberg varieties}~\cite{ShW2}, \emph{diagonal harmonics}~\cite{CM}, and {\em Macdonald polynomials}~\cite{AP,HW}.

In this paper we bring to light such an unusually nice combinatorial formula, relating the \emph{$q$-rook theory} which comes from generalizations of permutations and their inversions, and chromatic symmetric functions for Dyck paths of bounce two, aka abelian case.  We give an elementary proof of the strikingly elegant identity of Guay-Paquet (Theorem~\ref{thm:qhitCSFabelian}) which expresses the chromatic symmetric function for an arbitrary path given by partition $\lambda$ in terms of the chromatic symmetric functions for rectangles with coefficients the very combinatorial $q$-hit numbers. Along the way we establish numerous new identities for $q$-hit and $q$-rook numbers, give an elementary proof of Theorem~\ref{AN:generalLambda}, and pose many conjectures stemming from our findings. Our ultimate goal is to understand the chromatic symmetric functions with more relations and connections, which could lead not only to a proof of the $e$-positvity Conjecture~\ref{conj: StaSteShWa}, but also to a combinatorial interpretation of these coefficients. The technique of symmetry-breaking used in our proof of Theorem~\ref{thm:qhitCSFabelian}  could be extended beyond the abelian case as long as there is a suitable conjectured expression for the coefficients in the $e$-basis. 

\subsection{Definitions and main results}
Let $G$ be a graph with vertices $\{v_1,v_2,\ldots,v_n\}$ that are
totally ordered $v_1<v_2<\cdots <v_n$. In~\cite{St1}, Stanley defined the chromatic symmetric function (CSF) $\csf{G}$ of  $G$  as 
\begin{equation*}
\csf{G} = \sum_{\kappa: V\to \mathbb{P}, \text{ proper}} {\bf
  x}^{\kappa} =  \sum_{\kappa: V\to \mathbb{P}, \text{ proper}} x_1^{\#\kappa^{-1}(1)} x_2^{\#\kappa^{-1}(2)}\cdots, 
\end{equation*}
where $\mathbb{P}=\{1,2,3,\ldots\}$, ${\bf x}=(x_1,x_2,\ldots)$, and
the sum is over the proper colorings of the vertices of $G$.

Stanley and Stembridge~\cite{StSt} conjectured that the chromatic symmetric
functions  expand with positive coefficients in the basis $\{e_{\mu}\}$
of elementary symmetric functions for the graphs coming from Dyck paths in the following way. Given a Dyck path $d$ from $(0,0)$ to $(n,n)$, let $G(d)$ be the graph with vertices $\{1\ldots n\}$ and edges  $(i,j)$, $i<j$ if and only if the cell
$(i,j)$ is below the path $d$ (see Figure~\ref{fig: path to graph}). These are also the  incomparability graphs of {\em unit interval orders} or graphs obtained from {\em Hessenberg sequences}.  

\begin{figure}
    \centering
    \includegraphics{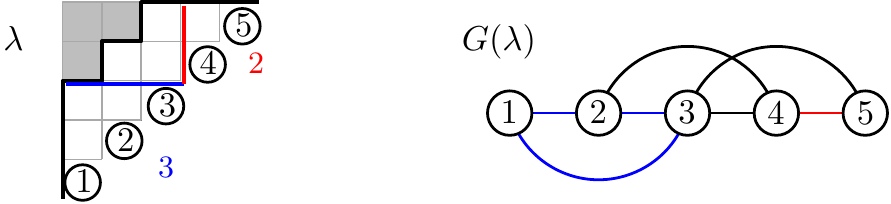}
    \caption{The Dyck path $d= \n^{\tcb{3}} \e\n\e\n\e \e^{\tcr{2}}$ associated to the partition $\lambda=(2,1) \subset 2 \times 3$ and the corresponding graph $G(\lambda)$. Each cell below the path $d$ corresponds to an edge of the graph. An example of a proper coloring would be $\kappa(1)=2, \kappa(2)=3,\kappa(3)=1,\kappa(4)=2,\kappa(5)=3$ which has $\asc(\kappa) = 4$.}
    \label{fig: path to graph}
\end{figure}

 Shareshian--Wachs~\cite{ShW2} introduced a quasisymmetric version of $\csf{G}$ defined by 
\begin{align*}
\csft{G}{q} = \sum_{\kappa: V\to \mathbb{P}, \text{ proper}} q^{\asc(\kappa)}{\bf x}^{\kappa},
\end{align*}
where $\asc(\kappa)$ is the number of edges $\{v_i,v_j\}$ of $G$ with
$i<j$ and $\kappa(v_i)<\kappa(v_j)$. 

For the graphs $G(d)$ coming from Dyck paths, the quasisymmetric function $\csft{G(d)}{q}$ is
actually symmetric and Shareshian--Wachs gave a refinement of the
Stanley--Stembridge conjecture for this Catalan family of graphs.

\begin{conjecture}[Stanley--Stembridge, Shareshian--Wachs] \label{conj: StaSteShWa}
Let $d$ be a Dyck path. Then the coefficients of $\csft{G(d)}{q}$ in
the elementary basis are in $\mathbb{N}[q]$.
\end{conjecture}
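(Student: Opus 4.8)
The plan is to lift the abelian-case machinery of this paper to all Dyck paths by pairing a conjectural manifestly-positive $e$-expansion with Guay-Paquet's modular law as an inductive engine. Concretely, I would first posit a formula $\csft{G(d)}{q} = \sum_\mu c^{d}_\mu(q)\,e_\mu$ in which each coefficient $c^{d}_\mu(q)$ is written as a generating function $\sum_{O} q^{\stat(O)}$ over some set of combinatorial objects $O$ attached to the path $d$, so that membership in $\mathbb{N}[q]$ is visible term by term. In the bounce-two case \thmref{AN:generalLambda} already supplies such a formula with $q$-hit numbers as the statistics, so the first genuine task is to isolate the correct higher-bounce replacement for those $q$-hit numbers. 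The rectangular base cases can be handled directly: there $G(d)$ is assembled from complete graphs, for which $\csft{K_m}{q} = [m]_q!\,e_m$, and disjoint-union multiplicativity of the CSF then yields an $\mathbb{N}[q]$-expansion by inspection.

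Second, I would run the induction through Guay-Paquet's modular law, a three-term relation $\alpha(q)\,\csft{G(d_2)}{q} = \beta(q)\,\csft{G(d_1)}{q} + \gamma(q)\,\csft{G(d_3)}{q}$ with $\alpha,\beta,\gamma\in\mathbb{Z}[q]$ relating three paths that differ by a single local move; iterating it rewrites any $\csft{G(d)}{q}$ in terms of the rectangular base cases. Granting the conjectural formula, the inductive step becomes a purely combinatorial identity --- one must verify that the proposed coefficients $c^{d}_\mu(q)$ obey the same three-term relation. This is exactly the stage at which the symmetry-breaking argument behind \thmref{thm:qhitCSFabelian} is intended to operate: one introduces an auxiliary refinement of the coloring order that resolves the graph's automorphisms, regroups the ascent-weighted proper colorings into blocks indexed by the new data, and matches the induced $q$-statistics across the three paths $d_1,d_2,d_3$.

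The decisive obstacle is that both ingredients fail to be available outside the abelian regime, and this is precisely what keeps the statement a conjecture. The modular law only subtracts, so its coefficients need not preserve positivity; without an explicit manifestly-positive formula the induction has nothing to propagate, and producing that formula is the open core of the problem, since the clean $q$-rook interpretation underlying the $q$-hit numbers is special to bounce two and no combinatorial model is known to reproduce the $e$-coefficients for general $d$. Equivalently, under the Brosnan-Chow/Guay-Paquet realization $\omega\,\csft{G(d)}{q} = \sum_j q^j\,\mathrm{ch}\,H^{2j}(\mathrm{Hess}(d))$, the statement asserts that every graded piece of the dot-action $S_n$-representation is a Young permutation module, and exhibiting a geometric or combinatorial permutation basis witnessing this is exactly what remains unknown. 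I therefore expect the symmetry-breaking step to go through formally once a positive higher-bounce coefficient formula is in hand, but the existence of that formula, together with its compatibility with the modular law, is the true barrier --- which is why the result is recorded here as \conjref{conj: StaSteShWa} rather than as a theorem.
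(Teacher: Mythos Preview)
Your proposal is not a proof, and it does not pretend to be one: you correctly identify that \conjref{conj: StaSteShWa} is an open conjecture and that the paper does not prove it in general. The paper only establishes the abelian (bounce-two) case via \thmref{AN:generalLambda} and \thmref{thm:qhitCSFabelian}, and records the general statement as a conjecture. So there is no ``paper's own proof'' to compare against.

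That said, your sketch of a strategy and its obstructions is sound and aligned with the paper's own framing. You correctly observe that the modular relation rewrites any $\csft{G(d)}{q}$ in terms of simpler paths but does not preserve positivity on its own; that the missing ingredient is a manifestly-positive combinatorial formula for the $e$-coefficients beyond bounce two; and that the $q$-hit interpretation is special to the abelian case. The paper makes exactly this point in \secref{sec: final remarks} when it notes that extending \thmref{thm:qhitCSFabelian} to bounce three or finding an $e$-expansion there ``involving $q$-rook theory'' is an open problem. Your invocation of the Hessenberg-variety/permutation-module reformulation is also accurate, though the paper only alludes to it via the references to \cite{ShW2} and \cite{HP}.

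In short: there is no gap to name because you have not claimed a proof, and your diagnosis of why the conjecture remains open is correct.
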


This conjecture has been verified independently and by different techniques by Cho--Huh~\cite{ChoHuh}, Harada--Precup~\cite{HP},  and Abreu--Nigro~\cite{AN} for the case of so-called {\em abelian} Dyck paths (corresponding to abelian Hessenberg varieties), which are defined as Dyck paths $d$ of from $(0,0)$ to $(m+n,m+n)$ of the form $\n^m w(\lambda) \e^n$ where $w(\lambda)$ is the encoding
in north ($\n$) and east ($\e$) steps of the partition $\lambda \subset n\times m$ (see
Figure~\ref{fig:hitexA}). We denote the associated graph by $G(\lambda)$ and the chromatic symmetric function by $\csft{\lambda}{q} := \csft{G(\lambda)}{q}$.

The symmetric functions $\csft{\lambda}{q}$ corresponding to abelian Dyck paths  are deeply related to the  {\em $q$-rook theory} of Garsia--Remmel~\cite{GR} as we illustrate with the next two identities that use the following notation 
\[
[n]_k = [n][n-1]\cdots[n-k+1],\quad  [n]!=[n]_n, \quad \qbinom{n}{k} =\frac{[n]_k}{[k]!},
\]
where $[x]=(1-q^x)/(1-q)$.

{\bf We define {\em $q$-hit numbers} of {\em rectangular boards}} of size $n\times m$ that we denote as $\qhit{j}{m,n}{\lambda}$ by a change of basis equation \eqref{eq: hit rook change of basis} involving the \emph{Garsia--Remmel $q$-rook numbers}. These $q$-hit numbers are polynomials in $q$, satisfying $\sum_{j=0}^n \qhit{j}{m,n}{\lambda}=[m]_n$, and at $q=1$ give the number of placements of $n$ non-attacking rooks in an $n \times m$ board ($n\leq m$) with $j$ rooks in the board of $\lambda$. We show that these $q$-hit numbers $\qhit{j}{m,n}{\lambda}$ are symmetric  polynomials in $\mathbb{N}[q]$ and are realized by a statistic defined by Dworkin \cite{D} (see Theorem~\ref{thm: qhit statistic rectangular board}). In the case of a square board $m=n$, these $q$-hit numbers $\qhit{j}{n}{\lambda}:=\qhit{j}{n,n}{\lambda}$ are up to a power of $q$ equal to the \emph{Garsia--Remmel $q$-hit numbers} (Proposition~\ref{prop: difference GR and our qhit}) which are symmetric unimodal polynomials in $\mathbb{N}[q]$ realized by different statistics by Haglund and Dworkin  (see \cite{HR}).

Abreu--Nigro gave an expansion of $\csft{\lambda}{q}$ in the elementary basis in terms of $q$-hit numbers of square boards. This result is a $q$-analogue of a special case of a result of Stanley--Stembridge~\cite[Thm. 4.3]{StSt}. 

\begin{theorem}[Abreu--Nigro~\cite{AN}]\label{AN:generalLambda}
Let $\lambda$ be partition inside an $n\times m$ board with $\ell(\lambda) = k \leq \lambda_1$. Then
\begin{align*}
\csft{\lambda}{q} &=
\qfactorial{k}\qhit{k}{m+n-k}{\lambda}\cdot e_{m+n-k,k} + \sum_{j=0}^{k-1}
q^j \qfactorial{j}\qnumber{m+n-2j} \qhit{j}{m+n-j-1}{\lambda} \cdot e_{m+n-j,j}.
\end{align*}
\end{theorem}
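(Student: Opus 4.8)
The plan is to obtain this $e$-expansion by combining Guay-Paquet's identity $\thmref{thm:qhitCSFabelian}$ -- which writes $\csft{\lambda}{q}$ as a $q$-hit-weighted sum of the $q$-chromatic symmetric functions of \emph{rectangular} abelian Dyck paths -- with the (easy) $e$-expansions of those rectangular pieces. The latter is a short symmetric-function computation: for a clique a proper coloring is an injection, so $\csft{K_r}{q}=\qfactorial{r}\,e_r$ from $\sum_{w\in\mathfrak{S}_r}q^{\binom{r}{2}-\inv(w)}=\qfactorial{r}$, and since $\csft{G_1\sqcup G_2}{q}=\csft{G_1}{q}\,\csft{G_2}{q}$ a disjoint union of two cliques has $\csft{K_a\sqcup K_b}{q}=\qfactorial{a}\qfactorial{b}\,e_a e_b$. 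Substituting these expansions into $\thmref{thm:qhitCSFabelian}$ produces an expansion of $\csft{\lambda}{q}$ supported on $\{e_{m+n-j}e_j:0\le j\le k\}$ (the higher terms dropping out because a placement can have at most $k=\ell(\lambda)$ rooks inside $\lambda$), in which the coefficient of $e_{m+n-j}e_j$ is $\qfactorial{m+n-j}\qfactorial{j}$ times a power of $q$ times a rectangular $q$-hit number $\qhit{\bullet}{\bullet}{\lambda}$ whose board is the one dictated by $\thmref{thm:qhitCSFabelian}$.

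The main work is then to reconcile those coefficients with the ones asserted here, namely $\qfactorial{k}\qhit{k}{m+n-k}{\lambda}$ when $j=k$ and $q^j\qfactorial{j}\qnumber{m+n-2j}\qhit{j}{m+n-j-1}{\lambda}$ when $j<k$. I would do this starting from the change of basis \eqref{eq: hit rook change of basis} that defines $\qhit{i}{m,n}{\lambda}$ in terms of the Garsia--Remmel $q$-rook numbers $\qrook{i}{\lambda}$: since $\qrook{i}{\lambda}$ depends only on the board of $\lambda$ and not on the surrounding rectangle, the dependence of $\qhit{i}{m,n}{\lambda}$ on $(m,n)$ is explicit, and one can extract closed identities relating the $q$-hit numbers on square boards of sizes $m+n-k$ and $m+n-j-1$ to each other and to those on the $n\times m$ board. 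These are precisely the new $q$-hit identities the introduction promises; I expect the factor $\qnumber{m+n-2j}$ and the shift by $q^j$ to fall out once the normalizing $q$-factorials are cancelled, and I expect the boundary degree $j=k$ to need separate handling, since that is exactly where $\lambda$ may fill an entire leftmost column and the generic pattern degenerates (the hypothesis $\ell(\lambda)\le\lambda_1$ is what keeps all the boards and $q$-numbers occurring here legitimate). The extreme cases make a good consistency check: $\lambda=\emptyset$ has $k=0$, $G(\lambda)=K_{m+n}$, $\csft{\lambda}{q}=\qfactorial{m+n}\,e_{m+n}$, matching $\qhit{0}{m+n}{\emptyset}=\qfactorial{m+n}$; while for $\lambda$ the full $n\times m$ rectangle $G(\lambda)=K_m\sqcup K_n$ and every term with $j<k$ vanishes because each rook placement forces strictly more than $j$ rooks into $\lambda$.

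A second, self-contained route would bypass $\thmref{thm:qhitCSFabelian}$ altogether: the functions $\csft{\lambda}{q}$ satisfy the Guay-Paquet/Abreu--Nigro ``modular'' linear recurrence under a one-cell change of $\lambda$, and that recurrence together with the base cases above determines $\csft{\lambda}{q}$ uniquely; it would then remain to check that the claimed right-hand side satisfies the same recurrence, which once more reduces to a linear relation among the $q$-hit numbers of $\lambda$ and its one-cell modifications, provable from \eqref{eq: hit rook change of basis}. Under either route the only genuine obstacle lies on the $q$-rook-theory side -- bookkeeping how the $q$-hit numbers transform under a change of ambient board and producing the exact prefactor $q^j\qfactorial{j}\qnumber{m+n-2j}$ -- rather than in anything about symmetric functions or proper colorings.
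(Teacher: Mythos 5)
Your overall strategy for the first route is the paper's: substitute the $e$-expansions of the rectangular pieces into \thmref{thm:qhitCSFabelian} and reconcile coefficients via $q$-hit identities. But there is a genuine gap at the very first step: the graph $G(m^j)$ for $0<j<n$ is \emph{not} a disjoint union of two cliques, so its $e$-expansion is not the "short symmetric-function computation" you describe. Concretely, $G(m^j)$ consists of the clique on $\{1,\dots,m\}$, the clique on $\{m+1,\dots,m+n\}$, and a complete bipartite graph joining $\{1,\dots,m\}$ to the $n-j$ vertices corresponding to the rows \emph{not} in $m^j$; the $j$ remaining vertices of the second clique are still adjacent to those $n-j$ vertices, so the graph is a $K_{m+n-j}$ with a $K_j$ glued along an induced $K_{n-j}$, not $K_{m+n-j}\sqcup K_j$. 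Only $j=0$ (giving $K_{m+n}$) and $j=n$ (giving $K_m\sqcup K_n$) degenerate as you claim. Consequently $\csft{m^j}{q}$ is not a single multiple of $e_{m+n-j,j}$: its $e$-expansion is itself the rectangular case of the theorem being proved (\lemref{lem:ANrectangle} in the paper), involving all of $e_{m+n-r,r}$ for $0\le r\le j$, and the paper must establish it by a separate induction on $m$ and $k$ using the relation $[m]X_{(m-1)^k}=q^k[m-k]X_{m^k}+[k]X_{m^{k-1}}$ (itself a consequence of \thmref{thm:qhitCSFabelian} and the explicit $q$-hit numbers of $(m-1)^k$). Your proposal silently assumes this lemma away.

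This error propagates: after substituting the correct rectangle expansions, the coefficient of $e_{m+n-r,r}$ is not a single term but a sum $\sum_{j\ge r}\qhit{r}{m+n-r-1}{m^j}\qhit{j}{m,n}{\lambda}$ (up to prefactors), and collapsing that convolution to the single claimed coefficient $q^r\qfactorial{r}\qnumber{m+n-2r}\qhit{r}{m+n-r-1}{\lambda}$ is the real content of the proof (\lemref{prop:qhit-relations}), established in the paper by induction on $|\lambda|$ via the deletion--contraction relation of \lemref{lem: deletion/contration} (or, alternatively, by $q$-binomial identities after passing to $q$-rook numbers). You correctly anticipate that "new $q$-hit identities" are needed and that they should follow from the change of basis \eqref{eq: hit rook change of basis}, but you neither state nor prove them, and the shape you predict for the intermediate expression is wrong. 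Your second, "modular relation" route is essentially the original Abreu--Nigro/Guay-Paquet argument and is viable in principle, but as written it is only a plan: verifying that the right-hand side satisfies the recurrence is again exactly a deletion--contraction identity for the $q$-hit numbers, which is the step that has to be done and is not.
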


\textbf{Our first main result is an elementary proof of an unpublished identity of Guay-Paquet}\footnote{Private communication~\cite{MGP_LR}.}\footnote{This identity was independently found by Lee and Soh \cite[Thm. 24]{LS} after this article was posted.} in Section~\ref{sec: pf of MGP} that expands $\csft{\lambda}{q}$ in terms of
chromatic symmetric functions for rectangular shape with  coefficients given by the {\em $q$-hit numbers} of  {\em rectangular boards} defined above. 

This result appears as a $q$-analogue of a special case of \cite[Prop. 4.1 (iv)]{MGP}. 

\begin{theorem}[Guay-Paquet~\cite{MGP_LR}] \label{thm:qhitCSFabelian}
Let $\lambda$ be partition inside an $n\times m$ board ($n\leq m$). Then 
\[
\csft{\lambda}{q} =\dfrac{1}{\qfalling{m}{n}} \sum_{j=0}^{n}
\qhit{j}{m,n}{\lambda} \cdot \csft{m^j}{q}.
\]
\end{theorem}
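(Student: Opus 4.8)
The plan is to break the symmetry of this symmetric‑function identity, reduce it to a polynomial identity in a single integer parameter, and verify that identity by a $q$‑weighted count of proper colorings organized by rook placements. First I would record the structure of the graphs involved. For an abelian path $\n^m w(\lambda)\e^n$ with $\lambda\subseteq n\times m$, the graph $G(\lambda)$ is the union of a clique on the ``column'' vertices $\{1,\dots,m\}$, a clique on the ``row'' vertices $\{m+1,\dots,m+n\}$, and a bipartite graph between them whose \emph{non‑edges} are exactly the cells of $\lambda$. Hence a proper coloring $\kappa$ of $G(\lambda)$ is the same as a pair of injections $c\colon\{1,\dots,m\}\to\mathbb P$ and $r\colon\{1,\dots,n\}\to\mathbb P$ whose coincidence set $\{(i,j):r_i=c_j\}$ is a non‑attacking rook placement contained in $\lambda$, and $\asc(\kappa)$ splits as (non‑inversions inside the column clique) $+$ (non‑inversions inside the row clique) $+\ \#\{(i,j)\notin\lambda:c_j<r_i\}$. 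In the degenerate case $\lambda=m^j$ the $j$ full rows detach, so $G(m^j)=K_{m+n-j}\sqcup K_j$ and $\csft{m^j}{q}=\qfactorial{m+n-j}\,\qfactorial{j}\,e_{m+n-j,j}$; in particular the $n+1$ symmetric functions $\csft{m^j}{q}$, $0\le j\le n$, are linearly independent (using $n\le m$), and under the specialization $\mathbf x\mapsto(1^N)$ they become $\qfactorial{m+n-j}\,\qfactorial{j}\binom{N}{m+n-j}\binom{N}{j}$, a family of polynomials in $N$ that is triangular when the index $j$ is ordered by the value $m+n-j$.

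Second I would reduce to a polynomial identity. By \thmref{AN:generalLambda}, $\csft{\lambda}{q}$ lies in the $(n+1)$‑dimensional span of $\{\csft{m^j}{q}\}_{j=0}^{n}$, and the right‑hand side of the claimed identity manifestly lies there as well; by the triangularity just noted, the $n+1$ linear functionals $f\mapsto f|_{\mathbf x=(1^N)}$ for $N=m,m+1,\dots,m+n$ separate this span. Thus it suffices to prove, for those $N$ (equivalently, as an identity of polynomials in $N$ of degree at most $m+n$),
\[
\qfalling{m}{n}\cdot\csft{\lambda}{q}\big|_{\mathbf x=(1^N)}=\sum_{j=0}^{n}\qhit{j}{m,n}{\lambda}\,\qfactorial{m+n-j}\,\qfactorial{j}\binom{N}{m+n-j}\binom{N}{j},
\]
whose left‑hand side is $\sum_{\kappa}q^{\asc(\kappa)}$ over proper colorings $\kappa$ of $G(\lambda)$ with colors in $\{1,\dots,N\}$, i.e.\ the Shareshian--Wachs $q$‑chromatic polynomial of $G(\lambda)$.

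Third I would evaluate this $q$‑chromatic polynomial by summing over the coincidence rook placement $R\subseteq\lambda$: for a fixed non‑attacking $R$ with $|R|=k$, grouping the free color choices and using the decomposition of $\asc(\kappa)$ above produces $q$‑factorials and $q$‑binomials times a power of $q$ depending only on $R$; summing over all $R$ with $k$ rooks on $\lambda$ yields the Garsia--Remmel $q$‑rook number $\qrook{k}{\lambda}$ of the rectangular board (equivalently, Dworkin's rook statistic) times an explicit power of $q$. This expresses $\qfalling{m}{n}\cdot\csft{\lambda}{q}|_{\mathbf x=(1^N)}$ as an explicit combination of the $\qrook{k}{\lambda}$ with polynomial‑in‑$N$ coefficients, and substituting the defining change of basis \eqref{eq: hit rook change of basis} rewrites it in terms of the $q$‑hit numbers $\qhit{j}{m,n}{\lambda}$, giving the displayed identity.

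The main obstacle is this last piece of $q$‑bookkeeping: controlling exactly how $\asc(\kappa)$ distributes over the two cliques, the bipartite edges, and the (non‑edge) rook cells, and pinning down the compensating powers of $q$ so that the rook‑number sum closes up to the \emph{rectangular‑board} $q$‑hit numbers rather than the naive Garsia--Remmel ones — precisely the power‑of‑$q$ discrepancy flagged in the introduction. The structural description of $G(\lambda)$ and the linear algebra of the reduction are routine by comparison.
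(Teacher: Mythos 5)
Your plan takes a genuinely different route from the paper, which proves the identity by induction on the number of variables via the color-$M$ deletion recursion (\lemref{lem:X_recursion}) and the three new $q$-hit identities of Lemmas~\ref{lem:qhit_linear_1}--\ref{lem:qhit_quadratic}. Your second step (reduction to principal specializations at $N=m,\dots,m+n$) is sound as linear algebra, but note that invoking \thmref{AN:generalLambda} for the containment of $\csft{\lambda}{q}$ in $\mathrm{span}\{e_{m+n-j,j}\}_{j\le n}$ is circular inside this paper, since Section~\ref{sec: MGP to AN} derives \thmref{AN:generalLambda} \emph{from} \thmref{thm:qhitCSFabelian}. This is repairable: every color class of $G(\lambda)$ has size at most $2$, with at most $n$ classes of size $2$, so $\csft{\lambda}{q}$ lies in $\mathrm{span}\{m_{2^a1^b}\}_{a\le n}$, which coincides with $\mathrm{span}\{e_{m+n-j,j}\}_{j\le n}$ by triangularity of $e_{\nu}=m_{\nu'}+\cdots$.

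The genuine gap is the third step, which is not a piece of routine bookkeeping but is in fact false as stated. You claim that for a fixed coincidence placement $R$ with $|R|=k$, the sum of $q^{\asc(\kappa)}$ over the corresponding colorings factors as ($q$-factorials and $q$-binomials depending only on $k$ and $N$) times a power of $q$ depending on $R$, and that summing these powers over all $R$ recovers $\qrook{k}{\lambda}$ up to a global power of $q$. Take $\lambda=(2)\subset 1\times 2$, so $G(\lambda)=K_2\sqcup K_1$ with vertex $3$ isolated. For either of the two one-rook placements (vertex $3$ sharing its color with vertex $1$, or with vertex $2$), the sum of $q^{\asc}$ over colorings using a fixed two-element color set is $1+q$ in both cases; hence the alleged ``power of $q$ depending only on $R$'' must be the same for both placements, and the sum over placements is $2q^{\alpha}$. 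But $\qrook{1}{(2)}=1+q$, with the two placements contributing $1$ and $q$ respectively, and $2q^{\alpha}\neq q^{\beta}(1+q)$ for any $\beta$. So the per-placement color count does not track the Garsia--Remmel inversion statistic, and the sum over placements does not close up to $\qrook{k}{\lambda}$ by the mechanism you describe. The aggregate identity you need (expressing $\qfalling{m}{n}\csft{\lambda}{q}|_{\mathbf x=(1^N)}$ through the $\qrook{k}{\lambda}$, or directly through the $\qhit{j}{m,n}{\lambda}$ via placements of $n$ rooks on the full board) may well be provable, but it is precisely the content of the theorem in the principally specialized setting; establishing it requires either an equidistribution argument or a careful greedy/insertion count in the style of the Garsia--Remmel factorization theorem, none of which is supplied. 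As the proposal stands, the heart of the proof is asserted rather than proved, and the asserted form of the assertion is contradicted by a two-cell example.
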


\textbf{Our second result is a direct elementary proof} of Theorem~\ref{AN:generalLambda} following from Theorem~\ref{thm:qhitCSFabelian} and using  \textbf{our third result} which is the definition and properties of the  $q$-hit numbers $\qhit{*}{*,*}{\lambda}$, including a deletion contraction relation, that we derive in Sections~\ref{sec:new_qrooks} and in the Appendix. 

\subsection{Old and new methods}
The original proofs of the two statements above  use a linear relation satisfied by $\csft{G(d)}{q}$ called the {\em modular relation}~\cite{AN,AS,MGP}. Our proof of Theorem~\ref{thm:qhitCSFabelian} uses a simple inductive approach on both the size $m+n$ of the graph and the number $M$ of variables (see Lemma~\ref{lem:X_recursion}). Our approach ignores/breaks the symmetry of the chromatic symmetric function by splitting the function as a polynomial in $x_k$, whose coefficients are polynomials in $x_1,\ldots,x_{k-1}$. The ultimate identities are derived from identities of the coefficients; the $q$-hit numbers. Such an approach could work in a more general setting if the coefficients in the expansion have some recursive combinatorial structure. Moreover, following the recursion it could be extended to a bijection, similar to RSK. The bottlenecks in this approach are the necessary new $q$-hit identities, which we derive after extensive use of generating functions\footnote{A fully combinatorial/bijective proof would be highly desirable and could completely unravel the combinatorics for CSFs in the abelian case. See Section~\ref{sec: final remarks}.}.  The derivation of Theorem~\ref{AN:generalLambda} follows from other $q$-hit identities, which can be proven also using deletion-contraction on $q$-hit and $q$-rook numbers. Note that deletion-contraction on the classical CSFs itself is not directly applicable due to the inhomogeneity of the relation.   

Along the way we prove new $q$-hit identities (Section~\ref{sec:new_qrooks} and Appendix) and unravel a mystery on different combinatorial statistics leading to different kinds of $q$-hit numbers (see Section~\ref{subsec:tale} and the Appendix) that have been mixed up in the literature. In particular, we establish new relations of {\em $q$-rook numbers} and {\em $q$-hit numbers} (Lemmas~\ref{lem:qrook_linear_1},~\ref{lem:qrook_linear_2},~\ref{lem:qrook_identity1}, and~\ref{prop:qhit-relations}) that develop further the $q$-rook theory of rectangular boards~\cite{LM17}.

As a Corollary to the fact that Theorems~\ref{AN:generalLambda} and ~\ref{thm:qhitCSFabelian} are in essence linear relation between chromatic symmetric functions, we establish that the same linear relation holds of the \textbf{unicellular LLT polynomials}, see Section~\ref{subsec:unicellular}.

\subsection{Organization}

In Sections~\ref{sec: background} and~\ref{sec:new_qrooks} we give the definitions of $q$-hit numbers and prove the necessary identities used later on. Our elementary proof of Theorem~\ref{thm:qhitCSFabelian} is in Section~\ref{sec: pf of MGP}, and the proof of Theorem~\ref{AN:generalLambda} is in Section~\ref{sec: MGP to AN}. In Section~\ref{sec: applications and variations} we discuss variations on these problems, expansions in other bases like CSFs for staircase shapes, applications to LLT polynomials, and some conjectures.

 In Appendices~\ref{app: Dworkin}, \ref{app: relation}, and~\ref{app: deletion-contraction} we present the Garsia--Remmel $q$-hit numbers and their relation to the $q$-hit numbers appearing in Theorems~\ref{AN:generalLambda},\ref{thm:qhitCSFabelian}, and the deletion-contraction relations for each variant.  Appendices~\ref{app: proof statistic},~\ref{app: symmetry qhit} have the proofs of Theorem~\ref{thm: qhit statistic rectangular board} and the symmetry of the $q$-hit numbers, respectively.

\section*{Acknowledgements}
We thank Mathieu Guay-Paquet for generously sharing the notes~\cite{MGP_LR} with 
Theorem~\ref{thm:qhitCSFabelian} as well as Alex Abreu, Per Alexandersson, Sergi Elizalde, Jim Haglund, Philippe Nadeau, Antonio Nigro, Alexei Oblomkov, Franco Saliola, Bruce Sagan, John Shareshian, and Michelle Wachs for insightful discussions. We also thank the anonymous referees for the comments and suggestions. This work was facilitated by computer experiments using Sage~\cite{sagemath} and its algebraic combinatorics features
developed by the Sage-Combinat community~\cite{Sage-Combinat}. 

\section{Background on $q$-rook theory}\label{sec: background}
For the rest of the paper, we assume $m$ and $n$ are non-negative integers with $m\geq n$. 

\subsection{$q$-rook numbers}

Rook placements are a generalizations of permutation diagrams, and their $q$-analogues keep track of the number of inversions. We now summarize important definitions and properties used later in relation to the chromatic symmetric functions. In the Appendix we include the proofs and further properties. 

\begin{definition}[$q$-rook numbers~\cite{GR}]\label{def: GR q-rook numbers}
Given a partition
$\lambda=(\lambda_1,\lambda_2,\ldots,\lambda_{\ell})$ the \emph{Garsia-Remmel $q$-rook numbers}
 are defined as 
 \[
 \qrook{k}{\lambda} = \sum_p q^{\inv(p)},
 \]
where the sum is over all placements $p$ of $k$ non-attacking rooks on $\lambda$ and
$\inv(p)$ is the number of cells of $\lambda$ that are not occupied by
a rook or directly
west or north of a rook (see Figure~\ref{fig:ex rook placement}). 
\end{definition}

\begin{proposition}[Garsia-Remmel~\cite{GR}]\label{prop:GR_rook_gen_fun}
Given a partition $\lambda=(\lambda_1,\ldots,\lambda_{\ell})$ we have that 
\begin{equation*} %\label{eq: def F}
F(x;\lambda):=\sum_{k=0}^{\ell} \qrook{k}{\lambda}[x]_{\ell-k}  = \prod_{i=1}^{\ell} [x+\lambda_{\ell-i+1}-i+1],
\end{equation*}
in particular $R_{\ell}(\lambda)=\prod_{i=1}^{\ell} [\lambda_{\ell-i+1} -i+1]$.
\end{proposition}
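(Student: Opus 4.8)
The identity is the Garsia--Remmel $q$-factorization theorem for the Ferrers board given by the Young diagram of $\lambda$, and I would prove it by a double count of rook placements on an enlarged board. Since $[x-j]=(1-q^{-j}q^{x})/(1-q)$, each $[x]_{\ell-k}$ is a polynomial in $q^{x}$ of degree $\ell-k$, so $F(x;\lambda)$ has degree $\le\ell$ in $q^{x}$, while the right-hand side has degree exactly $\ell$; hence it suffices to verify the equality for $\ell+1$ integer values of $x$, and we may assume $x\ge\ell$ is a large positive integer. Fix such an $x$ and let $\bar B$ be the Young diagram of $(\lambda_{1}+x,\dots,\lambda_{\ell}+x)$, i.e.\ the board obtained from $\lambda$ by adjoining a full $\ell\times x$ block of columns on its left. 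Let $N(\bar B)=\sum_{p}q^{\inv(p)}$, the sum over all placements $p$ of $\ell$ non-attacking rooks on $\bar B$ (necessarily one per row). The two sides of the stated identity will each be shown to equal $N(\bar B)$.

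For the right-hand side I would use a recursion. For any Ferrers board with weakly decreasing row lengths $R_{1}\ge\cdots\ge R_{m}$ carrying $m$ non-attacking rooks, the bottom row has length $R_m$, hence lies in the first $R_{m}$ columns, which are full columns of height $m$; summing $q^{\inv}$ over the position of the rook in this row gives a factor $[R_{m}]$, and --- crucially --- this rook casts no shadow on the rows above it. Deleting the bottom row together with the (full) column of its rook therefore leaves a placement of $m-1$ non-attacking rooks on the Young diagram of $(R_{1}-1,\dots,R_{m-1}-1)$, with the same $\inv$, and this shape does not depend on which column was occupied. Thus $N(R_1,\dots,R_m)=[R_{m}]\cdot N(R_{1}-1,\dots,R_{m-1}-1)$, and iterating with $R_i=\lambda_i+x$ gives $N(\bar B)=\prod_{i=1}^{\ell}[\,x+\lambda_{\ell-i+1}-i+1\,]$, the right-hand side.

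For the left-hand side I would split a placement $p$ according to the number $k$ of its rooks lying in the $\lambda$-part of $\bar B$ (the rightmost $\lambda_{1}$ columns). Those $k$ rooks range over all placements counted by $\qrook{k}{\lambda}$, and with the cancellation rule of Definition~\ref{def: GR q-rook numbers} a rook in the $\lambda$-part cancels every cell of the block in its row while a rook in the block cancels only block cells; consequently $q^{\inv(p)}$ factors, the block contributing the $q$-rook number of $\ell-k$ non-attacking rooks on an $(\ell-k)\times x$ rectangle, which is $[x]_{\ell-k}$. Summing over $k$ gives $N(\bar B)=\sum_{k=0}^{\ell}\qrook{k}{\lambda}\,[x]_{\ell-k}=F(x;\lambda)$, and comparing with the previous paragraph proves the identity; the ``in particular'' is the specialization $x=0$, where $[0]_{m}=0$ for $m\ge1$ forces $F(0;\lambda)=\qrook{\ell}{\lambda}$. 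The only steps that are not purely formal are the two claims that $\inv$ behaves additively under these decompositions --- deletion of a shortest row together with a full column, and the block split --- and verifying them (a short case analysis of which rook cancels which cell, as in Garsia--Remmel) is where the real work lies; everything else is bookkeeping.
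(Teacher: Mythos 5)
The paper does not prove this proposition; it is quoted from Garsia--Remmel \cite{GR} without proof, so there is no in-paper argument to compare against. Your proof is correct and is essentially the classical Garsia--Remmel factorization argument: augment $\lambda$ by an $\ell\times x$ full block, count $\ell$ non-attacking rooks on the augmented board weighted by $q^{\inv}$ in two ways (peeling off the shortest row for the product side, splitting by the number of rooks hitting $\lambda$ for the sum side), and conclude by polynomiality in $q^x$. One small imprecision: under the paper's convention a rook cancels the cells \emph{west and north} of it, so the rook in the bottom row does cancel the entire column above it --- the correct statement is not that it ``casts no shadow on the rows above'' but that, after you delete the bottom row together with its (full) column, no surviving cell is cancelled on account of that rook; your subsequent bookkeeping already uses exactly this, so the argument is unaffected. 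Likewise, in the block/$\lambda$ split the point worth spelling out is that a $\lambda$-part cell lying in a row whose rook sits in the block is cancelled if and only if it is north of a $\lambda$-rook, which matches the cancellation rule for the induced $k$-rook placement on $\lambda$ alone; with that check the factorization $\inv(p)=\inv_\lambda(p_\lambda)+\inv_{\mathrm{rect}}(p_{\mathrm{block}})$ holds and the identity follows.
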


\subsection{$q$-hit numbers}

The $q$-hit numbers are defined in terms of the $q$-rook numbers by a change of basis. Let $(a;q)_k=\prod_{i=0}^{k-1}(1-aq^i)$ denote the {\em $q$-Pochhammer symbol}.

\begin{definition}[{\cite[Def. 3.1, Prop. 3.5]{LM17}}]\label{lem: hit in terms of rs}
For $\lambda$ inside an $n\times m$ board, we define the \emph{$q$-hit polynomial} of $\lambda$ by 
\begin{equation} \label{eq: hit rook change of basis}
P(x;\lambda) = \sum_{i=0}^n \qhit{i}{m,n}{\lambda} x^i := \frac{q^{-|\lambda|}}{\qfactorial{m-n}}\sum_{i=0}^n \qrook{i}{\lambda} \qfactorial{m-i} (-1)^i q^{mi-\binom{i}{2}} (x;q)_i,
\end{equation}
where the coefficients $\qhit{i}{m,n}{\lambda}$ are the \emph{$q$-hit numbers} associated to $\lambda$. Equivalently, we have that for every $k$
\begin{align} \label{eq: hit in terms of rs}
\qhit{k}{m,n}{\lambda} &= \dfrac{q^{\binom{k}{2}-|\lambda|}}{\qfactorial{m-n}}\sum_{i=k}^n \qrook{i}{\lambda} \qfactorial{m-i} \qbinom{i}{k} (-1)^{i+k} q^{mi-\binom{i}{2}},\\ 
\intertext{and}
\qrook{k}{\lambda} &= q^{|\lambda|-mk}\frac{[m-n]!}{[m-k]!}\sum_{i=k}^n \qhit{i}{m,n}{\lambda} \qbinom{i}{k}_{q^{-1}}. \label{eq: r in terms of hits}
\end{align}
\end{definition}

\begin{notation}
For square boards with $n=m$, we denote the $q$-hit number by $\qhit{j}{m}{\lambda}$. 
\end{notation}

\begin{remark} \label{rem: difference qhits}
For the case $n=m$, Garsia--Remmel defined $q$-hit numbers $\nqhit{k}{n}{\lambda}$ by the relation
\begin{equation} \label{eq: def GR qhit}
\sum_{i=0}^n \nqhit{i}{n}{\lambda} x^i = \sum_{i=0}^n \qrook{i}{\lambda} [n-i]! \prod_{k=n-i+1}^n (x-q^k).
\end{equation}
One can show that the Garsia--Remmel $q$-hit numbers and our $q$-hit numbers differ by a power of $q$ (see Proposition~\ref{prop: difference GR and our qhit}).
\end{remark}

The $q$-hit numbers satisfy the following deletion-contraction relation that is proved in Appendix~\ref{app: deletion-contraction}. Given a shape $\lambda$ and a corner cell $e$ in $\lambda$, $\lambda \backslash e$  denotes the shape obtained after deleting the cell $e$ in $\lambda$, and $\lambda /e$ denotes the shape obtained after deleting in $\lambda$ the row and column containing $e$. See Figure~\ref{fig:ex deletion-contraction} for an example. 
\begin{figure}[ht]
\centering
\includegraphics{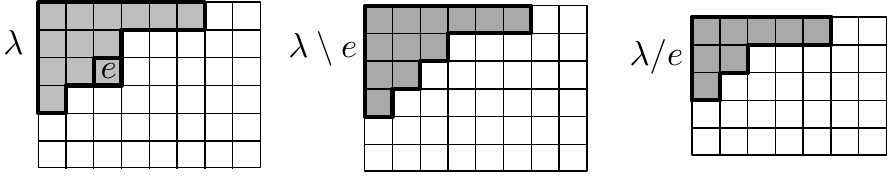}
\caption{Example of the deletion and contraction of the board of a partition $\lambda$.}
\label{fig:ex deletion-contraction}
\end{figure}

\begin{lemma} \label{lem: deletion/contration}
We have the following deletion-contraction relation:
\begin{equation*} 
\qhit{j}{m,n}{\lambda}  = \qhit{j}{m,n}{\lambda\backslash e}
+ q^{|\lambda/e|-|\lambda|+j+m-1}\left( \qhit{j-1}{m-1,n-1}{\lambda/e} - q \qhit{j}{m-1,n-1}{\lambda/e}\right).
\end{equation*}
\end{lemma}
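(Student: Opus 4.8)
The plan is to reduce the deletion-contraction relation for $q$-hit numbers to the corresponding (and more easily established) deletion-contraction relation for the Garsia--Remmel $q$-rook numbers, and then push it through the linear change of basis \eqref{eq: hit in terms of rs}. First I would record the $q$-rook deletion-contraction relation: if $e$ is a corner cell of $\lambda$ sitting in the cell that is the $i$-th from the right in its row and contributing a known number of ``inversion cells'', then every placement of $k$ non-attacking rooks on $\lambda$ either avoids the cell $e$ entirely (these are counted, after accounting for the fixed $\inv$-contribution of the uncovered cell $e$, by $q^{?}\qrook{k}{\lambda\backslash e}$ up to the bookkeeping of which cells of $\lambda\backslash e$ versus $\lambda$ are ``north/west of a rook''), or places a rook on $e$ (these are in bijection with placements of $k-1$ rooks on $\lambda/e$, again with an explicit power-of-$q$ shift coming from $\inv$). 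This yields an identity of the schematic form $\qrook{k}{\lambda} = q^{a}\qrook{k}{\lambda\backslash e} + q^{b}\qrook{k-1}{\lambda/e}$ with $a,b$ explicit in terms of $|\lambda|$, $|\lambda/e|$, and the position of $e$; the precise exponents are exactly what must be pinned down by a careful look at Definition~\ref{def: GR q-rook numbers} and Figure~\ref{fig:ex rook placement}.

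Next I would substitute this $q$-rook relation into the defining formula \eqref{eq: hit in terms of rs} for $\qhit{k}{m,n}{\lambda}$. The term $q^{a}\qrook{i}{\lambda\backslash e}$ assembles (after matching the prefactors $q^{\binom{k}{2}-|\lambda|}/\qfactorial{m-n}$, $\qfactorial{m-i}$, $\qbinom{i}{k}$, and the sign/power $(-1)^{i+k}q^{mi-\binom{i}{2}}$) into $\qhit{k}{m,n}{\lambda\backslash e}$ directly, since $\lambda\backslash e$ still lives in the $n\times m$ board and $|\lambda\backslash e| = |\lambda|-1$; the single discrepancy in the power of $q$ between $|\lambda|$ and $|\lambda\backslash e|$ must cancel against the exponent $a$, and I expect it does exactly. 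The term $q^{b}\qrook{i-1}{\lambda/e}$ is the interesting one: here $\lambda/e$ lives in the $(n-1)\times(m-1)$ board, so I reindex $i \mapsto i+1$ and compare the sum $\sum_{i} \qrook{i}{\lambda/e}\qfactorial{m-1-i}\qbinom{i}{k-1}(-1)^{\cdots}q^{\cdots}$ against the defining formulas for $\qhit{k-1}{m-1,n-1}{\lambda/e}$ and $\qhit{k}{m-1,n-1}{\lambda/e}$. Using the Pascal-type recurrence $\qbinom{i+1}{k} = q^{k}\qbinom{i}{k} + \qbinom{i}{k-1}$ (or its $q^{-1}$-variant, whichever matches the conventions in \eqref{eq: hit in terms of rs}) splits this single sum into precisely the combination $\qhit{k-1}{m-1,n-1}{\lambda/e} - q\,\qhit{k}{m-1,n-1}{\lambda/e}$, up to an overall power of $q$ that I would then check equals $q^{|\lambda/e|-|\lambda|+j+m-1}$ as claimed.

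The main obstacle, and the only real content, is bookkeeping the powers of $q$: the prefactor $q^{-|\lambda|}$ and the quantities $q^{mi-\binom{i}{2}}$, $\qfactorial{m-i}$ all shift when one passes from $\lambda$ to $\lambda\backslash e$ or to $\lambda/e$ in the smaller board, and these shifts have to conspire exactly with the inversion-count shifts $a$ and $b$ from the $q$-rook relation and with the $q^{k}$ picked up from the $q$-binomial recurrence. I would organize this by writing $e$'s position explicitly (say $e$ is the cell $(\ell(\lambda), \lambda_{\ell(\lambda)})$, the southeast corner, which is the convenient choice for reading off $\inv$), computing $|\lambda/e| - |\lambda|$ in terms of $\lambda_1$, $\ell(\lambda)$, and then verifying the exponent identity as a short linear-algebra check in the exponent. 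An alternative, cleaner route — which I would mention but probably relegate to the Appendix as the excerpt promises — is to prove the analogous deletion-contraction for the $q$-hit \emph{polynomials} $P(x;\lambda)$ of \eqref{eq: hit rook change of basis} directly: there the $(x;q)_i$ basis interacts transparently with the $q$-rook relation and the recurrence $(x;q)_{i+1} = (x;q)_i(1-xq^{i})$ does the job of splitting the contraction term, after which one reads off the coefficient of $x^j$ to recover the stated form. Either way the structural skeleton is ``$q$-rook deletion-contraction $+$ change of basis $+$ $q$-binomial Pascal,'' and the difficulty is entirely in the exponent arithmetic.
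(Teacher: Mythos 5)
Your proposal is correct and follows essentially the same route as the paper's own proof in Appendix~\ref{app: deletion-contraction}: substitute the $q$-rook deletion-contraction $\qrook{i}{\lambda}=q\,\qrook{i}{\lambda\backslash e}+\qrook{i-1}{\lambda/e}$ into \eqref{eq: hit in terms of rs}, reindex the contraction sum, and split it via the $q$-Pascal recurrence into the two $q$-hit numbers of the $(m-1)\times(n-1)$ board. The exponents you leave to be checked do work out exactly as claimed (with $a=1$, $b=0$ in the rook relation), so the remaining work is only the bookkeeping you describe.
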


Guay-Paquet~\cite{MGP_LR} defined the rectangular $q$-hit numbers using a statistic similar to Dworkin's statistic \cite{D} for the Garsia--Remmel $q$-hit numbers and we present this definition next, illustrated in Figure~\ref{fig:ex hit number}.

\begin{definition}[Statistic for the $q$-hit numbers]\label{def: our stat qhit}
Let $\lambda$ be a partition inside an $n\times m$ board. Given a placement $p$ of $n$ non-attacking rooks on an $n\times m$ board, with exactly $j$ rooks inside $\lambda$, let $\stat(p)$ be the number of cells $c$ in the board such that:
\begin{compactitem}
\item[(i)] there is no rook in $c$, 
\item[(ii)] there is no rook above $c$ on the same column, and either, 
\item[(iii)] if  $c$ is in $\lambda$ then the rook on the same row of $c$ is in $\lambda$ and to the right of $c$ or  
\item[(iv)] if $c$ is not in $\lambda$ then the rook on same row of $c$ is either in $\lambda$ or to the right of $c$. 
\end{compactitem}
\end{definition}

\begin{remark} \label{rem: rook cancellation in cylinder}
Intuitively, this statistic $\stat(p)$ counts the number of remaining cells in the $n\times m$ board after: wrapping this board on a vertical cylinder and each rook of $p$ cancels the cells south in its column and the cells east in its row until the border of $\lambda$.
\end{remark}

\begin{theorem} \label{thm: qhit statistic rectangular board}
Let $\lambda$ be a partition inside an $n\times m$ board and $j=0,\ldots,n$ then
\begin{equation} \label{eq: qhit statistic rectangular board}
\qhit{j}{m,n}{\lambda}= \sum_p q^{\stat(p)}, 
\end{equation}
where the sum is over all placements $p$ of $n$ non-attacking rooks on an $n\times m$ board, with exactly $j$ rooks inside $\lambda$.
\end{theorem}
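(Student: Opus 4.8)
The plan is to prove the statistic formula \eqref{eq: qhit statistic rectangular board} by showing that the generating function $\sum_p q^{\stat(p)}$, with the sum stratified by the number $j$ of rooks inside $\lambda$, satisfies the \emph{same} change-of-basis identity \eqref{eq: hit rook change of basis} that \emph{defines} the $q$-hit numbers $\qhit{j}{m,n}{\lambda}$ in terms of the Garsia--Remmel $q$-rook numbers $\qrook{i}{\lambda}$. Since \eqref{eq: hit rook change of basis} determines the $\qhit{j}{m,n}{\lambda}$ uniquely (the polynomials $(x;q)_i$ for $i=0,\dots,n$ form a basis of polynomials of degree $\le n$), it suffices to verify that the statistic-generating polynomials reproduce the right-hand side of \eqref{eq: hit rook change of basis} after the appropriate summation. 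Concretely, I would set $\widetilde P(x;\lambda) := \sum_{j=0}^n \bigl(\sum_p q^{\stat(p)}\bigr)\, x^j$ and aim to show $\widetilde P(x;\lambda) = P(x;\lambda)$.

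\textbf{Key steps.} First I would fix a placement $p$ of $n$ non-attacking rooks on the full $n\times m$ board (these are in bijection with injections $[n]\hookrightarrow[m]$, equivalently with the $[m]_n$ "full" placements) and analyze how $\stat(p)$ decomposes relative to $\lambda$. The cleanest route is the cylindrical reformulation of Remark~\ref{rem: rook cancellation in cylinder}: wrap the board on a vertical cylinder, let each rook cancel the cells below it in its column and the cells to its right in its row but only up to the boundary of $\lambda$. Second, I would sum $x^j q^{\stat(p)}$ over all full placements $p$, grouping first by the sub-placement $p'$ formed by the $j$ rooks lying inside $\lambda$ and then by the positions of the remaining $n-j$ rooks outside $\lambda$. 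The rooks outside $\lambda$ contribute, after the cylindrical cancellation, a factor that telescopes into a product of $q$-integers; this is exactly where the factor $\qfactorial{m-i}(-1)^i q^{mi-\binom{i}{2}}(x;q)_i$ and the normalization $q^{-|\lambda|}/\qfactorial{m-n}$ in \eqref{eq: hit rook change of basis} should emerge. Third, I would identify the contribution of the $j$ rooks inside $\lambda$ together with the cells of $\lambda$ they cancel: after accounting for the extra cancellation coming from rooks placed outside $\lambda$ but in the same rows, this inner contribution must reduce to $q^{\inv(p')}$ summed over placements $p'$ of $j$ rooks in $\lambda$, i.e. to $\qrook{j}{\lambda}$. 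Carrying these three reductions through and matching powers of $q$ term by term yields $\widetilde P(x;\lambda) = P(x;\lambda)$, hence the theorem. As a consistency check one recovers $\sum_j \qhit{j}{m,n}{\lambda} = [m]_n$ by setting $x=1$, and the $q=1$ specialization counts full placements with $j$ rooks in $\lambda$.

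\textbf{Main obstacle.} The delicate point is the bookkeeping of the cancellations along rows and columns when rooks inside and outside $\lambda$ interact: a cell $c \in \lambda$ can fail to be counted either because of a rook above it (possibly outside $\lambda$) or because of a rook to its right inside $\lambda$, while a cell $c \notin \lambda$ has the more permissive rule (iv). Disentangling these so that the "outside" part cleanly factors as a product of $q$-integers independent of the inside placement $p'$ — and so that the leftover really is $\inv(p')$ on the board $\lambda$ — is the heart of the argument and the step most likely to require a carefully chosen ordering of rows/columns or an auxiliary bijection. An alternative, and perhaps more robust, route is to prove the formula by induction using the deletion-contraction relation of Lemma~\ref{lem: deletion/contration}: one checks that $\sum_p q^{\stat(p)}$ satisfies the same recursion in $(\lambda,m,n)$, by splitting placements according to whether a rook occupies the corner cell $e$, its row, or its column, and tracking how $\stat$ changes under passing to $\lambda\backslash e$ and $\lambda/e$. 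I would carry out the generating-function identity as the main proof and relegate the inductive verification (or the detailed cancellation bookkeeping) to Appendix~\ref{app: proof statistic}.
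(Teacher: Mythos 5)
Your plan is a from-scratch verification of the defining relation \eqref{eq: hit rook change of basis} for the statistic generating function, which is a genuinely different route from the paper's, but as written it has a gap exactly at the step you flag as the ``main obstacle,'' and that step is the entire content of the theorem. Two concrete problems. First, stratifying full placements by the sub-placement $p'$ of the $j$ rooks inside $\lambda$ cannot directly produce the right-hand side of \eqref{eq: hit rook change of basis}: that expression is an alternating sum in the basis $(x;q)_i$, and alternating signs do not arise from a positive stratification. What such a stratification could naturally prove is the \emph{inverse} relation \eqref{eq: r in terms of hits} (fix $k$, choose $k$ of the $j$ rooks inside $\lambda$, and show the completion weights give $q^{mk-|\lambda|}[m-k]!/[m-n]!$ times $\qrook{k}{\lambda}$); your write-up conflates the two directions. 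Second, the hoped-for clean factorization --- ``the `outside' part cleanly factors as a product of $q$-integers independent of the inside placement $p'$'' --- is false as stated: by rule (iv) of Definition~\ref{def: our stat qhit}, whether a cell outside $\lambda$ is counted depends on whether the rook in its row lies inside $\lambda$, so the outside contribution is coupled to $p'$. Disentangling this coupling is precisely the hard combinatorial work (it is comparable to Dworkin's and Haglund's original proofs of their hit statistics), and neither your main route nor your deletion-contraction fallback carries it out.

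For comparison, the paper avoids this work entirely by reducing to a known theorem. It first reduces the rectangular board to the square board via a weight-preserving bijection (Lemma~\ref{lemma: rectangle to square our q-hit}, splitting off an $(m-n)\times(m-n)$ placement contributing $[m-n]!$). It then proves the placement-by-placement identity $\stat(p)-\dstat(p)=jm-|\lambda|$ (Lemma~\ref{lem:stat-dstat}, by induction on $|\lambda|$ using an auxiliary crossing statistic), where $\dstat$ is the Dworkin statistic known to realize the Garsia--Remmel $q$-hit numbers (Theorem~\ref{thm: dworkin rule}). Combining this with the algebraic relation $\nqhit{j}{m}{\lambda}=q^{|\lambda|-jm}\qhit{j}{m}{\lambda}$ of Proposition~\ref{prop: difference GR and our qhit} gives the theorem. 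If you want to salvage your approach without invoking Dworkin's result, the realistic target is the inverse relation \eqref{eq: r in terms of hits} for each fixed $k$, with a careful completion argument handling the rule-(iv) coupling; otherwise, adopting the paper's comparison of $\stat$ with $\dstat$ is the shorter path.
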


The proof for Theorem \ref{thm: qhit statistic rectangular board} is given in Appendix \ref{app: proof statistic}.

\begin{remark}\label{remark: different qhits}
Note that the Garsia--Remmel $q$-hit numbers have a very similar description in \cite{HR} (attributed to Dworkin) using a different attacking rule for the rooks. Our proof of Theorem~\ref{thm: qhit statistic rectangular board} in Appendix~\ref{app: Dworkin} follows by reducing to the case of the Garsia--Remmel $q$-hit numbers. See also Section~\ref{subsec:tale} for more details.
\end{remark}

Moreover, for each partition $\lambda$, the statistic $\stat(\cdot)$ is \emph{Mahonian}.

\begin{corollary} \label{cor: sum of hits}
Let $\lambda$ be a partition inside an $n\times m$ board, then
\[
\sum_{j=0}^n \qhit{j}{m,n}{\lambda} = [m]_n.
\]
\end{corollary}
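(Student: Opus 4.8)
The cleanest route is to specialize the change-of-basis definition \eqref{eq: hit rook change of basis} at $x=1$, which kills all the higher $q$-Pochhammer terms. Indeed, $\sum_{j=0}^n \qhit{j}{m,n}{\lambda} = P(1;\lambda)$, and in the formula $P(x;\lambda) = \frac{q^{-|\lambda|}}{\qfactorial{m-n}}\sum_{i=0}^n \qrook{i}{\lambda}\qfactorial{m-i}(-1)^i q^{mi-\binom{i}{2}}(x;q)_i$ every summand with $i\geq 1$ vanishes at $x=1$ because $(1;q)_i = \prod_{k=0}^{i-1}(1-q^k)$ contains the factor $1-q^0 = 0$. Only the $i=0$ term survives, giving $P(1;\lambda) = \frac{q^{-|\lambda|}}{\qfactorial{m-n}}\qrook{0}{\lambda}\qfactorial{m} = \frac{\qfactorial{m}}{\qfactorial{m-n}} = \qfalling{m}{n} = [m]_n$, since $\qrook{0}{\lambda} = q^{|\lambda|}$ (the empty placement has all $|\lambda|$ cells of $\lambda$ uncancelled, so $\inv = |\lambda|$).

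Alternatively, and perhaps more in the spirit of the surrounding statistical results, one can deduce it from Theorem~\ref{thm: qhit statistic rectangular board}: $\sum_{j=0}^n \qhit{j}{m,n}{\lambda} = \sum_p q^{\stat(p)}$ where now $p$ ranges over \emph{all} placements of $n$ non-attacking rooks on the $n\times m$ board (summing over $j$ removes the constraint on how many rooks land in $\lambda$). One then checks that $\stat$ is equidistributed with $\inv$ on this set, i.e. $\sum_p q^{\stat(p)} = \sum_p q^{\inv(p)} = R_n(n\times m\text{ board})$, and the latter equals $[m]_n$ by Proposition~\ref{prop:GR_rook_gen_fun} applied to the full rectangle $\lambda = m^n$: there $R_n(m^n) = \prod_{i=1}^n [m - i + 1] = [m]_n$. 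The equidistribution with $\inv$ is exactly the Mahonian property asserted after the corollary, and it follows from the cylinder description in Remark~\ref{rem: rook cancellation in cylinder}, since for the full board the wrapping does not change which cells get cancelled beyond the ordinary rook cancellation.

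I would present the first argument as the main proof since it is a one-line specialization requiring only \eqref{eq: hit rook change of basis} and the evaluation $\qrook{0}{\lambda} = q^{|\lambda|}$, with the second argument relegated to a remark as the conceptual explanation of the Mahonian claim. The only point needing a word of care is the normalization: one must confirm that $\qfactorial{m}/\qfactorial{m-n}$ is indeed $[m]_n = [m][m-1]\cdots[m-n+1]$ as defined in the paper's notation, which is immediate from $[m]! = [m][m-1]\cdots[1]$. There is no real obstacle here; the substance of the corollary is entirely contained in the already-established Definition~\ref{lem: hit in terms of rs} and the basic identity $\qrook{0}{\lambda}=q^{|\lambda|}$.
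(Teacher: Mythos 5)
Your main argument is correct and is essentially the paper's own proof viewed from the other side of Definition~\ref{lem: hit in terms of rs}: the paper sets $k=0$ in the inverse relation \eqref{eq: r in terms of hits}, while you set $x=1$ in \eqref{eq: hit rook change of basis}, and both reduce immediately to $\qrook{0}{\lambda}=q^{|\lambda|}$ together with $\qfactorial{m}/\qfactorial{m-n}=[m]_n$. No changes needed; the alternative statistical argument you sketch is fine to omit, as the Mahonian interpretation is already noted in the paper after the corollary.
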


\begin{proof}
Set $k=0$ in~\eqref{eq: r in terms of hits} and since $\qrook{0}{\lambda}=q^{|\lambda|}$, we obtain
\[
\sum_{j=0}^n \qhit{j}{m,n}{\lambda} = q^{-|\lambda|} \qrook{0}{\lambda} [m]_n = [m]_n. \qedhere
\]
\end{proof}

\begin{figure}[ht]
\centering
\begin{subfigure}[b]{0.22\textwidth}
\includegraphics{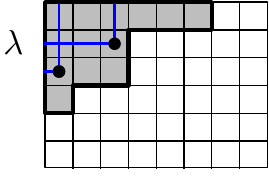}
\caption{}
\label{fig:ex rook placement}
\end{subfigure}
\begin{subfigure}[b]{0.22\textwidth}
\raisebox{5pt}{\includegraphics{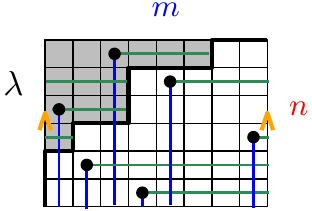}}
\caption{}
\label{fig:ex hit number}
\end{subfigure}
\begin{subfigure}[b]{0.22\textwidth}
\raisebox{5pt}{\includegraphics{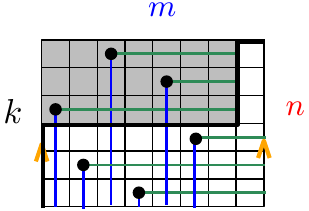}}
\caption{}
\label{fig:ex hit number rectangle k hits}
\end{subfigure}
\begin{subfigure}[b]{0.22\textwidth}
\raisebox{5pt}{\includegraphics{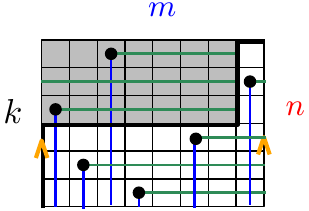}}
\caption{}
\label{fig:ex hit number rectangle k-1 hits}
\end{subfigure}

\caption{Example of the statistics of (A) a $q$-rook number and (B) a $q$-hit number. (C)(D) Examples of the cases of $q$-hit numbers of a rectangle $(m-1)^k \subset n\times m$ for Proposition~\ref{prop: hits small rect in rect}.}
\end{figure}

\begin{example} \label{ex: qrook and our qhit}
Consider the partition $\lambda=(6,3,3,1)$ inside a $6\times 8$ board. In Figures~\ref{fig:ex rook placement} and \ref{fig:ex hit number}, we present an example of a placement $p$ of two rooks on $\lambda$ with $\inv(p)=7$ and  an example of a placement $p'$ of six rooks on the  $6\times 8$ board with two hits on $\lambda$ and $\stat(p')=13$, respectively.
\end{example}

We finish this section with some results for $q$-hit numbers. 
The next two results show the relation between the $q$-hit numbers when we change the dimensions of the board.

\begin{lemma}\label{lem:q-hit for m,m}
Let $\lambda$ be a partition inside an $n\times m$ board. Then
\[
\qhit{j}{m,n}{\lambda} = \frac{1}{[m-n]!}\qhit{j}{m,m}{\lambda}.
\]
\end{lemma}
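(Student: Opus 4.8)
The plan is to derive this directly from the change-of-basis definition in \eqref{eq: hit in terms of rs}, since the $q$-rook numbers $\qrook{i}{\lambda}$ depend only on $\lambda$ and not on the ambient board dimensions $m,n$. Comparing the formula for $\qhit{k}{m,n}{\lambda}$ with the formula for $\qhit{k}{m,m}{\lambda}$, both sums run over the same index set $i=k,\ldots,n$ (note $n\le m$, so $\lambda$ sits inside the $m\times m$ board as well, and the rook numbers $R_i(\lambda)$ vanish for $i>n$ anyway since $\lambda$ has at most $n$ rows), and the factors $\qrook{i}{\lambda}$, $\qbinom{i}{k}$, $(-1)^{i+k}$, $q^{mi-\binom{i}{2}}$, and the prefactor $q^{\binom{k}{2}-|\lambda|}$ all coincide. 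The only discrepancy is in the factor $\qfactorial{m-i}$ versus the prefactor $1/\qfactorial{m-n}$: the $(m,n)$ version carries $\qfactorial{m-i}/\qfactorial{m-n}$ where the $(m,m)$ version carries $\qfactorial{m-i}/\qfactorial{0}=\qfactorial{m-i}$.

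So the first step is to write out both expressions from \eqref{eq: hit in terms of rs} and observe that
\[
\qhit{k}{m,n}{\lambda} = \frac{q^{\binom{k}{2}-|\lambda|}}{\qfactorial{m-n}}\sum_{i=k}^n \qrook{i}{\lambda} \qfactorial{m-i} \qbinom{i}{k} (-1)^{i+k} q^{mi-\binom{i}{2}}
= \frac{1}{\qfactorial{m-n}}\cdot q^{\binom{k}{2}-|\lambda|}\sum_{i=k}^n \qrook{i}{\lambda} \qfactorial{m-i} \qbinom{i}{k} (-1)^{i+k} q^{mi-\binom{i}{2}},
\]
and the second factor is exactly $\qhit{k}{m,m}{\lambda}$, since setting $n=m$ in \eqref{eq: hit in terms of rs} replaces $\qfactorial{m-n}$ by $\qfactorial{0}=1$ and keeps the summation range $i=k,\ldots,m$, which contributes nothing beyond $i=n$ because $\qrook{i}{\lambda}=0$ for $i>n$. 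Summing over all $k$ then gives the polynomial identity $P(x;\lambda)\big|_{(m,n)} = \tfrac{1}{\qfactorial{m-n}}P(x;\lambda)\big|_{(m,m)}$, which is the content of the lemma coefficient-by-coefficient.

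I do not expect any genuine obstacle here; the lemma is essentially a bookkeeping consequence of the definition, and the only point requiring a word of care is the claim that the summation range can be harmlessly extended from $i\le n$ to $i\le m$ when passing to the square board — this is justified by $R_i(\lambda)=0$ for $i>\ell(\lambda)$ and $\ell(\lambda)\le n$. One could alternatively phrase the whole argument at the level of the generating polynomials $P(x;\lambda)$ using \eqref{eq: hit rook change of basis} directly, which makes the $\qfactorial{m-n}$ factor even more transparent since it appears as a single global prefactor there; I would likely present it that way to avoid manipulating the binomial-coefficient sum at all.
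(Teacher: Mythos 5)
Your argument is correct and is essentially identical to the paper's proof: both pull out the global prefactor $1/\qfactorial{m-n}$ from \eqref{eq: hit in terms of rs} and use $\qrook{i}{\lambda}=0$ for $i>n$ (since $\ell(\lambda)\le n$) to identify the remaining sum, extended harmlessly to $i\le m$, with $\qhit{j}{m,m}{\lambda}$. No changes needed.
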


\begin{proof}
Since $\qrook{i}{\lambda}=0$ for $i=n+1,\ldots,m$ then~\eqref{eq: hit in terms of rs} becomes,
$$
\qhit{j}{m,n}{\lambda} = \dfrac{q^{\binom{j}{2}-|\lambda|}}{\qfactorial{m-n}}\sum_{i=j}^m \qrook{i}{\lambda} \qfactorial{m-i} \qbinom{i}{j} (-1)^{i+j} q^{mi-\binom{i}{2}} = \frac{1}{[m-n]!} \qhit{j}{m,m}{\lambda}.$$
\end{proof}

\begin{lemma} \label{lemma:remove column}
Let $\lambda$ be a partition inside an $(n-1)\times m$ board. Then
\begin{equation*} %\label{eq:greta-relation}
    \qhit{j}{m,n}{\lambda} = \qnumber{m+1-n} \qhit{j}{m,n-1}{\lambda}.
\end{equation*}
\end{lemma}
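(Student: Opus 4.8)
The plan is to follow the strategy used in the proof of \lemref{lem:q-hit for m,m}: start from the expansion \eqref{eq: hit in terms of rs} of the $q$-hit number in terms of Garsia--Remmel $q$-rook numbers, and exploit that $\qrook{i}{\lambda}=0$ whenever $i$ exceeds the number of rows of $\lambda$. The key observation is that, since $\lambda$ fits inside an $(n-1)\times m$ board, it has at most $n-1$ rows, so no placement of $n$ pairwise non-attacking rooks on $\lambda$ exists; that is, $\qrook{n}{\lambda}=0$.

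Concretely, I would write \eqref{eq: hit in terms of rs} for the pair $(m,n)$, clear the denominator, and discard the $i=n$ term (which vanishes by the observation above), obtaining
\[
\qfactorial{m-n}\,\qhit{j}{m,n}{\lambda} = q^{\binom{j}{2}-|\lambda|}\sum_{i=j}^{n-1} \qrook{i}{\lambda}\,\qfactorial{m-i}\,\qbinom{i}{j}\,(-1)^{i+j}\,q^{mi-\binom{i}{2}}.
\]
Applying the same formula to the pair $(m,n-1)$ produces exactly the same right-hand side, because the summand and the prefactor $q^{\binom{j}{2}-|\lambda|}$ do not involve $n$ and the range of summation is again $i=j,\dots,n-1$; only the cleared denominator changes, from $\qfactorial{m-n}$ to $\qfactorial{m-n+1}$. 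Equating the two right-hand sides gives
\[
\qfactorial{m-n}\,\qhit{j}{m,n}{\lambda} = \qfactorial{m-n+1}\,\qhit{j}{m,n-1}{\lambda},
\]
and since $\qfactorial{m-n+1}/\qfactorial{m-n}=\qnumber{m-n+1}=\qnumber{m+1-n}$, dividing by $\qfactorial{m-n}$ yields the claim.

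I do not expect a genuine obstacle here; the only point requiring attention is the bookkeeping fact $\qrook{n}{\lambda}=0$, which truncates the $q$-rook expansion so that the expressions for $(m,n)$ and $(m,n-1)$ coincide term by term. An even shorter route is to invoke \lemref{lem:q-hit for m,m} twice: since $\lambda$ lies inside the $(n-1)\times m$ board, hence also inside the $n\times m$ board, that lemma gives $\qhit{j}{m,n}{\lambda}=\qhit{j}{m,m}{\lambda}/\qfactorial{m-n}$ and $\qhit{j}{m,n-1}{\lambda}=\qhit{j}{m,m}{\lambda}/\qfactorial{m-n+1}$, and taking the ratio immediately produces the factor $\qnumber{m+1-n}$. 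Either way the lemma is a one-line consequence of \eqref{eq: hit in terms of rs}.
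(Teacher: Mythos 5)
Your proposal is correct and is essentially identical to the paper's proof: both start from \eqref{eq: hit in terms of rs}, use $\qrook{n}{\lambda}=0$ for $\lambda\subset(n-1)\times m$ to truncate the sum at $i=n-1$, and compare the prefactors $\qfactorial{m-n}$ and $\qfactorial{m-n+1}$. Your alternative route via two applications of \lemref{lem:q-hit for m,m} is also valid but is just a repackaging of the same computation.
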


\begin{proof}
We apply~\eqref{eq: hit in terms of rs} %~\eqref{eq: hit rook change of basis} 
to $\qhit{j}{m,n}{\lambda}$

and use the fact  $\qrook{n}{\lambda}=0$ since $\lambda \subset (n-1)\times m$ to obtain
\begin{align*}
    \qhit{j}{m,n}{\lambda} &= 
     \qnumber{m+1-n}\dfrac{q^{\binom{j}{2}-|\lambda|}}{\qfactorial{m-n+1}}\sum_{i=j}^{n-1} \qrook{i}{\lambda} \qfactorial{m-i} \qbinom{i}{j} (-1)^{i+j} q^{mi-\binom{i}{2}} \\
    &= \qnumber{m+1-n}\qhit{j}{m,n-1}{\lambda} ,
\end{align*}
where we used~\eqref{eq: hit rook change of basis} again for $\qhit{j}{m,n-1}{\lambda}$ to obtain the desired formula.
\end{proof}
Finally, we give formulas for $q$-rook numbers and $q$-hit numbers of rectangular shapes. 

\begin{proposition} 
\begin{equation} \label{eq: rooks rectangles}
\qrook{k}{a^b} = q^{(a-k)(b-k)}\dfrac{ \qfalling{a}{k} \qfalling{b}{k}}{\qfactorial{k}}.
\end{equation}
\end{proposition}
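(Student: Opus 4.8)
The plan is to use the generating function identity of Proposition~\ref{prop:GR_rook_gen_fun} applied to the rectangular shape $\lambda = a^b$ (i.e., $b$ rows each of length $a$), and then extract the coefficient of $\qfactorial{b-k}$... $[x]_{b-k}$ on both sides. Concretely, with $\ell = b$ and $\lambda_{b-i+1} = a$ for all $i = 1, \ldots, b$, the product side becomes
\[
F(x; a^b) = \prod_{i=1}^{b} [x + a - i + 1] = [x+a][x+a-1]\cdots[x+a-b+1] = \qfalling{x+a}{b}.
\]
So we must expand the $q$-falling factorial $\qfalling{x+a}{b}$ in the basis $\{\qfalling{x}{b-k}\}_{k=0}^{b}$ and read off the coefficients; by Proposition~\ref{prop:GR_rook_gen_fun} these coefficients are exactly $\qrook{k}{a^b}$.

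First I would establish the $q$-Vandermonde-type identity
\[
\qfalling{x+a}{b} = \sum_{k=0}^{b} \qbinom{b}{k} \qfalling{a}{k}\, q^{(a-k)(b-k)}\, \qfalling{x}{b-k},
\]
which can be proved either by induction on $b$ using the recurrence $[x+a-b+1]\qfalling{x+a}{b-1} = \qfalling{x+a}{b}$ together with $[x] = [x - (b-1-k)] + q^{x-(b-1-k)}[b-1-k]$-type splittings, or more cleanly by a direct bijective/combinatorial argument: a placement of $b$ non-attacking rooks splitting into $k$ rooks landing in a fixed sub-configuration. The cleanest route is probably to note that $\qfalling{x+a}{b}$ counts (weighted by $q$) injections from a $b$-set into an $(x+a)$-set, split according to how many of the $b$ elements map into the first $a$ versus the last $x$; tracking inversions gives the claimed $q$-powers. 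Matching the coefficient of $\qfalling{x}{b-k}$ on both sides of Proposition~\ref{prop:GR_rook_gen_fun} then yields
\[
\qrook{k}{a^b} = \qbinom{b}{k} \qfalling{a}{k}\, q^{(a-k)(b-k)} = q^{(a-k)(b-k)} \frac{\qfalling{a}{k}\qfalling{b}{k}}{\qfactorial{k}},
\]
using $\qbinom{b}{k} = \qfalling{b}{k}/\qfactorial{k}$, which is exactly~\eqref{eq: rooks rectangles}.

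The main obstacle is getting the exponent $(a-k)(b-k)$ and the $q$-binomial coefficient correct simultaneously; the bookkeeping of inversions in the $q$-Vandermonde step is where sign/shift errors creep in, so I would verify it against a small case (say $a = b = 2$, where $\qrook{1}{2^2} = q[2]^2 = q(1+q)^2$ should come out). An alternative self-contained derivation avoiding Proposition~\ref{prop:GR_rook_gen_fun} entirely: place the $k$ rooks one row at a time from bottom to top; a rook in row $r$ (counted among the bottom rows) placed in an available column contributes a factor recording the number of cells of $a^b$ to its west in that row and strictly north in its column that survive, and summing the geometric-type contributions telescopes to $q^{(a-k)(b-k)}\qfalling{a}{k}\qfalling{b}{k}/\qfactorial{k}$; but this direct count is more error-prone than the generating-function extraction, so I would present the generating-function proof as primary.
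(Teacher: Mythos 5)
Your proposal is correct, but it takes a genuinely different route from the paper's. The paper disposes of this proposition in one line, invoking either the deletion--contraction recurrence $\qrook{k}{a^b} = \qnumber{b}\,\qrook{k-1}{(a-1)^{b-1}} + q^b\,\qrook{k}{(a-1)^b}$ or the classical fact that, up to a power of $q$ and a factor of $(q-1)^k$, $\qrook{k}{a^b}$ counts rank-$k$ matrices in $\mathbb{F}_q^{a\times b}$, and then cites the standard formula for that count. You instead specialize Proposition~\ref{prop:GR_rook_gen_fun} to $\lambda=a^b$, obtaining $F(x;a^b)=\qfalling{x+a}{b}$, and expand in the basis $\{\qfalling{x}{b-k}\}_{k=0}^{b}$. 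Your key identity
\[
\qfalling{x+a}{b}=\sum_{k=0}^{b}\qbinom{b}{k}\qfalling{a}{k}\,q^{(a-k)(b-k)}\,\qfalling{x}{b-k}
\]
is correct: it is exactly $\qfactorial{b}$ times the standard $q$-Vandermonde identity $\txtqbin{x+a}{b}=\sum_k \txtqbin{a}{k}\txtqbin{x}{b-k}q^{(a-k)(b-k)}$ after writing $\qfactorial{b}=\qfactorial{k}\qfactorial{b-k}\txtqbin{b}{k}$, so rather than offering two half-sketched derivations you can simply quote it. Two points worth making explicit: first, the polynomials $\qfalling{x}{b-k}$ have distinct degrees as polynomials in $q^x$, hence form a basis, so the coefficient extraction against Proposition~\ref{prop:GR_rook_gen_fun} is legitimate and unambiguous; second, your sanity check $\qrook{1}{2^2}=q(1+q)^2$ is right and does catch the usual exponent errors. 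The trade-off: your argument is more self-contained relative to the paper's toolkit, using only the already-stated generating function plus a textbook identity, while the paper's proof outsources the computation to external references (or to an unproved recurrence) but is shorter.
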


\begin{proof}
The result follows from the recurrence $\qrook{k}{a^b} = \qnumber{b}  \qrook{k-1}{(a-1)^{b-1}} +q^b \qrook{k}{(a-1)^b}$ or from the fact that up to a power of $q$ and $(q-1)^k$, this is the number of rank $k$ matrices of size $a\times b$ over a finite field with $q$ elements~\cite[Thm. 1]{H}, formulas for which can be found in~\cite[Sec. 1.7]{Morrison}.
\end{proof}

\begin{proposition} \label{prop:qhit_rectangles}
\begin{align}\label{eq:qhit_mj}
    \qhit{k}{N}{m^j}= q^{ (N-j-m+k)k} \qfalling{m}{k} \qfactorial{N-j}  \dfrac{ \qfalling{N-m}{j-k} \qfalling{j}{j-k}}{ \qfactorial{j-k}}.
\end{align}
\end{proposition}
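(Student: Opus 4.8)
The plan is to derive the closed form for $\qhit{k}{N}{m^j}$ directly from the rook-to-hit change of basis \eqref{eq: hit in terms of rs}, specialized to a rectangular shape $\lambda = m^j$ inside an $N\times N$ board, using the known rook numbers \eqref{eq: rooks rectangles}. First I would substitute $\qrook{i}{m^j} = q^{(m-i)(j-i)}\qfalling{m}{i}\qfalling{j}{i}/\qfactorial{i}$ into the formula $\qhit{k}{N,N}{m^j} = q^{\binom{k}{2}-mj}\sum_{i=k}^{j}\qrook{i}{m^j}\qfactorial{N-i}\qbinom{i}{k}(-1)^{i+k}q^{Ni-\binom{i}{2}}$ (note the sum truncates at $i=j$ since $\qrook{i}{m^j}=0$ for $i>j$, and also $\qbinom{i}{k}=0$ for $i<k$). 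This produces a single-variable $q$-hypergeometric-type sum in $i$ over the range $k\le i\le j$, and the goal is to evaluate it in closed form.

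The key computational step is recognizing that after collecting the powers of $q$ and the $q$-factorial factors, the alternating sum over $i$ is an instance of the $q$-Vandermonde (Chu--Vandermonde) summation, or equivalently the $q$-binomial theorem in the terminating form $\sum_i (-1)^i q^{\binom{i}{2}}\qbinom{n}{i} z^i = (z;q)_n$ combined with a $q$-Chu--Vandermonde convolution. Concretely, I would reindex by $\ell = i - k$ so the sum runs $\ell = 0,\dots,j-k$, pull out the $\ell$-independent prefactors, and check that the remaining sum $\sum_{\ell=0}^{j-k} (-1)^\ell q^{\binom{\ell}{2}} \qbinom{j-k}{\ell} \qfactorial{N-k-\ell}/\qfactorial{?} \cdot (\text{power of }q)$ collapses via a standard $q$-identity to $\qfactorial{N-j}\qfalling{N-m}{j-k}/\text{(denominator)}$. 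Equivalently, one can avoid re-deriving the $q$-Vandermonde sum by observing that $\qhit{k}{N}{m^j}$ should already be available from the specialization of \eqref{eq: hit rook change of basis}: the polynomial $P(x;m^j)$ is a product $\prod_i[x + \lambda_{\ell-i+1} - i + 1]$-type object via \propref{prop:GR_rook_gen_fun}, so extracting the coefficient of $x^k$ in a fully factored generating polynomial is cleaner than summing.

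An alternative and perhaps cleaner route I would pursue first: use \lemref{lem:q-hit for m,m} to write $\qhit{k}{N,N}{m^j} = [N-N]!\,\qhit{k}{N,N}{m^j}$ trivially, but more usefully combine \lemref{lem:q-hit for m,m} and \lemref{lemma:remove column} to reduce an $N\times N$ board with $\lambda = m^j$ (which lives inside a $j\times m$ sub-board) down to the "tight" board $j \times m$ where the rectangle fills everything. For $\lambda = m^j$ inside the $j\times m$ board, $\qhit{k}{m,j}{m^j}$ should be essentially a single term: all $j$ rooks that can be placed must interact with $\lambda$ in a controlled way, and a direct count via the statistic in \defref{def: our stat qhit} (or \thmref{thm: qhit statistic rectangular board}) gives a $q$-binomial-type answer $\qbinom{m}{k}$-style expression times an explicit power of $q$. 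Then \lemref{lemma:remove column} applied $N-j$ times reintroduces the factor $\qfalling{N}{\,N-j}$-type product, and \lemref{lem:q-hit for m,m} reintroduces $1/\qfactorial{N-j}$... actually the boards don't quite line up this way since $\lambda=m^j$ need not have $m = $ width of the tight board after removing rows; so I would be careful and likely fall back on the direct generating-function computation.

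The main obstacle I expect is the bookkeeping of the $q$-powers: the formula \eqref{eq:qhit_mj} has the exponent $(N-j-m+k)k$, and tracking how $q^{\binom{k}{2}-mj}$, $q^{(m-i)(j-i)}$, $q^{Ni - \binom{i}{2}}$, and the $(-1)^{i+k}$ sign interact through the $q$-Vandermonde collapse — especially after the reindexing $\ell = i-k$ and factoring out $\qfalling{m}{k}$ and $\qfalling{j}{k}/\qfactorial{k}$ — is delicate and error-prone, and getting the residual sum into exactly the normalized $q$-Chu--Vandermonde shape $\sum_\ell \qbinom{A}{\ell}\qbinom{B}{\cdot}q^{\cdot} = \qbinom{A+B}{\cdot}$ requires choosing the right auxiliary variable. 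I would therefore verify the final exponent and all $q$-factorial factors against the small cases $k=j$ (where the sum is a single term and \eqref{eq:qhit_mj} must reduce to $q^{(N-m)j}\qfalling{m}{j}\qfactorial{N-j}$, matching $\qhit{j}{N}{m^j}$ directly) and $k=0$ (where \corref{cor: sum of hits} combined with $\qrook{0}=q^{|\lambda|}$ forces a consistency check with $[N]_N$), before writing up the general $q$-Vandermonde argument.
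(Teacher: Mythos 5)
Your route is viable and genuinely different from the paper's. The paper proves \eqref{eq:qhit_mj} combinatorially: invoking the statistic realization of \thmref{thm: qhit statistic rectangular board}, it decomposes any placement of $N$ rooks with $k$ hits on $m^j$ into three independent pieces and shows $\qhit{k}{N}{m^j}=\qrook{k}{m^k}\cdot\qrook{j-k}{(N-m)^j}\cdot\qrook{N-j}{(N-j)^{N-j}}$, after which \eqref{eq: rooks rectangles} gives the closed form with no alternating sum to evaluate. You instead work purely algebraically from the change of basis \eqref{eq: hit in terms of rs} together with \eqref{eq: rooks rectangles}; after the reindexing $\ell=i-k$ and the factorization $\txtqbin{j}{i}\txtqbin{i}{k}=\txtqbin{j}{k}\txtqbin{j-k}{\ell}$, writing $\qfactorial{N-k-\ell}=\qfactorial{N-k}/\qfalling{N-k}{\ell}$ does put the residual sum in terminating $q$-Chu--Vandermonde (${}_2\phi_1$) form, and the evaluation produces exactly the factor $\qfactorial{N-j}\qfalling{N-m}{j-k}$ you need, so the plan closes. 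What your approach buys is independence from \thmref{thm: qhit statistic rectangular board} (whose proof lives in the appendix and routes through Dworkin's statistic), at the cost of the $q$-power bookkeeping you rightly flag; what the paper's approach buys is a two-line multiplicative argument with no summation identity at all. Two caveats: the central $q$-Vandermonde collapse is named but not executed in your write-up, and that step is where all the risk sits, so your proposed checks at $k=j$ and $k=0$ are not optional; and your ``cleaner alternative'' via \lemref{lem:q-hit for m,m} and \lemref{lemma:remove column} does break down for the reason you yourself identify (the boards do not nest correctly after row removal), so falling back to the direct computation is the right call.
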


\begin{proof}
We compute the $q$-hit number directly using the statistic in~\eqref{eq: qhit statistic rectangular board}. We claim that
\begin{align*}
    \qhit{k}{N}{m^j}= \qrook{k}{m^k}\cdot \qrook{j-k}{(N-m)^j}\cdot \qrook{N-j}{(N-j)^{N-j}}.
\end{align*}
To show this, let us denote by $B$ the $N\times N$ board. Then there are $j-k$ rows occupied by rooks right of the shape $m^j$. These rooks cancel the respective rows from the $m^j$ shape. The overall contribution to the $q$-hit number from the $k$ rooks on the remaining shape $m^k$ is $\qrook{k}{m^k}=\qfalling{m}{k}$. The overall contribution to the $q$-hit number from the $j-k$ rooks placed to the right of the shape $m^j$ is $\qrook{j-k}{(N-m)^j}$. The $j$ rooks placed on the first $j$ rows of the board $B$ cancel as many columns in the shape $N^{N-j}$ consisting of the last $N-j$ rows of the board $B$. 

By Proposition~\ref{prop:GR_rook_gen_fun}, the overall contribution to the $q$-hit number from placing the remaining $N-j$ rooks in the remaining shape $(N-j)^{N-j}$ is $\qrook{N-j}{(N-j)^{N-j}}=[N-j]!$. This proves the claim and the result follows by using the formula in~\eqref{eq: rooks rectangles} for $\qrook{k}{a^b}$.
\end{proof}

\begin{proposition} \label{prop: hits small rect in rect}
$$\qhit{r}{m,n}{(m-1)^k}= 
\begin{cases}
q^k[m-k] [m-1]_{n-1}& r=k, \\
[k] [m-1]_{n-1} & r=k-1, \\
0 & otherwise.
\end{cases} 
$$
\end{proposition}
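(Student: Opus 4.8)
The plan is to first pin down for which $r$ the number $\qhit{r}{m,n}{(m-1)^k}$ is nonzero — this yields the ``$0$ otherwise'' case — and then to evaluate the two surviving cases by reducing to the rectangle formula \propref{prop:qhit_rectangles}.

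For the vanishing I would argue combinatorially via \thmref{thm: qhit statistic rectangular board}. The board of $(m-1)^k$ is the first $k$ rows of the $n\times m$ board with the last (rightmost) column removed, and in any placement of $n$ non-attacking rooks there is exactly one rook in each of those $k$ rows. Such a rook is a hit of $(m-1)^k$ unless it sits in the deleted last column, and at most one rook can sit there; hence the number $r$ of hits inside $(m-1)^k$ must be $k$ or $k-1$ (and for $r>k$ there are no admissible placements at all), so $\qhit{r}{m,n}{(m-1)^k}=0$ for $r\notin\{k-1,k\}$. The two surviving configurations are precisely those drawn in \figref{fig:ex hit number rectangle k hits} and \figref{fig:ex hit number rectangle k-1 hits}.

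For $r\in\{k-1,k\}$ I would pass to a square board using \lemref{lem:q-hit for m,m}, so that $\qhit{r}{m,n}{(m-1)^k}=\frac{1}{\qfactorial{m-n}}\,\qhit{r}{m}{(m-1)^k}$, and then evaluate $\qhit{r}{m}{(m-1)^k}$ with \eqref{eq:qhit_mj} for board size $N=m$ and the rectangle consisting of $m-1$ columns and $k$ rows, i.e. the substitution $(N,m,j,k)\mapsto(m,\,m-1,\,k,\,r)$. Combining and using $\qfactorial{m-k}/\qfactorial{m-n}=\qfalling{m-k}{n-k}$, this gives
\[
\qhit{r}{m,n}{(m-1)^k}=q^{(r-k+1)r}\,\qfalling{m-1}{r}\,\qfalling{m-k}{n-k}\,\frac{\qfalling{1}{k-r}\,\qfalling{k}{k-r}}{\qfactorial{k-r}},
\]
where the factor $\qfalling{1}{k-r}$ equals $1$ for $k-r\in\{0,1\}$ and vanishes once $k-r\ge 2$, giving an independent check of the previous paragraph.

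What remains is to simplify the right-hand side in the two cases, which is routine $q$-integer bookkeeping. For $r=k$ the prefactor is $q^{k}$ and $\qfalling{1}{0}=\qfalling{k}{0}=\qfactorial{0}=1$, leaving $q^{k}\,\qfalling{m-1}{k}\,\qfalling{m-k}{n-k}$; peeling a factor $\qnumber{m-k}$ off the second product and telescoping the contiguous $q$-factorial products turns this into $q^{k}\,\qnumber{m-k}\,\qfalling{m-1}{n-1}$. For $r=k-1$ the prefactor is $q^{0}=1$, with $\qfalling{1}{1}=\qfactorial{1}=1$ and $\qfalling{k}{1}=\qnumber{k}$, leaving $\qnumber{k}\,\qfalling{m-1}{k-1}\,\qfalling{m-k}{n-k}=\qnumber{k}\,\qfalling{m-1}{n-1}$ by the same telescoping. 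These are exactly the claimed values. I expect the only mildly delicate points to be this reindexing of products and checking the degenerate boundaries $k=n$ (where $\qfalling{m-k}{n-k}$ is an empty product) and $k=m$ (where both sides vanish); these are the main, but minor, obstacles. Alternatively one can bypass \propref{prop:qhit_rectangles} and evaluate $\sum_p q^{\stat(p)}$ directly by the same board-decomposition used in its proof, splitting according to whether the rook in the last column lies among the top $k$ rows — which is what Figures~\ref{fig:ex hit number rectangle k hits} and~\ref{fig:ex hit number rectangle k-1 hits} depict.
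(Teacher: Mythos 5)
Your proof is correct, but it reaches the two nonzero values by a genuinely different route than the paper. The vanishing argument (one rook per row, at most one rook in the deleted last column, hence $r\in\{k-1,k\}$) is the same combinatorial observation the paper opens with. For the evaluation, however, the paper computes $\sum_p q^{\stat(p)}$ directly by decomposing the board — exactly the "alternative" you sketch in your last sentence, splitting on the position of the rook in column $m$ and factoring the contribution as $q$-rook numbers of sub-rectangles — whereas you instead reduce to the square board via \lemref{lem:q-hit for m,m} and then specialize the already-established rectangle formula \eqref{eq:qhit_mj} with $(N,m,j,k)\mapsto(m,m-1,k,r)$. I checked your substitution and bookkeeping: the exponent $q^{(r-k+1)r}$, the identity $\qfactorial{m-k}/\qfactorial{m-n}=\qfalling{m-k}{n-k}$, and the telescopings $\qfalling{m-1}{k}\qfalling{m-k}{n-k}=\qnumber{m-k}\qfalling{m-1}{n-1}$ and $\qfalling{m-1}{k-1}\qfalling{m-k}{n-k}=\qfalling{m-1}{n-1}$ are all correct (including the boundary $k=n$, where $\qfalling{m-k}{n-k}$ is empty), and the factor $\qfalling{1}{k-r}$ gives a pleasant algebraic cross-check of the vanishing for $k-r\ge 2$. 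Your approach buys a shorter, purely algebraic derivation that leans on \propref{prop:qhit_rectangles} and avoids re-running the rook-cancellation analysis; the paper's direct argument is self-contained at this point and makes the two surviving configurations (Figures \ref{fig:ex hit number rectangle k hits} and \ref{fig:ex hit number rectangle k-1 hits}) combinatorially transparent, which is in the spirit of the statistic-based development. Either is acceptable; there is no circularity in citing \propref{prop:qhit_rectangles}, which is proved independently beforehand.
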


\begin{proof}
Since every row of the $n\times m$ board has a rook and the last column has at most one rook then the rooks can only ``hit" the shape $(m-1)^k$ $r=k$ or $r=k-1$ times. 

When $r=k$, the first $k$ cells of the last column are not cancelled in any rook placement so they contribute to $\stat(\cdot)$. The contribution to the $q$-hit number from the $k$ rooks placed on the shape $(m-1)^k$ is $\qrook{k}{(m-1)^k}$. These rooks cancel $k$ columns in the shape of the last $n-k$ rows. Then the contribution to the $q$-hit number of placing $n-k$ rooks on the remaining shape $(m-k)^{n-k}$ is $\qrook{n-k}{(m-k)^{n-k}}$. See Figure~\ref{fig:ex hit number rectangle k hits}. Thus, 
\begin{multline*}
\qhit{k}{m,n}{(m-1)^k}  = q^k \qrook{k}{(m-1)^k}\cdot \qrook{n-k}{(m-k)^{n-k}} = q^k [m-1]_k[m-k]_{n-k}=  q^k[m-k] [m-1]_{n-1}.
\end{multline*}

When $r=k-1$, there is a rook on one of the first $k$ cells of the $m$th column which cancels all the cells of its corresponding row $i=1,\ldots,k$ and the cells below the rook in its column. This rook contributes $i-1$ to the statistic (the cells above the rook in the $m$th column). There contribution to the $q$-hit number from the $k-1$ rooks placed on the remaining shape $(m-1)^{k-1}$ (without row $i$) is $\qrook{k-1}{(m-1)^{k-1}}$. The $k$ rooks on the first $k$ rows cancel the cells of their columns in the shape of the last $n-k$ rows. Then the contribution to the $q$-hit number of placing $n-k$ rooks on the remaining shape $(m-k)^{n-k}$ is $\qrook{n-k}{(m-k)^{n-k}}$. See Figure~\ref{fig:ex hit number rectangle k-1 hits}. Summing over all $i=1,\ldots,k$ we obtain that
\[
\qhit{k-1}{m,n}{(m-1)^k} = [k]\qrook{k-1}{(m-1)^{k-1}} \cdot \qrook{n-k}{(m-k)^{n-k}} = [k] [m-1]_{k-1} [m-k]_{n-k} = [k] [m-1]_{n-1}.
\]
\end{proof}

\section{New $q$-rook and $q$-hit identities}\label{sec:new_qrooks}

The proofs of Theorems~\ref{AN:generalLambda} and~\ref{thm:qhitCSFabelian} rely on many new identities between $q$-hit numbers, and their equivalent $q$-rook versions. While the $q$-hit and $q$-rook identities are independent of the chromatic symmetric function theory, we first prove them using elementary combinatorial methods and univariate generating functions. 

For brevity, we will denote by $\del{\lambda}{c}{j}$ the partition obtained from $\lambda$ by removing its $j$th column, by $\del{\lambda}{r}{i}$ the partition obtained by removing its $i$th row, and by $\lambda/(i,j)$ the partition obtained from $\lambda$ by removing its $j$th column and its $i$th row. Moreover, we denote $\ell = \ell(\lambda)$ and the conjugate partition of $\lambda$ by $\lambda'$.

\subsection{Identities on $q$-rook numbers}

The following $q$-rook identities can be proven directly using the generating function identity in Proposition~\ref{prop:GR_rook_gen_fun} of Garsia-Remmel. We start with a simple algebraic identity. 

\begin{lemma}\label{lem:F_ratio}
 Given a partition $\lambda=(\lambda_1,\ldots,\lambda_{\ell})$,
    $$q^{\lambda_1} [x] \frac{ F(x-1;\lambda)}{F(x;\lambda)} =  [x-\ell+\lambda_1]  -  \sum_{j=1}^{\lambda_1} q^{\lambda_1-j}\prod_{t=1}^{\lambda_j'} \frac{[x+\lambda_{t}-1-\ell+t] }{[x+\lambda_{t}-\ell+t]}.$$
    \end{lemma}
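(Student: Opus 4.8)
The plan is to work directly with the Garsia--Remmel generating function $F(x;\lambda)=\prod_{i=1}^{\ell}[x+\lambda_{\ell-i+1}-i+1]$ from Proposition~\ref{prop:GR_rook_gen_fun}, reindexing so that the product runs over the rows of $\lambda$ in their natural order: writing $a_t := \lambda_t - t$ for $t=1,\dots,\ell$, one has $F(x;\lambda)=\prod_{t=1}^{\ell}[x+a_t+1]$, and likewise $F(x-1;\lambda)=\prod_{t=1}^{\ell}[x+a_t]$. Hence the ratio telescopes into $\prod_{t=1}^{\ell}\frac{[x+a_t]}{[x+a_t+1]}$, and the left-hand side becomes $q^{\lambda_1}[x]\prod_{t=1}^{\ell}\frac{[x+a_t]}{[x+a_t+1]}$. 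The right-hand side is a sum of $\lambda_1+1$ terms, so the natural strategy is a partial-fraction-type decomposition: I would seek an identity of rational functions in the variable $[x]$ (equivalently in $q^x$) of the form $q^{\lambda_1}[x]\frac{F(x-1;\lambda)}{F(x;\lambda)} = [x-\ell+\lambda_1] - \sum_{j=1}^{\lambda_1} c_j(q)\,\frac{[x+\lambda_j'-\ell+\lambda_j']}{\cdots}$ — but in fact the cleaner route is induction on $\lambda_1$ (the number of columns), peeling off one column at a time.

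Concretely, I would set up the induction on the number of columns of $\lambda$. Removing the last column of $\lambda$ changes $\ell(\lambda)=\ell$ only if $\lambda$ has a column of full height; in general removing column $\lambda_1$ (the rightmost) decreases $\lambda_t$ by $1$ exactly for $t=1,\dots,\lambda_1'$ (the rows that reach the last column), where $\lambda_1'$ is the first part of the conjugate. Denote $\mu = \del{\lambda}{c}{\lambda_1}$. Then $F(x;\lambda)/F(x;\mu) = \prod_{t=1}^{\lambda_1'}\frac{[x+\lambda_t-\ell+t]}{[x+\lambda_t-1-\ell+t]}$, and similarly for $F(x-1;\cdot)$. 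Combining this with the inductive hypothesis applied to $\mu$ (whose largest part is $\lambda_1-1$, and note $\mu_1 = \lambda_1$ still unless... — one must be careful here: removing the rightmost column gives a partition with first part $\lambda_1-1$ only after also accounting that $\mu_t=\lambda_t$ for $t>\lambda_1'$), the recursive step reduces the claimed identity for $\lambda$ to a single algebraic manipulation relating the $j=\lambda_1$ term of the sum to the column-removal ratio, plus bookkeeping of the $[x-\ell+\lambda_1]$ versus $[x-\ell+\lambda_1-1]$ shift. The base case $\lambda_1=0$ (empty partition, $F\equiv$ product over an empty... actually $\ell=0$) is trivial since both sides vanish appropriately, or one can take $\lambda_1=1$ as base.

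Alternatively — and this may be cleaner to write — I would verify the identity as an equality of rational functions in $z=q^x$ by checking it has the same poles with the same residues and the same value at one point (say $z\to 0$ or $z\to\infty$). The poles on the left come from the factors $[x+\lambda_t+1-\ell+t]$ in the denominator $F(x;\lambda)$, i.e. at $q^x = q^{\ell-t-\lambda_t-1}$ for $t=1,\dots,\ell$ (for distinct values of $\lambda_t-t$); on the right the $j$th summand contributes poles at $q^x=q^{\ell-t-\lambda_t}$ for $t=1,\dots,\lambda_j'$. Matching these and their residues, together with matching the behaviour as $x\to\infty$ (where $[x]\sim \frac{-q^x}{1-q}$ and the ratio $F(x-1;\lambda)/F(x;\lambda)\to q^{-\ell}$, so the LHS $\to q^{\lambda_1-\ell}\cdot\frac{-q^x}{1-q}$, to be compared with the leading behaviour of the RHS), pins down the identity.

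\textbf{Main obstacle.} The genuine difficulty is the bookkeeping of indices — keeping straight the relationship between $\lambda_t$, the conjugate parts $\lambda_j'$, and the length $\ell$ when a column is removed, and ensuring the residue/pole matching (or the inductive reduction) handles repeated values of $\lambda_t - t$ correctly (equal differences mean higher-order poles, or coalescing terms in the sum). The algebra in each individual step is routine $q$-number manipulation; it is the combinatorial indexing of which rows are affected by removing the last column, and the off-by-one shifts $[x+\lambda_t-1-\ell+t]$ versus $[x+\lambda_t-\ell+t]$, that require care. I expect the cleanest writeup is the column-removal induction, with the pole/residue argument as a cross-check.
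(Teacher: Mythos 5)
Your proposal does not yet contain a proof: both routes you sketch stop exactly where the real work begins, and the setup has indexing errors that would derail either one. First, the reindexing is off: from Proposition~\ref{prop:GR_rook_gen_fun}, substituting $t=\ell-i+1$ gives $F(x;\lambda)=\prod_{t=1}^{\ell}[x+\lambda_t-\ell+t]$, not $\prod_t[x+\lambda_t-t+1]$; your $a_t=\lambda_t-t$ should be $\lambda_t-\ell+t-1$, and this matters because the factors $[x+\lambda_t-\ell+t]$ are exactly the denominators appearing in the right-hand side. Second, in the column-removal induction, deleting the rightmost column decrements $\lambda_t$ for $t=1,\dots,\lambda'_{\lambda_1}$ (the rows of length exactly... at least $\lambda_1$), not for $t=1,\dots,\lambda'_1$ — the latter equals $\ell$. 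Because those rows change, the products $\prod_{t=1}^{\mu'_j}\frac{[x+\mu_t-1-\ell+t]}{[x+\mu_t-\ell+t]}$ in the inductive hypothesis for $\mu=\del{\lambda}{c}{\lambda_1}$ do \emph{not} coincide with the corresponding products for $\lambda$; reconciling them is not ``a single algebraic manipulation'' but the entire content of the lemma, and you have not carried it out. The pole-matching alternative has the same status: as you yourself note, the values $\lambda_t-\ell+t$ can repeat (already for $\lambda=(2,1)$), so one must match principal parts at higher-order poles, and none of that matching is performed.

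For comparison, the paper inducts on $\ell(\lambda)$, peeling off the \emph{first row} rather than the last column. The key observation is that the right-hand side factors: the terms with $j>\lambda_2$ have $\lambda'_j=1$, so their products reduce to the single ratio $\frac{[x+\lambda_1-\ell]}{[x+\lambda_1-\ell+1]}$, and together with $[x-\ell+\lambda_1]$ they combine with the common $t=1$ factor of the remaining terms to produce an overall prefactor $q^{\lambda_1-\lambda_2}\frac{[x+\lambda_1-\ell]}{[x+\lambda_1-\ell+1]}$ times the right-hand side of the lemma for $\tilde\lambda=(\lambda_2,\dots,\lambda_\ell)$; the prefactor is precisely $q^{\lambda_1-\lambda_2}F(x;\tilde\lambda)\big/F(x;\lambda)$ up to the shift $\ell\mapsto\ell-1$, which closes the induction. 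This row-peeling avoids the difficulty you flagged, because removing the first row changes no $\lambda_t$ for $t\ge 2$ and only shifts $\ell$ by one uniformly. If you want to salvage your column induction you would need to write out explicitly how the shifted products for $\mu$ recombine into those for $\lambda$; as it stands the proposal identifies the obstacle correctly but does not overcome it.
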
    

\begin{proof}
We use induction on $\ell(\lambda)$ and apply Proposition~\ref{prop:GR_rook_gen_fun}. For $\ell(\lambda)=1$, we have 
\begin{multline*}
    [x] \frac{ F(x-1;\lambda)}{F(x;\lambda)} =  [x-1] +  \sum_{j=1}^{\lambda_1} q^{-j}  -  q^{-j}  \frac{[x+\lambda_{1}-1] }{[x+\lambda_{1}]} = q^{-\lambda_1} \frac{[x+\lambda_1-1]}{[x+\lambda_1]}([x+\lambda_1]-[\lambda_1]).
\end{multline*}
Next, expanding the RHS of the above identity and doing standard manipulations gives
  \begin{multline*}
 [x-\ell+\lambda_1]  -  \sum_{j=1}^{\lambda_1} q^{\lambda_1-j}\prod_{t=1}^{\lambda_j'} \frac{[x+\lambda_{t}-1-\ell+t] }{[x+\lambda_{t}-\ell+t]} = \\
  = q^{\lambda_1-\lambda_2} \frac{[x+\lambda_1-\ell]}{[x+\lambda_1-\ell+1]} \left( [x+\lambda_2 -(\ell-1)] - \sum_{j=1}^{\lambda_2} q^{\lambda_2-j} \prod_{t=2}^{\lambda_j'} \frac{[x+\lambda_{t}-1-\ell+t] }{[x+\lambda_{t}-\ell+t]} \right).
\end{multline*}
By induction hypothesis the parenthetical on the RHS above is $q^{\lambda_2} [x]  F(x-1;\tilde{\lambda})/F(x;\tilde{\lambda})$ where $\tilde{\lambda} = (\lambda_2,\ldots,\lambda_\ell)$. Using  $\tilde{\lambda}_t = \lambda_{t+1}$ for the reindexing, we obtain the result. 
\end{proof}    

Next, we find the following $q$-analogue of the derivative of $F(x;\lambda)$.
\begin{lemma}\label{lem:DF}
We have the following formula for the $q$-analogue of the derivative of $F(x;\lambda)$:
$$DF(x;\lambda) :=\sum_{k=0}^{\ell} [k]\qrook{k}{\lambda} [x]_{\ell-k}=q^{\ell-x}( [x]F(x-1;\lambda)-[x-\ell]F(x;\lambda)).$$
\end{lemma}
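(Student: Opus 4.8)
\textbf{Proof plan for Lemma~\ref{lem:DF}.}
The plan is to prove the identity by expressing the claimed right-hand side directly in terms of $F(x;\lambda)$ and $F(x-1;\lambda)$, reducing everything to the ratio identity of Lemma~\ref{lem:F_ratio}, which has already been established. The key observation is that $DF(x;\lambda)$ is, up to the reindexing of the sum, a $q$-difference operator applied to $F(x;\lambda)$: writing $[k] = \frac{1-q^k}{1-q}$ and $[x]_{\ell-k} = [x][x-1]\cdots[x-(\ell-k)+1]$, one wants to recognize $\sum_k [k]\qrook{k}{\lambda}[x]_{\ell-k}$ as a combination of $F(x;\lambda) = \sum_k \qrook{k}{\lambda}[x]_{\ell-k}$ and a shifted version.

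First I would write $[k] = \frac{1}{1-q}(1 - q^k)$, so that
\[
(1-q)\, DF(x;\lambda) = \sum_{k=0}^\ell \qrook{k}{\lambda}[x]_{\ell-k} - \sum_{k=0}^\ell q^k \qrook{k}{\lambda}[x]_{\ell-k} = F(x;\lambda) - \sum_{k=0}^\ell q^k \qrook{k}{\lambda}[x]_{\ell-k}.
\]
The second sum is the crux: I would recognize that $q^k[x]_{\ell-k}$ can be rewritten in terms of $[x-1]_{\ell-k}$ and lower/shifted falling factorials using the basic identity $q^k[x]_{\ell - k} = [x-1]_{\ell-k} \cdot q^{\ell} \cdot \frac{\text{(something)}}{\text{(something)}}$ — more precisely, using $[x] = q[x-1] + 1$ and $q^{k}[x]_{\ell-k}$, one massages $\sum_k q^k \qrook{k}{\lambda}[x]_{\ell-k}$ into an expression built from $F(x-1;\lambda)$. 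The cleanest route is to verify the identity $q^k [x]_{\ell-k} = q^{\ell-x}\bigl([x][x-1]_{\ell-k} \cdot q^{\,x-\ell+k}\,\bigr)$-type manipulations, but in practice the efficient thing is: observe that $[x]_{\ell-k} - q^{\ell - x + k}[x-1]_{\ell - k}$ telescopes nicely, or alternatively use that $F(x-1;\lambda) = \sum_k \qrook{k}{\lambda}[x-1]_{\ell-k}$ and relate $[x-1]_{\ell-k}$ to $[x]_{\ell-k}$ via $[x-1]_{\ell-k} = \frac{[x-\ell+k]}{[x]}[x]_{\ell-k}$. Substituting this and comparing term-by-term reduces the whole claim to a polynomial identity in $q^x$ that can be checked directly, or — more conceptually — one multiplies through by $F(x;\lambda)$ and invokes Lemma~\ref{lem:F_ratio} to replace $[x]F(x-1;\lambda)/F(x;\lambda)$ by its closed form, then checks the resulting rational-function identity.

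An alternative, perhaps cleaner, approach which I would present as the main line: combine $DF(x;\lambda) = \frac{1}{1-q}\bigl(F(x;\lambda) - G(x)\bigr)$ where $G(x) := \sum_k q^k \qrook{k}{\lambda}[x]_{\ell-k}$, and prove the single identity $G(x) = F(x;\lambda) - (1-q)q^{\ell-x}\bigl([x]F(x-1;\lambda) - [x-\ell]F(x;\lambda)\bigr)$. Since both sides are polynomials in the variable $u = q^x$ (after clearing the $[x]$-type factors) of controlled degree, and $F(x;\lambda)$ has the explicit product form $\prod_{i=1}^\ell[x+\lambda_{\ell-i+1}-i+1]$ from Proposition~\ref{prop:GR_rook_gen_fun}, I can either match leading coefficients and roots, or run an induction on $\ell(\lambda)$ parallel to the proof of Lemma~\ref{lem:F_ratio}: peel off the last factor $[x+\lambda_1-\ell+1]$ of the product, use the recursion $\qrook{k}{\lambda}$ satisfies in terms of $\tilde\lambda = (\lambda_2,\dots,\lambda_\ell)$, and close the induction.

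The main obstacle I anticipate is bookkeeping the shifts: the expression $q^{\ell-x}$ together with the arguments $[x]$, $[x-\ell]$, $[x-1]$ and the falling factorials $[x]_{\ell-k}$ requires care to line up indices correctly, and a sign or a power of $q$ is easy to misplace. The conceptual content is light — it is essentially the statement that $D$ is a first-order $q$-difference operator acting on the product formula for $F$ — so the right framing (divide by $F(x;\lambda)$, reduce to Lemma~\ref{lem:F_ratio}) should make the verification short; the risk is purely computational. I would double-check the final formula at $q=1$, where it must reduce to the ordinary derivative relation $\sum_k k\,R_k(\lambda)(x)_{\ell-k} = x\cdot\frac{d}{dx}\bigl[\text{something}\bigr]$-type sanity check, namely that both sides agree as the classical $x F(x-1) - (x-\ell)F(x)$ governing $\sum_k k R_k (x)_{\ell-k}$.
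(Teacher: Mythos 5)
Your core route---substituting $[x-1]_{\ell-k}=\frac{[x-\ell+k]}{[x]}[x]_{\ell-k}$ into $F(x-1;\lambda)$ and comparing term by term---is exactly the paper's proof, which computes $[x]_{\ell-k}-[x-1]_{\ell-k}=\frac{[x]_{\ell-k}}{[x]}q^{x-\ell}([\ell]-[k])$, sums against $\qrook{k}{\lambda}$ to get $[x](F(x;\lambda)-F(x-1;\lambda))=q^{x-\ell}\bigl([\ell]F(x;\lambda)-DF(x;\lambda)\bigr)$, and solves for $DF$ using $[x]-q^{x-\ell}[\ell]=[x-\ell]$. The detours you hedge with (invoking Lemma~\ref{lem:F_ratio}, induction on $\ell(\lambda)$, matching roots of the product formula) are all unnecessary: the identity is a purely termwise falling-factorial manipulation that never uses the product form of $F$, so once the term-by-term comparison is carried out you are done.
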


\begin{proof}

Following the notation of Proposition~\ref{prop:GR_rook_gen_fun},  

$$
F(x;\lambda):= \sum_{k=0}^\ell \qrook{k}{\lambda} [x]_{\ell-k} =  \prod_{i=1}^\ell [x+\lambda_{\ell-i+1}-i+1].
$$
Note that 
\begin{align*}
[x]_{\ell-k} - [x-1]_{\ell-k} &= [x-1]_{\ell-k-1} \left( \frac{1-q^{x} - 1+q^{x-\ell+k}}{1-q} \right) = \frac{[x]_{\ell-k}}{[x]}  \frac{ q^{x-\ell} (q^{k}-q^{\ell})}{1-q}
\\ &= \frac{[x]_{\ell-k}}{[x]} q^{x-\ell} ( [\ell]-[k]).
\end{align*}

Apply this identity to each term in the following difference
\begin{align*}
    F(x;\lambda) - F(x-1;\lambda) = \sum_{k=0}^\ell \qrook{k}{\lambda} \frac{[x]_{\ell-k}}{[x]} q^{x-\ell} ( [\ell]-[k]),
    \end{align*}
so that
\begin{align*}
[x](F(x;\lambda) - F(x-1;\lambda)) = q^{x-\ell} [\ell] F(x;\lambda)- q^{x-\ell} DF(x;\lambda),
\end{align*}
where $DF(x;\lambda):=\sum_{k=0}^\ell [k]\qrook{k}{\lambda} [x]_{l-k}$. This gives an equation for $DF(x;\lambda)$ which we solve as
$$DF(x;\lambda) = -q^{\ell-x} ([x]-q^{x-\ell}[\ell])F(x;\lambda) + q^{\ell-x}[x]F(x-1;\lambda) =q^{\ell-x}( [x]F(x-1;\lambda)-[x-\ell]F(x;\lambda)),$$
giving us the desired formula.
\end{proof}
The following result shows the relationship between $q$-rook numbers for partitions obtained from deleting a column of $\lambda$.
\begin{lemma}\label{lem:qrook_linear_1}
For all $i$ fixed,
\[
\sum_j q^{m-j} \qrook{i}{\del{\lambda}{c}{j}}  =    \qrook{i}{\lambda} [m-i]    -  \qrook{i+1}{\lambda}  (q^m -q^{m-i-1}).
\]
\end{lemma}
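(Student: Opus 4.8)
The plan is to prove this identity by summing a column-deletion version of the Garsia--Remmel generating function identity over $j$ and extracting coefficients. First I would recall that for $\lambda \subset n\times m$ with $\ell = \ell(\lambda) \leq n$, by \propref{prop:GR_rook_gen_fun} we have $F(x;\lambda) = \sum_{k} \qrook{k}{\lambda}[x]_{\ell-k}$, and I want an analogous generating function for $\sum_j q^{m-j}\qrook{i}{\del{\lambda}{c}{j}}$. The key observation is that deleting the $j$th column of $\lambda$ (where $j$ ranges over $1,\ldots,\lambda_1$) and tracking $q^{m-j}$ should, after summing, produce something expressible through $F(x;\lambda)$, $F(x-1;\lambda)$, and the product formula $\prod_{i=1}^\ell [x+\lambda_{\ell-i+1}-i+1]$. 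In fact, I expect the column-deletion sum to be governed by the logarithmic-derivative-type expression already computed in \lemref{lem:F_ratio}: that lemma expresses $q^{\lambda_1}[x]\,F(x-1;\lambda)/F(x;\lambda)$ as $[x-\ell+\lambda_1]$ minus a sum over $j=1,\ldots,\lambda_1$ of $q^{\lambda_1-j}$ times a telescoping product, and that inner sum over $j$ is precisely where the columns $\del{\lambda}{c}{j}$ enter.

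The main steps, in order, would be: (1) Write down the generating function $G_i(x) := \sum_j q^{m-j}\qrook{i}{\del{\lambda}{c}{j}}\,[x]_{\ell-1-i}$-type object, or more cleanly, form $\sum_j q^{m-j} F(x;\del{\lambda}{c}{j})$ and identify it via \lemref{lem:F_ratio} (suitably cleared of denominators using the product formula for $F(x;\lambda)$) with a combination of $[m-i]$-weighted shifts of $F(x;\lambda)$ and $F(x-1;\lambda)$. (2) Use \lemref{lem:DF}, which gives $DF(x;\lambda) = q^{\ell-x}([x]F(x-1;\lambda) - [x-\ell]F(x;\lambda))$, to rewrite the $F(x-1;\lambda)$ contribution in terms of the $[k]\qrook{k}{\lambda}$ coefficients. (3) Expand both sides in the basis $\{[x]_{\ell-1-i}\}_i$ (the natural basis for rook polynomials of partitions with one fewer column) and equate coefficients of a fixed power, which should directly yield $\qrook{i}{\lambda}[m-i] - \qrook{i+1}{\lambda}(q^m - q^{m-i-1})$ on the right. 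The powers of $q$ will need bookkeeping: the $q^{m-j}$ weight and the $q^{mi - \binom{i}{2}}$-type shifts that appear in rook-number manipulations must combine to give exactly $q^m - q^{m-i-1} = q^{m-i-1}(q^{i+1}-1) = q^{m-i-1}\cdot\frac{q^{i+1}-1}{1-q}\cdot(1-q)$, i.e. essentially $q^{m-i-1}[i+1](q-1)$ — so I should double-check the sign and the $(1-q)$ normalization against the $[x]$-conventions used in \propref{prop:GR_rook_gen_fun}.

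Alternatively — and this may be cleaner — I would try a purely combinatorial bijective argument: on the right, $\qrook{i}{\lambda}[m-i]$ counts placements of $i$ non-attacking rooks on $\lambda$ weighted by $q^{\inv(p)}$ times an extra factor $[m-i] = \sum_{t=0}^{m-i-1} q^t$, which we interpret as choosing one of the $m-i$ ``free'' columns (columns of the $n\times m$ board not hit by a rook of $p$, with a $q$-weight by position) to be the deleted column $j$; and $\qrook{i+1}{\lambda}(q^m - q^{m-i-1})$ is a correction counting configurations where the column we wish to delete actually contains a rook. Matching this against $\sum_j q^{m-j}\qrook{i}{\del{\lambda}{c}{j}}$, where each term counts $i$-rook placements on $\lambda$ avoiding column $j$, is an inclusion--exclusion: a placement of $i$ rooks on $\lambda$ either already avoids column $j$ (contributing to the sum) or uses column $j$ (in which case after removing that rook we get an $i$ or $i{-}1$ rook placement on the smaller board), and the $q$-statistics $\inv$ on $\lambda$ versus on $\del{\lambda}{c}{j}$ differ by a controlled amount. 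The hard part, in either approach, will be the $q$-power bookkeeping: reconciling the $\inv$ statistic on $\del{\lambda}{c}{j}$ (cells not occupied and not north/west of a rook, computed in the smaller board) with $\inv(p)$ computed in $\lambda$ together with the explicit weight $q^{m-j}$, since deleting an interior column shifts which cells count as ``west of a rook.'' I expect the generating-function route via \lemref{lem:F_ratio} to sidestep most of this, at the cost of one careful basis-change computation.
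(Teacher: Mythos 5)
Your primary (generating-function) route is exactly the paper's proof: multiply by the falling factorials and sum over $i$, compute $\sum_j q^{m-j} F(x;\del{\lambda}{c}{j}) = [x+m-\ell]F(x;\lambda) - q^m[x]F(x-1;\lambda)$ by writing $F(x;\del{\lambda}{c}{j})/F(x;\lambda)$ as the telescoping product and invoking \lemref{lem:F_ratio}, then match coefficients. Two small corrections for when you execute it: deleting a column preserves $\ell(\lambda)$, so the expansion basis is $\{[x]_{\ell-i}\}$ rather than $\{[x]_{\ell-1-i}\}$; and \lemref{lem:DF} is not needed, since the right-hand side collapses directly via the one-line identity $[m-i]\,[x]_{\ell-i} - (q^m-q^{m-i})\,[x]_{\ell-i+1} = [x+m-\ell]\,[x]_{\ell-i} - q^m\,[x]_{\ell-i+1}$ together with $[x]_{\ell-i+1}=[x]\,[x-1]_{\ell-i}$.
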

\begin{proof}

Multiplying on both sides by $[x]_{\ell-i}$, the above claim is equivalent to the generating function identity:
\begin{multline*}
    \sum_j q^{m-j} F(x;\del{\lambda}{c}{j}) = \sum_i (\qrook{i}{\lambda} [m-i]    -  \qrook{i+1}{\lambda}  (q^m -q^{m-i-1}))[x]_{\ell-i}\\
    =\sum_i \qrook{i}{\lambda} \left( [m-i] [x]_{\ell-i} - (q^m-q^{m-i})[x]_{\ell-i+1} \right) = \sum_i \qrook{i}{\lambda} [x]_{\ell-i}[m+x-\ell] -q^m[x][x-1]_{\ell-i}\\
    = [m+x-\ell]F(x;\lambda) - q^m[x] F(x-1;\lambda),
\end{multline*}
where we use the observation that
\begin{multline*}
    [m-i] [x]_{\ell-i} - (q^m-q^{m-i})[x]_{\ell-i+1} = [x]_{\ell-i}\frac{(1-q^{m-i})  - (q^m-q^{m-i})(1-q^{x-\ell+i}) }{1-q} \\=  [x]_{\ell-i}\frac{1-q^{m-i}  - q^m+q^{m-i}+ q^{m+x-\ell+i} -q^{m+x-\ell} }{1-q} =[x]_{\ell-i} [x+m-\ell] - q^m[x]_{\ell-i+1}.
\end{multline*}

We have that
\begin{multline*}
    F(x;\del{\lambda}{c}{j}) = \prod_{i=1}^{\lambda'_j} [x+\lambda_i -1 -\ell+i ] \prod_{i=\lambda'_j+1}^\ell [x+\lambda_i -\ell+i] = F(x;\lambda) \prod_{i=1}^{\lambda'_j} \frac{ [x-1 +\lambda_i -\ell+i ]}{[x+\lambda_i -\ell+i ]}.
\end{multline*}

Using Lemma~\ref{lem:F_ratio} and that $\del{\lambda}{c}{j} = \lambda$ for $j>\lambda_1$,
\begin{multline*}
    \sum_{j=1}^m q^{m-j} F(x;\del{\lambda}{c}{j}) = F(x;\lambda)\left( [m-\lambda_1]  + q^{m-\lambda_1} \sum_{j} q^{\lambda_1 - j}\prod_{i=1}^{\lambda'_j} \frac{ [x-1 +\lambda_i -\ell+i ]}{[x+\lambda_i -\ell+i ]} \right)\\
    =  F(x;\lambda)\left( [m-\lambda_1] +q^{m-\lambda_1}[x-\ell+\lambda_1] -q^{m-\lambda_1}q^{\lambda_1}[x] \frac{F(x-1;\lambda)}{F(x;\lambda) }\right) \\
    = F(x;\lambda)[x-\ell+\lambda_1 + m-\lambda_1] -q ^m[x]F(x-1;\lambda),
\end{multline*}
which is what we wanted to show and completes the proof.
\end{proof}
 We also have the following relationship between $q$-rook numbers obtained from removing a single row from $\lambda$. 
\begin{lemma}\label{lem:qrook_linear_2}
For all fixed $k$,
\begin{align*}
    \sum_{i=1}^n q^{i-1 +\lambda_i} \qrook{k}{\del{\lambda}{r}{i}}
    = [n] \qrook{k}{\lambda} - [k]\qrook{k}{\lambda}.
\end{align*}
\end{lemma}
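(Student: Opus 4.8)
\textbf{Proof proposal for Lemma~\ref{lem:qrook_linear_2}.}

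\emph{Strategy.} The plan is to mirror the generating-function technique used for Lemma~\ref{lem:qrook_linear_1}: multiply both sides by $[x]_{\ell-k}$ and recognize each side as a known univariate polynomial identity in the variable $x$, then appeal to Proposition~\ref{prop:GR_rook_gen_fun} and Lemma~\ref{lem:DF}. Concretely, after multiplying by $[x]_{\ell-k}$ and summing over $k$, the right-hand side becomes $[n]F(x;\lambda) - DF(x;\lambda)$, which by Lemma~\ref{lem:DF} equals $[n]F(x;\lambda) - q^{\ell-x}([x]F(x-1;\lambda) - [x-\ell]F(x;\lambda))$. So the target identity is equivalent to
\[
\sum_{i=1}^n q^{i-1+\lambda_i}\, F(x;\del{\lambda}{r}{i}) = [n]F(x;\lambda) - q^{\ell-x}\bigl([x]F(x-1;\lambda) - [x-\ell]F(x;\lambda)\bigr),
\]
where I must be slightly careful: $\del{\lambda}{r}{i}$ has length $\ell-1$ (or $\ell$, if row $i$ was a zero row — but here $i$ ranges only over nonempty rows), so the degree in $x$ on the left is $\ell-1$, matching the right-hand side after the cancellation of the top-degree terms.

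\emph{Key steps.} First I would compute $F(x;\del{\lambda}{r}{i})/F(x;\lambda)$ explicitly using the product formula in Proposition~\ref{prop:GR_rook_gen_fun}: removing row $i$ from $\lambda=(\lambda_1,\dots,\lambda_\ell)$ shifts the indexing of all rows above it, producing a telescoping-type ratio of $q$-integers. Writing $\lambda^{(i)} := \del{\lambda}{r}{i}$, I expect
\[
\frac{F(x;\lambda^{(i)})}{F(x;\lambda)} = \frac{1}{[x+\lambda_i - \ell + i]}\prod_{t=1}^{i-1}\frac{[x+\lambda_t-\ell+t]}{[x+\lambda_t-\ell+t+1]},
\]
so that the summand $q^{i-1+\lambda_i}F(x;\lambda^{(i)})$ becomes $F(x;\lambda)$ times a quantity that should telescope when summed over $i$. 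Second, I would sum over $i=1,\dots,n$ (noting $n=\ell$ here, or handling trailing empty rows separately), tracking the telescoping: the partial products $\prod_{t=1}^{i-1}[x+\lambda_t-\ell+t]/[x+\lambda_t-\ell+t+1]$ should collapse, and the extra factor $q^{i-1+\lambda_i}/[x+\lambda_i-\ell+i]$ combined with $1-q^{x+\lambda_i-\ell+i}$-type numerators should produce a difference of two consecutive terms. Third, after the telescoping, I would match the surviving boundary terms against $[n] - q^{\ell-x}[x]F(x-1;\lambda)/F(x;\lambda) + q^{\ell-x}[x-\ell]$, using the identity $F(x-1;\lambda)/F(x;\lambda) = \prod_{i=1}^\ell [x+\lambda_{\ell-i+1}-i]/[x+\lambda_{\ell-i+1}-i+1]$ from Proposition~\ref{prop:GR_rook_gen_fun}. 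An alternative, possibly cleaner, route is induction on $\ell(\lambda)$ in the style of Lemma~\ref{lem:F_ratio}: peel off the first row, apply the inductive hypothesis to $\tilde\lambda=(\lambda_2,\dots,\lambda_\ell)$, and reconcile the two via a single-variable manipulation of the ratio $F(x;\lambda)/F(x;\tilde\lambda) = [x+\lambda_1-\ell+1]$ composed with shifts. I would try the direct telescoping first and fall back to induction if the bookkeeping gets unwieldy.

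\emph{Main obstacle.} The delicate point is the row-removal reindexing: deleting row $i$ changes the arm-lengths-minus-index bookkeeping for every row $t<i$, so $F(x;\del{\lambda}{r}{i})$ is not simply $F(x;\lambda)$ with one factor deleted — each surviving factor above the removed row gets its argument shifted by $1$. Getting this shift exactly right, and confirming that the resulting sum genuinely telescopes (rather than merely looking like it should), is where the real work lies; in particular I expect to need the observation that $q^{a}[x-\ell+b] - q^{a+1}[x-\ell+b-1] \cdot (\text{something}) $ collapses, analogous to the bracketed computations inside the proof of Lemma~\ref{lem:qrook_linear_1}. Once the correct ratio formula for $F(x;\del{\lambda}{r}{i})/F(x;\lambda)$ is in hand, the rest should be a routine (if lengthy) verification that both sides agree as polynomials in $x$, hence as the claimed identity of $q$-rook numbers after extracting the coefficient of $[x]_{\ell-k}$.
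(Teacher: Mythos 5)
Your overall strategy is exactly the paper's: multiply by the falling factorial, sum over $k$, convert the right-hand side to $[n]F-DF$ via Lemma~\ref{lem:DF}, compute the ratio $F(\cdot;\del{\lambda}{r}{i})/F(\cdot;\lambda)$ from Proposition~\ref{prop:GR_rook_gen_fun}, and telescope over $i$ (the paper also first reduces $n$ to $\ell$ exactly as you anticipate). However, your reduction to a generating-function identity contains a concrete error that would derail the argument. Since $\del{\lambda}{r}{i}$ has length $\ell-1$, its rook generating function is $F(y;\del{\lambda}{r}{i})=\sum_k \qrook{k}{\del{\lambda}{r}{i}}[y]_{(\ell-1)-k}$, whereas the lemma's left-hand side, after multiplying by $[y]_{\ell-k}=[y]\,[y-1]_{(\ell-1)-k}$ and summing over $k$, produces $[y]\sum_i q^{i-1+\lambda_i}F(y-1;\del{\lambda}{r}{i})$ — note the shift to $y-1$ and the overall prefactor $[y]$. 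Your displayed target identity uses $\sum_i q^{i-1+\lambda_i}F(x;\del{\lambda}{r}{i})$ instead, and that identity is simply false: for $\lambda=(1)$ its left side is the constant $q$ while its right side is $q[x]$ (the correct shifted version gives $[x]\cdot q$, which matches). Your remark that the degree drop "matches after cancellation of top-degree terms" addresses degrees but not this basis mismatch, so if you proceeded from your displayed identity the telescoping would not close up.

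A secondary slip: your predicted ratio is upside down. Deleting row $i$ shifts the surviving factors \emph{above} the deleted row to $[x+\lambda_t-\ell+t+1]$ in the numerator (with $[x+\lambda_t-\ell+t]$ in the denominator), i.e.
\[
\frac{F(x;\del{\lambda}{r}{i})}{F(x;\lambda)}=\frac{1}{[x+\lambda_i-\ell+i]}\prod_{t=1}^{i-1}\frac{[x+\lambda_t-\ell+t+1]}{[x+\lambda_t-\ell+t]},
\]
which, evaluated at $x-1$ as required by the corrected reduction, is precisely what makes the sum telescope against the difference $[x+\lambda_i-\ell+i]-[x-1+\lambda_i-\ell+i]=q^{x-1-\ell+i+\lambda_i}$. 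With these two corrections your plan goes through and coincides with the paper's proof.
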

\begin{proof}
First of all, notice that we can replace $n$ with $\ell$ since for $i>\ell$, $\del{\lambda}{r}{i} = \lambda$, $\lambda_i=0$ and $[n] - \sum_{i=\ell+1}^n q^{i-1} = [\ell]$.
Multiplying by $[y]_{\ell - k}=[y] [y-1]_{(\ell-1)-k}$ on both sides and summing over all $k$, the identity is equivalent to
\begin{align*}
   [y] \sum_{i=1}^\ell q^{i-1 +\lambda_i} \sum_k  \qrook{k}{\del{\lambda}{r}{i}} [y-1]_{(\ell-1)-k}
    = [n] \sum_k \qrook{k}{\lambda}[y]_{\ell-k}  - \sum_k [k]\qrook{k}{\lambda}[y]_{\ell-k}.
\end{align*}

Thus, the identity is then equivalent to the generating function identity
\begin{multline*}
  [y] \sum_{i=1}^\ell q^{i-1 +\lambda_i} F(y-1,\del{\lambda}{r}{i})  \\
    = [\ell]F(y; \lambda) - DF(y;\lambda) = [\ell]F(y,\lambda) +q^{-y+ \ell}[y-\ell]F(y,\lambda) -q^{\ell-y}[y]F(y-1,\lambda).
\end{multline*}

We have that for $i\leq \ell$
\begin{multline*}
    F(y-1,\del{\lambda}{r}{i}) = \prod_{j=1}^{\ell-1}[y-1 + (\del{\lambda}{r}{i})_j - (\ell -1)+i] = \prod_{j=1}^{i-1} [y-1+\lambda_j -\ell +1 +j ] \prod_{j=i+1}^{\ell} [y-1+ \lambda_j - (\ell-1)+j-1 ] \\
    = \frac{1}{[y-1+\lambda_i-\ell+i]} \prod_{j=1}^{i-1} \frac{[y+\lambda_j-\ell]}{[y-1+\lambda_j - \ell]} F(y-1,\lambda),
\end{multline*}
and so we reach the following equivalent identity
\begin{multline*}
   [y] \sum_{i=1}^\ell  \frac{q^{y- \ell + i-1 +\lambda_i}}{[y-1+\lambda_i-\ell+i]} \prod_{j=1}^{i-1} \frac{[y+\lambda_j-\ell]}{[y-1+\lambda_j - \ell]} F(y-1,\lambda)  \\ = q^{y-\ell}[\ell]F(y,\lambda) +[y-\ell]F(y,\lambda) - [y]F(y-1,\lambda).
  \end{multline*}
We can rewrite it as
\begin{multline*}
  [y] \sum_{i=1}^\ell  \frac{[y- \ell + i  +\lambda_i] - [y-1 -\ell+i +\lambda_i]}{[y-1+\lambda_i-\ell+i]} \prod_{j=1}^{i-1} \frac{[y+\lambda_j-\ell +j]}{[y-1+\lambda_j - \ell+j]} F(y-1,\lambda)  \\
   = q^{y-\ell}[\ell]F(y,\lambda) +[y-\ell]F(y,\lambda) - [y]F(y-1,\lambda),
  \end{multline*}
 which reduces again to 
 \begin{align*}
     [y] F(y-1,\lambda)   \sum_{i=1}^\ell   \left( \prod_{j=1}^{i} \frac{[y+\lambda_j-\ell+j]}{[y-1+\lambda_j - \ell+j]} - \prod_{j=1}^{i-1} \frac{[y+\lambda_j-\ell+j]}{[y-1+\lambda_j - \ell+j]} \right) 
   = [y]F(y,\lambda) - [y]F(y-1,\lambda).
\end{align*}
After canceling the terms in the telescoping sum on the LHS, the identity reduces to
\begin{multline*}
   [y] F(y-1,\lambda)     \left( \prod_{j=1}^{\ell} \frac{[y+\lambda_j-\ell+j]}{[y-1+\lambda_j - \ell+j]} - 1\right) -   [y]F(y,\lambda) + [y]F(y-1,\lambda) \\
    =[y] (F(y,\lambda) -F(y-1,\lambda)) - [y]F(y,\lambda) + [y]F(y-1,\lambda)=0,
\end{multline*}
which completes the proof.
\end{proof}

\begin{lemma}\label{lem:qrook_identity1}
We have the following identity for $q$-rook numbers:
$$\sum_{(i,j) \in \lambda} q^{i-j+\lambda_i} \qrook{k}{\lambda/(i,j)} = q[k+1] \qrook{k+1}{\lambda}.$$
\end{lemma}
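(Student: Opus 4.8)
The plan is to reduce the claimed identity to a generating-function statement in the style of the preceding lemmas, then exploit the telescoping/derivative identities already established. First I would multiply both sides of the claimed identity by $[x]_{\ell-1-k}$ and sum over $k$ so that the right side becomes, up to a shift, the $q$-derivative $DF(x;\lambda)$ from Lemma~\ref{lem:DF}: indeed $\sum_k q[k+1]\qrook{k+1}{\lambda}[x]_{\ell-1-k} = q\sum_{k\ge 1}[k]\qrook{k}{\lambda}[x]_{\ell-k}/\,[x-\ell+k]$-type manipulation, so I expect $DF$ (or a simple variant of it) to appear on the right after reindexing $k+1\mapsto k$. The left side then becomes a double sum $\sum_{(i,j)\in\lambda} q^{i-j+\lambda_i} F(x;\lambda/(i,j))$, and the goal is to show this equals the appropriate combination of $F(x;\lambda)$ and $F(x-1;\lambda)$.

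The key structural input is the product formula of Proposition~\ref{prop:GR_rook_gen_fun}: removing row $i$ and column $j$ from $\lambda$ multiplies $F(x;\lambda)$ by an explicit ratio of $q$-brackets, exactly as computed inside the proofs of Lemmas~\ref{lem:qrook_linear_1} and~\ref{lem:qrook_linear_2}. So I would first fix the row $i$ and sum over $j$ with the $q^{-j}$ weight --- this is precisely the inner sum handled by Lemma~\ref{lem:F_ratio} applied to the partition $\del{\lambda}{r}{i}$ (or to $\lambda$ with a bookkeeping adjustment for the row being removed). That collapses the $j$-sum into a difference of two $F$-ratios. Then I would sum the resulting expression over $i$ with the weight $q^{i+\lambda_i}$; by the same telescoping mechanism used at the end of the proof of Lemma~\ref{lem:qrook_linear_2}, the $i$-sum should telescope, leaving only boundary terms that assemble into $[x]F(x-1;\lambda)$ and $[x-\ell]F(x;\lambda)$, i.e.\ into $DF(x;\lambda)$ by Lemma~\ref{lem:DF}. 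Matching constants and powers of $q$ finishes the identification. An alternative, if the double telescoping is messy, is to prove it by induction on $\ell(\lambda)$, peeling off the first row: the terms $(i,j)$ with $i=1$ give one contribution computable directly from Lemma~\ref{lem:F_ratio}, and the terms with $i\ge 2$ reduce to the statement for $\tilde\lambda=(\lambda_2,\ldots,\lambda_\ell)$ after reindexing.

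The main obstacle I anticipate is the order of summation: the weight $q^{i-j+\lambda_i}$ mixes row and column indices, and $\lambda/(i,j)$ only makes sense for $(i,j)\in\lambda$, so the $j$-range depends on $i$ (namely $1\le j\le\lambda_i$) and the $i$-range implicitly depends on $j$ (namely $i\le\lambda_j'$). Getting Lemma~\ref{lem:F_ratio} to apply cleanly requires first doing the $j$-sum for fixed $i$ over $1\le j\le\lambda_i$, and one must check that the "extra" terms $j>\lambda_i$ (where $\del{\lambda}{r}{i}$ has had a column that doesn't exist removed) either vanish or are exactly the correction terms that make Lemma~\ref{lem:F_ratio} applicable. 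A careful treatment of these boundary indices --- analogous to the remark "$\del{\lambda}{c}{j}=\lambda$ for $j>\lambda_1$" in the proof of Lemma~\ref{lem:qrook_linear_1} and the replacement of $n$ by $\ell$ in Lemma~\ref{lem:qrook_linear_2} --- is where the real work lies; once the indexing is pinned down, the algebra is the same telescoping-plus-$DF$ routine as before.
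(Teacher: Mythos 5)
Your framework is right: multiplying by $[x]_{\ell-1-k}$ and summing over $k$ does turn the right-hand side into $q\,DF(x;\lambda)$ (the reindexing $t=k+1$ gives $\sum_t q[t]\qrook{t}{\lambda}[x]_{\ell-t}$ directly; no division by $[x-\ell+k]$ is involved), and the proof does come down to showing $\sum_{(i,j)\in\lambda}q^{i-j+\lambda_i}F(x;\lambda/(i,j))=qDF(x;\lambda)$ using the product formula, telescoping, and Lemma~\ref{lem:F_ratio}. But the order of summation you propose is the one that does not close. If you fix $i$ and sum over $j$ first, the inner sum runs over $1\le j\le\lambda_i$, whereas Lemma~\ref{lem:F_ratio} applied to $\del{\lambda}{r}{i}$ only evaluates the \emph{complete} sum $1\le j\le(\del{\lambda}{r}{i})_1$, which is $\lambda_1$ (or $\lambda_2$ when $i=1$), not $\lambda_i$. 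The extra terms $\lambda_i<j\le\lambda_1$ are not present in the identity and do not vanish, so you would need a truncated version of Lemma~\ref{lem:F_ratio} for arbitrary partial sums, which is genuinely more than a bookkeeping adjustment. You correctly flag this as "where the real work lies," but the work is left undone, and your fallback induction on $\ell(\lambda)$ runs into the same truncation when you peel off the first row.

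The resolution in the paper is to swap the order. Write $q^{x-1-\ell+i+\lambda_i}=[x+\lambda_i-\ell+i]-[x+\lambda_i-\ell+i-1]$, which converts each summand $q^{i-j+\lambda_i}F(x;\lambda/(i,j))$ into $q^{-x+1+\ell-j}F(x;\lambda)$ times a difference of two products $\prod_{t=i+1}^{\lambda_j'}-\prod_{t=i}^{\lambda_j'}$ of bracket ratios. For each fixed column $j$, summing over $i=1,\ldots,\lambda_j'$ telescopes over exactly the natural range, leaving $1-\prod_{t=1}^{\lambda_j'}[\cdots]/[\cdots]$; the remaining $j$-sum with weight $q^{-j}$ over $1\le j\le\lambda_1$ is then precisely the full-range sum evaluated by Lemma~\ref{lem:F_ratio} applied to $\lambda$ itself. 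So the missing idea is not a new lemma but the observation that the $i$-sum, not the $j$-sum, is the one that telescopes cleanly, after which Lemma~\ref{lem:F_ratio} is used exactly once in its stated form.
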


\begin{proof}
Translating this identity into generating functions, we have that it is equivalent to 
\begin{align*}
    \sum_{k=0}^{\ell-1} \sum_{(i,j)\in\lambda} q^{i-j+\lambda_i}\qrook{k}{\lambda/(i,j)} [x]_{\ell-1-k} = \sum_{k=0}^{\ell-1} q[k+1]\qrook{k+1}{\lambda}[x]_{\ell-k-1}= \sum_{t=0}^{\ell} q[t]\qrook{t}{\lambda}[x]_{\ell-t},
\end{align*}
which can be rewritten, with the help of Lemma~\ref{lem:DF}, in terms of the $F$ function as
\begin{align}\label{eq:quadratic_rook_f_identity}
    \sum_{(i,j)\in\lambda} q^{i-j+\lambda_i} F(x;\lambda/(i,j)) = q DF(x;\lambda) = -q^{\ell-x+1} [x-\ell]F(x;\lambda) + q^{1+\ell-x}[x] F(x-1;\lambda).
\end{align}
Now, we notice that if $\mu = \lambda/(i,j)$
\begin{align*}
    F(x;\lambda/(i,j)) &= \prod_{t=1}^{\ell-1} [x+\mu_t - \ell+1+t] \\
    &= \prod_{t=1}^{i-1}[x+\lambda_t -1 -\ell+1+t] \prod_{t=i}^{\lambda_j'-1}[x+\lambda_{t+1}-1-\ell+1+t] \prod_{t=\lambda_j'}^{\ell-1}[x+\lambda_{t+1}-\ell+1+t]\\
    &= F(x;\lambda) \frac{ \prod_{t=i+1}^{\lambda_j'}[x+\lambda_{t}-1-\ell+t] }{\prod_{t=i}^{\lambda_j'}[x+\lambda_{t}-\ell+t]}.
\end{align*}

Note that $q^{x-1-\ell +i+\lambda_i}=[x+\lambda_i-\ell+i] - [x+\lambda_i-\ell+i-1]$, so we can rewrite this as
\begin{multline*}
    q^{i-j+\lambda_i} F(x;\lambda/(i,j)) \\= q^{-x+1+\ell-j} F(x;\lambda) ([x+\lambda_i-\ell+i] - [x+\lambda_i-\ell+i-1]) \frac{ \prod_{t=i+1}^{\lambda_j'}[x+\lambda_{t}-1-\ell+t] }{\prod_{t=i}^{\lambda_j'}[x+\lambda_{t}-\ell+t]}\\
    = q^{-x+1+\ell-j} F(x;\lambda) \left( \prod_{t=i+1}^{\lambda_j'}\frac{[x+\lambda_{t}-1-\ell+t] }{[x+\lambda_{t}-\ell+t]} - \prod_{t=i}^{\lambda_j'} \frac{[x+\lambda_{t}-1-\ell+t] }{[x+\lambda_{t}-\ell+t]} \right).
\end{multline*}
Fixing $j$ and summing over all possible $i=1\ldots \lambda_j'$ we  get telescoping cancellations and so
\begin{align*}
    \sum_{(i,j)\in\lambda} q^{i-j+\lambda_i} F(x;\lambda/(i,j)) = \sum_{j} q^{-x+1+\ell-j} F(x;\lambda)\left( 1 - \prod_{t=1}^{\lambda_j'} \frac{[x+\lambda_{t}-1-\ell+t] }{[x+\lambda_{t}-\ell+t]} \right). 
    \end{align*}
Substituting this into the LHS of~\eqref{eq:quadratic_rook_f_identity}, the needed identity transforms to the equivalent
\begin{align*}
    \sum_{j} q^{1-j+\ell-x} F(x;\lambda)\left(1-\prod_{t=1}^{\lambda_j'} \frac{[x+\lambda_{t}-1-\ell+t] }{[x+\lambda_{t}-\ell+t]} \right) &= q DF(x;\lambda) \\ &= q^{\ell-x+1}\left( [x]F(x-1;\lambda) - [x-\ell]F(x;\lambda)\right), 
\end{align*}
which is equivalent to
  \[     [x] \frac{ F(x-1;\lambda)}{F(x;\lambda)} =  [x-\ell] +  \sum_{j=1}^{\lambda_1} q^{-j}  -  q^{-j} \prod_{t=1}^{\lambda_j'} \frac{[x+\lambda_{t}-1-\ell+t] }{[x+\lambda_{t}-\ell+t]}. \]
     This last identity simplifies as
\[       q^{\lambda_1} [x] \frac{ F(x-1;\lambda)}{F(x;\lambda)} =  [x-\ell+\lambda_1]  -  \sum_{j=1}^{\lambda_1} q^{\lambda_1-j}\prod_{t=1}^{\lambda_j'} \frac{[x+\lambda_{t}-1-\ell+t] }{[x+\lambda_{t}-\ell+t]}, \]
    and is equivalent to the formula in Lemma~\ref{lem:F_ratio}, which completes the proof. \end{proof}

  \subsection{$q$-hit identities}
  
  We now translate the above rook identities into $q$-hit identities using the relationship from Definition~\ref{lem: hit in terms of rs}. Let 
$$G^{m,n}(x;\lambda)  =  \sum_{k=0}^n \qhit{k}{m,n}{\lambda} (q^{x})^k.$$
Then we have the following equivalences in terms of the generating functions
\begin{align}\label{eq:qhit_gen_fun_rooks}
    G^{m,n}(x;\lambda) 
    &:= \sum_{i=0}^n \sum_{k=0}^i \dfrac{q^{\binom{k}{2}-|\lambda|}}{\qfactorial{m-n}} \qrook{i}{\lambda} \qfactorial{m-i} \qbinom{i}{k} (-1)^{i+k} q^{mi-\binom{i}{2}}q^{xk}\\
    &= \frac{q^{-|\lambda|}}{\qfactorial{m-n} }  \sum_{i=0}^n\qrook{i}{\lambda} \qfactorial{m-i} q^{mi-\binom{i}{2}} (-1)^i \sum_{k=0}^i q^{\binom{k}{2}} \qbinom{i}{k} (-q^x)^k \nonumber \\
    &= \frac{q^{-|\lambda|}}{\qfactorial{m-n} }  \sum_{i=0}^n \qrook{i}{\lambda} \qfactorial{m-i} q^{mi-\binom{i}{2}} (-1)^i  \prod_{k=0}^{i-1}(1-q^{x+k}). \nonumber
\end{align}

\begin{lemma}\label{lem:qhit_linear_1}
For every $\lambda$ inside an $n\times m$ board and for every $k$
 $$\sum_{j=1}^{m}  q^{m+n-j - \lambda_j'} \qhit{k}{m-1,n}{\del{\lambda}{c}{j}} = [m-n] \qhit{k}{m,n}{\lambda} q^{n-k}.$$
\end{lemma}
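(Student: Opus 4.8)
\textbf{Proof proposal for \lemref{lem:qhit_linear_1}.}

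The plan is to translate the desired $q$-hit identity into an identity of the generating functions $G^{m,n}(x;\lambda)$ and reduce it to the already-proven $q$-rook identity in \lemref{lem:qrook_linear_1}. First I would recall from \eqref{eq:qhit_gen_fun_rooks} that
\[
G^{m,n}(x;\lambda) = \frac{q^{-|\lambda|}}{\qfactorial{m-n}} \sum_{i=0}^n \qrook{i}{\lambda} \qfactorial{m-i} q^{mi-\binom{i}{2}} (-1)^i \prod_{k=0}^{i-1}(1-q^{x+k}),
\]
and that the claimed identity, after multiplying by $(q^x)^k$ and summing over $k$, is equivalent to
\[
\sum_{j=1}^m q^{m+n-j-\lambda_j'}\, G^{m-1,n}(x;\del{\lambda}{c}{j}) = [m-n]\, q^{n}\, G^{m,n}(x;q^{-1}x\text{-shift}),
\]
where on the right the substitution $q^{-k}$ inside the sum is absorbed as a shift $x\mapsto x-1$ in the variable; more precisely $\sum_k \qhit{k}{m,n}{\lambda} q^{(n-k)+xk} = q^n G^{m,n}(x-1;\lambda)$. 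So the target becomes
\[
\sum_{j=1}^m q^{m+n-j-\lambda_j'}\, G^{m-1,n}(x;\del{\lambda}{c}{j}) = [m-n]\, q^{n}\, G^{m,n}(x-1;\lambda).
\]

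Next I would expand both sides via \eqref{eq:qhit_gen_fun_rooks}. On the left, note $|\del{\lambda}{c}{j}| = |\lambda| - \lambda_j'$, which cancels the $q^{-\lambda_j'}$ factor against the $q^{-|\del{\lambda}{c}{j}|}$ coming from the generating function, leaving a clean $q^{-|\lambda|}$; the denominator becomes $\qfactorial{(m-1)-n}$. Collecting the $q^{m+n-j}$ weights and pulling the $j$-sum inside, the coefficient of $\prod_{k=0}^{i-1}(1-q^{x+k})$ on the left is proportional to $\sum_j q^{m-j}\qrook{i}{\del{\lambda}{c}{j}}$ — exactly the left-hand side of \lemref{lem:qrook_linear_1}, up to the $q$-power $q^n$ and the $i$-dependent factors $\qfactorial{(m-1)-i} q^{(m-1)i - \binom{i}{2}}(-1)^i$. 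On the right, I would expand $[m-n]q^n G^{m,n}(x-1;\lambda)$ the same way; the key bookkeeping is that replacing $x$ by $x-1$ turns $\prod_{k=0}^{i-1}(1-q^{x+k})$ into $\prod_{k=0}^{i-1}(1-q^{x-1+k}) = \frac{1-q^{x-1}}{1-q^{x+i-1}}\prod_{k=0}^{i-1}(1-q^{x+k})$, which is exactly the kind of telescoping shift that \lemref{lem:qrook_linear_1} produces (its proof went through the $F(x;\lambda)$ vs.\ $F(x-1;\lambda)$ relation and Lemma~\ref{lem:F_ratio}). Substituting the right-hand side of \lemref{lem:qrook_linear_1}, namely $\qrook{i}{\lambda}[m-i] - \qrook{i+1}{\lambda}(q^m - q^{m-i-1})$, and reindexing the second term $i \mapsto i-1$, I would match coefficients of $\qrook{i}{\lambda} \prod_{k}(1-q^{x+k})$ on both sides. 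The factor $[m-i]$ combines with $\qfactorial{(m-1)-i}$ to give $\qfactorial{m-i}/\qfactorial{m-n}\cdot \qfactorial{m-n-1}$ matching the right side's normalization, and the $(q^m - q^{m-i-1}) = q^{m-i-1}[i+1]\cdot\frac{1-q}{1-q}$… $= q^{m-i-1}(q^i - 1)\cdot(-1)$ type terms account for the shift $x\mapsto x-1$ in $G^{m,n}$.

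The main obstacle will be the careful tracking of the many $q$-powers — the exponents $mi - \binom{i}{2}$, the shift $(m-1)i$ on the left versus $mi$ on the right, the $q^{m+n-j}$ weights, and the $q^n$ and $[m-n]$ prefactors — and verifying that after the reindexing $i\mapsto i-1$ in the $\qrook{i+1}{\lambda}$ term everything collapses to a single identity of rational functions in $q^x$. I expect no conceptual difficulty beyond \lemref{lem:qrook_linear_1} itself; the verification is a finite computation comparing coefficients of the linearly independent functions $\prod_{k=0}^{i-1}(1-q^{x+k})$ (equivalently, of $(q^x;q)_i$) for $i = 0,1,\dots,n$. An alternative, possibly cleaner route would be to avoid generating functions and instead use \eqref{eq: hit in terms of rs} directly: write $\qhit{k}{m-1,n}{\del{\lambda}{c}{j}}$ in terms of $\qrook{i}{\del{\lambda}{c}{j}}$, apply \lemref{lem:qrook_linear_1} to the $j$-sum, and recognize the result as $\qhit{k}{m,n}{\lambda}$ up to the stated power of $q$ by re-invoking \eqref{eq: hit in terms of rs}; this trades the $x$-variable bookkeeping for binomial-coefficient manipulations with $\qbinom{i}{k}$, which may be more transparent.
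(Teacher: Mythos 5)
Your proposal follows essentially the same route as the paper's proof: pass to the generating functions $G^{m,n}(x;\lambda)$, identify the right-hand side as $q^n[m-n]\,G^{m,n}(x-1;\lambda)$, expand both sides in the basis $(q^x;q)_i$ (the paper handles the shift via $q^i(q^{x-1};q)_i=(q^i-1)(q^x;q)_{i-1}+(q^x;q)_i$), and match coefficients to reduce everything to \lemref{lem:qrook_linear_1}. One cosmetic slip in your sketch: $q^m-q^{m-i-1}=q^{m-i-1}(q^{i+1}-1)$, not $-q^{m-i-1}(q^i-1)$; this does not affect the strategy, which is correct and matches the paper's.
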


\begin{proof}
The collection of identities for $k=0,\ldots,n$ is equivalent to the following generating function identities 
\begin{multline*}
    \sum_{k=0}^n \sum_{j=1}^{m}  q^{m+n-j - \lambda_j'} \qhit{k}{m-1,n}{\del{\lambda}{c}{j}}q^{xk} \\ = [m-n] \sum_{k=0}^n \qhit{k}{m,n}{\lambda} q^{n-k}q^{xk}
    \sum_{j=1}^m q^{m+n-j-\lambda_j'} G^{m-1,n}(x;\del{\lambda}{c}{j}) = q^n[m-n] G^{m,n}(x-1;\lambda).
\end{multline*}

Using~\eqref{eq:qhit_gen_fun_rooks} and expanding in the $q^x$-polynomial basis $(q^x;q)_k=\prod_{k=0}^{i-1}(1-q^{x+k})$ for $i=0,\ldots,n$, the $G$-identity is equivalent to the following rook identity for every $i$:
\begin{multline*}
\sum_{j=1}^{m}  q^{m+n-j - \lambda_j'} \frac{q^{-|\lambda|+\lambda_j'}}{\qfactorial{m-1-n} }  \sum_{i=0}^n \qrook{i}{\del{\lambda}{c}{j}} \qfactorial{m-1-i} q^{mi-i-\binom{i}{2}} (-1)^i  \prod_{k=0}^{i-1}(1-q^{x+k})  \\
 =q^n[m-n] \frac{q^{-|\lambda|}}{\qfactorial{m-n} }  \sum_{i=0}^n \qrook{i}{\lambda} \qfactorial{m-i} q^{mi-\binom{i}{2}} (-1)^i  \prod_{k=0}^{i-1}(1-q^{x-1+k}),
 \end{multline*}
 which simplifies as
\begin{multline*}  \sum_{j=1}^{m}  q^{m-j }  \sum_{i=0}^n \qrook{i}{\del{\lambda}{c}{j}} \qfactorial{m-1-i} q^{mi-i-\binom{i}{2}} (-1)^i  \prod_{k=0}^{i-1}(1-q^{x+k})  \\
 = \sum_{i=0}^n \qrook{i}{\lambda} \qfactorial{m-i} q^{mi-\binom{i}{2}-i} (-1)^i  q^i\prod_{k=0}^{i-1}(1-q^{x-1+k}).
\end{multline*}
Notice that 
\[
q^i(q^{x-1};q)_i = (q^i-q^{x+i-1})(q^x;q)_{i-1}=(q^i-1)(q^x;q)_{i-1} +(q^x;q)_i,
\]
and so the RHS above expands in the $(q^x;q)_i$ basis as:
\begin{multline*}
   \sum_{i=0}^n \qrook{i}{\lambda} \qfactorial{m-i} q^{mi-\binom{i+1}{2}} (-1)^i ( (q^i-1)(q^x;q)_{i-1} +(q^x;q)_i )\\
= \sum_{i=0}^n (-1)^i (q^x;q)_i \left(    \qrook{i}{\lambda} \qfactorial{m-i} q^{mi-\binom{i+1}{2}}   -  \qrook{i+1}{\lambda} \qfactorial{m-i-1} q^{mi+m-\binom{i+2}{2}} (q^{i+1}-1)\right).
\end{multline*}

Therefore, this is equivalent to the following $q$-rook identity for every $i$:
\begin{multline*}
      \sum_{j=1}^{m}  q^{m-j }   \qrook{i}{\del{\lambda}{c}{j}} \qfactorial{m-1-i} q^{mi-\binom{i+1}{2}} \\ = \left(    \qrook{i}{\lambda} \qfactorial{m-i} q^{mi-\binom{i+1}{2}}   -  \qrook{i+1}{\lambda} \qfactorial{m-i-1} q^{mi+m-\binom{i+2}{2}}  (q^{i+1}-1)\right).
\end{multline*}
Simplifying last expression, we obtain
\begin{align*}
      \sum_{j=1}^{m}  q^{m-j }   \qrook{i}{\del{\lambda}{c}{j}}  =    \qrook{i}{\lambda} [m-i]    -  \qrook{i+1}{\lambda}  (q^m -q^{m-i-1}),
\end{align*}
which is exactly Lemma~\ref{lem:qrook_linear_1}.
\end{proof}

\begin{lemma}\label{lem:qhit_linear_2}
Let $k\leq n\leq m$ be fixed, and $\lambda \subset n\times m$. We have the following $q$-hit identity: 
\[
[m-n+1]\sum_{i=1}^{n}  q^{i-1} \qhit{k}{m,n-1}{\del{\lambda}{r}{i}} =  \qhit{k}{m,n}{\lambda}q^k[n-k] + \qhit{k+1}{m,n}{\lambda}[k+1].
\]
\end{lemma}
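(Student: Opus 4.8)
The strategy is to convert the claimed $q$-hit identity into an equivalent $q$-rook identity, exactly as was done for \lemref{lem:qhit_linear_1}, and then recognize the result as \lemref{lem:qrook_linear_2}. Concretely, I would first introduce the generating function $G^{m,n}(x;\lambda) = \sum_{k} \qhit{k}{m,n}{\lambda}(q^x)^k$ and rewrite the desired family of identities (for $k = 0,\ldots,n$) as the single generating-function identity
\[
[m-n+1]\sum_{i=1}^{n} q^{i-1} G^{m,n-1}(x;\del{\lambda}{r}{i}) \cdot (\text{shift}) = \text{(combination of $G^{m,n}(x;\lambda)$ at $x$ and $x-1$ or similar)},
\]
where the left factor $q^k[n-k]$ and $[k+1]$ on the two $q$-hit terms translate into first-order shift operators in the $q^x$ variable. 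The precise shift bookkeeping is the one subtle point: the factor $q^k[n-k] = ([n]-[k])q^{k}/1$ combined with $[k+1]$ on the next term is exactly the pattern $[n]\qrook{k}{\lambda} - [k]\qrook{k}{\lambda}$ that appears on the RHS of \lemref{lem:qrook_linear_2} once one passes through the change-of-basis \eqref{eq: hit in terms of rs}, so I expect these to match up cleanly.

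Next, using the expansion \eqref{eq:qhit_gen_fun_rooks} of $G^{m,n}(x;\lambda)$ in terms of $q$-rook numbers and the $q$-Pochhammer basis $(q^x;q)_i = \prod_{k=0}^{i-1}(1-q^{x+k})$, I would expand both sides in this basis. On the left side, $G^{m,n-1}(x;\del{\lambda}{r}{i})$ contributes $\qrook{k}{\del{\lambda}{r}{i}}$ summed against $\qfactorial{m-k}$ and appropriate powers of $q$; summing over $i$ with the weight $q^{i-1}$ produces $\sum_i q^{i-1}\qrook{k}{\del{\lambda}{r}{i}}$, which is \emph{not} quite the sum in \lemref{lem:qrook_linear_2} — there the weight is $q^{i-1+\lambda_i}$. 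The extra factor $q^{\lambda_i}$ must come from the $q^{-|\del{\lambda}{r}{i}|}$ versus $q^{-|\lambda|}$ discrepancy in the normalization $q^{-|\lambda|}/\qfactorial{m-n}$ in \eqref{eq:qhit_gen_fun_rooks}: since $|\del{\lambda}{r}{i}| = |\lambda| - \lambda_i$, passing from the $q$-hit normalization of $\del{\lambda}{r}{i}$ to that of $\lambda$ supplies precisely the missing $q^{\lambda_i}$. Tracking this carefully is what makes the left-hand sides agree.

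After matching coefficients of $(q^x;q)_i$ on both sides, the identity should reduce — term by term in $i$ — to
\[
\sum_{i=1}^{n} q^{i-1+\lambda_i}\qrook{k}{\del{\lambda}{r}{i}} = [n]\qrook{k}{\lambda} - [k]\qrook{k}{\lambda},
\]
which is exactly \lemref{lem:qrook_linear_2} (with $\ell$ replaced by $n$, valid since $\del{\lambda}{r}{i}=\lambda$ for $i>\ell(\lambda)$ and the weights telescope correctly, as noted in that lemma's proof). The factors $\qfactorial{m-k}$, $q^{mk-\binom{k}{2}}$, the dimension-reduction factor $[m-n+1]$ reconciling $\qfactorial{m-n}$ with $\qfactorial{m-n+1}$, and the shift-operator bookkeeping translating $q^k[n-k]$ and $[k+1]$ should all cancel uniformly across the $i$-th coefficient, just as the analogous factors did in the proof of \lemref{lem:qhit_linear_1}.

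\textbf{Main obstacle.} The routine but delicate part is the shift-operator algebra: verifying that the RHS combination $\qhit{k}{m,n}{\lambda}q^k[n-k] + \qhit{k+1}{m,n}{\lambda}[k+1]$, after substituting \eqref{eq: hit in terms of rs} and re-expanding in the $(q^x;q)_i$ basis, collapses to $(q^x;q)_i$ times exactly $\qfactorial{m-k}q^{\cdots}([n]-[k])\qrook{k}{\lambda}$ with no residual cross-terms. This is the same phenomenon as the identity $q^i(q^{x-1};q)_i = (q^i-1)(q^x;q)_{i-1} + (q^x;q)_i$ used in \lemref{lem:qhit_linear_1}, and I expect an analogous two-line manipulation to handle it; but getting every power of $q$ and every $q$-factorial to line up is where the care is needed. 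Everything else is bookkeeping that mirrors the already-completed proof of \lemref{lem:qhit_linear_1}.
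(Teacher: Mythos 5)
Your plan is correct and follows essentially the same route as the paper: pass to the generating function $G^{m,n}(x;\lambda)$, expand in the basis $(q^x;q)_i$, and match coefficients to reduce the statement to Lemma~\ref{lem:qrook_linear_2}, with the extra $q^{\lambda_i}$ supplied by $q^{-|\del{\lambda}{r}{i}|}=q^{-|\lambda|+\lambda_i}$ and the prefactor $[m-n+1]$ absorbed into $\qfactorial{m-n+1}$ exactly as you describe. The "shift-operator bookkeeping" you flag as the main obstacle is handled in the paper by the explicit difference operator $\Delta F(x)=\frac{F(x+1)-F(x)}{q^x(q-1)}$, via $q^k[n-k]=[n]-[k]$ and $[k+1]q^{xk}=\Delta q^{x(k+1)}$, which turns the right-hand side into $[n]G^{m,n}(x;\lambda)+(1-q^x)\Delta G^{m,n}(x;\lambda)$ and makes the coefficient matching routine.
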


\begin{proof}
Multiplying both sides by $(q^x)^k$ and summing over all $k$, the identity becomes equivalent to the following generating function identity:
\begin{align*}%\label{gs second linear relation}
 [m-n+1]\sum_{i=1}^{n}  q^{i-1} G^{m,n-1}(x;\del{\lambda}{r}{i}) = \sum_{k=0}^{n-1} \qhit{k}{m,n}{\lambda} (q^{x+1})^k [n-k] + \sum_{k=0}^{n-1} \qhit{k+1}{m,n}{\lambda}[k+1]q^{xk}.   
\end{align*}
To prove the above identity, we will use the following difference operator and its properties:
$$\Delta F(x): = \frac{F(x+1) - F(x)}{q^x(q-1)}, \quad \Delta (q^x)^p = [p] (q^x)^{p-1}, \quad \Delta(q^x,q)_p = -[p] (q^{x+1},q)_{p-1} = -\frac{ [p] (q^x;q)_p}{1-x}. $$
Notice also that $q^k[n-k] = [n]-[k]$ and $[k+1]q^{xk} = \Delta q^{x (k+1)}$. Therefore, we have
\begin{align*}
    \sum_{k=0}^{n-1} \qhit{k}{m,n}{\lambda} \left(q^{x+1}\right)^k [n-k]  &= \sum_{k=0}^{n-1} \qhit{k}{m,n}{\lambda} q^{xk} [n] - \sum_{k=0}^{n-1} \qhit{k}{m,n}{\lambda} [k] q^{xk}  
 \\ &= [n] G^{m,n}(x; \lambda)  - q^x \Delta G^{m,n}(x;\lambda)
\intertext{and}
    \sum_{k=0}^{n-1} \qhit{k+1}{m,n}{\lambda}[k+1]q^{xk}  &= \Delta G^{m,n}(x;\lambda).
\end{align*}
Thus, the generating function identity is equivalent to
\begin{align*}
    [m-n+1] \sum_{i=1}^n q^{i-1} G^{m,n-1}(x; \del{\lambda}{r}{i}) = [n] G^{m,n}(x; \lambda)  - q^x \Delta G^{m,n}(x;\lambda) + \Delta G^{m,n}(x;\lambda). 
\end{align*}
Using the formula 
\[
(1-q^x) \Delta G^{m,n}(x;\lambda) = \frac{q^{-|\lambda|}}{[m-n]!}\sum_{k=0}^n [k]\qrook{k}{\lambda} [m-k]! q^{mk-\binom{k}{2}} (-1)^{k-1}\underbrace{(1-q^x)(q^{x+1},q)_{k-1}}_{(q^x;q)_k},
\]
we can express the generating function identity in terms of $q$-rook numbers generating function as:
\begin{multline*}
    [m-n+1] \sum_{i=1}^n q^{i-1}  \frac{q^{-|\lambda| +\lambda_i}}{[m-n+1]!}\sum_{k=0}^n \qrook{k}{\del{\lambda}{r}{i}}[m-k]! q^{mk-\binom{k}{2}} (-1)^{k}(q^x,q)_{k} \\
    = [n] \frac{q^{-|\lambda|}}{[m-n]!} \sum_k \qrook{k}{\lambda} [m-k]! q^{mk-\binom{k}{2}} (-1)^k (q^x,q)_k \\ +  
    \frac{q^{-|\lambda|}}{[m-n]!}\sum_{k=0}^n [k]\qrook{k}{\lambda} [m-k]! q^{mk-\binom{k}{2}} (-1)^{k-1}(q^x;q)_k.
\end{multline*}
For each $k$,  the coefficients at $(q^x;q)_k$ coincide, after canceling common factors, and reducing then to the $q$-rook identities from Lemma~\ref{lem:qrook_linear_2}
\begin{align*}
    \sum_{i=1}^n q^{i-1 +\lambda_i} \qrook{k}{\del{\lambda}{r}{i}}
    = [n] \qrook{k}{\lambda} - [k]\qrook{k}{\lambda}.
\end{align*}
\end{proof}

\begin{lemma}\label{lem:qhit_quadratic}
We have the  following identity
\[
 q^k\sum_{(i,j)\in \lambda} q^{i+(m-j-\lambda'_j)} \qhit{k}{m-1,n-1}{\lambda/(i,j)} = [k+1] \qhit{k+1}{m,n}{\lambda}.
\]

\end{lemma}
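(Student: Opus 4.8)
The plan is to mirror the proofs of Lemmas~\ref{lem:qhit_linear_1} and~\ref{lem:qhit_linear_2}: pass to the generating function $G^{m,n}(x;\lambda)=\sum_{k}\qhit{k}{m,n}{\lambda}(q^x)^k$, re-express everything in the basis $\{(q^x;q)_k\}_{k=0}^n$ through~\eqref{eq:qhit_gen_fun_rooks}, and reduce the claim coefficient-by-coefficient to the $q$-rook identity of Lemma~\ref{lem:qrook_identity1}. Concretely, I would first multiply the asserted identity by $(q^x)^k$ and sum over $k\ge 0$. The factor $q^k$ on the left merges with $(q^x)^k$ into $(q^{x+1})^k$, so the left-hand side becomes $\sum_{(i,j)\in\lambda}q^{i+m-j-\lambda'_j}\,G^{m-1,n-1}(x+1;\lambda/(i,j))$; and since $\Delta(q^x)^k=[k](q^x)^{k-1}$ for the difference operator $\Delta F(x)=(F(x+1)-F(x))/(q^x(q-1))$ used in the proof of Lemma~\ref{lem:qhit_linear_2}, the right-hand side becomes $\Delta G^{m,n}(x;\lambda)$. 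So it suffices to prove the generating-function identity
\[
\sum_{(i,j)\in\lambda}q^{i+m-j-\lambda'_j}\,G^{m-1,n-1}(x+1;\lambda/(i,j))=\Delta G^{m,n}(x;\lambda).
\]

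Next I would multiply both sides by $(1-q^x)$ and expand in the $(q^x;q)_k$ basis. On the right, the proof of Lemma~\ref{lem:qhit_linear_2} already records
\[
(1-q^x)\Delta G^{m,n}(x;\lambda)=\frac{q^{-|\lambda|}}{\qfactorial{m-n}}\sum_{k}[k]\qrook{k}{\lambda}\qfactorial{m-k}q^{mk-\binom{k}{2}}(-1)^{k-1}(q^x;q)_k .
\]
On the left, \eqref{eq:qhit_gen_fun_rooks} writes $G^{m-1,n-1}(x+1;\mu)$ in terms of $\qrook{i}{\mu}$ and the factors $(q^{x+1};q)_i$; multiplying by $(1-q^x)$ and using $(1-q^x)(q^{x+1};q)_i=(q^x;q)_{i+1}$, then re-indexing $k=i+1$, puts the left-hand side in the same basis. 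Using $|\lambda/(i,j)|=|\lambda|-\lambda_i-\lambda'_j+1$, the prefactor $q^{i+m-j-\lambda'_j}q^{-|\lambda/(i,j)|}$ collapses to $q^{-|\lambda|}q^{m-1}q^{i-j+\lambda_i}$, so the coefficient of $(q^x;q)_k$ on the left is a fixed $q$-power times $\qfactorial{m-k}$ times $\sum_{(i,j)\in\lambda}q^{i-j+\lambda_i}\qrook{k-1}{\lambda/(i,j)}$. By Lemma~\ref{lem:qrook_identity1} with $k$ replaced by $k-1$ this sum equals $q[k]\qrook{k}{\lambda}$, and comparing with the right-hand coefficient the whole identity reduces, for each $k$, to the numerical check $m+(m-1)(k-1)-\binom{k-1}{2}=mk-\binom{k}{2}$, which is just the Pascal-type identity $\binom{k}{2}-\binom{k-1}{2}=k-1$. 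Linear independence of the $(q^x;q)_k$ then yields the generating-function identity, hence the lemma.

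I expect the only real obstacle to be careful bookkeeping: tracking the three $q$-exponents (the $q^{i+m-j-\lambda'_j}$ from the statement, the $q^{-|\lambda/(i,j)|}$ coming from~\eqref{eq:qhit_gen_fun_rooks}, and the $q^{(m-1)(k-1)-\binom{k-1}{2}}$ produced by the expansion), getting the $i\mapsto k-1$ re-indexing right, and double-checking the small identity $(1-q^x)(q^{x+1};q)_i=(q^x;q)_{i+1}$ that drives the shift $x\mapsto x+1$. All the genuinely combinatorial content is already packaged in Lemma~\ref{lem:qrook_identity1}; the remainder is the by-now-routine dictionary between the $q$-hit and $q$-rook generating functions used throughout this section.
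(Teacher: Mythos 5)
Your argument is correct and is essentially the paper's proof in generating-function clothing: both reduce the claim, via the change of basis \eqref{eq: hit in terms of rs}/\eqref{eq:qhit_gen_fun_rooks} between $q$-hit and $q$-rook numbers, to the single $q$-rook identity of Lemma~\ref{lem:qrook_identity1}, and all your intermediate computations (the shift $x\mapsto x+1$, the identity $(1-q^x)(q^{x+1};q)_i=(q^x;q)_{i+1}$, the collapse of the prefactor using $|\lambda/(i,j)|=|\lambda|-\lambda_i-\lambda'_j+1$, and the final exponent check) are accurate. The paper just carries out the same reduction by direct manipulation of the coefficient formula for $[k+1]\qhit{k+1}{m,n}{\lambda}$ rather than packaging it through $G^{m,n}$ and the difference operator $\Delta$.
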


\begin{proof}
We show that this result follows from Lemma~\ref{lem:qrook_identity1}.
By~\eqref{eq: hit in terms of rs},
\begin{align*}
    [k+1] \qhit{k+1}{m,n}{\lambda} &= [k+1] \frac{q^{\binom{k+1}{2} - |\lambda|}}{[m-n]!} \sum_{t=k+1}^n \qrook{t}{\lambda} [m-t]! \qbinom{t}{k+1} (-1)^{t+k+1} q^{mt-\binom{t}{2}} \\
    &=  \frac{q^{\binom{k+1}{2} - |\lambda|}}{[m-n]!} \sum_{t'=k}^{n-1} [t'+1]\qrook{t'+1}{\lambda} [m-t'-1]! \qbinom{t'}{k} (-1)^{t'+k} q^{mt'+m-\binom{t'+1}{2}},
\end{align*}
where we reindexed the sum with $t'=t-1$. Next, we apply Lemma~\ref{lem:qrook_identity1} and exchange sums to obtain
\begin{multline*}
   [k+1] \qhit{k+1}{m,n}{\lambda} = \frac{q^{\binom{k+1}{2} - |\lambda|}}{[m-n]!} \sum_{(i,j)\in \lambda} q^{m-j+\lambda_i} \sum_{t'=k}^{n-1} \qrook{t'}{\lambda/(i,j)} [m-t'-1]! \qbinom{t'}{k} (-1)^{t'+k} q^{mt'+m-\binom{t'+1}{2}}\\
   =q^k \sum_{(i,j) \in \lambda} q^{i+m-j-\lambda'_j} \frac{q^{\binom{k}{2}}-|\lambda/(i,j)|}{[m-n]!} \sum_{t'=k}^{n-1} \qrook{t'}{\lambda/(i,j)} [m-t'-1]! \qbinom{t'}{k} (-1)^{t'+k} q^{(m-1)t'-\binom{t'}{2}}\\
   =q^k \sum_{(i,j) \in \lambda} q^{i+m-j-\lambda'_j} \qhit{k}{m-1,n-1}{\lambda/(i,j)},
\end{multline*}
where we also used that $|\lambda/(i,j)| = |\lambda|-\lambda_i-\lambda'_j+1$ and~\eqref{eq: hit in terms of rs} for $\qhit{k}{m-1,n-1}{\lambda/(i,j)}$.
\end{proof}

\section{The Guay-Paquet $q$-hit identity} \label{sec: pf of MGP}

In this section we give our main result, a proof of Theorem~\ref{thm:qhitCSFabelian} using the $q$-rook theory identities from Section~\ref{sec:new_qrooks}. We start by giving an example of this elegant identity.

\begin{example} \label{ex:tcase}
For $\lambda = (2,1)$ inside a $2\times 3$ board, looking at Figure~\ref{fig:hitexB}, we see that
$\qhit{0}{3,2}{\lambda}=q^0=1$, $\qhit{1}{3,2}{\lambda}=2q+2q^2$,
$\qhit{2}{3,2}{\lambda}=q^3$. One can verify that  
\[
\csft{21}{q} = \dfrac{1}{[3][2]}  \left(
  \csft{3^0}{q} + (2q^2+2q)\csft{3^1}{q}+q^3\csft{3^2}{q}\right). 
\]
\end{example}

\begin{remark}
By Lemma~\ref{lem:q-hit for m,m}, the identity in Theorem~\ref{thm:qhitCSFabelian} can be rewritten as 
\begin{equation} \label{eq: MGP relation square boards}
\csft{\lambda}{q} \,=\, \dfrac{1}{\qfactorial{m}} \sum_{j=0}^{n}
\qhit{j}{m}{\lambda} \cdot \csft{m^j}{q}.
\end{equation}
\end{remark}

\begin{figure}
     \centering
     \begin{subfigure}[b]{0.23\textwidth}
     \raisebox{25pt}{\includegraphics{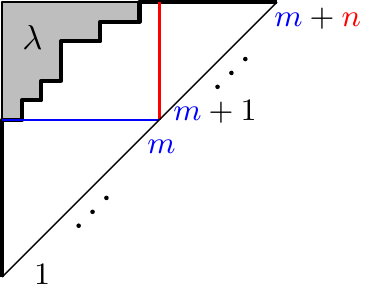}}
     \caption{}
    \label{fig:hitexA}
     \end{subfigure}
     \begin{subfigure}[b]{0.7\textwidth}
         \centering
         \includegraphics{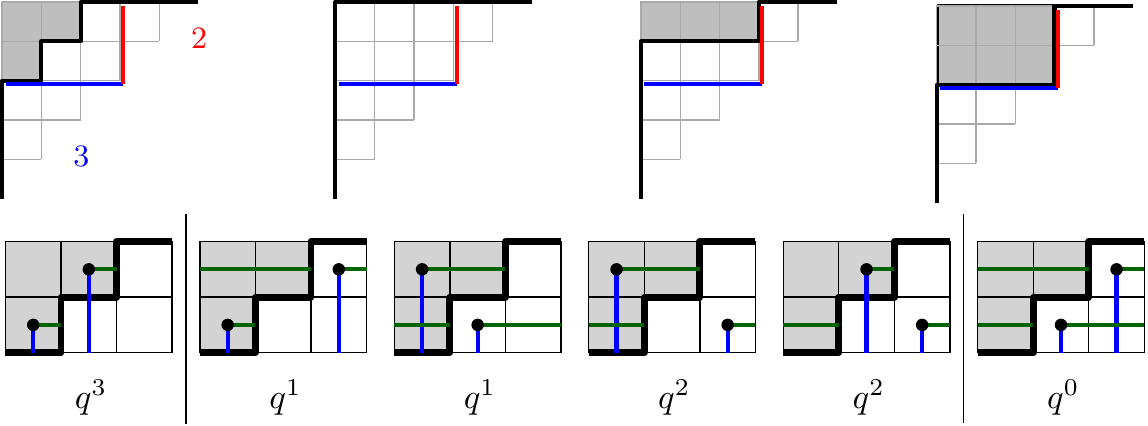}
         \caption{}
         \label{fig:hitexB}
     \end{subfigure}
    \caption{(A) an abelian Dyck path $\lambda$ inside an $n\times m$ board. (B) Top: the paths for $\lambda=(2,1)$, and for the rectangles $3^0,3^1,3^2$ inside a $2\times 3$ board. (B) Bottom: the six placements of $2$ rooks in $2\times 3$
  divided by how many rooks ``hit''  $(2,1)$ (in gray) and the
  associated statistic  to each rook placement.} 
\label{fig:hitex}
\end{figure}

Our main self-contained proof is an inductive argument, where the induction is applied both on the size $m+n$ of the graph, and also on the number of variables in the symmetric polynomials.
Namely, we consider the chromatic symmetric functions in variables $x_1,\ldots,x_M$ and each monomial appearing as a particular assignment of the variables (i.e. colors) to the vertices. That is, the vertices $1,\ldots,N=m+n$ are colored $\{1,\ldots,M\}$. For simplicity, we denote by $X_\lambda^N(M)$ the chromatic symmetric polynomial $X_{G(\lambda)}(x_1,\ldots,x_M;q)$ where the graph $G(\lambda)$ has  $N$ vertices. We will use induction on both $M$ and $n,\ m$ when necessary, driven by the following recursion.

Recall that $\lambda/(i,j)$ is the partition obtained by removing the $i$th row and the $j$th column from $\lambda$. Moreover, we denote by $\lambda/i$ the partitions obtained by removing from $\lambda$ the $i$th column, for $i=1,\ldots,m$, or the $(m+n-i+1)$th row, for $i=m+1,\ldots,m+n$.

\begin{lemma}\label{lem:X_recursion}
For $\lambda \subset n \times m$ we have the following recursion
\begin{align*}
    X_{\lambda}^{m+n}(M) =& X_{\lambda}^{m+n}(M-1) 
    + x_M\sum_{i=1}^{m+n} q^{m+n-i - \lambda_i'} X^{m+n-1}_{\lambda/i}(M-1) \\
    &+ x_M^2\sum_{(i,j) \in \lambda} q^{ i-1 + (m+n-j-\lambda_j')} X^{m+n-2}_{\lambda/(i,j)}(M-1).
\end{align*}
\end{lemma}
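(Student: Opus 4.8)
The plan is to prove the recursion by isolating the color assigned to the last vertex $m+n$ of the graph $G(\lambda)$ (note $\lambda \subset n\times m$ so $G(\lambda)$ has $m+n$ vertices) and splitting the sum over proper colorings $\kappa \colon \{1,\dots,m+n\}\to\{1,\dots,M\}$ according to how many of the vertices adjacent to $m+n$ receive the color $M$. First I would observe that a proper coloring of $G(\lambda)$ using colors in $\{1,\dots,M\}$ either does not use the color $M$ on vertex $m+n$ at all, in which case, by deleting vertex $m+n$ and its incident edges we are essentially in $G$ on the remaining vertices --- but one must be careful: removing the last vertex of $G(\lambda)$ is not literally $G(\lambda)$ on $m+n-1$ vertices, so the clean bookkeeping is to instead condition on whether color $M$ is used \emph{somewhere}. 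The cleanest route: write $X_\lambda^{m+n}(M)= X_\lambda^{m+n}(M-1) + \big(\text{terms where color }M\text{ is actually used}\big)$, and then argue that in any proper coloring the set of vertices colored $M$ forms an independent set of $G(\lambda)$, which (because the edges of $G(\lambda)$ are exactly the cells below an abelian Dyck path, i.e. intervals) has a very restricted structure.

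\textbf{Key combinatorial input.}
The main step is to identify, for abelian $\lambda=\n^m w(\lambda)\e^n$, the independent sets of $G(\lambda)$ of size $0$, $1$, $2$ and to check that no independent set has size $\geq 3$ contributes --- more precisely, I would show that the recursion follows from recording the independent set $S=\kappa^{-1}(M)$ and matching it to a column/row deletion (for $|S|=1$) or a cell contraction $\lambda/(i,j)$ (for $|S|=2$). The substitution $x_M^{|S|}$ accounts for the color-$M$ monomial, while the rest of the coloring $\kappa$ restricted to $\{1,\dots,m+n\}\setminus S$ with colors in $\{1,\dots,M-1\}$ must be a proper coloring of the induced subgraph, and crucially the number of ascents $\asc(\kappa)$ splits as (ascents among edges incident to $S$, which are forced since $M$ is the largest color, hence each such edge $\{v_i,v_j\}$ with $i<j$ is an ascent iff $v_j\in S$) plus (ascents of the restricted coloring). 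A careful count of edges incident to a single deleted column/row of the board $\lambda$ versus a deleted cell gives exactly the exponents $m+n-i-\lambda_i'$ and $i-1+(m+n-j-\lambda_j')$. The induced subgraph on the complement of $S$ must be shown to be isomorphic to $G(\lambda/i)$ (resp. $G(\lambda/(i,j))$) on $m+n-1$ (resp. $m+n-2$) vertices; this is where the abelian/interval structure is used, since deleting a column or row of the board of $\lambda$ corresponds exactly to deleting a vertex of the incomparability graph and re-indexing, and $\lambda/i$, $\lambda/(i,j)$ are defined precisely to make this work.

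\textbf{Main obstacle.}
The hard part will be the ascent bookkeeping and verifying that the power of $q$ attached to each term is exactly $q^{m+n-i-\lambda_i'}$ (single deletion) and $q^{i-1+(m+n-j-\lambda_j')}$ (double deletion). One has to carefully count how many edges $\{u,w\}$ of $G(\lambda)$ with the larger endpoint in $S$ there are; for a single vertex $v$ this is the number of neighbors of $v$ in $G(\lambda)$ that come before it plus the number of edges to later vertices that become ascents, and these counts must be reconciled with the column/row index $i$ and the conjugate-part sizes $\lambda_i'$. For the quadratic term there is the additional subtlety that when $|S|=2$, say $S=\{u,w\}$ with $u<w$, the two vertices are non-adjacent (independent set), the edges from $\{1,\dots,m+n\}$ into $\{u,w\}$ are counted without double-counting, and the extra "$i-1$" appears from the relative position of the two deleted indices. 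I would handle this by a direct case analysis on the position of the cell $(i,j)$ in the Dyck-path picture (whether $i\leq m$, i.e.\ a "column" vertex, or not), using \figref{fig: path to graph} as a guide, and checking the two small cases $|S|\in\{1,2\}$ exhaust all possibilities because any three vertices of $G(\lambda)$ in the abelian case contain an edge. A small amount of care is also needed at the boundary ($\lambda$ empty, or $\lambda/i$ not a genuine partition) but these degenerate gracefully since the corresponding $q$-powers and graphs are still well-defined.
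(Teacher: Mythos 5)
Your proposal is correct and follows essentially the same route as the paper: condition on the color class $S=\kappa^{-1}(M)$, note that $G(\lambda)$ is covered by the two cliques $\{1,\dots,m\}$ and $\{m+1,\dots,m+n\}$ so $|S|\le 2$, observe that an edge incident to $S$ is an ascent exactly when its larger-indexed endpoint lies in $S$ (since $M$ is the maximal color), and identify the induced subgraph on the complement of $S$ with $G(\lambda/i)$ or $G(\lambda/(i,j))$. The only nit is that the "number of edges to later vertices that become ascents" you worry about reconciling is zero by your own earlier observation, so the $q$-exponent is just the count of smaller-indexed neighbors of the color-$M$ vertices, exactly as in the paper.
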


\begin{proof}
In the abelian case, i.e. when $\lambda \subset n\times m$, the graph $G(\lambda)$ consists of a clique with vertices $\{1,\ldots, m\}$, a clique with vertices $\{m+1,\ldots,m+n\}$ and a bipartite graph in between with edges $(i,m+j)$ for each $(i,j)$ in $\{1,\ldots,n\}\times \{1,\ldots,m\} \setminus \lambda$ (the complement of $\lambda$ in $n\times m$). Therefore, a coloring of this graph has at most two vertices of the same color. If the colors used are in $\{1,\ldots,M\}$, there are three cases for the appearances of color $M$:
\begin{compactitem}
\item[\textbf{1.}] No vertex is colored $M$, this term contributes  $X_\lambda^{m+n-1}(M-1)$ to $X_\lambda^{m+n}(M)$.
\item[\textbf{2.}] Only one vertex is colored $M$. Suppose this vertex is in column $j$ (from left) and row $i=N-j$ (from top to bottom). It creates ascents with all vertices above it  but not in $\lambda$, giving $N-j-\lambda_j'$ ascents. Deleting this vertex corresponds to deleting its row and column (only one would be a row/column of $\lambda$) and we get a graph on $N-1$ vertices with shape $\lambda/j$ (deleting either row $N-j$ from $\lambda$, or  column $j$ from $\lambda$). The remaining vertices and their ascents are not affected by this, so all their possible colorings contribute 
    $$x_M \sum_{j=1}^{N} q^{N-j-\lambda'_j}  X_{\lambda/j}^{m+n-1}(M-1).$$
    
\item[\textbf{3.}] Two vertices are colored $M$. Suppose that the lower one is in column $j$ and the higher  one is in row $i$ (counting from the top), necessarily with $(i,j) \in \lambda$. The lower vertex contributes $N-j - \lambda_j'$ ascents with the ``visible'' vertices above it. The higher vertex contributes $i-1$ ascents, the number of vertices above it, giving a total of $N-j - \lambda_j'+ i-1$ ascents. We can  remove these two vertices, by removing row $i$ and column $j$ from $\lambda$ and decreasing $N$ by 2. Again, the remaining vertices and their ascents are not affected, so
these terms contribute
$$x_M^2\sum_{(i,j)} q^{N-j-\lambda'_j + i-1 }  X^{m+n-2}_{\lambda/(i,j)}(M-1).$$
\end{compactitem}
\end{proof}
For rectangular shapes $\lambda = (m^k)$, Lemma~\ref{lem:X_recursion} simplifies to give  the following recursive expansion.
\begin{lemma}\label{lem:X_rect_recursion}
\begin{align*}
 X^{m+n}_{m^k}(M) &= X^{m+n}_{m^k}(M-1) + x_M \left(q^{n-k}[m] X^{m+n-1}_{(m-1)^k}(M-1) + [k]X^{m+n-1}_{m^{k-1}}(M-1) \right. \\  &\left. + q^k[n-k] X^{m+n-1}_{m^k}(M-1) \right)+
x_M^2 q^{n-k}[k][m] X^{m+n-2}_{(m-1)^{k-1}}(M-1). 
\end{align*}
\end{lemma}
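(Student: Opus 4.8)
The plan is to specialize the general recursion in \lemref{lem:X_recursion} to the rectangular shape $\lambda = (m^k)$ and simplify each of the three sums by exploiting the fact that all columns of $m^k$ have height $k$ and all rows have length $m$. First I would handle the degree-zero term, which is unchanged: it contributes $X^{m+n}_{m^k}(M-1)$ directly.

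\textbf{The linear term.} For the sum $\sum_{i=1}^{m+n} q^{m+n-i-\lambda'_i} X^{m+n-1}_{\lambda/i}(M-1)$, I would split the index $i$ into the three natural ranges. For $i=1,\ldots,m$ (removing the $i$th column), we have $\lambda'_i = k$ for every such $i$ (since $\lambda = m^k$ has all its first $m$ columns of height $k$), and removing any one of these columns yields the shape $(m-1)^k$; so this block of the sum is $X^{m+n-1}_{(m-1)^k}(M-1)\sum_{i=1}^m q^{m+n-i-k} = q^{n-k}[m]\,X^{m+n-1}_{(m-1)^k}(M-1)$. For $i=m+1,\ldots,m+n$ (removing the $(m+n-i+1)$th row, i.e. one of the $n$ rows of the ambient board), one must distinguish whether the removed row is one of the $k$ rows of $\lambda$ or one of the $n-k$ empty rows below. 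Removing one of the $k$ nonempty rows gives $m^{k-1}$ and, tracking the exponent $q^{m+n-i-\lambda'_i}$ with $\lambda'_i = 0$ in this range, the $k$ such terms sum to $[k]\,X^{m+n-1}_{m^{k-1}}(M-1)$ after the geometric summation; removing one of the remaining $n-k$ empty rows leaves $m^k$ unchanged and the corresponding terms sum to $q^k[n-k]\,X^{m+n-1}_{m^k}(M-1)$. This reproduces exactly the three terms in the bracket multiplied by $x_M$.

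\textbf{The quadratic term.} For $\sum_{(i,j)\in\lambda} q^{i-1+(m+n-j-\lambda'_j)} X^{m+n-2}_{\lambda/(i,j)}(M-1)$, since $\lambda = m^k$ the cell $(i,j)$ ranges over $1\le i\le k$, $1\le j\le m$; every column has $\lambda'_j = k$, and removing row $i$ and column $j$ always produces $(m-1)^{k-1}$ regardless of which cell was chosen. Hence the sum factors as $X^{m+n-2}_{(m-1)^{k-1}}(M-1)\cdot\big(\sum_{i=1}^k q^{i-1}\big)\big(\sum_{j=1}^m q^{m+n-j-k}\big) = X^{m+n-2}_{(m-1)^{k-1}}(M-1)\cdot[k]\cdot q^{n-k}[m]$, which is precisely the last term with coefficient $x_M^2$.

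\textbf{Main obstacle.} The computation is entirely routine once the index ranges in \lemref{lem:X_recursion} are correctly interpreted; the only point requiring care is the bookkeeping in the linear term, namely correctly matching the meaning of $\lambda'_i$ (and the ``$\lambda/i$'' convention for $i>m$, which removes rows of the ambient board counted from the bottom) so that the $k$ nonempty rows and the $n-k$ empty rows are separated properly and their geometric sums $\sum q^{\bullet}$ telescope into $[k]$ and $q^k[n-k]$ respectively. Assembling the three contributions with the factors $1$, $x_M$, $x_M^2$ then yields the stated identity.
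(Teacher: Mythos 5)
Your proposal is correct and follows essentially the same route as the paper: specialize Lemma~\ref{lem:X_recursion} to $\lambda=m^k$, split the linear sum into the column range $i\in[1,m]$, the empty rows $i\in[m+1,m+n-k]$, and the rows of $\lambda$ itself $i\in[m+n-k+1,m+n]$, and evaluate the resulting geometric sums as $q^{n-k}[m]$, $q^k[n-k]$, and $[k]$, with the quadratic sum factoring into $q^{n-k}[k][m]$. The bookkeeping of $\lambda'_i$ and the $\lambda/i$ convention, which you correctly flag as the only delicate point, is handled exactly as in the paper's argument.
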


\begin{proof}
This follows by carefully applying Lemma~\ref{lem:X_recursion} to the shape $\lambda = m^k$. If $i\in[1,m]$ then $\lambda/i = (m-1)^k$ and $N-i - \lambda_i' = m+n-i -k$. If $i\in [m+1,m+n-k]$, then it is not a row or column of $\lambda$ and we have $\lambda/i = m^k$ with $m+n-i-0$ ascents.  If $i \in[m+n-k+1,m+n]$ then $\lambda/i = m^{k-1}$ and there are $m+n-i$ ascents. For $(i,j) \in \lambda$ we always have $\lambda/(i,j) = (m-1)^{k-1}$ and the ascents are $m+n-j-k + i-1$. Summing over the row/column indices in the given intervals we get the desired ascent statistics as the given $q$-integers.
\end{proof}

\begin{proof}[Proof of Theorem~\ref{thm:qhitCSFabelian}]
Translating Theorem~\ref{thm:qhitCSFabelian} into chromatic symmetric polynomials, we want to prove that for every $M$ we have
\begin{align}\label{eq:GPpolynomials}
X_\lambda^{m+n}(M) =\dfrac{1}{\qfalling{m}{n}} \sum_{j=0}^{n}
\qhit{j}{m,n}{\lambda} \cdot X_{m^j}^{m+n}(M).
\end{align}
If $M < m$ then both $X_\lambda^{m+n}(M)=0$ and $X_{m^j}^{m+n}(M)=0$ since there is no proper coloring of the lower complete graph on $m$ vertices. Otherwise, if $M=1$  then $m=1$ and so $n=1$ and if $\lambda= \emptyset$ then  $X_{\lambda}^{2}(M) \neq 0$ with $X_{m^0}(1) = 0$ and $\qhit{1}{1,1}{\emptyset} =0$, and the identity is satisfied. If $\lambda=(1)$ then $X^{2}_{(1)}(1) = x_1^2$, $[m]_n=[1]_1=1$, $\qhit{0}{1,1}{(1)}=0$ and $\qhit{1}{1,1}{(1)} = 1$ with $m^1=(1)$, and the identity is again trivially satisfied. This completes the initial conditions for the recursion in Lemma~\ref{lem:X_recursion}.

We will prove identity~\eqref{eq:GPpolynomials} by induction on $M$. As the argument above shows, the identity is trivially satisfied for $M=1$. Suppose that~\eqref{eq:GPpolynomials} is true for $M-1$, every $m\geq n$, and every shape $\lambda \subset m^n$. Naturally, if $M<m$ then both sides become trivially 0.

The rest of the proof is as follows. We apply Lemma~\ref{lem:X_rect_recursion} to each term $X_{m^j}(M)$ appearing in the RHS of~\eqref{eq:GPpolynomials}. We also apply   Lemma~\ref{lem:X_recursion} to the LHS of~\eqref{eq:GPpolynomials}, and the inductive hypothesis  to  $X_{\mu}^{\star}(M-1)$ for all appearing terms, where $\star$ means any value $\leq m+n$.

Applying Lemma~\ref{lem:X_rect_recursion} to each term $X_{m^j}(M)$ appearing in the RHS in~\eqref{eq:GPpolynomials}, we obtain
\begin{multline*}
    \dfrac{1}{\qfalling{m}{n}} \sum_{j=0}^{n}
\qhit{j}{m,n}{\lambda} \cdot X_{m^j}^{m+n}(M) =  \frac{1}{[m]_n} \sum_k \qhit{k}{m,n}{\lambda} \left( X^{m+n}_{m^k}(M-1) + x_M \left(q^{n-k}[m] X^{m+n-1}_{(m-1)^k}(M-1) \right.\right. \\  \left. \left. +[k]X^{m+n-1}_{m^{k-1}}(M-1) + q^k[n-k] X^{m+n-1}_{m^k}(M-1) \right)+
x_M^2 q^{n-k}[k][m] X^{m+n-2}_{(m-1)^{k-1}}(M-1) \right).
\end{multline*}

We now apply Lemma~\ref{lem:X_recursion} to the LHS in~\eqref{eq:GPpolynomials}. We split the sum in the linear $x_M$ term into $j \in [1,m]$, when $\lambda/j = \del{\lambda}{c}{j}\subset (m-1)\times n$ (removing column $j$), and then $i=m+n+1-j \in [1,n]$, where $\lambda_j'=0$ and $m+n-j - \lambda_j'=i-1$ and $\lambda/i \subset (n-1) \times m$. We then apply the inductive hypothesis to each $X_\mu^\star(M-1)$ appearing with the corresponding rectangular frame, obtaining 
\begin{multline*}
     X_{\lambda}^{m+n}(M)  
     =X_{\lambda}^{m+n}(M-1) \\
    + x_M\sum_{j=1}^{m+n} q^{m+n-j - \lambda_j'} X^{m+n-1}_{\lambda/j}(M-1) 
    + x_M^2\sum_{(i,j) \in \lambda} q^{ i-1 + (m+n-j-\lambda_j')} X^{m+n-2}_{\lambda/(i,j)}(M-1) \\
\end{multline*}
\begin{multline*}
    =   \frac{1}{[m]_n} \sum_k \qhit{k}{m,n}{\lambda} X^{m+n}_{m^k}(M-1) 
    + x_M\sum_k \frac{ \sum_{j=1}^{m} q^{m+n-j - \lambda_j'}  \qhit{k}{m-1,n}{\del{\lambda}{c}{j}}  }{[m-1]_n}  X^{m+n-1}_{(m-1)^k}(M-1) \\
    + x_M \sum_k \frac{ \sum_{i=1}^n q^{i-1}  \qhit{k}{m,n-1}{\lambda/i} }{[m]_{n-1}} X^{m+n-1}_{m^k}(M-1)\\ 
    + x_M^2 \sum_k \frac{ \sum_{(i,j) \in \lambda} q^{ i-1 + (m+n-j-\lambda_j')}  \qhit{k}{m-1,n-1}{\lambda/(i,j)} }{[m-1]_{n-1}} X^{m+n-2}_{(m-1)^k}(M-1).
\end{multline*}

Applying Lemmas~\ref{lem:qhit_linear_1},~\ref{lem:qhit_linear_2} and~\ref{lem:qhit_quadratic} to the sums of $q$-hit numbers above, we get that
\begin{align*}
     X_{\lambda}^{m+n}(M) %= \\
    &=   \frac{1}{[m]_n} \sum_k \qhit{k}{m,n}{\lambda} X^{m+n}_{m^k}(M-1) 
    + x_M\sum_k \frac{ q^{n-k} [m-n] \qhit{k}{m,n}{\lambda}  }{[m-1]_n}  X^{m+n-1}_{(m-1)^k}(M-1) \\
    &+ x_M \sum_k \frac{ q^k[n-k] \qhit{k}{m,n}{\lambda} + [k+1] \qhit{k+1}{m,n}{\lambda} }{[m-n+1] [m]_{n-1}} X^{m+n-1}_{m^k}(M-1) \\
     &+ x_M^2 \sum_k \frac{ q^{n-1-k}[k+1] \qhit{k+1}{m,n}{\lambda} }{[m-1]_{n-1}} X^{m+n-2}_{(m-1)^k}(M-1).
\end{align*}
Simplifying the factors $\frac{ [m-1]_n}{[m-n]} = \frac{ [m]_n}{[m]}$ , $[m-n+1][m]_{n-1} = [m]_n$ and $[m-1]_{n-1} = \frac{[m]_n}{[m]}$, and grouping the terms with $\qhit{k}{m,n}{\lambda}$, we can rewrite the above identity as
\begin{multline*}
     X_{\lambda}^{m+n}(M) 
    =   \frac{1}{[m]_n} \sum_k \qhit{k}{m,n}{\lambda} \times \left( X^{m+n}_{m^k}(M-1) 
    + x_M  q^{n-k} [m]  X^{m+n-1}_{(m-1)^k}(M-1) \right. \\
\left.    + x_M ( q^k[n-k] X^{m+n-1}_{m^k}(M-1)  + [k] X^{m+n-1}_{m^{k-1}}(M-1) )
    + x_M^2  q^{n-k}[k] [m]  X^{m+n-2}_{(m-1)^{k-1}}(M-1) \right)\\
    = \frac{1}{[m]_n} \sum_k \qhit{k}{m,n}{\lambda} X_{m^k}^{m+n}(M),
\end{multline*}
where we recognized the sum in the parentheses as the RHS of the recursion for rectangular CSF, namely Lemma~\ref{lem:X_rect_recursion}. This completes the induction. 

\end{proof}

\section{The Abreu--Nigro expansion in the elementary basis} \label{sec: MGP to AN}

In this section we show that Guay-Paquet's identity (Theorem~\ref{thm:qhitCSFabelian}) is equivalent to Abreu--Nigro's identity (Theorem~\ref{AN:generalLambda}). 
We start by giving a proof of Abreu--Nigro's identity for rectangular shapes. 
\begin{lemma}[Abreu--Nigro's formula for rectangles]\label{lem:ANrectangle}
\begin{align*}
    \csft{m^k}{q} &=\qfactorial{k}\qhit{k}{m+n-k}{m^k}\cdot e_{m+n-k,k} 
    + \sum_{r=0}^{k-1}q^r \qfactorial{r}\qnumber{m+n-2r} \qhit{r}{m+n-r-1}{m^k} \cdot e_{m+n-r,r}.
\end{align*}
\end{lemma}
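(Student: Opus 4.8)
The plan is to prove the statement in the equivalent form of a polynomial identity in a fixed number $M$ of variables and to induct on $M$, driving the induction by the rectangular recursion of \lemref{lem:X_rect_recursion}. Write $e_{a,b}=e_ae_b$ and let
\[
F_{m,n,k}:=\qfactorial{k}\,\qhit{k}{m+n-k}{m^k}\,e_{m+n-k}e_k+\sum_{r=0}^{k-1}q^r\qfactorial{r}\qnumber{m+n-2r}\,\qhit{r}{m+n-r-1}{m^k}\,e_{m+n-r}e_r
\]
be the claimed right-hand side. Since a symmetric-function identity holds iff its truncation to $x_1,\dots,x_M$ holds for every $M$, it suffices to show $X^{m+n}_{m^k}(M)=F_{m,n,k}(x_1,\dots,x_M)$ for all $M$ and all $(m,n,k)$ with $0\le k\le n\le m$, by induction on $M$; the base $M=0$ is trivial, both sides being $0$ since every elementary factor in $F_{m,n,k}$ has index at least $m+n-k\ge 1$.

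For the inductive step I would apply \lemref{lem:X_rect_recursion} to the left side, which expresses $X^{m+n}_{m^k}(M)$ through $X^{m+n}_{m^k}(M-1)$ and the polynomials $X^{m+n-1}_{(m-1)^k}(M-1)$, $X^{m+n-1}_{m^{k-1}}(M-1)$, $X^{m+n-1}_{m^k}(M-1)$, $X^{m+n-2}_{(m-1)^{k-1}}(M-1)$, all in $M-1$ variables, and then invoke the inductive hypothesis to replace each of these by the corresponding $F$. On the right side, the single-variable relation $e_j(x_1,\dots,x_M)=e_j(x_1,\dots,x_{M-1})+x_M\,e_{j-1}(x_1,\dots,x_{M-1})$, applied to both factors of each product, expands $F_{m,n,k}(M)$ as $F_{m,n,k}(M-1)+x_M(\cdots)+x_M^2(\cdots)$ whose $x_M$- and $x_M^2$-coefficients are explicit linear combinations of products $e_a(M-1)e_b(M-1)$. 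Matching the two expansions in powers of $x_M$ and then matching the scalar coefficient of each monomial $e_a(M-1)e_b(M-1)$ reduces the whole identity to a finite list of scalar relations among the $q$-hit numbers of the four rectangular families $m^k$, $(m-1)^k$, $m^{k-1}$, $(m-1)^{k-1}$; for example, the $x_M^2$-part collapses to $\qhit{k}{m+n-k}{m^k}=q^{n-k}\qnumber{m}\,\qhit{k-1}{m+n-k-1}{(m-1)^{k-1}}$ together with one relation for each lower index $r$, and the $x_M^1$-part yields a similar (slightly longer) list.

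Finally, each of these scalar relations is checked by substituting the closed product formula of \propref{prop:qhit_rectangles} for $\qhit{k}{N}{m^j}$ and cancelling $q$-integers and $q$-factorials; the degenerate cases are harmless — e.g. when $k=n$ every $\qhit{r}{m+n-r-1}{m^n}$ with $r<n$ vanishes and $\qhit{n}{m}{m^n}=\qfactorial{m}$ by \corref{cor: sum of hits}, so $F_{m,n,n}=\qfactorial{m}\qfactorial{n}\,e_me_n$, which is exactly $\csft{m^n}{q}=X_{K_m}X_{K_n}$. I expect the only real difficulty to be organizational, namely keeping the various square-board sizes ($m+n-k$, $m+n-r-1$, and the shifted versions coming from the lower-order terms) and the accompanying powers of $q$ correctly aligned while matching coefficients across the four families; there is no conceptual obstacle once the recursion is in place. (An alternative, which leads to the same bookkeeping, is to compute $\csft{m^k}{q}$ directly from the graph $G(m^k)=(K_m\sqcup K_k)\ast K_{n-k}$ by summing $q^{\asc}$ over proper colorings, organized according to the colors assigned to the shared clique $K_{n-k}$.)
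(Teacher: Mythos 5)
Your proposal is correct in substance but follows a genuinely different route from the paper. The paper proves \lemref{lem:ANrectangle} by first establishing the Guay-Paquet rectangle relation $\qnumber{m}X_{(m-1)^k}=q^k\qnumber{m-k}X_{m^k}+\qnumber{k}X_{m^{k-1}}$ (\lemref{lem: MGP for rectangles}), which is itself a consequence of the already-proven \thmref{thm:qhitCSFabelian} together with \propref{prop: hits small rect in rect}; it then inducts on $m$ and $k$, solves for $X_{m^k}$, and matches $e$-basis coefficients using \eqref{eq:qhit_mj}. You instead bypass \thmref{thm:qhitCSFabelian} entirely and re-run the symmetry-breaking induction on the number of variables $M$, feeding \lemref{lem:X_rect_recursion} into the $e$-expansion via $e_j(x_1,\ldots,x_M)=e_j(x_1,\ldots,x_{M-1})+x_M\,e_{j-1}(x_1,\ldots,x_{M-1})$ and matching coefficients of $x_M^0,x_M^1,x_M^2$ and of the products $e_ae_b$. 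This reduces the lemma to a finite list of scalar identities among square-board $q$-hit numbers of the four rectangles $m^k$, $(m-1)^k$, $m^{k-1}$, $(m-1)^{k-1}$, each checkable from \propref{prop:qhit_rectangles}; your sample identity $\qhit{k}{m+n-k}{m^k}=q^{n-k}\qnumber{m}\,\qhit{k-1}{m+n-k-1}{(m-1)^{k-1}}$ does hold (both sides equal $q^{(n-k)k}\qfalling{m}{k}\qfactorial{m+n-2k}$). What your route buys is logical independence: \lemref{lem:ANrectangle}, and hence the passage to \thmref{AN:generalLambda}, no longer rests on \thmref{thm:qhitCSFabelian}. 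What it costs is that the bookkeeping of Section~\ref{sec: pf of MGP} is essentially repeated, now with $e$-products in place of the $X_{m^j}$. Two points need more care than you give them: (i) when $n=m$ the term $X^{m+n-1}_{(m-1)^k}(M-1)$ lives in an $m\times(m-1)$ board with more rows than columns, so the inductive hypothesis must be formulated to cover, or be transposed into, that case --- the same wrinkle the paper's proof of \thmref{thm:qhitCSFabelian} handles by a formal cancellation; and (ii) the reduction to scalar identities goes in the sound direction (scalar identities imply the polynomial one, so no linear independence of the $e_ae_b$ is needed), but the $x_M^1$ list, in which each coefficient of $e_{m+n-1-s}e_s$ mixes $\qhit{s}{\cdot}{\cdot}$ and $\qhit{s+1}{\cdot}{\cdot}$ across the three shapes $(m-1)^k$, $m^{k-1}$, $m^k$, still has to be written out and verified; it is the longest part of the computation and is only asserted in your sketch.
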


In order to prove this case of the Abreu--Nigro identity we need the following result.

\begin{lemma}[Guay-Paquet formula for rectangles] 
For the shape $(m-1)^k \subset n\times m$ we have that,
\label{lem: MGP for rectangles}
\begin{equation*} %\label{eq: MGP for rectangles}
[m]X_{(m-1)^k} =q^k[m-k]X_{m^k} + [k]X_{m^{k-1}}.
\end{equation*}
\end{lemma}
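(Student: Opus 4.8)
The plan is to obtain this identity as a direct specialization of Guay-Paquet's identity (Theorem~\ref{thm:qhitCSFabelian}) to the rectangular shape $\lambda = (m-1)^k$, whose $q$-hit numbers are given explicitly by Proposition~\ref{prop: hits small rect in rect}. Writing $X_\mu$ for $\csft{\mu}{q}$ throughout (as in the lemma statement), I would first recall that by Proposition~\ref{prop: hits small rect in rect} the only nonvanishing $q$-hit numbers of $(m-1)^k$ inside the $n\times m$ board are
\[
\qhit{k}{m,n}{(m-1)^k} = q^k[m-k][m-1]_{n-1}, \qquad \qhit{k-1}{m,n}{(m-1)^k} = [k][m-1]_{n-1},
\]
and all other $\qhit{r}{m,n}{(m-1)^k}$ vanish.

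Next I would plug these into Theorem~\ref{thm:qhitCSFabelian}, which for $\lambda = (m-1)^k$ collapses to a two-term sum:
\[
X_{(m-1)^k} = \frac{1}{[m]_n}\Bigl( q^k[m-k][m-1]_{n-1}\, X_{m^k} + [k][m-1]_{n-1}\, X_{m^{k-1}} \Bigr).
\]
The key bookkeeping step is the factorization $[m]_n = [m][m-1][m-2]\cdots[m-n+1] = [m]\cdot[m-1]_{n-1}$, which lets the common factor $[m-1]_{n-1}$ cancel against $[m]_n$, leaving $\tfrac{1}{[m]}$ in front. Multiplying both sides by $[m]$ yields
\[
[m]X_{(m-1)^k} = q^k[m-k]X_{m^k} + [k]X_{m^{k-1}},
\]
which is exactly the claim.

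I do not anticipate a genuine obstacle here beyond the $q$-integer cancellation; the only points worth checking are the edge cases $k=0$ (where $[k]=0$ and $m^0=(m-1)^0=\emptyset$, so the identity reads $[m]X_\emptyset = [m]X_{m^0}$) and $k=n=m$ (where $[m-k]=0$, so the identity reduces to $[m]X_{(m-1)^m}=[m]X_{m^{m-1}}$, consistent with $\qhit{k}{m,m}{(m-1)^m}=0$ and $\qhit{k-1}{m,m}{(m-1)^m}=[m]!$). If a proof independent of Theorem~\ref{thm:qhitCSFabelian} were desired, one could instead rerun the induction on the number of variables $M$ used in the proof of that theorem, feeding the rectangular recursion of Lemma~\ref{lem:X_rect_recursion} into both sides of the claimed identity; but specializing the already-established Theorem~\ref{thm:qhitCSFabelian} is by far the shortest route.
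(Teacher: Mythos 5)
Your proposal is correct and follows essentially the same route as the paper: the paper's proof likewise specializes Theorem~\ref{thm:qhitCSFabelian} to $\lambda=(m-1)^k$, substitutes the two nonzero $q$-hit numbers from Proposition~\ref{prop: hits small rect in rect}, and cancels the common factor $[m-1]_{n-1}$ against $[m]_n=[m]\,[m-1]_{n-1}$.
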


\begin{corollary}
$$
\qbinom{m-1}{k} X_{m^k} = \sum_{j=0}^k \qbinom{m}{j} (-1)^{k-j} q^{-(k+j)(k-j+1)/2} X_{(m-1)^j}.
$$
\end{corollary}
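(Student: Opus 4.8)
The corollary is a purely linear-algebraic inversion of the recursion in \lemref{lem: MGP for rectangles}. Write the recursion as
\[
[m]\, X_{(m-1)^k} \;=\; q^k[m-k]\, X_{m^k} \;+\; [k]\, X_{m^{k-1}},
\]
and the plan is to solve it for $X_{m^k}$ in terms of the family $\{X_{(m-1)^j}\}_{j=0}^{k}$ by iterating. Concretely, set $a_k := q^k[m-k]$ and $b_k := [k]$, so that $X_{m^k} = a_k^{-1}\bigl([m]\,X_{(m-1)^k} - b_k\, X_{m^{k-1}}\bigr)$, and unfold this downward in $k$. After $k$ steps one obtains $X_{m^k}$ as a linear combination $\sum_{j=0}^k c_{k,j}\, X_{(m-1)^j}$ where
\[
c_{k,j} \;=\; [m]\,(-1)^{k-j}\,\frac{b_{j+1}b_{j+2}\cdots b_k}{a_j a_{j+1}\cdots a_k}
\;=\; [m]\,(-1)^{k-j}\,\frac{[k]!/[j]!}{\prod_{i=j}^{k} q^{i}[m-i]}.
\]
So the main computation is to check that this product telescopes to the stated closed form $\qbinom{m-1}{k}^{-1}\qbinom{m}{j}(-1)^{k-j}q^{-(k+j)(k-j+1)/2}$; equivalently, that
\[
\qbinom{m-1}{k}\,[m]\,\frac{[k]!}{[j]!}\,\frac{1}{\prod_{i=j}^{k}q^i[m-i]}
\;=\; \qbinom{m}{j}\,q^{-(k+j)(k-j+1)/2}.
\]

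\textbf{Key steps, in order.} First I would record the base case $k=0$: the recursion is vacuous there, $X_{m^0}$ is the CSF of the empty graph on $m+n$ vertices, and both sides of the claimed identity reduce to $X_{(m-1)^0}$ times $\qbinom{m-1}{0}^{-1}\qbinom{m}{0} = 1$, so it holds. Then I would set up the induction on $k$: assume the formula for $X_{m^{k-1}}$ in terms of $\{X_{(m-1)^j}\}_{j\le k-1}$, substitute into the solved recursion $X_{m^k} = \frac{[m]}{q^k[m-k]}X_{(m-1)^k} - \frac{[k]}{q^k[m-k]}X_{m^{k-1}}$, and collect the coefficient of each $X_{(m-1)^j}$. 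The coefficient of $X_{(m-1)^k}$ comes out immediately as $\frac{[m]}{q^k[m-k]} = \qbinom{m-1}{k}^{-1}\qbinom{m}{k}q^{-k}$ since $q^{-k(k+k)(1)/2}$... let me just say: since $(k+k)(k-k+1)/2 = k$ and $\qbinom{m}{k}/\qbinom{m-1}{k} = [m]/[m-k]$, the $j=k$ term matches. For $j<k$ the coefficient of $X_{(m-1)^j}$ equals $-\frac{[k]}{q^k[m-k]}$ times the inductive coefficient $\qbinom{m-1}{k-1}^{-1}\qbinom{m}{j}(-1)^{k-1-j}q^{-(k-1+j)(k-j)/2}$, and the remaining $q$-binomial and $q$-power bookkeeping — using $\qbinom{m-1}{k-1}^{-1}[k] = \qbinom{m-1}{k}^{-1}[m-k]$ and the exponent identity $(k-1+j)(k-j)/2 + k = (k+j)(k-j+1)/2$ — shows it equals $\qbinom{m-1}{k}^{-1}\qbinom{m}{j}(-1)^{k-j}q^{-(k+j)(k-j+1)/2}$, completing the induction.

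\textbf{Main obstacle.} There is no conceptual difficulty here — the result is a formal inversion of a two-term recursion — so the only real risk is an arithmetic slip in reconciling the $q$-power exponents and the $q$-binomial ratios. The cleanest way to avoid it is to verify the single exponent identity $\tfrac{(k-1+j)(k-j)}{2} + k = \tfrac{(k+j)(k-j+1)}{2}$ at the outset (both sides expand to $\tfrac{k^2 - j^2 + k + j}{2}$), and to use the normalization $\qbinom{m}{j}q^{-(k+j)(k-j+1)/2}$ rather than unsimplified products of $[\,\cdot\,]$'s throughout, so that the telescoping is transparent. As an independent sanity check one can confirm the $\lambda = (m-1)^k$ case is consistent with the degree count and with \exref{ex:tcase} for small $m,k$.
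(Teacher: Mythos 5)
Your proposal is correct: the paper states this corollary without proof as an immediate consequence of Lemma~\ref{lem: MGP for rectangles}, and your argument — solving that two-term recursion for $X_{m^k}$ and unrolling it, with the coefficient $[m](-1)^{k-j}\frac{[k]!/[j]!}{\prod_{i=j}^{k}q^i[m-i]}$ simplifying via $\prod_{i=j}^k q^i = q^{(k+j)(k-j+1)/2}$ and $[m]\frac{[k]!}{[j]!}\frac{[m-k-1]!}{[m-j]!}=\qbinom{m-1}{k}^{-1}\qbinom{m}{j}$ — is exactly the intended inversion, and all the $q$-binomial and exponent identities you cite check out. (The only nit: $X_{m^0}$ is the CSF of the complete graph $K_{m+n}$, not the empty graph, but since $m^0=(m-1)^0=\varnothing$ the base case is trivially an identity either way.)
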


\begin{proof}[Proof of Lemma~\ref{lem: MGP for rectangles}]
By Theorem~\ref{thm:qhitCSFabelian} for the shape $\lambda=(m-1)^k \subset n\times m$ and the formula for the $q$-hit numbers $\qhit{r}{m,n}{(m-1)^k}$ from Proposition~\ref{prop: hits small rect in rect} we obtain
\begin{align*}
X_{(m-1)^k} &= \frac{1}{[m]_{n}} q^k[m-1]_k [m-k]_{n-k} X_{m^k} + \frac{1}{[m]_{n}}[k][m-1]_{k-1}[m-k]_{n-k} X_{m^{k-1}},\\
\intertext{which simplifies to }
[m]X_{(m-1)^k} &= q^k [m-k] X_{m^k} + [k]X_{m^{k-1}}.
\end{align*}
\end{proof}

\begin{proof}[Proof of Lemma~\ref{lem:ANrectangle}]
We use induction on $m$ and $k$. For the base case, note that $X_{m^0} = [m+n]! e_{m+n} = \qhit{0}{m+n}{m^0} e_{m+n}$. Now, by Lemma~\ref{lem: MGP for rectangles} we have that 
\[
q^k[m-k]X_{m^k} = \left([m] X_{(m-1)^k} - [k]X_{m^{k-1}}\right).
\]
Next, we use the induction hypothesis on $X_{(m-1)^k}$ and $X_{m^{k-1}}$ and we simplify our expression, so we want to prove that 

\begin{multline}\label{eq:induction together}
q^k[m-k]\Big( \qfactorial{k}\qhit{k}{m+n-k}{m^k}\cdot e_{m+n-k,k}
    + \sum_{r=0}^{k-1}q^r \qfactorial{r}\qnumber{m+n-2r} \qhit{r}{m+n-r-1}{m^k} \cdot e_{m+n-r,r} \Big)\\ 
    = \qnumber{m}\Big( \qfactorial{k}\qhit{k}{m+n-k}{(m-1)^k}\cdot e_{m+n-k,k} 
    + \sum_{r=0}^{k-1}q^r \qfactorial{r}\qnumber{m+n-2r} \qhit{r}{m+n-r-1}{(m-1)^k} \cdot e_{m+n-r,r} \Big)\\
    - \qnumber{k}\Big(\qfactorial{k-1}\qhit{k-1}{m+n-k+1}{m^{k-1}}\cdot e_{m+n-k+1,k-1} 
    - \sum_{r=0}^{k-2}q^r \qfactorial{r}\qnumber{m+n-2r} \qhit{r}{m+n-r-1}{m^{k-1}} \cdot e_{m+n-r,r}\Big).
\end{multline}

We do so by looking at the coefficient of $e_{m-n-r,r}$ for $0\leq r\leq k$. For $r=k$, we have that 
$$q^k[m-k]\qfactorial{k}\qhit{k}{m+n-k}{m^k} = \qnumber{m}\qfactorial{k} \qhit{k}{m+n-k}{(m-1)^k},$$
which follows from~\eqref{eq:qhit_mj} using routine simplifications. 

For $r=k-1$, we have that 
\begin{multline}\label{eq:lemma-k-1}
    q^k[m-k]q^{k-1} \qfactorial{k-1}\qnumber{m+n-2k+2} \qhit{k-1}{m+n-k}{m^k} \\
    = \qnumber{m}q^{k-1} \qfactorial{k-1}\qnumber{m+n-2k+2} \qhit{k-1}{m+n-k}{(m-1)^k} - \qnumber{k}\qfactorial{k-1}\qhit{k-1}{m+n-k+1}{m^{k-1}}.
\end{multline}
Using~\eqref{eq:qhit_mj}, we obtain that~\eqref{eq:lemma-k-1} is equivalent to 
\begin{align}\label{eq:lemma-k-1-qnumber}
    q\qnumber{m-k}\qnumber{n-k} = \qnumber{m-k+1}\qnumber{n-k+1} - \qnumber{m+n-2k+1}.
\end{align}

For $0\leq r\leq k-2$, we have that 
\begin{multline}\label{eq:lemma-r}
q^k\qnumber{m-k}q^r \qfactorial{r}\qnumber{m+n-2r} \qhit{r}{m+n-r-1}{m^k} \\ 
= \qnumber{m}q^r \qfactorial{r}\qnumber{m+n-2r} \qhit{r}{m+n-r-1}{(m-1)^k} - \qnumber{k}q^r \qfactorial{r}\qnumber{m+n-2r} \qhit{r}{m+n-r-1}{m^{k-1}}.
\end{multline}
Using~\eqref{eq:qhit_mj}, we obtain that~\eqref{eq:lemma-r} is equivalent to 
\begin{align}\label{eq:lemma-r-qnumber}
q^{k-r}\qnumber{m-k}\qnumber{n-k} = \qnumber{m-r}\qnumber{n-r} - \qnumber{m+n-r-k}\qnumber{k-r},    
\end{align}
which is straightforward to verify by expanding both sides. 
Note that~\eqref{eq:lemma-k-1-qnumber} is a particular case of~\eqref{eq:lemma-r-qnumber} by taking $r=k-1$. 
\end{proof}

We are now ready to prove that the Guay-Paquet's identity and Abreu--Nigro's follow from each other. As a corollary, we obtain a new proof of the latter. The following example illustrates Abreu--Nigro's result. 
\begin{example} \label{ex:tcaseAN}
For $\lambda = (2,1)$ inside a $2\times 3$ board, we have that for $k=2$, 
$\qhit{2}{3}{\lambda}=q^3$, $\qhit{1}{3}{\lambda}=2q^2 + 2q$ (see Figure~\ref{fig:hitex_AN}), and  $\qhit{0}{4}{\lambda}=q^3 + 3q^2 + 3q + 1$. Therefore, 
\[
\csft{2,1}{q} = q^3(1+q) e_{{3,2}}({\bf x})+  q(1+q+q^2)(2q^2+2q) e_{{4,1}}({\bf x})+ (1+q+q^2+q^3)(q^3 + 3q^2 + 3q + 1) e_{{5}}({\bf x}).
\]
\end{example}
\begin{figure}[ht]
\centering
\includegraphics{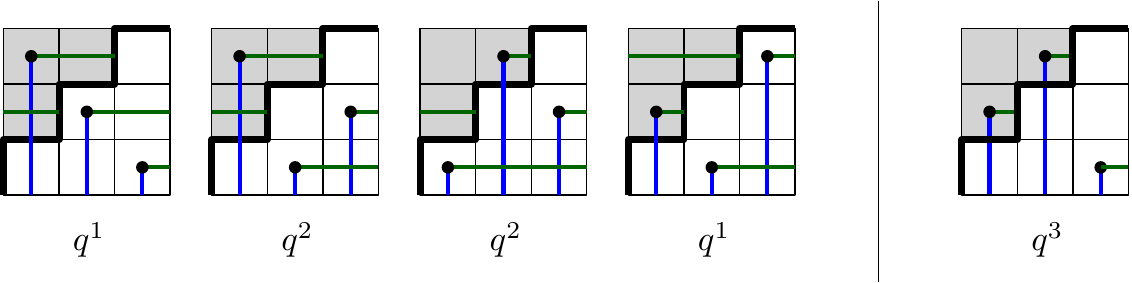}
\caption{Placements of $3$ rooks in $3\times 3$ that ``hit" $(2,1)$ (in gray) once and twice and the
  associated statistic  to each rook placement.}  
\label{fig:hitex_AN}
\end{figure}

\begin{proof}[Proof of Theorem~\ref{AN:generalLambda}]

Applying Lemma~\ref{lem:ANrectangle} to the RHS of the formula in Theorem~\ref{thm:qhitCSFabelian}, we obtain that
\begin{multline*}
    \dfrac{1}{\qfalling{m}{n}} \sum_{j=0}^{n}
\qhit{j}{m,n}{\lambda} \cdot \csft{m^j}{q} 
= \dfrac{1}{\qfalling{m}{n}} \sum_{j=0}^{n}
\qhit{j}{m,n}{\lambda} \Big( \qfactorial{j}\qhit{j}{m+n-j}{m^j}\cdot e_{m+n-j,j}\Big) \\[0.07in]
+\dfrac{1}{\qfalling{m}{n}} \sum_{j=0}^{n}
\qhit{j}{m,n}{\lambda} \Big(  q^r\sum_{r=0}^{j-1}
\qfactorial{r} \qnumber{m+n-2r}\qhit{r}{m+n-r-1}{m^j} \cdot e_{m+n-r,r}\Big).
\end{multline*}
Now, switching the summation order, we have that
\begin{multline*}
    \dfrac{1}{\qfalling{m}{n}} \sum_{j=0}^{n}
\qhit{j}{m,n}{\lambda} \cdot \csft{m^j}{q}
=\sum_{r=0}^n e_{m+n-r,r} \dfrac{1}{\qfalling{m}{n}}\qfactorial{r} \qhit{r}{m+n-r}{m^r}\qhit{r}{m,n}{\lambda} \\
+ \sum_{r=0}^{n-1} e_{m+n-r,r} \dfrac{1}{\qfalling{m}{n}}\Big(q^r\sum_{j=r+1}^n \qfactorial{r} \qnumber{m+n-2r}\qhit{r}{m+n-r-1}{m^j}\qhit{j}{m,n}{\lambda}\Big).
\end{multline*}
Thus, we need to show that for $r=k=\ell(\lambda)$,
\begin{align*}
    \qfalling{m}{n}  \qhit{k}{m+n-k}{\lambda} &= \qhit{k}{m+n-k}{m^k}\qhit{k}{m,n}{\lambda}
    + q^k\sum_{j=k+1}^n  \qnumber{m+n-2k} \qhit{k}{m+n-k-1}{m^j}\qhit{j}{m,n}{\lambda}\\
    &= \qhit{k}{m+n-k}{m^k}\qhit{k}{m,n}{\lambda},
\end{align*}
since $\qhit{j}{m,n}{\lambda}=0$ for $j=k+1,\ldots,n$. We also need to show that for $r<k=\ell(\lambda)$,
\begin{align*}
    \qfalling{m}{n} q^r \qnumber{m+n-2r}\qhit{r}{m+n-r-1}{\lambda}
    &= \qhit{r}{m+n-r}{m^r}\qhit{r}{m,n}{\lambda} \\
    &+ q^r\sum_{j=r+1}^n   \qnumber{m+n-2r}\qhit{r}{m+n-r-1}{m^j}\qhit{j}{m,n}{\lambda}.\nonumber 
\end{align*}
After using~\eqref{eq:qhit_mj}, these two relations are equivalent to the following identities relating $q$-hit numbers of $\lambda$ in square boards and rectangular boards. Thus, the Abreu-Nigro expansion for $\csft{\lambda}{q}$ follows now from Lemma~\ref{prop:qhit-relations} below, which completes the proof. \end{proof}

\begin{lemma}\label{prop:qhit-relations}
Let $\lambda$ be a partition inside an $n\times m$ board and $k=\ell(\lambda)$, then 
\begin{align}
   \qbinom{m-k}{n-k}\qhit{k}{m+n-k}{\lambda} &= q^{k(n-k)}  \qfalling{m+n-2k}{m-k} \qhit{k}{m,n}{\lambda}, \label{eq:keyrel1}
\intertext{
and for $0\leq r< k$, we have}
\label{eq:keyrel2}
   \qbinom{m-r}{n-r}  \qhit{r}{m+n-r-1}{\lambda} &= 
    q^{ r(n-r-1)} \qfalling{m+n-2r-1}{m-r-1} \qhit{r}{m,n}{\lambda} \nonumber\\
    & +  \sum_{j=r+1}^n   q^{r(n-1-j)} \qbinom{j}{r} \dfrac{ \qfalling{m+n-r-j-1}{m-r}  }{\qnumber{n-r}} \qhit{j}{m,n}{\lambda}.
\end{align}

\end{lemma}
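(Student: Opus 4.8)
\textbf{Proof proposal for Lemma~\ref{prop:qhit-relations}.}
The plan is to prove both identities \eqref{eq:keyrel1} and \eqref{eq:keyrel2} by expressing all the $q$-hit numbers in terms of $q$-rook numbers via \eqref{eq: hit in terms of rs}, and then reducing everything to a single manageable $q$-rook identity. For \eqref{eq:keyrel1}, note that on the left side $\qhit{k}{m+n-k}{\lambda}$ is a $q$-hit number of $\lambda$ in the square board of size $m+n-k$, while on the right $\qhit{k}{m,n}{\lambda}$ lives in the $n\times m$ board; since $\ell(\lambda)=k$, the only nonzero $q$-rook number of top degree available is $\qrook{k}{\lambda}$, so in fact $\qhit{k}{N}{\lambda}$ for $N=m+n-k$ and $\qhit{k}{m,n}{\lambda}$ are each, by \eqref{eq: hit in terms of rs}, a single explicit monomial multiple of $\qrook{k}{\lambda}$ (the sum collapses to its $i=k$ term). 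Dividing one by the other kills $\qrook{k}{\lambda}$ entirely, and \eqref{eq:keyrel1} becomes an identity purely among $q$-binomials, $q$-falling factorials, and powers of $q$, which one verifies by the standard manipulations (e.g.\ writing $\qfactorial{m-i}$ ratios and collecting exponents of $q$), exactly of the flavor already used in the proof of Lemma~\ref{lem:ANrectangle}.

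For \eqref{eq:keyrel2} the situation is genuinely a sum, so I would proceed as follows. First substitute \eqref{eq: hit in terms of rs} for $\qhit{r}{m+n-r-1}{\lambda}$ on the left and for $\qhit{j}{m,n}{\lambda}$ inside the sum on the right, and likewise \eqref{eq:qhit_mj} (Proposition~\ref{prop:qhit_rectangles}) is \emph{not} needed here since the rectangular hit numbers have already been absorbed — wait, rather: the right-hand side is expressed directly in $\qhit{j}{m,n}{\lambda}$, so I expand each such term by \eqref{eq: hit in terms of rs} into $q$-rook numbers $\qrook{i}{\lambda}$, $i\geq j$, and I expand the left-hand $q$-hit number the same way. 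After exchanging the order of summation on the right (summing $j$ first for fixed rook-index $i$), the coefficient of $\qrook{i}{\lambda}$ on each side must match. This turns \eqref{eq:keyrel2} into a finite identity, for each $i$ with $r\le i\le n$, among $q$-binomials and $q$-factorials of the form
\[
\text{(single term from LHS)} \;=\; \text{(single term)} \;+\; \sum_{j=r+1}^{i} (\text{explicit }q\text{-hypergeometric term}),
\]
where the inner sum is a terminating $q$-binomial sum. I expect it to telescope or to be summable by the $q$-Vandermonde / $q$-Chu identity; in either case it is a closed-form verification with no combinatorics.

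The main obstacle will be the bookkeeping in \eqref{eq:keyrel2}: keeping the three distinct board sizes straight ($n\times m$, the square $m+n-r-1$, and the rectangles $m^j$), tracking the powers of $q$ coming from the prefactors $q^{\binom{k}{2}-|\lambda|}$, $q^{mi-\binom{i}{2}}$ and from $|m^j|=mj$, and confirming that the resulting pure-$q$ sum really does collapse. A cleaner alternative, which I would pursue in parallel, is to avoid the rook expansion altogether and instead derive \eqref{eq:keyrel1}--\eqref{eq:keyrel2} directly from the generating-function relations \eqref{eq: hit rook change of basis} together with the board-changing lemmas already proven: Lemma~\ref{lem:q-hit for m,m} relates $\qhit{j}{m,n}{\lambda}$ to $\qhit{j}{m,m}{\lambda}$, and Lemma~\ref{lemma:remove column} lets one step a square board down to a rectangular one one column at a time. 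Iterating Lemma~\ref{lemma:remove column} expresses $\qhit{r}{m+n-r-1}{\lambda}$ (a square board, but since $\ell(\lambda)=k$ we may view $\lambda\subset (m+n-r-1-?)\times(m+n-r-1)$ only when the row bound permits) in terms of $\qhit{r}{m,n}{\lambda}$ up to an explicit product of $q$-integers — this handles the ``diagonal'' term — and the remaining sum over $j>r$ should then be forced by comparing with the rectangular formula \eqref{eq:qhit_mj}. Either route reduces the lemma to elementary $q$-integer algebra; the only real work is choosing the route that minimizes the number of cases, and I would write up whichever of the two produces the shorter verification.
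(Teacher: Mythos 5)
Your proposal is correct, and it is worth separating the two halves. For \eqref{eq:keyrel1} your observation that $\qrook{i}{\lambda}=0$ for $i>k=\ell(\lambda)$ collapses both sides of \eqref{eq: hit in terms of rs} to their $i=k$ terms is right, and it reduces the identity to the elementary check
$q^{(n-k)k}\qfactorial{m+n-2k}\qfactorial{m-n}/\qfactorial{m-k}=q^{k(n-k)}\qfalling{m+n-2k}{m-k}/\txtqbin{m-k}{n-k}$,
which holds. This is a genuinely different (and arguably slicker) argument than the paper's, which instead argues combinatorially from the rook statistic that the top $k$ rows force all $k$ rooks into $\lambda$, factoring both sides through $\qhit{k}{k,\lambda_1}{\lambda}$. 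For \eqref{eq:keyrel2}, your primary route --- expand every $q$-hit number in $q$-rook numbers via \eqref{eq: hit in terms of rs}, swap the order of summation, and match the coefficient of each $\qrook{i}{\lambda}$ --- is exactly the paper's own alternative proof in Appendix~\ref{app: another proof qhit rel}: the coefficient matching reduces to the terminating sum \eqref{eq:qbinomial}, which is evaluated there by extracting a coefficient from a product of two $q$-binomial generating functions (a $q$-Chu--Vandermonde evaluation, as you anticipated). The paper's main-text proof is different in character: it proceeds by induction on $|\lambda|$ using the deletion-contraction relation of Lemma~\ref{lem: deletion/contration}, cancelling the $\lambda\backslash e$ terms by the inductive hypothesis and matching the remaining $\lambda/e$ terms coefficient by coefficient. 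The trade-off is that your route needs only the change-of-basis formula plus one classical $q$-binomial identity, while the paper's main route leans on the deletion-contraction machinery it develops in the appendix anyway. Your secondary ``cleaner alternative'' via iterating Lemma~\ref{lemma:remove column} is the weak point --- as you note yourself, that lemma requires $\lambda$ to fit in a board with one fewer row, which fails for general $\lambda$ in the square board of size $m+n-r-1$, and it could at best produce the diagonal term, not the sum over $j>r$ --- but since you offer it only as a parallel option and your primary route is sound, there is no gap.
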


\begin{proof}
The first relation follows from a simple combinatorial observation together with Lemma~\ref{lem:q-hit for m,m}. For $k=\ell(\lambda)$, % placed on $\lambda$
we see that the $k$ rooks on the first $k$ rows on the board have to be all inside $\lambda$, and all the cells outside $\lambda$ in these first rows will be empty.
Thus 
$$\qhit{k}{N}{\lambda} = \qhit{k}{k,\lambda_1}{\lambda} q^{k(N-\lambda_1)} [N-k]!.$$
Similarly, by the proof of Lemma~\ref{lemma: rectangle to square our q-hit} we have
$$\qhit{k}{m,n}{\lambda} = \qhit{k}{k,\lambda_1}{\lambda} q^{k(m-\lambda_1)} \qbinom{m+n-2k}{n-k}.$$
Substituting these formulas in each side of~\eqref{eq:keyrel1} we get the desired identity.

For the second relation, we switch gears and use a deletion-contraction relation of $q$-hit numbers (Lemma~\ref{lem: deletion/contration}). The idea is to use induction of the size of $\lambda$, the deletion-contraction relation for $q$-hit numbers, and deduce the identity by matching coefficients at each $q$-hit number. 
\begin{remark}
An alternative proof appears in Section~\ref{app: another proof qhit rel}, where the $q$-hits are expressed in terms of $q$-rooks and matching coefficients at each $q$-rook reduces to $q$-binomial identities. 
\end{remark}

Let us start with the RHS of~\eqref{eq:keyrel2}. Denote by
\begin{align*}
B_r^{m,n}(\lambda) &:=  q^{ r(n-r-1)} \qfalling{m+n-2r-1}{m-r-1} \qhit{r}{m,n}{\lambda}  \\ &+  \sum_{j=r+1}^n   q^{r(n-1-j)} \qbinom{j}{r} \dfrac{ \qfalling{m+n-r-j-1}{m-r}  }{\qnumber{n-r}} \qhit{j}{m,n}{\lambda}.
\end{align*}
If we apply the deletion-contraction relation in Lemma~\ref{lem: deletion/contration} to the $q$-hit numbers, we have that
\begin{align}\label{eq: Bnm-1}
B_r^{m,n}(\lambda) &=  q^{ r(n-r-1)} \qfalling{m+n-2r-1}{m-r-1} \qhit{r}{m,n}{\lambda\backslash e} \nonumber \\
&+ q^{ r(n-r-1)} \qfalling{m+n-2r-1}{m-r-1} q^{|\lambda/e|-|\lambda|+r+m-1}\qhit{r-1}{m-1,n-1}{\lambda/e}\nonumber \\
&- q^{ r(n-r-1)} \qfalling{m+n-2r-1}{m-r-1} q^{|\lambda/e|-|\lambda|+r+m}\qhit{r}{m-1,n-1}{\lambda/e} \nonumber\\
&+ \sum_{j=r+1}^n   q^{r(n-1-j)} \qbinom{j}{r} \dfrac{ \qfalling{m+n-r-j-1}{m-r}  }{\qnumber{n-r}} \qhit{j}{m,n}{\lambda\backslash e}\nonumber \\
&+ \sum_{j=r+1}^n   q^{r(n-1-j)} \qbinom{j}{r} \dfrac{ \qfalling{m+n-r-j-1}{m-r}  }{\qnumber{n-r}} q^{|\lambda/e|-|\lambda|+j+m-1}\qhit{j-1}{m-1,n-1}{\lambda/e} \nonumber\\
&- \sum_{j=r+1}^n   q^{r(n-1-j)} \qbinom{j}{r} \dfrac{ \qfalling{m+n-r-j-1}{m-r}  }{\qnumber{n-r}} q^{|\lambda/e|-|\lambda|+j+m}\qhit{j}{m-1,n-1}{\lambda/e}.
\end{align}

For the LHS of~\eqref{eq:keyrel2}, we denote $C_r^{m,n}(\lambda) := \qbinom{m-r}{n-r}  \qhit{r}{m+n-r-1}{\lambda}$. 
Then by Corollary~\ref{cor: delention/contraction boards}, 
\begin{align*}
C_r^{m,n}(\lambda) &= \qbinom{m-r}{n-r} \qhit{r}{m+n-r-1}{\lambda\backslash e} \\ &+ \qbinom{m-r}{n-r}  q^{|\lambda/e|-|\lambda|+r+m+n-r-2} \left(\qhit{r-1}{m+n-r-2}{\lambda/e} -q\qhit{r}{m+n-r-2}{\lambda/e}\right).
\end{align*}
That is, we have the following deletion-contraction relation for the $C$'s:
\begin{align}\label{eq: Cnm-1}
 C_r^{m,n}(\lambda) =  C_r^{m,n}(\lambda\backslash e) 
+ q^{|\lambda/e|-|\lambda|+m+n-2}\left( C_{r-1}^{m-1,n-1}(\lambda/e) -
q\dfrac{\qnumber{m-r}}{\qnumber{m-n}} C_{r}^{m-1,n}(\lambda/e) \right). 
\end{align}
Now, we want to compare the two expressions in~\eqref{eq: Bnm-1} and~\eqref{eq: Cnm-1}. 
By \emph{inductive hypothesis}, 
\begin{align*}
  C_r^{m,n}(\lambda\backslash e) &= q^{r(n-r-1)}\qfalling{m+n-2r-1}{m-r-1}\qhit{r}{m,n}{\lambda\backslash e}  \\
  & +  \sum_{j=r+1}^n   q^{r(n-1-j)} \qbinom{j}{r} \dfrac{ \qfalling{m+n-r-j-1}{m-r}  }{\qnumber{n-r}} \qhit{j}{m,n}{\lambda\backslash e} = B_r^{m,n}(\lambda\backslash e). 
 \end{align*}
 These terms appear both in~\eqref{eq: Bnm-1} and in~\eqref{eq: Cnm-1}, and so they cancel. We have left to show that 
 \begin{multline*}
q^{|\lambda/e|-|\lambda|+m+n-2}\left( C_{r-1}^{m-1,n-1}(\lambda/e) -
q\dfrac{\qnumber{m-r}}{\qnumber{m-n}} C_{r}^{m-1,n}(\lambda/e) \right) \\
= q^{ r(n-r-1)} \qfalling{m+n-2r-1}{m-r-1} q^{|\lambda/e|-|\lambda|+r+m-1}\qhit{r-1}{m-1,n-1}{\lambda/e}\nonumber \\
- q^{ r(n-r-1)} \qfalling{m+n-2r-1}{m-r-1} q^{|\lambda/e|-|\lambda|+r+m}\qhit{r}{m-1,n-1}{\lambda/e} \nonumber\\
+ \sum_{j=r+1}^n   q^{r(n-1-j)} \qbinom{j}{r} \dfrac{ \qfalling{m+n-r-j-1}{m-r}  }{\qnumber{n-r}} q^{|\lambda/e|-|\lambda|+j+m-1}\qhit{j-1}{m-1,n-1}{\lambda/e} \nonumber\\
- \sum_{j=r+1}^n   q^{r(n-1-j)} \qbinom{j}{r} \dfrac{ \qfalling{m+n-r-j-1}{m-r}  }{\qnumber{n-r}} q^{|\lambda/e|-|\lambda|+j+m}\qhit{j}{m-1,n-1}{\lambda/e}.
 \end{multline*}
 This last equation simplify to the following identity:
   \begin{multline}\label{eq: Bnm-Cnm}
q^{n-2}\left( C_{r-1}^{m-1,n-1}(\lambda/e) -
q\dfrac{\qnumber{m-r}}{\qnumber{m-n}} C_{r}^{m-1,n}(\lambda/e) \right) \\
= q^{ r(n-r)-1} \qfalling{m+n-2r-1}{m-r-1} \left(\qhit{r-1}{m-1,n-1}{\lambda/e}-q\qhit{r}{m-1,n-1}{\lambda/e} \right)\\
+ \sum_{j=r+1}^n   q^{r(n-1-j)+j-1} \qbinom{j}{r} \dfrac{ \qfalling{m+n-r-j-1}{m-r}  }{\qnumber{n-r}}\left(\qhit{j-1}{m-1,n-1}{\lambda/e} - q\qhit{j}{m-1,n-1}{\lambda/e}\right),
 \end{multline}
In order to show~\eqref{eq: Bnm-Cnm}, we apply first the \emph{inductive hypothesis} in the $C$'s together with the relation in Corollary~\ref{lemma:remove column} to obtain their expansion in terms of $q$-hit numbers of the form $\qhit{j}{m-1,n-1}{\lambda/e}$:
 \begin{align*}
 q^{n-2}C_{r-1}^{m-1,n-1}(\lambda/e) &= q^{ (r-1)(n-r-1)+n-2} \qfalling{m+n-2r-1}{m-r-1} \qhit{r-1}{m-1,n-1}{\lambda/e} \\
    & +  \sum_{j=r}^{n-1}   q^{(r-1)(n-2-j)+n-2} \qbinom{j}{r-1} \dfrac{ \qfalling{m+n-r-j-2}{m-r}  }{\qnumber{n-r}} \qhit{j}{m-1,n-1}{\lambda/e},   \\
 q^{n-1}\dfrac{\qnumber{m-r}}{\qnumber{m-n}}C_{r}^{m-1,n}(\lambda/e)   &= q^{ r(n-r-1)+n-1}\dfrac{\qnumber{m-r}}{\qnumber{m-n}} \qfalling{m+n-2r-2}{m-r-2} \qhit{r}{m-1,n}{\lambda/e} \nonumber\\
    & +  \sum_{j=r+1}^{n}   q^{r(n-1-j)+n-1} \qbinom{j}{r} \dfrac{\qnumber{m-r}}{\qnumber{m-n}}\dfrac{ \qfalling{m+n-r-j-2}{m-r-1}  }{\qnumber{n-r}} \qhit{j}{m-1,n}{\lambda/e}.
 \end{align*}
 
Finally, we compare the coefficients in~\eqref{eq: Bnm-Cnm}. 
For $\qhit{r-1}{m-1,n-1}{\lambda/e}$, we have 
\begin{align*}
    LHS &= q^{ (r-1)(n-r-1)+n-2} \qfalling{m+n-2r-1}{m-r-1} = RHS.
\end{align*}

For $\qhit{r}{m-1,n-1}{\lambda/e}$, we have
\begin{align*}
    LHS &= q^{(r-1)(n-2-r)+n-2} \qbinom{r}{r-1} \dfrac{ \qfalling{m+n-r-r-2}{m-r}  }{\qnumber{n-r}} - q^{r(n-r-1)+n-1}\qnumber{m-r} \qfalling{m+n-2r-2}{m-r-2}, \\
    RHS &= -q^{ r(n-r)} \qfalling{m+n-2r-1}{m-r-1} + q^{r(n-2-r)+r} \qbinom{r+1}{r} \dfrac{ \qfalling{m+n-2r-2}{m-r}  }{\qnumber{n-r}},
\end{align*}
which simplifies to the relation $\qnumber{m+n-2r-1} = q^{n-r-1}\qnumber{m-r} +\qnumber{n-r-1}$. This last relation is a straightforward verification by expanding both sides.

For $\qhit{j}{m-1,n-1}{\lambda/e}$, with $r+1\leq j\leq n-1$,
\begin{align*}
    LHS &=q^{(r-1)(n-2-j)+n-2} \qbinom{j}{r-1} \dfrac{ \qfalling{m+n-r-j-2}{m-r}  }{\qnumber{n-r}}  \\ &- 
    q^{r(n-1-j)+n-1} \qbinom{j}{r} \qnumber{m-r}\dfrac{ \qfalling{m+n-r-j-2}{m-r-1}  }{\qnumber{n-r}},\\
    RHS &= q^{r(n-2-j)+j} \qbinom{j+1}{r} \dfrac{ \qfalling{m+n-r-j-2}{m-r}  }{\qnumber{n-r}} - q^{r(n-1-j)+j} \qbinom{j}{r} \dfrac{ \qfalling{m+n-r-j-1}{m-r}  }{\qnumber{n-r}},
\end{align*}
which simplifies to the relation $\qnumber{m+n-r-j-1} = q^{n-j-1}\qnumber{m-r} +\qnumber{n-j-1}$. This last relation is a straightforward verification by expanding both sides.

\end{proof}

\section{Variations and applications} \label{sec: applications and variations}

\subsection{Theorems~\ref{AN:generalLambda} and~\ref{thm:qhitCSFabelian} in terms of unicellular LLT polynomials}\label{subsec:unicellular}

Chromatic symmetric functions of Dyck paths are related to {\em unicellular LLT polynomials} \cite{LLT} that can be defined as follows. For a Dyck path $d$, let $G(d)$ be the associated graph and denote 
\begin{align*}
\LLT_{G(d)}({\bf x},q) := \sum_{\kappa: V(G(d))\to \mathbb{P}} q^{\asc(\kappa)}{\bf x}^{\kappa},
\end{align*}
where the sum is over all vertex colorings $\kappa$ of $G(d)$ and $\asc(\kappa)$ is the same as in the definition of $\csft{G}{q}$.

The function $\LLT_{G(d)}({\bf x},q)$ is actually symmetric (see~\cite[Sec. 3.1]{AP}) and the symmetric functions $X_{G(d)}({\bf x},q)$ and $\LLT_d({\bf x},q)$ are related via a plethystic substitution discovered independently by Carlsson--Mellit~\cite[Prop. 3.4]{CM} and Guay-Paquet~\cite[Lemma 172]{MGP2}:
\[
\LLT_{G(d)}({\bf x},q) = (q-1)^n X_{G(d)}[{\bf x}/(q-1),q],
\]
where $n$ is the size of the Dyck path. As a consequence of this connection, any linear relation in one family implies the same relation in the other one. Since Theorems~\ref{AN:generalLambda} and~\ref{thm:qhitCSFabelian} yield a linear relation among certain chromatic symmetric functions, we immediately obtain the same relations for the corresponding unicellular LLT polynomials.

\begin{corollary}
Let $\lambda$ be partition inside an $n\times m$ board with $\ell(\lambda) = k \leq \lambda_1$. Then
\begin{align*}
\LLT_{G(\lambda)}({\bf x},q) &=
\frac{1}{[m+n-k]!}\qhit{k}{m+n-k}{\lambda}\cdot \LLT_{K(m+n-k,k)}({\bf x},q) \\ &+ \sum_{j=0}^{k-1}
q^j \frac{\qnumber{m+n-2j}}{[m+n-j]!} \qhit{j}{m+n-j-1}{\lambda} \cdot \LLT_{K(m+n-j,j)}({\bf x},q),
\end{align*}
where $K(a,b)$ is the disjoint union of complete graphs on vertices $\{1,\ldots,a\}$ and $\{a+1,\ldots,a+b\}$.
\end{corollary}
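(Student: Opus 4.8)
The plan is to derive this corollary directly from Theorem~\ref{AN:generalLambda} by invoking the plethystic relationship between chromatic symmetric functions and unicellular LLT polynomials. The key observation, stated just above the corollary, is that the map $X_{G(d)}(\mathbf{x},q) \mapsto (q-1)^n X_{G(d)}[\mathbf{x}/(q-1),q] = \LLT_{G(d)}(\mathbf{x},q)$ sends chromatic symmetric functions to unicellular LLT polynomials, where $n$ is the size of the underlying Dyck path (equivalently, the number of vertices of $G(d)$). First I would recall that Theorem~\ref{AN:generalLambda} is, after clearing the $q$-analogue factorials, a \emph{linear} identity expressing $\csft{\lambda}{q}$ as a $\mathbb{Q}(q)$-linear combination of the symmetric functions $e_{m+n-j,j}$. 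But the subtlety here is that the plethystic substitution is applied to the \emph{whole} symmetric function at once, and $(q-1)^n$ with $n$ the number of vertices must be tracked carefully: both $G(\lambda)$ and each $K(m+n-j,j)$ have exactly $m+n$ vertices, so the prefactor $(q-1)^{m+n}$ is uniform across all terms and factors out cleanly.

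The cleanest route, then, is to observe that $e_{a,b} := e_a e_b$ is itself $\csft{K(a,b)}{q}$ up to a normalization — indeed, a disjoint union of cliques $K_a \sqcup K_b$ has chromatic symmetric function $[a]! \, [b]! \, e_a e_b$ after accounting for the ascent statistic on a clique (colorings of $K_a$ that are proper correspond to injective maps, and summing $q^{\asc}$ over the orderings of $a$ distinct colors gives $[a]!$). Concretely, $\csft{K(a,b)}{q} = [a]!\,[b]!\, e_{a,b}$, so Theorem~\ref{AN:generalLambda} can be rewritten entirely in the basis $\{\csft{K(m+n-j,j)}{q}\}_{j=0}^{k}$ with coefficients obtained by dividing the stated coefficients by $[m+n-j]!\,[j]!$. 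Carrying this out: the $e_{m+n-k,k}$ term has coefficient $[k]!\,\qhit{k}{m+n-k}{\lambda}$, and dividing by $[m+n-k]!\,[k]!$ leaves $\qhit{k}{m+n-k}{\lambda}/[m+n-k]!$; similarly the $e_{m+n-j,j}$ term has coefficient $q^j[j]!\,[m+n-2j]\,\qhit{j}{m+n-j-1}{\lambda}$, and dividing by $[m+n-j]!\,[j]!$ leaves $q^j[m+n-2j]\,\qhit{j}{m+n-j-1}{\lambda}/[m+n-j]!$. These are exactly the coefficients appearing in the corollary.

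The final step is to apply the plethysm $F \mapsto (q-1)^{m+n} F[\mathbf{x}/(q-1),q]$ to both sides of this rewritten identity. Since the identity is $\mathbb{Q}(q)$-linear in symmetric functions and the plethystic substitution at a fixed $q$ is a $\mathbb{Q}(q)$-algebra-compatible linear operation (it is linear over the coefficient field $\mathbb{Q}(q)$), it commutes with taking $\mathbb{Q}(q)$-linear combinations; hence each $\csft{K(m+n-j,j)}{q}$ on the right becomes $\LLT_{K(m+n-j,j)}(\mathbf{x},q)$ and the left becomes $\LLT_{G(\lambda)}(\mathbf{x},q)$, with all scalar coefficients unchanged. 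This yields precisely the claimed formula. I do not expect any genuine obstacle here — the content is entirely in Theorem~\ref{AN:generalLambda} and the already-cited plethystic identity; the only point requiring a little care is the bookkeeping of the $[a]!\,[b]!$ normalization relating $e_{a,b}$ to $\csft{K(a,b)}{q}$ and checking that the vertex counts (hence the power of $(q-1)$) genuinely agree across all terms, which they do because every graph involved has $m+n$ vertices.
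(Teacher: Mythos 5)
Your proposal is correct and follows essentially the same route as the paper: rewrite the $e$-basis terms in Theorem~\ref{AN:generalLambda} using $\csft{K(a,b)}{q}=[a]![b]!\,e_{a,b}$ to get a $\mathbb{Q}(q)$-linear relation among chromatic symmetric functions of Dyck-path graphs, then transfer it to unicellular LLT polynomials. The paper delegates the transfer step to a cited proposition of Alexandersson--Panova, whereas you unfold it via the $\mathbb{Q}(q)$-linearity of the plethystic substitution (with the correct observation that all graphs involved have $m+n$ vertices, so the $(q-1)^{m+n}$ prefactor is uniform); the coefficient bookkeeping matches the stated formula.
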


\begin{proof}
Since $X_{K(a,b)}({\bf x},q)=[a]![b]! e_{a,b}$, then Theorem~\ref{AN:generalLambda} yields a linear relation among $X_{G(\lambda)}({\bf x},q)$, $X_{K(m+n,0)}({\bf x},q),\ldots, X_{K(m+n-k,k)}({\bf x},q)$. The result then follows from the fact that 
every linear relation among a set of chromatic symmetric functions of Dyck paths has a corresponding relation among unicellular LLT polynomials~\cite[Prop. 55]{AP}.
\end{proof}

\begin{corollary}
Let $\lambda$ be a partition inside an $n\times m$ ($n\leq m$) board. Then
\[
\LLT_{G(\lambda)}({\bf x},q) = \frac{1}{[m]_n} \sum_{j=0}^n \qhit{j}{m,n}{\lambda} \cdot \LLT_{G(m^j)}({\bf x},q).
\]
\end{corollary}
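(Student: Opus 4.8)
The plan is to deduce this statement as an immediate corollary of Theorem~\ref{thm:qhitCSFabelian} by transporting the identity through the plethystic relation
\[
\LLT_{G(d)}({\bf x},q) = (q-1)^{\abs{d}} X_{G(d)}[{\bf x}/(q-1),q]
\]
discovered by Carlsson--Mellit and Guay-Paquet, exactly as in the preceding corollary. First I would observe that every graph appearing in Theorem~\ref{thm:qhitCSFabelian} is a Dyck-path graph: $G(\lambda)$ for $\lambda \subset n\times m$, and the rectangles $G(m^j)$ for $j=0,\ldots,n$, all of which are $G(d)$ for an abelian Dyck path $d$ on $m+n$ vertices. Thus Theorem~\ref{thm:qhitCSFabelian} is literally a linear relation, with coefficients in $\mathbb{Q}(q)$ (the numbers $\qhit{j}{m,n}{\lambda}/\qfalling{m}{n}$), among chromatic symmetric functions of Dyck paths that all have the same size $m+n$.

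Next I would invoke \cite[Prop. 55]{AP}, which says precisely that any $\mathbb{Q}(q)$-linear relation among chromatic symmetric functions $\csft{G(d)}{q}$ of Dyck paths of a fixed size carries over to the same linear relation among the corresponding unicellular LLT polynomials $\LLT_{G(d)}({\bf x},q)$; this is valid because the plethysm ${\bf x}\mapsto {\bf x}/(q-1)$ is a ring homomorphism and, on Dyck paths of a fixed size $N$, multiplication by the scalar $(q-1)^N$ is the uniform normalization relating the two families. Applying this to the relation of Theorem~\ref{thm:qhitCSFabelian} gives directly
\[
\LLT_{G(\lambda)}({\bf x},q) = \frac{1}{[m]_n} \sum_{j=0}^n \qhit{j}{m,n}{\lambda} \cdot \LLT_{G(m^j)}({\bf x},q),
\]
which is the claim. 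The same argument (using the $e$-expansion $X_{K(a,b)} = [a]![b]!\,e_{a,b}$ to rewrite Theorem~\ref{AN:generalLambda} as a linear relation among CSFs of Dyck paths) is what was used in the preceding corollary, so the proof here is essentially one line once that framework is set up.

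There is no real obstacle: the only point requiring a moment's care is that all the shapes involved genuinely have the \emph{same} size $m+n$, so that the scalar $(q-1)^{m+n}$ factors out uniformly and \cite[Prop. 55]{AP} applies verbatim; since $\lambda$, each $m^j$ with $j\leq n$, and the ambient board are all inside the $n\times m$ rectangle and the associated abelian Dyck paths all run from $(0,0)$ to $(m+n,m+n)$, this is automatic. The proof is therefore:

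\begin{proof}
By Theorem~\ref{thm:qhitCSFabelian}, $\csft{\lambda}{q} = \frac{1}{[m]_n}\sum_{j=0}^n \qhit{j}{m,n}{\lambda}\,\csft{m^j}{q}$ is a linear relation, with coefficients in $\mathbb{Q}(q)$, among the chromatic symmetric functions of the Dyck paths $G(\lambda), G(m^0),\ldots,G(m^n)$, all of which have size $m+n$. By \cite[Prop. 55]{AP}, every such linear relation among chromatic symmetric functions of Dyck paths of a fixed size induces the corresponding linear relation among the associated unicellular LLT polynomials. Hence
\[
\LLT_{G(\lambda)}({\bf x},q) = \frac{1}{[m]_n} \sum_{j=0}^n \qhit{j}{m,n}{\lambda} \cdot \LLT_{G(m^j)}({\bf x},q). \qedhere
\]
\end{proof}
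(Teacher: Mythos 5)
Your proof is correct and matches the paper's argument exactly: both cite Theorem~\ref{thm:qhitCSFabelian} as a linear relation among the chromatic symmetric functions $X_{\lambda}$, $X_{m^0},\ldots,X_{m^n}$ and then invoke \cite[Prop.~55]{AP} to transfer that relation to the corresponding unicellular LLT polynomials. No gaps.
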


\begin{proof}
Theorem~\ref{thm:qhitCSFabelian} yields a linear relation among $X_{\lambda}({\bf x},q)$, $X_{m^0}({\bf x},q),\ldots, X_{m^n}({\bf x},q)$. The result then follows by the same fact as in the proof above. 
\end{proof}

\subsection{The staircase basis}
Let $V^{m,n}:=\text{span}_{\mathbb{Q}(q)}\{ X_{\lambda} \mid \lambda \text{ inside an } n\times m \text{ board}\}$. 
Consider the set of chromatic symmetric functions given by rectangular shapes $\mathcal{R}^{m,n}:=\left\{\csft{m^j}{q}\right\}_{j=0}^n$. This set is actually a basis for $V^{m,n}$.
\begin{corollary} \label{cor: rectangle is basis}
The set $\mathcal{R}^{m,n}$ forms a basis for the $\mathbb{Q}(q)$-vector space $V^{m,n}$.
\end{corollary}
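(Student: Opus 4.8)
The plan is to verify the two defining properties of a basis: that $\mathcal{R}^{m,n}$ spans $V^{m,n}$, and that its $n+1$ members are linearly independent over $\mathbb{Q}(q)$.

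\textbf{Spanning.} I would apply \thmref{thm:qhitCSFabelian} verbatim: for any partition $\lambda$ inside an $n\times m$ board it gives
\[
\csft{\lambda}{q} = \frac{1}{\qfalling{m}{n}}\sum_{j=0}^{n}\qhit{j}{m,n}{\lambda}\,\csft{m^j}{q},
\]
and since each $\qhit{j}{m,n}{\lambda}\in\mathbb{Q}[q]$ and $\qfalling{m}{n}\neq 0$, this exhibits $\csft{\lambda}{q}$ as a $\mathbb{Q}(q)$-linear combination of elements of $\mathcal{R}^{m,n}$. As the $X_\lambda$ with $\lambda\subset n\times m$ span $V^{m,n}$ by definition, $\mathcal{R}^{m,n}$ spans $V^{m,n}$ as well.

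\textbf{Linear independence.} Here I would use the elementary-symmetric-function expansion of the rectangular chromatic symmetric functions, \lemref{lem:ANrectangle}: for each $k$, $\csft{m^k}{q}$ lies in $\mathrm{span}_{\mathbb{Q}(q)}\{e_{m+n-r,r}\}_{r=0}^{k}$, and the coefficient of the top term $e_{m+n-k,k}$ is $\qfactorial{k}\qhit{k}{m+n-k}{m^k}$. Specializing the rectangular $q$-hit formula~\eqref{eq:qhit_mj} to $N=m+n-k$ and $j=k$ collapses the trailing $q$-Pochhammer factors to $1$ and yields $\qhit{k}{m+n-k}{m^k}=q^{(n-k)k}\qfalling{m}{k}\qfactorial{m+n-2k}$, so the top coefficient equals $q^{(n-k)k}\qfactorial{k}\qfalling{m}{k}\qfactorial{m+n-2k}$, a nonzero element of $\mathbb{Q}(q)$. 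Therefore the matrix recording the coordinates of $\csft{m^0}{q},\dots,\csft{m^n}{q}$ with respect to $e_{m+n},e_{m+n-1,1},\dots,e_{m,n}$ is triangular with nonvanishing diagonal. Because $n\le m$, the partitions $(m+n-r,r)$ for $r=0,\dots,n$ are pairwise distinct, so the $e_{m+n-r,r}$ are part of the elementary basis $\{e_\mu\}$ and hence linearly independent; consequently $\mathcal{R}^{m,n}$ is linearly independent over $\mathbb{Q}(q)$.

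Combining the two parts shows $\mathcal{R}^{m,n}$ is a basis of $V^{m,n}$ (and incidentally $\dim_{\mathbb{Q}(q)}V^{m,n}=n+1$). The only computation involved is the simplification of $\qhit{k}{m+n-k}{m^k}$ from~\eqref{eq:qhit_mj}, which is routine; the real content is entirely supplied by \thmref{thm:qhitCSFabelian} and \lemref{lem:ANrectangle}, both already proved, so I do not foresee a genuine obstacle. An alternative to the triangularity step would be to check that the $q=1$ specializations $X_{G(m^0)},\dots,X_{G(m^n)}$ are independent, but the $e$-expansion route is cleaner given what is already available.
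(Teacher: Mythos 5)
Your proof is correct and follows essentially the same route as the paper: spanning via Theorem~\ref{thm:qhitCSFabelian}, and linear independence via the triangularity of the $e$-expansion of the $\csft{m^k}{q}$ with nonvanishing leading coefficient $\qfactorial{k}\qhit{k}{m+n-k}{m^k}$. The only cosmetic difference is that you compute this coefficient explicitly from~\eqref{eq:qhit_mj}, while the paper argues its nonvanishing combinatorially (one can place $k$ non-attacking rooks on $m^k$ since $k\leq m$) and merely points to Proposition~\ref{prop:qhit_rectangles} for the closed form.
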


To prove this result we need the following result.
\begin{proposition} \label{prop: Schur hit expansion}
Let $\lambda$ be a partition inside the $n\times m$ board and let  $k=\ell(\lambda)\leq \lambda_1$
\begin{multline}\label{eq:Schur rectangle}
\csft{\lambda}{q} = [k]! \qhit{k}{m+n-k}{\lambda}s_{2^k1^{m+n-2k}}({\bf x}) \\
+ \sum_{i=0}^{k-1} s_{2^i1^{m+n-2i}}({\bf x}) \Big( [k]!\qhit{k}{m+n-k}{\lambda} + \sum_{j\geq i}^{k-1} q^j[j]![m+n-2j] \qhit{j}{m+n-j-1}{\lambda}\Big).
\end{multline}
\end{proposition}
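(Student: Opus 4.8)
The plan is to deduce \eqref{eq:Schur rectangle} from the Abreu--Nigro expansion in \thmref{AN:generalLambda} by rewriting each product of two elementary symmetric functions in the Schur basis and then collecting coefficients. No new properties of $q$-hit numbers are needed; the only ingredients are \thmref{AN:generalLambda} and the Pieri rule.

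The first step is to record the Schur expansion of $e_ae_b$ for $a\ge b\ge 0$. Multiplying $e_a=s_{1^{a}}$ by $e_b$ adjoins a vertical $b$-strip to a single column, so the partitions that appear are exactly the two-column shapes, and the classical Pieri count (equivalently, the Kostka numbers in the $e$-to-$s$ change of basis) gives
\[
e_ae_b \;=\; \sum_{s=0}^{b} s_{2^{s}1^{\,a+b-2s}}.
\]
In \eqref{eq:Schur rectangle} we only meet this identity with $a=m+n-j$ and $b=j$, where $0\le j\le k=\ell(\lambda)\le n\le m$; in particular $a\ge b$, and every shape $2^{s}1^{\,m+n-2s}$ that occurs (with $0\le s\le j\le k$) is a genuine partition since $m+n-2s\ge m+n-2k\ge m-n\ge 0$. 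Writing $T_s:=s_{2^{s}1^{\,m+n-2s}}({\bf x})$, this says $e_{m+n-j,j}=\sum_{s=0}^{j}T_s$.

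The second step is the substitution. Set $c:=[k]!\,\qhit{k}{m+n-k}{\lambda}$ and, for $0\le j\le k-1$, $d_j:=q^{j}[j]!\,[m+n-2j]\,\qhit{j}{m+n-j-1}{\lambda}$, so that \thmref{AN:generalLambda} reads $\csft{\lambda}{q}=c\,e_{m+n-k,k}+\sum_{j=0}^{k-1}d_j\,e_{m+n-j,j}$. Inserting the Schur expansions,
\[
\csft{\lambda}{q} \;=\; c\sum_{s=0}^{k}T_s \;+\; \sum_{j=0}^{k-1} d_j \sum_{s=0}^{j} T_s .
\]
Now read off the coefficient of $T_i$: only the first sum reaches $T_k$, with coefficient $c$; while for $0\le i\le k-1$ the summand $d_j$ contributes to $T_i$ precisely when $j\ge i$, so the coefficient of $T_i$ equals $c+\sum_{j=i}^{k-1}d_j$. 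Substituting back the definitions of $c$ and $d_j$ turns this into exactly \eqref{eq:Schur rectangle}.

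I do not expect a real obstacle: the whole argument is a finite reorganization of a known expansion. The one point that needs a line of justification is that the Pieri expansion of $e_ae_b$ runs over only the single family $\{s_{2^{s}1^{a+b-2s}}\}$, which uses $a\ge b$ --- and this is guaranteed by the hypothesis $k\le n\le m$ inherited from \thmref{AN:generalLambda}. (Equivalently one could carry out the Pieri step via the dual Jacobi--Trudi identity for the conjugate two-row shapes $(m+n-s,s)$, but the vertical-strip description above is the most direct.)
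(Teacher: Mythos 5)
Your proposal is correct and is essentially the paper's own argument: the paper likewise substitutes the expansion $e_{a}e_{b}=\sum_{s=0}^{b}s_{2^{s}1^{a+b-2s}}$ into Theorem~\ref{AN:generalLambda} and collects coefficients of each $s_{2^{i}1^{m+n-2i}}$. Your write-up just makes the Pieri step and the coefficient bookkeeping explicit.
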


\begin{proof}
This expansion follows by combining expressing the elementary symmetric functions in the Schur basis  $e_{n-j,j}=\sum_{i=0}^j s_{2^i1^{n-2i}}$ in Theorem~\ref{AN:generalLambda}.
\end{proof}

\begin{proof}[Proof of Corollary~\ref{cor: rectangle is basis}]
By Theorem~\ref{thm:qhitCSFabelian} we have that $\mathcal{R}^{m,n}$ span $V^{m,n}$. 

By Theorem~\ref{AN:generalLambda} the leading term $\csft{m^j}{q}$ in the $e$-basis is $\qhit{j}{m+n-j}{m^j}e_{m+n-j,j}$. Since $j\leq m$ we can certainly place $j$ rooks on $m^j$ and see that  $\qhit{j}{m+n-j}{m^j}\neq 0$. Note that the full formula is given in Proposition~\ref{prop:qhit_rectangles}. Thus the transition matrix between $\mathcal{R}^{m,n}$  and the $e_{m+n-j,j}$'s is upper triangular with nonzero diagonal entries, so they are linearly independent and form a basis.
\end{proof}

Guay-Paquet considered another basis for the space $V^{m,n}$. This basis is called the \emph{staircase basis} since it is indexed by the staircase partitions, which are partitions inside an $n\times m$ board of the form $\delta_j = (j,j-1,\ldots,1)$, for $j=0,\ldots,n$. Let $\mathcal{S}^{m,n}:=\left\{ \csft{\delta_j}{q}\right\}_{j=0}^n$.

\begin{proposition}[\cite{MGP_LR}] \label{prop: staircase is basis}
The set $\mathcal{S}^{m,n}$ forms a basis for the $\mathbb{Q}(q)$-vector space  $V^{m,n}$.
\end{proposition}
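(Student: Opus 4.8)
The plan is to show that the staircase partitions $\delta_0,\delta_1,\ldots,\delta_n$ give chromatic symmetric functions that, when expanded in the elementary basis via \thmref{AN:generalLambda}, form a triangular system, exactly as in the proof of \corref{cor: rectangle is basis}. First I would observe that $\ell(\delta_j) = j \leq \delta_{j,1} = j$ (the inequality $k\le\lambda_1$ is satisfied with equality), so \thmref{AN:generalLambda} applies to each $\csft{\delta_j}{q}$ and gives the leading term $\qfactorial{j}\qhit{j}{m+n-j}{\delta_j}\cdot e_{m+n-j,j}$, all other terms involving $e_{m+n-r,r}$ with $r<j$. So the transition matrix from $\mathcal{S}^{m,n}$ to $\{e_{m+n-j,j}\}_{j=0}^n$ is upper triangular, and it suffices to check the diagonal entries $\qfactorial{j}\qhit{j}{m+n-j}{\delta_j}$ are nonzero as elements of $\mathbb{Q}(q)$.

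For the diagonal entries, I would invoke the combinatorial interpretation of $q$-hit numbers in \thmref{thm: qhit statistic rectangular board}: $\qhit{j}{m+n-j}{\delta_j}$ is a sum of $q^{\stat(p)}$ over placements $p$ of $m+n-j$ non-attacking rooks on an $(m+n-j)\times(m+n-j)$ board with exactly $j$ rooks inside the staircase $\delta_j$. Such a placement exists: one can place $j$ rooks along the staircase $\delta_j$ (for instance on the cells $(i, j-i+1)$ for $i=1,\ldots,j$, which are non-attacking and all lie in $\delta_j$), and then complete to a full non-attacking placement on the square board, which is always possible since a partial non-attacking rook placement on a square board extends to a permutation. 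Hence $\qhit{j}{m+n-j}{\delta_j}$ is a nonempty sum of powers of $q$, so it is a nonzero polynomial in $\mathbb{N}[q]$; combined with $\qfactorial{j}\neq 0$ in $\mathbb{Q}(q)$, every diagonal entry is nonzero. Since $\dim V^{m,n} = n+1$ (the $\{X_{m^j}\}_{j=0}^n$ already span it by \thmref{thm:qhitCSFabelian} and are independent by \corref{cor: rectangle is basis}), and the $n+1$ elements of $\mathcal{S}^{m,n}$ are linearly independent by triangularity, they form a basis.

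The main obstacle is verifying that $\delta_j$ actually fits inside the $n\times m$ board and that the hypothesis $\ell(\delta_j)\le\delta_{j,1}$ needed for \thmref{AN:generalLambda} genuinely holds for every $j$ in range: we need $\delta_n=(n,n-1,\ldots,1)$ to satisfy $n\le m$, which is our standing assumption $m\ge n$, so $\delta_j\subset n\times m$ for all $j\le n$; and $\ell(\delta_j)=j=\delta_{j,1}$, so the hypothesis holds with equality. A secondary point worth spelling out is that one should use the Schur-basis expansion of \propref{prop: Schur hit expansion} instead, if one wants strict upper-triangularity with a cleaner diagonal — but the $e$-basis argument above is self-contained and parallels \corref{cor: rectangle is basis} verbatim, so I would present it that way. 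Alternatively, if one prefers to avoid \thmref{AN:generalLambda}, one can change basis from $\mathcal{R}^{m,n}$ to $\mathcal{S}^{m,n}$ directly using \thmref{thm:qhitCSFabelian}: the matrix $\big(\qhit{j}{m,n}{\delta_i}\big)_{i,j}$ would need to be shown invertible, but this reduces to the same nonvanishing statement about $\qhit{i}{m,n}{\delta_i}$ together with a triangularity argument on the number of rooks that can hit $\delta_i$, which is more delicate; hence the $e$-basis route is preferable.
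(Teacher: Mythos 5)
Your proof is correct and follows essentially the same route as the paper's: both expand each $\csft{\delta_j}{q}$ via Theorem~\ref{AN:generalLambda}, observe that the transition matrix to $\{e_{m+n-j,j}\}$ is upper triangular, and certify the nonvanishing of the diagonal entries $\qhit{j}{m+n-j}{\delta_j}$ by exhibiting an explicit placement of $j$ non-attacking rooks inside the staircase extended to a full permutation. The only difference is cosmetic: you verify the hypotheses of Theorem~\ref{AN:generalLambda} and the dimension count more explicitly than the paper does.
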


\begin{proof}

We look at the expansion in Proposition~\ref{AN:generalLambda} for $\delta_j$. In this case, %$\qhit{j}{m,n}{\delta_i}=0$ for $j>i$ because $\ell(\delta_i)=i$, and
$\qhit{j}{m+n-j}{\delta_j} \neq 0$ because we can place $j$ rooks on the main diagonal of $\delta_j$ and place $m+n-2j$ remaining rooks outside $\delta_j$ (for instance, in the main diagonal of the square board). Therefore, the leading term of $\csft{\delta_j}{q}$ is $\qhit{j}{m+n-j}{\delta_j}e_{m+n-j,j}\neq 0$. That is, $\mathcal{S}^{m,n}$'s transfer matrix with the $V^{m,n}$ basis given by $\{e_{m+n-j,j}\}$ is upper triangular with nonzero diagonal and  hence is also a basis.
\end{proof}

Since the coefficients of $\csft{\lambda}{q}$ in the $\mathcal{R}^{m,n}$ basis involve $q$-hit numbers, we also want to study the coefficients appearing in the decomposition of $\csft{\lambda}{q}$ in this new basis $\mathcal{S}^{m,n}$. That is, we want to understand the coefficients $\ac{j}{m,n}{\lambda}{q}$ in the expression
\begin{equation} \label{eq:def aj}
\csft{\lambda}{q} = \sum_{j=0}^n \ac{j}{m,n}{\lambda}{q} \cdot \csft{\delta_j}{q}. 
\end{equation}

Calculations suggest that up to a sign and a power of $q$, these coefficients are in $\mathbb{N}[q]$.
\begin{conjecture} \label{conj:change of basis}
Let $\lambda$ be a partition inside an $n\times m$ board ($n\leq m$) and $j$ be fixed, then $ \ac{j}{m,n}{\lambda}{q} $ is a Laurent polynomial in $q$ whose coefficients are integers of the same sign.\footnote{This conjecture was proved by Nadeau--Tewari \cite[Thm. 3.5]{NT}.}
\end{conjecture}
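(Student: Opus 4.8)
\textbf{Proof proposal for Conjecture~\ref{conj:change of basis}.}

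The plan is to set up a recursion for the coefficients $\ac{j}{m,n}{\lambda}{q}$ that parallels the deletion-contraction structure already established for the $q$-hit numbers, and then argue that the recursion preserves the property of being a Laurent polynomial whose coefficients all have the same sign. First I would combine Theorem~\ref{thm:qhitCSFabelian} with the known expansion of $\csft{m^j}{q}$ in the staircase basis $\mathcal{S}^{m,n}$: since $\mathcal{S}^{m,n}$ is a basis (Proposition~\ref{prop: staircase is basis}) and $\mathcal{R}^{m,n}$ is a basis (Corollary~\ref{cor: rectangle is basis}), there is a fixed invertible transition matrix $T^{m,n}$ with $\csft{m^j}{q} = \sum_i (T^{m,n})_{ji}\,\csft{\delta_i}{q}$, so that $\ac{j}{m,n}{\lambda}{q} = \frac{1}{\qfalling{m}{n}}\sum_i \qhit{i}{m,n}{\lambda}\,(T^{m,n})_{ij}$. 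Thus the conjecture reduces to two sub-claims: (a) the entries of $T^{m,n}$, after clearing the denominator $\qfalling{m}{n}$, are single-sign Laurent polynomials in $q$, and (b) the weighted sum $\sum_i \qhit{i}{m,n}{\lambda}\,(T^{m,n})_{ij}$ inherits the single-sign property. Claim (a) is a purely rectangular computation: the staircase-to-rectangle transition can be read off from Proposition~\ref{prop: Schur hit expansion} (or directly from Theorem~\ref{AN:generalLambda} applied to both $m^j$ and $\delta_i$), and I expect the inverse matrix to have a clean closed form, with signs alternating in a predictable pattern governed by the $q$-binomial inversion already used in Definition~\ref{lem: hit in terms of rs}.

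For claim (b) the natural tool is the deletion-contraction relation of Lemma~\ref{lem: deletion/contration}. Fixing a corner cell $e$ of $\lambda$, I would substitute that relation into $\ac{j}{m,n}{\lambda}{q} = \frac{1}{\qfalling{m}{n}}\sum_i \qhit{i}{m,n}{\lambda}\,(T^{m,n})_{ij}$ and, using the compatibility of $T^{m,n}$ with the column/row-removal identities (Lemmas~\ref{lem:q-hit for m,m}, \ref{lemma:remove column} and the proof strategy of Lemma~\ref{prop:qhit-relations}), rewrite the result as a $q$-monomial times a $\mathbb{Z}_{\ge 0}$-combination of $\ac{*}{m,n}{\lambda\backslash e}{q}$, $\ac{*}{m-1,n-1}{\lambda/e}{q}$, and $\ac{*}{m-1,n-1}{\lambda/e}{q}$ (with a single $-q\cdot$ correction term, exactly as in Lemma~\ref{lem: deletion/contration}). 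The difficulty is that the raw deletion-contraction relation is \emph{not} manifestly sign-coherent — it has the shape $\qhit{j}{}{\lambda\backslash e} + q^{\bullet}(\qhit{j-1}{}{\lambda/e} - q\,\qhit{j}{}{\lambda/e})$, and the difference $\qhit{j-1}{}{\lambda/e} - q\,\qhit{j}{}{\lambda/e}$ need not be single-signed. So the real content is to show that after summing against the transition matrix the offending cross terms either cancel or assemble into a genuinely positive (resp. negative) Laurent polynomial; I would try to identify the correct ``potential'' — perhaps a partial sum $\sum_{i\le j}$ or $\sum_{i\ge j}$ of the $\ac{}{}{}{}{}$'s, analogous to how unimodality of the Garsia--Remmel $q$-hit numbers is proven — for which the recursion closes with only nonnegative coefficients.

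The base cases are straightforward: for $\lambda$ a rectangle this is claim (a), and for $\lambda$ a staircase $\delta_j$ we have $\ac{i}{m,n}{\delta_j}{q} = \delta_{ij}$. The induction is on $|\lambda|$, removing a well-chosen corner (I would take the corner in the longest row, or whichever makes $\lambda/e$ still fit inside $(n-1)\times(m-1)$, so the inductive hypothesis applies to both $\lambda\backslash e$ and $\lambda/e$). I expect the main obstacle to be precisely the sign-coherence step of the previous paragraph: the deletion-contraction relation introduces a $-q\,\qhit{j}{}{\lambda/e}$ term, and controlling its sign contribution after the change of basis to $\mathcal{S}^{m,n}$ is where all the combinatorics lives. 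A fallback, if the recursive argument resists, is to establish a direct combinatorial formula for $\ac{j}{m,n}{\lambda}{q}$ — Theorem~\ref{thm: qhit statistic rectangular board} gives $\qhit{i}{m,n}{\lambda}$ as a $\stat$-generating function over rook placements, and one could hope the alternating sum against $(T^{m,n})_{ij}$ telescopes to a signed generating function over a restricted set of placements, from which single-signedness would be immediate; but this seems to require a new statistic adapted to the staircase, which I do not have at the outset.
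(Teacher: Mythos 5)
This statement is a \emph{conjecture} in the paper: the authors do not prove it (the footnote records that a proof was later given by Nadeau--Tewari), so there is no in-paper argument to compare yours against. What the paper does supply is exactly your first reduction: Proposition~\ref{prop: a lambda in a mi's} writes $\ac{j}{m,n}{\lambda}{q}$ as $\frac{1}{[m]_n}\sum_i \qhit{i}{m,n}{\lambda}\,\ac{j}{m,n}{m^i}{q}$, reducing the problem to rectangles. Your paragraph one rediscovers this, which is fine, but everything after that is a plan rather than a proof, and you say so yourself.

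The genuine gap is twofold. First, your claim (a) --- that the rectangle coefficients $\ac{j}{m,n}{m^i}{q}$ have a ``clean closed form'' with predictable signs --- is asserted, not established; Remark~\ref{remark: inverse matrices} shows the inverse matrix $\mathcal{H}$ involves $q$-Eulerian polynomials and staircase $q$-hit numbers, and inverting it is not a routine $q$-binomial inversion. Second, and more seriously, your claim (b) is not merely ``inherited'': since $\qhit{i}{m,n}{\lambda}\in\mathbb{N}[q]$, the sum over $i$ is single-signed only if, for fixed $j$, all the $\ac{j}{m,n}{m^i}{q}$ carry the \emph{same} sign as $i$ varies --- a statement you neither formulate nor prove, and which is the actual content of the conjecture after the reduction. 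Your proposed route through the deletion-contraction relation of Lemma~\ref{lem: deletion/contration} runs into exactly the obstruction you name: the term $\qhit{j-1}{m-1,n-1}{\lambda/e}-q\,\qhit{j}{m-1,n-1}{\lambda/e}$ is not sign-coherent, and you offer no mechanism (no identified ``potential,'' no cancellation lemma) that resolves it. As written, the proposal identifies where the difficulty lives but does not close it, so it does not constitute a proof of the conjecture.
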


\begin{example}
For $\lambda = 3$ inside a $2\times 3$ board, we have 
\[
q^2 \csft{3}{q} = -(q+1) \csft{\varnothing}{q} +(q^2+q+1) \csft{1}{q} + 0\cdot \csft{21}{q}.
\]
\begin{figure}[h]
    \centering
    \includegraphics{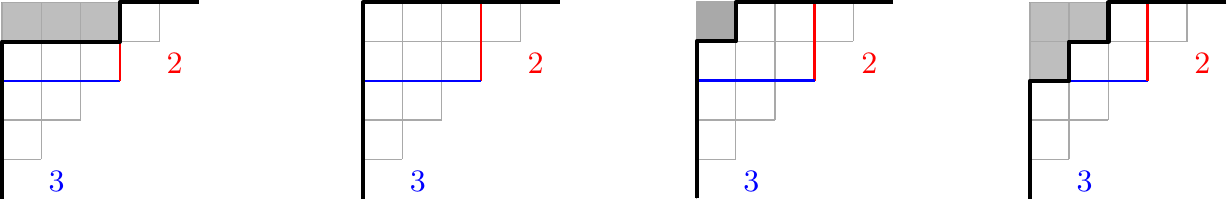}
    \caption{Left: the path for $\lambda=3$ inside a $2\times 3$ board. Right: the three staircase paths $\delta_0,\delta_1,\delta_2$ for $m=3$ and $n=2$.}
    \label{fig:staircase}
\end{figure}
\end{example}

A possible approach to Conjecture~\ref{conj:change of basis} is looking at the change of bases between the rectangular basis $\mathcal{R}^{m,n}$ and the staircase basis $\mathcal{S}^{m,n}$. Let us start showing that the coefficients of the change of bases determine $\ac{j}{m,n}{\lambda}{q}$.
\begin{proposition}\label{prop: a lambda in a mi's}
$\displaystyle{\ac{j}{m,n}{\lambda}{q}  = \frac{1}{[m]_n}\sum_{i=0}^n \qhit{i}{m,n}{\lambda} \ac{j}{m,n}{m^i}{q}}$.
\end{proposition}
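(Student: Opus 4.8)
The plan is to simply combine the Guay--Paquet identity (Theorem~\ref{thm:qhitCSFabelian}) with the definition~\eqref{eq:def aj} of the coefficients $\ac{j}{m,n}{\lambda}{q}$, using that $\mathcal{S}^{m,n}$ is a basis. First I would recall that, since each rectangle $m^i$ for $i=0,\ldots,n$ is itself a partition inside the $n\times m$ board, equation~\eqref{eq:def aj} applied to $\lambda = m^i$ reads
\[
\csft{m^i}{q} = \sum_{j=0}^n \ac{j}{m,n}{m^i}{q}\cdot \csft{\delta_j}{q}.
\]
Next I would substitute this into the right-hand side of Theorem~\ref{thm:qhitCSFabelian}:
\[
\csft{\lambda}{q} = \frac{1}{\qfalling{m}{n}}\sum_{i=0}^n \qhit{i}{m,n}{\lambda}\cdot \csft{m^i}{q}
= \sum_{j=0}^n \left( \frac{1}{\qfalling{m}{n}}\sum_{i=0}^n \qhit{i}{m,n}{\lambda}\, \ac{j}{m,n}{m^i}{q} \right) \csft{\delta_j}{q}.
\]
Comparing this with~\eqref{eq:def aj} for $\lambda$ and invoking Proposition~\ref{prop: staircase is basis}, which guarantees that the expansion of $\csft{\lambda}{q}$ in the basis $\mathcal{S}^{m,n}$ is unique, yields the claimed formula.

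There is essentially no obstacle here: the only thing to be careful about is that $\mathcal{S}^{m,n}$ is genuinely a basis of the space $V^{m,n}$ containing all the $\csft{m^i}{q}$ and $\csft{\lambda}{q}$, which is exactly the content of Proposition~\ref{prop: staircase is basis}, so the coefficient extraction is legitimate. One should also note for completeness that $\ell(m^i)=i\le m=(m^i)_1$ when $i\le m$ (and $n\le m$), so the $\ac{j}{m,n}{m^i}{q}$ are indeed the coefficients defined by~\eqref{eq:def aj}; no transpose-type subtlety arises. Thus the proof reduces to a one-line change-of-basis computation, and the ``hard part'' — establishing that the rectangular functions expand in the staircase basis with controlled coefficients — has already been isolated into the study of the $\ac{j}{m,n}{m^i}{q}$, which is the subject of the surrounding discussion.
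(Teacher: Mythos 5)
Your proposal is correct and matches the paper's proof essentially verbatim: both substitute the staircase expansion of $\csft{m^i}{q}$ into Theorem~\ref{thm:qhitCSFabelian}, exchange the order of summation, and extract the coefficient of $\csft{\delta_j}{q}$, with Proposition~\ref{prop: staircase is basis} justifying the uniqueness of that extraction. No differences worth noting.
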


\begin{proof}
This follows from basic linear algebra. By Theorem~\ref{thm:qhitCSFabelian} and~\eqref{eq:def aj}, we have that
\begin{align*}
\csft{\lambda}{q} &= \frac{1}{[m]_n}\sum_{k=0}^n \qhit{k}{m,n}{\lambda} \cdot \csft{m^k}{q}\\
&= \frac{1}{[m]_n}\sum_{k=0}^n \qhit{k}{m,n}{\lambda} \sum_{j=0}^n \ac{j}{m,n}{m^k}{q} \csft{\delta_j}{q}.\\
\intertext{Next, we exchange the order of summation  to obtain}
\csft{\lambda}{q} &=\sum_{j=0}^n  \csft{\delta_j}{q} \frac{1}{[m]_n}\sum_{k=0}^n \qhit{k}{m,n}{\lambda}\ac{j}{m,n}{m^k}{q}. %= \sum_{j=0}^n \ac{j}{m,n}{m^k}{q} \csft{\delta_j}{q}.
\end{align*}
The result then follows by extracting the coefficient of $\csft{\delta_j}{q}$ on both sides above.
\end{proof}
This means that in order to prove Conjecture~\ref{conj:change of basis} it suffices to verify it for $\ac{j}{m,n}{m^k}{q}$. 
\begin{remark}\label{remark: inverse matrices}
We have the following $(n+1)\times (n+1)$ matrix and its inverse
\[
\mathcal{A}:=\left(\ac{j}{m,n}{m^k}{q}\right)_{0\leq j,k\leq n}, \quad   %is the matrix 
%given by the $q$-hit numbers of shape $\delta_i$, 
\mathcal{H}:=\mathcal{A}^{-1}= \dfrac{1}{[m]_n}\left(\qhit{j}{m,n}{\delta_k}\right)_{0\leq j,k\leq n}.
\]
Moreover $\qhit{j}{n,n}{\delta_n}=A_{n,j+1}(q)$ equals  the  $(\maj,\des)$  \textbf{$q$-Eulerian polynomials} (see \cite{shareshianwachs2007}) defined by $$\displaystyle{A_{n,j+1}(q):=\sum_{\substack{w\in \mathfrak{S_n} \\ \des(w)=j}} q^{\maj(w)}},$$ 
where $\des(w)$ and $\maj(w)$ are the number of {\em descents} and the {\em major index} of $w$ (see \cite[Eq. I.1,I.2]{GR} and Appendix~\ref{app: relation}). Also, one can check that the specialization 
$$\left.\qhit{j}{n,n}{\delta_{n-r}}\right|_{q=1} = A_{n,j}^{(r)},$$
where the $A^{(r)}_{n,j}$ are the \textbf{$r$-excedence numbers} that count the number of permutations in $\mathfrak{S}_n$ with $j$ $r$-excendences $\{i \mid w(i) \geq i+r\}$ (see~\cite{foata2006theorie,elizalde2020descents}).
\end{remark}

Note that the coefficients $a_j^{m,n}(\lambda,q)$ add up to one.

\begin{proposition}
Let $\lambda$ be a partition inside an $n\times m$ ($n\leq m$) board. Then $\displaystyle{\sum_{j=0}^n \ac{j}{m,n}{\lambda}{q}=1}$.
\end{proposition}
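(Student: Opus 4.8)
The plan is to push everything into the rectangular basis $\mathcal R^{m,n}=\{\csft{m^i}{q}\}_{i=0}^n$ via Theorem~\ref{thm:qhitCSFabelian}, and then read off the statement from the ``mass'' identity $\sum_i \qhit{i}{m,n}{\mu}=[m]_n$ of Corollary~\ref{cor: sum of hits}. First I would apply Theorem~\ref{thm:qhitCSFabelian} simultaneously to $\lambda$ and to each staircase $\delta_j$ ($j=0,\dots,n$), all of which sit inside the $n\times m$ board, to get
\[
\csft{\lambda}{q}=\frac{1}{[m]_n}\sum_{i=0}^n \qhit{i}{m,n}{\lambda}\,\csft{m^i}{q},
\qquad
\csft{\delta_j}{q}=\frac{1}{[m]_n}\sum_{i=0}^n \qhit{i}{m,n}{\delta_j}\,\csft{m^i}{q}.
\]
Substituting the second family into the defining relation~\eqref{eq:def aj}, namely $\csft{\lambda}{q}=\sum_{j=0}^n \ac{j}{m,n}{\lambda}{q}\,\csft{\delta_j}{q}$, and collecting the coefficient of each $\csft{m^i}{q}$ produces a second expansion of $\csft{\lambda}{q}$ in $\mathcal R^{m,n}$, to be compared with the first one.

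Since $\mathcal R^{m,n}$ is a basis of the $\mathbb{Q}(q)$-vector space $V^{m,n}$ by Corollary~\ref{cor: rectangle is basis}, I would then equate the coefficients of $\csft{m^i}{q}$ (the common factor $1/[m]_n$ cancels) to obtain, for every $i=0,\dots,n$, the scalar identity
\[
\qhit{i}{m,n}{\lambda}=\sum_{j=0}^n \ac{j}{m,n}{\lambda}{q}\,\qhit{i}{m,n}{\delta_j}.
\]
Summing these $n+1$ identities over $i$ and invoking Corollary~\ref{cor: sum of hits} on the left (with $\mu=\lambda$) and inside the sum on the right (with $\mu=\delta_j$, for each $j$), I get $[m]_n=\bigl(\sum_{j=0}^n \ac{j}{m,n}{\lambda}{q}\bigr)[m]_n$; dividing by $[m]_n$, which is a nonzero element of $\mathbb{Q}(q)$, yields $\sum_{j=0}^n \ac{j}{m,n}{\lambda}{q}=1$.

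I do not expect a genuine obstacle here: the argument is pure bookkeeping, and the only points that need a word of care are that Theorem~\ref{thm:qhitCSFabelian} and Corollary~\ref{cor: sum of hits} apply uniformly to \emph{all} shapes inside the board (so the same normalization $[m]_n$ occurs everywhere and cancels), and that $[m]_n$ is invertible in $\mathbb{Q}(q)$. An equivalent route, if one prefers not to equate coefficients, is to first establish $\sum_{j=0}^n \ac{j}{m,n}{m^k}{q}=1$ for all $k$ --- which is exactly the statement that the matrix $\mathcal H$ of Remark~\ref{remark: inverse matrices} has all row sums equal to $1$ (once more Corollary~\ref{cor: sum of hits}), hence so does $\mathcal A=\mathcal H^{-1}$ --- and then conclude for general $\lambda$ from Proposition~\ref{prop: a lambda in a mi's} together with one further application of Corollary~\ref{cor: sum of hits}.
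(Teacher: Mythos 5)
Your argument is correct. The paper's own proof is exactly your ``alternative route'': it first reduces to the rectangular shapes $m^k$ via Proposition~\ref{prop: a lambda in a mi's} together with Corollary~\ref{cor: sum of hits}, and then observes that the columns of $\mathcal H=\mathcal A^{-1}$ sum to $1$ (again by Corollary~\ref{cor: sum of hits}), whence so do those of $\mathcal A$ by the standard fact that $\mathbf 1^{T}M=\mathbf 1^{T}$ implies $\mathbf 1^{T}M^{-1}=\mathbf 1^{T}$. (Minor point: with the paper's indexing of $\mathcal H$ these are column sums rather than row sums, but the linear-algebra fact is insensitive to this.) Your primary route is a slight streamlining of the same idea: by expanding both $\csft{\lambda}{q}$ and each $\csft{\delta_j}{q}$ in the basis $\mathcal R^{m,n}$ and equating coefficients, you obtain the single change-of-basis identity $\qhit{i}{m,n}{\lambda}=\sum_{j}\ac{j}{m,n}{\lambda}{q}\,\qhit{i}{m,n}{\delta_j}$, and summing over $i$ finishes the proof without ever invoking the matrix $\mathcal A$ or its invertibility explicitly --- only the basis property of $\mathcal R^{m,n}$ (Corollary~\ref{cor: rectangle is basis}) is needed to justify equating coefficients. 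Both proofs rest on the same two inputs, Theorem~\ref{thm:qhitCSFabelian} and Corollary~\ref{cor: sum of hits}; yours packages the bookkeeping a bit more directly, while the paper's phrasing makes the statement about the transition matrices $\mathcal A$ and $\mathcal H$ explicit, which is convenient for the surrounding discussion of Remark~\ref{remark: inverse matrices}.
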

\begin{proof}

By Proposition~\ref{prop: a lambda in a mi's} and Corollary~\ref{cor: sum of hits} it is enough to show the result for the $a_j^{m,n}(m^k,q)$. That is, to show that the columns of the matrix $\mathcal{A}$ from Remark~\ref{remark: inverse matrices} add up to one. This follows from basic linear algebra since the columns of the inverse matrix $\mathcal{A}^{-1}=\mathcal{H}$ also add up to one by Corollary~\ref{cor: sum of hits}.
\end{proof}

\subsection{Theorem~\ref{thm:qhitCSFabelian} in terms of $q$-rook numbers}

Theorem~\ref{thm:qhitCSFabelian} expresses $X_{\lambda}({\bf x},q)$ as a linear combination of the functions $X_{m^j}({\bf x},q)$ where the coefficients are normalized $q$-hit numbers. Since the latter are defined in~\eqref{eq: hit in terms of rs} in terms of $q$-rook numbers it is natural to give an expression for $X_{\lambda}({\bf x},q)$ involving $q$-rook numbers $R_i(\lambda)$.

\begin{definition}
For non-negative integers $m$, $n$, and $i$, with $m\geq n$ and $0\leq i\leq n$,  define
\[
Y^{m,n}_i({\bf x},q) := \sum_{k=0}^i(-1)^{i-k}\qbinom{i}{k}q^{\binom{k}{2}}X_{m^k}({\bf x},q).
\]
\end{definition}

\begin{corollary}
Let $\lambda$ be a partition inside an $n\times m$ board ($n\leq m$). Then
\[
X_{\lambda}({\bf x},q) = \dfrac{1}{\qfactorial{m}}\sum_{i=0}^n \qfactorial{m-i} q^{mi-\binom{i}{2}-|\lambda|}\qrook{i}{\lambda}\cdot Y^{m,n}_i({\bf x},q).
\]
\end{corollary}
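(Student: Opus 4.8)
The statement is a direct algebraic rewriting of Theorem~\ref{thm:qhitCSFabelian}: we take the known expansion of $X_\lambda$ in the rectangular CSFs $X_{m^k}$ with $q$-hit coefficients, substitute the defining change of basis~\eqref{eq: hit in terms of rs} that expresses $\qhit{i}{m,n}{\lambda}$ in terms of the $q$-rook numbers $\qrook{i}{\lambda}$, and reorganize the double sum so that the coefficient of each $\qrook{i}{\lambda}$ is precisely the alternating combination of $X_{m^k}$ that defines $Y^{m,n}_i({\bf x},q)$. First I would start from the square-board form~\eqref{eq: MGP relation square boards},
\[
\csft{\lambda}{q} \,=\, \dfrac{1}{\qfactorial{m}} \sum_{k=0}^{n}
\qhit{k}{m}{\lambda} \cdot \csft{m^k}{q},
\]
and plug in~\eqref{eq: hit in terms of rs} with $n$ replaced by $m$ (equivalently, use Lemma~\ref{lem:q-hit for m,m} to pass between $\qhit{k}{m,n}{\lambda}$ and $\qhit{k}{m,m}{\lambda}$). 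This yields a double sum over $k$ (the rectangle index) and $i\ge k$ (the rook index).

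Next I would interchange the order of summation, summing over $i$ from $0$ to $n$ on the outside and over $k$ from $0$ to $i$ on the inside (using $\qrook{i}{\lambda}=0$ for $i>n$). The inner sum then has the shape
\[
\qfactorial{m-i}\,q^{mi-\binom{i}{2}-|\lambda|}\,\qrook{i}{\lambda}\;\sum_{k=0}^{i}(-1)^{i+k}\,q^{\binom{k}{2}}\qbinom{i}{k}\,\csft{m^k}{q},
\]
after the powers of $q$ depending on $i$ alone and the factor $\qfactorial{m-i}$ are pulled out of the $k$-sum; the remaining $k$-sum is exactly $Y^{m,n}_i({\bf x},q)$ by its definition, since $(-1)^{i+k}=(-1)^{i-k}$. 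Collecting the prefactor $1/\qfactorial{m}$ (from~\eqref{eq: MGP relation square boards}) versus the $1/\qfactorial{m-n}$ that appears inside~\eqref{eq: hit in terms of rs} requires a little care: working directly from~\eqref{eq: MGP relation square boards} with $\qhit{k}{m,m}{\lambda}$ there is no $1/\qfactorial{m-n}$ factor, and matching the $q$-exponents $\binom{k}{2}-|\lambda|+\big(mi-\binom{i}{2}\big)$ against the claimed $mi-\binom{i}{2}-|\lambda|$ shows that the $\binom{k}{2}$ is precisely what is absorbed into $Y^{m,n}_i$. This produces exactly the asserted formula.

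The only real bookkeeping obstacle is tracking the powers of $q$ and the factorials through the swap of summation order so that nothing depending on $k$ is left outside $Y^{m,n}_i$ and nothing depending on $i$ is left inside; this is routine but must be done carefully, in particular choosing whether to invoke Lemma~\ref{lem:q-hit for m,m} at the start or to keep the $n\times m$ board throughout. No new ideas are needed beyond Theorem~\ref{thm:qhitCSFabelian} and the definitions, so the corollary follows immediately once the index manipulation is carried out.
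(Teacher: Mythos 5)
Your proposal is correct and is essentially the paper's own argument: substitute the change of basis \eqref{eq: hit in terms of rs} into Theorem~\ref{thm:qhitCSFabelian}, interchange the order of summation, and recognize the inner $k$-sum (with its $q^{\binom{k}{2}}$ factor) as $Y^{m,n}_i({\bf x},q)$, with the prefactors combining via $\qfalling{m}{n}\,\qfactorial{m-n}=\qfactorial{m}$. The bookkeeping you flag works out exactly as you describe, so nothing further is needed.
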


\begin{proof}
We use~\eqref{eq: hit in terms of rs} in Theorem~\ref{thm:qhitCSFabelian} and change the order to summation to obtain:
\begin{align*}
    X_{\lambda}({\bf x},q) &= 
    \frac{1}{\qfalling{m}{n}} \sum_{k=0}^n \dfrac{q^{\binom{k}{2}-|\lambda|}}{\qfactorial{m-n}}\sum_{i=k}^n \qfactorial{m-i}\qbinom{i}{k}(-1)^{i+k}q^{mi-\binom{i}{2}}\qrook{i}{\lambda} X_{m^k}({\bf x},q) \nonumber \\
    &= \dfrac{1}{\qfactorial{m}}\sum_{i=0}^n \qfactorial{m-i} q^{mi-\binom{i}{2}-|\lambda|}\qrook{i}{\lambda}\cdot \left( \sum_{k=0}^i(-1)^{i-k}\qbinom{i}{k}q^{\binom{k}{2}}X_{m^k}({\bf x},q)\right).
\end{align*}
The result follows by noting that the sum in parenthesis on the RHS is exactly ${Y_i({\bf x},q)}$. 
\end{proof}

The following example shows that the $Y^{m,n}_i({\bf x},q)$ are not $m$-positive. 
\begin{example}
For $m=3$, $n=2$ and $i=1$, we have that $$Y_1^{3,2}({\bf x},q) = X_{3^1}({\bf x},q) - X_{3^0}({\bf x},q).$$
Expanding in the $m$-basis, we have that 
\begin{align*}
Y_1^{3,2}({\bf x},q) &= \left(-q^{10} - 4 q^{9} - 9 q^{8} - 14 q^{7} - 12 q^{6} - 2 q^{5} + 11 q^{4} + 16 q^{3} + 11 q^{2} + 4 q\right)m_{1,1,1,1,1} \\
&+ \left(q^{6} + 3 q^{5} + 5 q^{4} + 5 q^{3} + 3 q^{2} + q\right)m_{2,1,1,1}.\intertext{
However, in the $e$-basis we have that} 
%Y_1^{3,2}({\bf x},q) 
&= \left(q^{6} + 3 q^{5} + 5 q^{4} + 5 q^{3} + 3 q^{2} + q\right)e_{4,1} \\ &- \left(q^{10} + 4 q^{9} + 9 q^{8} + 14 q^{7} + 17 q^{6} 
+17 q^{5} + 14 q^{4} + 9 q^{3} + 4 q^{2} + q\right)e_{5}
\end{align*}
\end{example}

As noted by John Shareshian (private communication), the expansion of the $Y_i^{m,n}$ in the elementary basis in the previous example is a product of $q$-numbers. This is always the case as the next result shows.

\begin{proposition}
Consider the functions $Y_i({\bf x},q)$, then
\[
Y_i({\bf x},q)=q^{in}[m]![i]! [m+n-2i-1]_{n-i-1} \sum_{r=0}^i (-1)^{i+r} q^{\frac{(r-2i)(r+1)}{2}} [m+n-2r]\qbinom{m+n-r-i-1}{i-r} e_{m+n-r,r}.
\]
\end{proposition}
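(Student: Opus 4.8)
The plan is to start from the definition
\[
Y_i^{m,n}({\bf x},q) = \sum_{k=0}^i (-1)^{i-k}\qbinom{i}{k}q^{\binom{k}{2}} X_{m^k}({\bf x},q)
\]
and substitute the Abreu--Nigro expansion of each rectangular CSF $X_{m^k}$, namely Lemma~\ref{lem:ANrectangle}, which expresses $X_{m^k}$ as a sum over $r=0,\ldots,k$ of explicit $q$-hit coefficients times $e_{m+n-r,r}$. After interchanging the order of the two sums ($k$ outside, $r$ inside $\mapsto$ $r$ outside, $k$ from $r$ to $i$ inside), the coefficient of $e_{m+n-r,r}$ in $Y_i^{m,n}$ becomes a single sum over $k$ of products of $q$-binomials, $q$-factorials, and the rectangular $q$-hit numbers $\qhit{r}{m+n-r-1}{m^k}$ (or $\qhit{k}{m+n-k}{m^k}$ when $r=k$). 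The explicit formula for these rectangular $q$-hit numbers is available as Proposition~\ref{prop:qhit_rectangles}, equation~\eqref{eq:qhit_mj}, so after that substitution the coefficient of $e_{m+n-r,r}$ is a fully explicit finite $q$-hypergeometric sum in $k$.

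The next step is to evaluate that inner sum in closed form. I expect it to telescope or collapse to a single term up to $q$-binomial identities; concretely, one isolates the $k$-dependence and recognizes the sum as a terminating $q$-Vandermonde / $q$-Chu--Vandermonde type identity (the same $q$-binomial gymnastics already used throughout Section~\ref{sec:new_qrooks} and in the proof of Lemma~\ref{prop:qhit-relations}). The answer must reproduce the stated product $q^{in}[m]![i]![m+n-2i-1]_{n-i-1}$ times the alternating sum over $r$ of $q^{(r-2i)(r+1)/2}[m+n-2r]\qbinom{m+n-r-i-1}{i-r}e_{m+n-r,r}$. It is worth checking the $r=i$ term separately, since it comes with the boundary coefficient $\qhit{i}{m+n-i}{m^i}$ rather than the generic $\qhit{r}{m+n-r-1}{m^k}$, and confirming it matches the $r=i$ summand of the claimed formula (where $\qbinom{m+n-2i-1}{0}=1$); this is a good low-dimensional sanity check together with the worked Example just above for $m=3,n=2,i=1$.

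An alternative, possibly cleaner route is to avoid the double sum entirely: use the Corollary just after Lemma~\ref{lem: MGP for rectangles}, which already inverts the change of basis and expresses $\qbinom{m-1}{k}X_{m^k}$ as an alternating sum of $X_{(m-1)^j}$'s with coefficients $(-1)^{k-j}q^{-(k+j)(k-j+1)/2}$. Comparing that with the definition of $Y_i^{m,n}$ suggests $Y_i^{m,n}$ is (up to an explicit monomial and $q$-factorial prefactor) proportional to $X_{(m-1)^i}$ evaluated in the $e$-basis, or more precisely to the difference of two consecutive such terms; one could then quote Lemma~\ref{lem:ANrectangle} for $(m-1)^i$ directly and only a single-index simplification remains. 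Either way, the key inputs are Lemma~\ref{lem:ANrectangle}, equation~\eqref{eq:qhit_mj}, and standard $q$-binomial summation.

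The main obstacle I anticipate is the bookkeeping of the powers of $q$: the prefactors in~\eqref{eq:qhit_mj} carry exponents like $(N-j-m+k)k$ with $N=m+n-k$ or $N=m+n-r-1$, which interact with the $\binom{k}{2}$ from the definition of $Y_i$ and with the $q^{-(k+j)(k-j+1)/2}$ in the inversion corollary, and getting the final exponent to collapse to the clean $q^{in}$ and $q^{(r-2i)(r+1)/2}$ requires careful completion of squares in the summation index. I would organize this by first fixing $r$, pulling every $k$-independent factor out front, writing the remaining $k$-sum in the normalized form $\sum_k (-1)^k q^{\binom{k}{2}} \qbinom{a}{k}\qbinom{b}{c-k}$-style, applying $q$-Vandermonde, and only at the very end recombining the extracted prefactor with the output of the summation to read off the stated product; the algebra is routine but error-prone, so cross-checking against the $m=3,n=2,i=1$ expansion in the preceding example is essential.
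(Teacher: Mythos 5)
Your primary route is essentially the paper's proof verbatim: substitute the Abreu--Nigro rectangle expansion into the definition of $Y_i$, interchange the sums, insert the explicit formula~\eqref{eq:qhit_mj} for $\qhit{r}{m+n-r-1}{m^k}$, and collapse the inner $k$-sum with the terminating $q$-Vandermonde identity $\sum_{a+b=c}(-1)^aq^{\binom{a}{2}}\txtqbin{A}{a}\txtqbin{B+b}{b}=q^{Ac}\txtqbin{B-A+c}{c}$ (taken there with $A=n-r-1$, $B=m+n-r-1-i$, $c=i-r$); the only organizational difference is that the paper sidesteps your separate $r=k$ boundary check by quoting the compact Nadeau--Tewari form of Lemma~\ref{lem:ANrectangle}, in which that term is already absorbed into the generic summand. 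Your alternative route is not viable as stated---by Lemma~\ref{lem: MGP for rectangles} each $X_{(m-1)^j}$ is a combination of only two consecutive $X_{m^k}$'s, so $Y_i$, which involves $i+1$ of them, cannot be proportional to $X_{(m-1)^i}$ or to a difference of two such terms once $i\ge 3$---but since you offer it only as a possible shortcut, your main argument stands.
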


\begin{proof}
By \cite[\S 4.4.2]{NT}, the Abreu--Nigro expansion of $X_{m^k}({\bf x},q)$ in Lemma~\ref{lem:ANrectangle} can be  written compactly as
\[
X_{m^k}({\bf x},q) = \sum_{j=0}^k q^j [j]![m+n-2j] \qhit{j}{m+n-j-1}{m^k} \cdot e_{m+n-j,j}.
\]
Thus the coefficient $c_{r}(q)$ of the elementary basis $e_{m+n-r,r}$ in $Y_i({\bf x},q)$ equals
\begin{equation*}
c_r(q) = \sum_{k=r}^i (-1)^{i-k} \qbinom{i}{k} q^{\binom{k}{2}+r} [r]![m+n-2r] \qhit{r}{m+n-r-1}{m^k}.
\end{equation*}
Next, we use the explicit expression for the $q$-hit numbers of rectangles in Proposition~\ref{prop:qhit_rectangles} and standard manipulations to obtain 
\begin{multline*}
   c_r(q) = (-1)^{i+r} [i]! [m+n-r-i-1]![m+n-2r] [m]_r q^{nr-\binom{r+1}{2}} \times \\
   \times \sum_{k=r}^i (-1)^{k-r} q^{\binom{k-r}{2}} \qbinom{n-r-1}{k-r}\qbinom{m+n-r-1-k}{i-k}.
\end{multline*}
Lastly, we simplify the sum on the RHS above using the following identity of $q$-binomials 
\[
\sum_{a+b=c} (-1)^a q^{\binom{a}{2}}\qbinom{A}{a}\qbinom{B+b}{b}=q^{Ac}\qbinom{B-A+c}{c}\] 
for $A=n-r-1$, $B=m+n-r-1-i$, and $c=i-r$, and do standard manipulations, to obtain the desired result.
\end{proof}

\section{Final Remarks}\label{sec: final remarks}

\subsection{A tale of two variants of $q$-hit numbers}\label{subsec:tale}

To our surprise, the $q$-hit numbers appearing in Theorems~\ref{AN:generalLambda} and Theorem~\ref{thm:qhitCSFabelian} are not exactly the Garsia--Remmel $q$-hit numbers denoted by $\nqhit{j}{n}{\lambda}$ but instead, they are off by a power of $q$ (see Proposition~\ref{prop: difference GR and our qhit}). Thus, these two variants of $q$-hit numbers satisfy different versions of deletion-contraction (see Appendix~\ref{app: deletion-contraction}).

While working on this project, we have encountered two different variants of Dworkin's statistic for the Garsia--Remmel $q$-hit numbers $\nqhit{n}{j}{\lambda}$ for $\lambda \subset n\times n$. In \cite[Sec. 7, Fig. 3]{D} Dworkin gives a statistic with a rule that is the transpose of the statistic in Definition~\ref{def: our stat qhit}. However, this statistic actually yields our $q$-hit numbers $\qhit{j}{n}{\lambda}$ (this can be seen from the change of basis to $q$-rook numbers and because the latter stay invariant under conjugation) and {\em not} the Garsia--Remmel $q$-hit numbers $\nqhit{j}{n}{\lambda}$ as claimed in \cite{D}. In \cite[Fig. 5]{HR}, Haglund and Remmel give a statistic similar to Dworkin that they attribute to him (see Definition~\ref{def: Dworkin stat qhit}) that {\em does} yield the Garsia--Remmel $q$-hit numbers. The authors in \cite{HR} give a weight-preserving bijection between their version of Dworkin's statistic and Haglund's statistic for $\nqhit{j}{n}{\lambda}$ from \cite{H} thus proving the validity of their version of Dworkin's statistic.

 See Appendices~\ref{app: Dworkin} and \ref{app: relation} for more details on the  Garsia--Remmel $q$-hit numbers and their relation to our $q$-hit numbers. And see Appendix~\ref{app: proof statistic} for a proof of Theorem~\ref{thm: qhit statistic rectangular board} which shows the statistic in Definition~\ref{def: our stat qhit} that yields the $q$-hit numbers $\qhit{m,n}{j}{\lambda}$. 
 
 For a recent $q$-analogue of hit numbers for general boards, not just {\em Ferrers boards}, see \cite{LM17}.

\subsection{A conjectured deletion-contraction relation for $q$-hit numbers}

We use the deletion-contraction of $q$-hit numbers (Lemma~\ref{lem: deletion/contration}) in our proof of Theorem~\ref{AN:generalLambda}. It appears that the $q$-hit numbers satisfy a similar deletion-contraction relation with simpler powers of $q$. For more details in another deletion-contraction relation of $q$-hit numbers see Appendix~\ref{app: deletion-contraction}.

\begin{conjecture} \label{lemma:del con hits}
Let $\lambda$ be a partition inside an $n\times m$ board and $e$ be an outer corner of $\lambda$. Then we have the following recursion: $\qhit{j}{m,n}{\varnothing} = \qfalling{m}{n} \delta_{j,0}$ and 
\begin{equation*} %\label{eq:delcon_rect}
q\qhit{j}{m,n}{\lambda} =  \qhit{j}{m,n}{\lambda\backslash e}
+ q^m \qhit{j-1}{m-1,n-1}{\lambda/e} - \qhit{j}{m-1,n-1}{\lambda/e}.\footnote{This conjecture was proved by Nadeau--Tewari (private communication).}
\end{equation*}
\end{conjecture}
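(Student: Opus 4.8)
The plan is to translate the $q$-hit recursion into an identity for the Garsia--Remmel $q$-rook numbers, by the same mechanism used in the proofs of Lemmas~\ref{lem:qhit_linear_1}--\ref{lem:qhit_quadratic}. Encode both sides through $G^{m,n}(x;\lambda)=\sum_k\qhit{k}{m,n}{\lambda}q^{xk}$; then the recursion of the Conjecture is equivalent to $qG^{m,n}(x;\lambda)=G^{m,n}(x;\lambda\backslash e)+(q^{m+x}-1)\,G^{m-1,n-1}(x;\lambda/e)$. Substituting the $q$-rook expansion~\eqref{eq:qhit_gen_fun_rooks} for each $G$, expanding $(q^{m+x}-1)(q^x;q)_i$ in the basis $\{(q^x;q)_k\}$ by means of the identity $q^{x+i}(q^x;q)_i=(q^x;q)_i-(q^x;q)_{i+1}$, reindexing, and using $|\lambda/e|=|\lambda|-i_0-j_0+1$ for the corner $e=(i_0,j_0)$, the identity reduces (after matching the coefficient of each $(q^x;q)_i$ and cancelling common factors) to the purely combinatorial $q$-rook identity
\[
 q^{\,i+1}\bigl(\qrook{i}{\lambda}-\qrook{i}{\lambda\backslash e}\bigr)=q^{\,i_0+j_0-1}\bigl((q-1)\qrook{i}{\lambda/e}+q\,\qrook{i-1}{\lambda/e}\bigr),\qquad 0\le i\le n.
\]
(The base case $\qhit{j}{m,n}{\varnothing}=[m]_n\,\delta_{j,0}$ is immediate from~\eqref{eq: hit in terms of rs}, since $\qrook{i}{\varnothing}=\delta_{i,0}$.)

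To prove this $q$-rook identity I would use two elementary recurrences for the removable corner $e=(i_0,j_0)$ of $\lambda$. The first is the standard deletion--contraction $\qrook{i}{\lambda}=q\,\qrook{i}{\lambda\backslash e}+\qrook{i-1}{\lambda/e}$, proved by classifying placements of $i$ non-attacking rooks on $\lambda$ according to whether the cell $e$ carries a rook: if it does, deleting that rook together with its row and column is a bijection onto placements of $i-1$ rooks on $\lambda/e$ that preserves $\inv$ (a rook on the corner cancels only cells of row $i_0$ and column $j_0$, all deleted, and leaves the $\inv$-status of every other cell unchanged); if it does not, the placement is a placement of $i$ rooks on $\lambda\backslash e$, and $e$ --- being the last cell of its row and of its column in $\lambda$ --- has no rook to its east or south, hence is always counted by $\inv$, contributing the extra factor $q$. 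The second ingredient is a row-and-column removal recurrence
\[
 \qrook{i}{\lambda\backslash e}=q^{\,i_0+j_0-i-2}\,\qrook{i}{\lambda/e}+[\,i_0+j_0-i-1\,]\,\qrook{i-1}{\lambda/e},
\]
which I would obtain from the Garsia--Remmel product formula (Proposition~\ref{prop:GR_rook_gen_fun}) by computing the ratio $F(x;\lambda\backslash e)/F(x;\lambda/e)$ and a short telescoping, in the style of Lemmas~\ref{lem:F_ratio} and~\ref{lem:DF} (for $j_0=1$, which forces $i_0=\ell(\lambda)$, this is the familiar first-column deletion recurrence; the general case is a mild variant). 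Combining the two finishes the proof: from the first, $\qrook{i}{\lambda}-\qrook{i}{\lambda\backslash e}=(q-1)\qrook{i}{\lambda\backslash e}+\qrook{i-1}{\lambda/e}$; substituting the second for $\qrook{i}{\lambda\backslash e}$ and simplifying with $(q-1)[\,i_0+j_0-i-1\,]+1=q^{\,i_0+j_0-i-1}$ collapses the right-hand side to $q^{\,i_0+j_0-1}\bigl((q-1)\qrook{i}{\lambda/e}+q\,\qrook{i-1}{\lambda/e}\bigr)$, as required; reading the equivalence backwards then yields the Conjecture.

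I expect the main obstacle to be the second $q$-rook recurrence together with the exponent bookkeeping throughout: getting the powers of $q$ exactly right in the generating-function reduction and in the row-and-column removal identity --- including the degenerate shapes --- is precisely where errors creep in, just as for the other $q$-hit identities of Section~\ref{sec:new_qrooks}. Once the $q$-rook identity is in hand, the passage back to $q$-hit numbers and hence to the stated recursion is routine.
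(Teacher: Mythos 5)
The paper does not prove this statement at all: it is stated as a conjecture, with a footnote crediting a proof to Nadeau--Tewari (private communication). So there is no internal argument to compare yours against; what you have proposed is, in effect, a proof of something the paper leaves open. Having checked your two ingredients, I believe your route works. The generating-function reduction is correct: writing the conjectured recursion as $qG^{m,n}(x;\lambda)=G^{m,n}(x;\lambda\backslash e)+(q^{m+x}-1)G^{m-1,n-1}(x;\lambda/e)$, substituting \eqref{eq:qhit_gen_fun_rooks}, using $q^{x+i}(q^x;q)_i=(q^x;q)_i-(q^x;q)_{i+1}$ and $|\lambda/e|-|\lambda|=1-i_0-j_0$, and matching coefficients of $(q^x;q)_i$ does collapse (the exponent in the shifted term reduces to $1-i$) to exactly your $q$-rook identity. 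Your second recurrence is also correct and is even easier than you anticipate: for a removable corner $e=(i_0,j_0)$ of $\lambda$ with $\ell=\ell(\lambda)$, the Garsia--Remmel product formula gives directly
\begin{equation*}
F(x;\lambda/e)=\frac{F(x;\lambda)}{\qnumber{x+i_0+j_0-\ell}},\qquad F(x;\lambda\backslash e)=\qnumber{x+i_0+j_0-\ell-1}\,F(x;\lambda/e),
\end{equation*}
and expanding $\qnumber{x+i_0+j_0-\ell-1}[x]_{\ell-1-i}=q^{i_0+j_0-i-2}[x]_{\ell-i}+\qnumber{i_0+j_0-i-2}[x]_{\ell-1-i}$ yields $\qrook{i}{\lambda\backslash e}=q^{i_0+j_0-i-2}\qrook{i}{\lambda/e}+\qnumber{i_0+j_0-i-1}\qrook{i-1}{\lambda/e}$ on the nose; combined with the standard deletion--contraction and $(q-1)\qnumber{N}+1=q^N$ this gives your displayed rook identity, which I have also verified on examples. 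Two loose ends remain to be written out carefully: the degenerate corner $j_0=1$ (where $\ell(\lambda\backslash e)=\ell-1$, so one must use the zero-padded form $\sum_k\qrook{k}{\mu}[x]_{L-k}=\prod_{t=1}^{L}\qnumber{x+\mu_t-L+t}$ with $L=\ell$), and the boundary coefficient $i=n=m$ in the reduction, where the factor $\qnumber{m-i}$ vanishes and the coefficient of $(q^x;q)_m$ must be matched directly rather than after dividing by it; both work out but deserve a sentence each. Modulo writing those details, your proposal is a complete and self-contained proof of the conjecture.
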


\subsection{Combinatorial proof of Theorem~\ref{thm:qhitCSFabelian}}

Guay-Paquet's proof of  Theorem~\ref{thm:qhitCSFabelian} sketched in \cite{MGP_LR} is based on the idea of dual basis from linear algebra. He shows that the vector space $V^{m,n}$, together with the basis $\mathcal{R}^{m,n}$, has a dual vector space $V^*_{m,n}:=\text{span}_{\mathbb{Q}(q)}\big( P(x;\lambda)/[m]_n \mid \lambda \subset n\times m \big)$ with dual basis $\{x^i \mid i=0,\ldots n\}$. Now, the dual basis coefficients are given by the normalized $q$-hit numbers $\qhit{i}{m,n}{\lambda}/[m]_n$ as shown (up to normalization) in~\eqref{eq: hit rook change of basis}.

In contrast, our proof of Theorem~\ref{thm:qhitCSFabelian} uses $q$-rook theory, it would be interesting to find a bijective proof of this result relating colorings with rook placements. 

Specifically, we can rewrite the identity as 
\begin{equation*}
    [m]_n \csft{\lambda}{q} = \sum_{k=0}^n \qhit{k}{m,n}{\lambda} \csft{m^k}{q}.
\end{equation*}
Matching monomials in ${\bf x}$, powers of $q$, and interpreting $[m]_n=\qhit{0}{m,n}{\emptyset}$ we are looking for a bijection $\varphi_\nu$ for every $\nu =(2,\ldots,2,1,\ldots)$ as follows.

\emph{From:}

 Pairs of a rook placement $P_0^{m,n}(\emptyset)$ of $n$ rooks on $n \times m$ board with $\inv(P)$ inversions and a proper coloring $\kappa(\lambda,\nu)$ of $G_{\lambda}$ of type $\nu$ and $\asc(\kappa)$ total ascents. 

\emph{Into:}

 Triples $k, P_k^{m,n}(\lambda), \kappa(m^k,\nu)$ consisting of an integer $k\in [0,n]$, a rook placement on $n \times m$ with exactly $k$ rooks inside $\lambda$, and a proper coloring of $G_{m^k}$ of type $\nu$, 

\emph{such that} 
$$\asc(\kappa(m^k,\nu))+\inv(P_k^{m,n}) = \asc(\kappa)+\inv(P).$$

\subsection{Combinatorial proof of Theorem~\ref{AN:generalLambda}}

There are other rules for the elementary basis expansion of $\csft{\lambda}{q}$. In particular, Cho--Huh~\cite{ChoHuh} give an expansion in terms of {\em $P$-tableaux} of shape  $2^j 1^{m+n-2j}$ such that there is no $s \geq j+2$ such that  $(a_{i,1},a_{s,1}) \in \lambda$ for all $i \in \{ \ell+1,\ldots,s-1\}$ (see~\cite[Sec. 6]{ShW2} for details on $P$-tableaux). We denote such set of $P$-tableaux by $\mathcal{T}'((m+n-j,j))$.  Let $$c_j^{m,n}(q):=\sum_{T \in \mathcal{T}'(m+n-j,j)} q^{\inv_{G(\lambda)}(T)}.$$
For the definitions of $P$-tableau and $\inv_G(T)$ see~\cite[Sec. 6]{ShW2}.

 It would be interesting to find a weight-preserving bijection that shows that 
\[
c_j^{m,n}(q) = \begin{cases}
[j]! \qhit{j}{m+n-j}{\lambda} &\text{ if } j=\ell(\lambda),\\
q^j[j]! [m+n-2j] \qhit{j}{m+n-j-1}{\lambda} &\text{ if } j<\ell(\lambda).
\end{cases}
\]

For instance, for the case $j=\ell(\lambda)$, we need to establish a weight-preserving bijection between the rook placements with $j$ rooks inside $\lambda$, together with some labeling of the ones inside $\lambda$ to account for $\qfactorial{j}$, and $\mathcal{T}'((m+n-j,j))$.

\subsection{Extending Theorems~\ref{AN:generalLambda} and \ref{thm:qhitCSFabelian} to bicolored graphs} 

Both Theorem~\ref{AN:generalLambda} and Theorem~\ref{thm:qhitCSFabelian} are $q$-analogues of a special cases of respective results by Stanley--Stembridge \cite[Thm. 4.3]{StSt} and by Guay-Paquet \cite[Prop. 4.1 (iv)]{MGP}  for {\em bicolored graphs}. It would be interesting to find a $q$-analogue of these more general results for bicolored graphs $G$. However, for such graphs $G$ the function $X_G({\bf x},q)$ may not be symmetric.

\subsection{Beyond the abelian case}

We have studied the chromatic symmetric function $\csft{G(d)}{q}$ for Dyck paths $d$ of bounce two, aka the abelian case \cite{HP}. 
Recently Cho--Hong \cite{CHo} verified Conjecture~\ref{conj: StaSteShWa} when $q=1$ for Dyck paths of bounce three.  Their expansion is in terms of certain $P$-tableaux. For these Dyck paths, it would be interesting to find an $e$-expansion involving $q$-rook theory or extending Theorem~\ref{thm:qhitCSFabelian}.

 \bibliographystyle{alpha}
 \bibliography{qhit-biblio-fpsac.bib}

\appendix
\section*{Appendix}\label{sec:appendix}
\renewcommand{\thesubsection}{\Alph{subsection}}
\numberwithin{theorem}{subsection}
\numberwithin{equation}{subsection}

In this appendix we give more details on the two variants of the statistic on the Garsia--Remmel $q$-hit numbers, their relation, and their deletion-contraction relations. 
\subsection{Garsia-Remmel $q$-hit numbers and Dworkin's statistic}\label{app: Dworkin}
We start by defining the original version of the $q$-hit numbers given by Garsia--Remmel that is different than our $q$-hit numbers. Recall that $m\geq n$.

\begin{definition}[\cite{GR}]
For $\lambda$ inside an $n\times n$ board, we define the Garsia-Remmel \emph{$q$-hit polynomial} of $\lambda$ by 
\begin{equation} \label{eq: GR hit rook change of basis}
\sum_{i=0}^n \nqhit{i}{n,n}{\lambda} x^i := \sum_{i=0}^n \qrook{i}{\lambda} \qfactorial{n-i} \prod_{k=n-i+1}^n (x-q^k).
\end{equation}
\end{definition}

Garsia and Remmel~\cite[Theorem 2.1]{GR} showed that $\nqhit{i}{n}{\lambda}:=\nqhit{i}{n,n}{\lambda}$ is a polynomial with nonnegative coefficients and Dworkin~\cite{D} and Haglund~\cite{H} gave different statistics realizing these $q$-hit numbers. We focus on Dworkin's statistic (as presented in \cite{HR}, see Section~\ref{subsec:tale}) since it is very similar to the statistic in Definition~\ref{def: our stat qhit}.

\begin{definition}[Dworkin's statistic for the $q$-hit numbers~\cite{HR}]\label{def: Dworkin stat qhit}
Let $\lambda$ be a partition inside an $n\times m$ board. Given a placement $p$ of $n$ non-attacking rooks on an $n\times n$ board, with exactly $j$ inside $\lambda$, let $\dstat(p)$ be the number of cells $c$ in the $n\times m$ board such that 
\begin{compactitem}
\item[(i)] there is no rook in $c$, 
\item[(ii)] there is no rook below $c$ on the same column, and either, 
\item[(iii)] if  $c$ is in $\lambda$ then the rook on the same row of $c$ is either outside $\lambda$ or else to the left of $c$; or  
\item[(iv)] if $c$ is not in $\lambda$ then the rook on same row of $c$ is not in $\lambda$ and to the left  of $c$. 
\end{compactitem}
\end{definition}

\begin{example}
Consider the partition $\lambda=(4,3,2,2)$ inside a $6\times 6$ board. In Figure~\ref{fig: Dworkin q-hit statistic square}, we present a rook placement $p$ of six rooks on the  $6\times 6$ board with three hits on $\lambda$ where $\dstat(p)=4$.
\end{example}

\begin{figure}
     \centering
     \begin{subfigure}[b]{0.2\textwidth}
         \centering
         \includegraphics{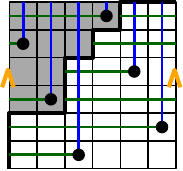}
         
         \caption{}
         \label{fig: Dworkin q-hit statistic square}
         
     \end{subfigure}
     \begin{subfigure}[b]{0.2\textwidth}
         \centering
         \includegraphics{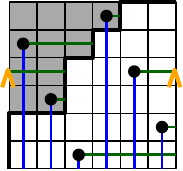}
         \caption{}
         \label{fig: our q-hit statistic square}
     \end{subfigure}
      \begin{subfigure}[b]{0.5\textwidth}
    \includegraphics{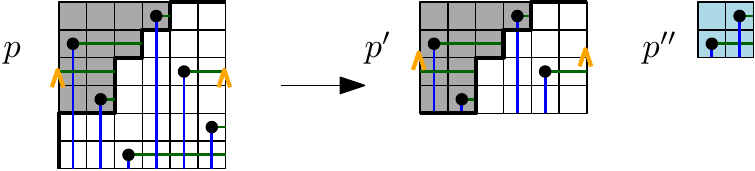}
    \caption{}
    \label{fig:our qhit square to rect}
   \end{subfigure}
    \label{fig:qhit square to rect}
        \caption{(A) For the same rook placement $p$, examples of (A) the statistic  $\dstat(p)$ for the Garsia--Remmel $q$-hit numbers, and (B) of the  statistic $\stat(p)$ for our $q$-hit numbers. Note how the empty cells in the calculation of one statistic correspond to the double crossings in the other. Moreover in each placement $\stat(p)-\cross(p)=j\cdot m-|\lambda|$. (C) Illustration of bijection between rook placement counted in  $\qhit{j}{m,m}{\lambda}$ and rook placements counted in  $\qhit{j}{m,n}{\lambda}$   and $R_{m-n}((m-n)^{m-n})$, respectively.}
        \label{fig: Dworkin vs Our q-hit}
\end{figure}

\begin{theorem}[{Dworkin~\cite{D,HR}}] \label{thm: dworkin rule}
Let $\lambda$ be a partition inside an $n\times n$ board and $j=0,\ldots,n$ then
\[
\nqhit{j}{n}{\lambda} = \sum_p q^{\dstat(p)},
\]
where the sum is over all placements $p$ of $n$ non-attacking rooks on an $n\times n$ board with exactly $j$ rooks inside $\lambda$.
\end{theorem}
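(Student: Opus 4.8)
We recognize the statement as Dworkin's theorem, and the plan is to verify that the $\dstat$-generating function obeys the defining change of basis \eqref{eq: GR hit rook change of basis} for the Garsia--Remmel $q$-hit numbers. Writing $h(p)$ for the number of rooks of a placement $p$ that lie inside $\lambda$, it suffices to prove the polynomial identity
$$\sum_{j=0}^{n}\Big(\sum_{p\,:\,h(p)=j} q^{\dstat(p)}\Big)x^{j} \;=\; \sum_{i=0}^{n}\qrook{i}{\lambda}\,\qfactorial{n-i}\prod_{k=n-i+1}^{n}(x-q^{k}),$$
where the inner sum on the left ranges over non-attacking placements of $n$ rooks on the $n\times n$ board. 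Since both sides are polynomials of degree at most $n$ in $x$, it is enough to check equality at $x=q^{m}$ for all integers $m\ge n$ (infinitely many distinct values over $\mathbb{Q}(q)$). For these values the right-hand side simplifies, via $q^{m}-q^{k}=-q^{k}(1-q)[m-k]$ and Proposition~\ref{prop:GR_rook_gen_fun}, into the $q$-rook generating function of $\lambda$ placed inside the enlarged $n\times m$ board, while the left-hand side becomes $\sum_{p}q^{\dstat(p)+m\cdot h(p)}$; thus the identity reduces to a statement purely about placements of $n$ rooks on an $n\times m$ board.

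For that reduced statement I would argue by induction on the number of cells of $\lambda$, peeling off an outer corner cell $e$ of $\lambda$. On the algebraic side, the standard deletion--contraction recursion for $q$-rook numbers (expressing $\qrook{i}{\lambda}$ through $\qrook{i}{\lambda\backslash e}$ and $\qrook{i-1}{\lambda/e}$, up to an explicit power of $q$) translates into a recursion for the right-hand side above. On the combinatorial side one classifies each placement $p$ of $n$ non-attacking rooks according to whether the cell $e$ is occupied and, if not, according to the position of the rook in the row of $e$ and of the rook in the column of $e$ relative to the boundary of $\lambda$; in each case one reads off how $h(p)$ and $\dstat(p)$ change when $e$ is removed from $\lambda$, and checks that the contributions assemble into the algebraic recursion. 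An equivalent and arguably cleaner route is to invoke the weight-preserving bijection of Haglund--Remmel~\cite{HR} between $\dstat$ and Haglund's statistic, the latter already shown in~\cite{H} to generate $\nqhit{j}{n}{\lambda}$; our task would then reduce to recording that their bijection does match the rule of Definition~\ref{def: Dworkin stat qhit}.

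The main obstacle is the bookkeeping in the deletion--contraction step: the ``cylinder/attacking'' rule defining $\dstat$ makes the change in the statistic upon removing $e$ genuinely depend on several features of $p$ (whether a rook sits in $e$; whether the rook in the row of $e$ is inside $\lambda$ and on which side of $e$; whether the rook in the column of $e$ lies above or below $e$), and these pieces must combine into exactly the powers of $q$ demanded by the $q$-rook recursion. A related pitfall, already flagged in Remark~\ref{remark: different qhits}, is that the transpose of this rule --- Dworkin's originally stated statistic --- produces \emph{our} $q$-hit numbers $\qhit{j}{n}{\lambda}$ rather than the Garsia--Remmel numbers $\nqhit{j}{n}{\lambda}$; since by Proposition~\ref{prop: difference GR and our qhit} the two differ only by a power of $q$, one must keep track of that power and be careful which variant is being verified.
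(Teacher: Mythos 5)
First, a point of reference: the paper does not prove Theorem~\ref{thm: dworkin rule} at all --- it is stated as a cited result \cite{D,HR}, and the paper's own Theorem~\ref{thm: qhit statistic rectangular board} is proved in Appendix~\ref{app: proof statistic} by \emph{reducing to} this cited theorem via Lemma~\ref{lem:stat-dstat}. Your fallback route (``invoke the weight-preserving bijection of Haglund--Remmel between $\dstat$ and Haglund's statistic'') is therefore exactly what the paper does, namely a citation rather than a proof; that is a legitimate way to dispose of the statement, but it makes the first two paragraphs of your proposal superfluous.

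If the intent is to actually prove the theorem, there is a genuine gap at each of the two pivotal steps, and they are precisely where the content of the theorem lives. (1) In the specialization argument, the claim that at $x=q^m$ the right-hand side of \eqref{eq: GR hit rook change of basis} ``simplifies into the $q$-rook generating function of $\lambda$ placed inside the enlarged $n\times m$ board'' while the left-hand side becomes $\sum_p q^{\dstat(p)+m\cdot h(p)}$, so that ``the identity reduces to a statement purely about placements on an $n\times m$ board,'' is not a reduction: one must exhibit a weight-preserving bijection between placements on the $n\times n$ board weighted by $q^{\dstat(p)+mh(p)}$ and rook placements on the augmented board weighted by $\inv$, and it is exactly here that the asymmetric cancellation rule of Definition~\ref{def: Dworkin stat qhit} has to do work; nothing in the proposal carries this out. (2) The alternative deletion--contraction induction collides with the fact that the recursion the statistic must satisfy (Lemma~\ref{lemma:del con GR hits}) is not subtraction-free:
\[
\nqhit{j}{m,n}{\lambda} = q\,\nqhit{j}{m,n}{\lambda\backslash e}+\nqhit{j-1}{m-1,n-1}{\lambda/e}-q^m\,\nqhit{j}{m-1,n-1}{\lambda/e},
\]
and the contraction terms live on a smaller board. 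A case-by-case classification of placements around the corner $e$ cannot ``assemble into'' this recursion term by term; one needs either a sign-reversing/sieve argument or a bijection proving the rearranged identity with the negative term moved across, and you explicitly defer this --- which is the entire difficulty --- as ``the main obstacle'' without resolving it. Moreover the two routes as written are incompatible: once you commit to verifying a deletion--contraction recursion, the $x=q^m$ specialization buys nothing, and the ``reduced statement'' it was supposed to produce is never formulated. What is solid in the proposal is the peripheral bookkeeping (the degree count, evaluation at infinitely many points $q^m$, the identity $q^m-q^k=-q^k(1-q)[m-k]$, and the correct warning from Remark~\ref{remark: different qhits} that the transposed rule yields $\qhit{j}{n}{\lambda}$ rather than $\nqhit{j}{n}{\lambda}$); the core combinatorial argument is missing.
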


\subsection{Relation between $q$-hit numbers $\qhit{j}{n}{\lambda}$ and $\nqhit{j}{n}{\lambda}$}\label{app: relation}

The next result shows that our $q$-hit numbers and the Garsia--Remmel $q$-hit numbers are off by a power of $q$. We show this from the respective definitions of each $q$-hit numbers from the generating polynomials of $q$-rook numbers. We will ultimately show Theorem~\ref{thm: qhit statistic rectangular board} by showing that for a rook placement $p$, the statistics $\stat(p)$ and $\dstat(p)$ are off by the same power (see Lemma~\ref{lem:stat-dstat}).

\begin{proposition} \label{prop: difference GR and our qhit}
\begin{align}\label{eq:change H and tilde H}
    \nqhit{j}{m}{\lambda}:= q^{|\lambda| - jm} \qhit{j}{m}{\lambda}.
\end{align}
\end{proposition}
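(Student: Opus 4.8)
The plan is to compare the two defining generating polynomials directly: I will show that the Garsia--Remmel generating polynomial is obtained from ours by the substitution $x\mapsto q^{-m}x$ together with multiplication by $q^{|\lambda|}$, and then extract the coefficient of $x^{j}$.

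First I would rewrite the product in the Garsia--Remmel definition \eqref{eq: GR hit rook change of basis} (with $n$ there specialized to $m$). Pulling $-q^{k}$ out of each factor,
\[
\prod_{k=m-i+1}^{m}(x-q^{k}) \;=\; (-1)^{i}\,q^{\sum_{k=m-i+1}^{m}k}\prod_{k=m-i+1}^{m}\bigl(1-q^{-k}x\bigr),
\]
and the exponent simplifies, via $\sum_{k=m-i+1}^{m}k=\binom{m+1}{2}-\binom{m-i+1}{2}$, to $mi-\binom{i}{2}$. Reindexing $k=m-t$ in the remaining product converts it into $\prod_{t=0}^{i-1}\bigl(1-q^{-m}x\,q^{t}\bigr)=(q^{-m}x;q)_{i}$. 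Hence
\[
\sum_{i=0}^{m}\nqhit{i}{m}{\lambda}\,x^{i}
\;=\;\sum_{i=0}^{m}\qrook{i}{\lambda}\,\qfactorial{m-i}\,(-1)^{i}\,q^{mi-\binom{i}{2}}\,(q^{-m}x;q)_{i}.
\]

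Next I would specialize our defining relation \eqref{eq: hit rook change of basis} to $n=m$, so that $\qfactorial{m-n}=\qfactorial{0}=1$, obtaining
\[
\sum_{i=0}^{m}\qhit{i}{m}{\lambda}\,x^{i}
\;=\;q^{-|\lambda|}\sum_{i=0}^{m}\qrook{i}{\lambda}\,\qfactorial{m-i}\,(-1)^{i}\,q^{mi-\binom{i}{2}}\,(x;q)_{i}.
\]
Writing $P(x;\lambda)=\sum_{i}\qhit{i}{m}{\lambda}x^{i}$ for our polynomial and $\widetilde P(x;\lambda)=\sum_{i}\nqhit{i}{m}{\lambda}x^{i}$ for the Garsia--Remmel one, comparison of the two displays gives the single identity $\widetilde P(x;\lambda)=q^{|\lambda|}\,P(q^{-m}x;\lambda)$, since $(q^{-m}x;q)_{i}$ is precisely what $(x;q)_{i}$ becomes under $x\mapsto q^{-m}x$.

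Finally I would extract the coefficient of $x^{j}$ from both sides of $\widetilde P(x;\lambda)=q^{|\lambda|}P(q^{-m}x;\lambda)$, which yields $\nqhit{j}{m}{\lambda}=q^{|\lambda|}\,q^{-mj}\,\qhit{j}{m}{\lambda}$, i.e. exactly \eqref{eq:change H and tilde H}. I do not expect a genuine obstacle here: the only step needing care is the exponent bookkeeping in $\sum_{k=m-i+1}^{m}k=mi-\binom{i}{2}$, after which the two generating polynomials are manifestly the same expansion (in the basis $(q^{-m}x;q)_{0},\dots,(q^{-m}x;q)_{m}$ of polynomials of degree $\le m$) up to the overall scalar $q^{|\lambda|}$.
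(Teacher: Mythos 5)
Your proof is correct and follows essentially the same route as the paper's: both compare the two defining generating polynomials under the substitution $x\mapsto q^{-m}x$ together with multiplication by $q^{|\lambda|}$, using the rewriting $q^{mi-\binom{i}{2}}=q^{\sum_{k=0}^{i-1}(m-k)}$ to match the $q$-Pochhammer factors, and then extract the coefficient of $x^j$. The only (cosmetic) difference is the direction of the manipulation — you transform the Garsia--Remmel product into Pochhammer form, while the paper transforms the Pochhammer form into the product.
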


\begin{proof}
We compare~\eqref{eq: GR hit rook change of basis} and~\eqref{eq: hit rook change of basis}. Substituting $xq^{-m}$ and multiplying by $q^{|\lambda|}$ in~\eqref{eq: hit rook change of basis}, rewriting the factor $q^{mi-\binom{i}{2}}=q^{\sum_{k=0}^{i-1} m - k}$ and rewriting the $q$-Pochhammer symbol we have
\begin{align*}
    \sum_{j=0}^m \qhit{j}{m}{\lambda} q^{|\lambda|} (xq^{-m})^j &=  \sum_{i=0}^m R_i(\lambda)[m-i]! (-1)^i \prod_{k=0}^{i-1} q^{m-k} (1-xq^{-m} q^k)\\
    &= \sum_{i=0}^m R_i(\lambda)[m-i]! \prod_{k=0}^{i-1}(x-q^{m-k}),
\end{align*}
which is the RHS of~\eqref{eq: GR hit rook change of basis} for $m=n$. Comparing coefficients at $x^j$ we obtain the desired identity.
\end{proof}

As a consequence of the symmetry and reciprocity of the Garsia--Remmel $q$-hit numbers we obtain the following result. Given $\lambda$ inside $m\times m$ board, let $\overline{\lambda}$ be the complementary partition of $\lambda$, viewed as a Ferrers board.

\begin{corollary}
\begin{equation}
    \qhit{j}{m}{\lambda} = \nqhit{m-j}{m}{\overline{\lambda}}.
\end{equation}
\end{corollary}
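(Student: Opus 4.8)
The plan is to prove the identity bijectively by the $180^\circ$ rotation of the square board, which realizes the classical complementation/reciprocity of the Garsia--Remmel $q$-hit numbers; a purely algebraic route is noted at the end. First I would rewrite both sides as statistic-generating functions over rook placements on the $m\times m$ board. By \thmref{thm: qhit statistic rectangular board} with $n=m$,
\[
\qhit{j}{m}{\lambda}=\sum_{p} q^{\stat(p)},
\]
the sum over placements $p$ of $m$ non-attacking rooks on the $m\times m$ board with exactly $j$ rooks inside $\lambda$ and $\stat$ as in \defref{def: our stat qhit}; and by Dworkin's rule (\thmref{thm: dworkin rule}),
\[
\nqhit{m-j}{m}{\overline{\lambda}}=\sum_{p'} q^{\dstat(p')},
\]
the sum over placements $p'$ of $m$ non-attacking rooks on the $m\times m$ board with exactly $m-j$ rooks inside $\overline{\lambda}$ and $\dstat$ as in \defref{def: Dworkin stat qhit} taken relative to the shape $\overline{\lambda}$.

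Next I would introduce the $180^\circ$ rotation $\rho$ of the $m\times m$ board, sending the cell in row $i$ from the top and column $k$ from the left to the cell in row $m{+}1{-}i$ and column $m{+}1{-}k$. This is an involution taking non-attacking rook placements to non-attacking rook placements, with the three properties I will use: (a) a rook lies above a cell $c$ in the column of $c$ iff a rook lies below $\rho(c)$ in the column of $\rho(c)$, and a rook lies to the right of $c$ in the row of $c$ iff a rook lies to the left of $\rho(c)$ in the row of $\rho(c)$; (b) by the very definition of the complementary partition, $c\in\lambda$ iff $\rho(c)\notin\overline{\lambda}$; hence (c) if $p$ has exactly $j$ rooks inside $\lambda$, then $\rho(p)$ has exactly $j$ rooks outside $\overline{\lambda}$, i.e.\ exactly $m-j$ rooks inside $\overline{\lambda}$. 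Thus $\rho$ is a bijection between the two sets of placements indexing the sums above.

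The core of the proof is the equality $\stat(p)=\dstat(\rho(p))$, which I would establish by showing that a cell $c$ is counted by $\stat(p)$ exactly when $\rho(c)$ is counted by $\dstat(\rho(p))$. Properties (a) turn clauses (i),(ii) of \defref{def: our stat qhit} into clauses (i),(ii) of \defref{def: Dworkin stat qhit} (``no rook above'' becomes ``no rook below''). For the remaining clause one splits on whether $c\in\lambda$: when $c\in\lambda$ (so $\rho(c)\notin\overline{\lambda}$), clause (iii) of $\stat$---``the rook in the row of $c$ is in $\lambda$ and to the right of $c$''---becomes, via (a) and (b), ``the rook in the row of $\rho(c)$ is not in $\overline{\lambda}$ and to the left of $\rho(c)$'', which is precisely clause (iv) of $\dstat$ for $\overline{\lambda}$; symmetrically, when $c\notin\lambda$ (so $\rho(c)\in\overline{\lambda}$), clause (iv) of $\stat$---``the rook in the row of $c$ is in $\lambda$ or to the right of $c$''---becomes ``the rook in the row of $\rho(c)$ is outside $\overline{\lambda}$ or to the left of $\rho(c)$'', which is clause (iii) of $\dstat$ for $\overline{\lambda}$. (The rook ``in the row of $c$'' is always well defined since each of the $m$ rows of the board carries exactly one rook.) Summing $q^{\stat(p)}=q^{\dstat(\rho(p))}$ over the bijection $\rho$ then gives $\qhit{j}{m}{\lambda}=\nqhit{m-j}{m}{\overline{\lambda}}$. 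The only point requiring care is this clause-by-clause matching, where one must track the simultaneous exchange of ``above'' with ``below'', of ``left'' with ``right'', and of the roles of $\lambda$ and $\overline{\lambda}$ induced by $\rho$; once (a) and (b) are in hand the matching is mechanical. Finally, one can avoid statistics entirely: by \propref{prop: difference GR and our qhit} the asserted identity is equivalent to the Garsia--Remmel reciprocity relation $\nqhit{j}{m}{\lambda}=q^{|\lambda|-jm}\nqhit{m-j}{m}{\overline{\lambda}}$, and the rotation argument above in fact supplies a proof of that relation as well.
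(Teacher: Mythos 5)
Your proof is correct, but it takes a genuinely different route from the paper. The paper derives the identity purely algebraically in three lines: it combines \propref{prop: difference GR and our qhit} (the power-of-$q$ relation $\nqhit{j}{m}{\lambda}=q^{|\lambda|-jm}\qhit{j}{m}{\lambda}$) with two results imported from Dworkin, namely the reciprocity $\nqhit{j}{m}{\lambda,q}=q^{\binom{m}{2}}\nqhit{m-j}{m}{\overline{\lambda},q^{-1}}$ and the symmetry $q^{\binom{m}{2}+|\lambda|-jm}\nqhit{j}{m}{\lambda,q^{-1}}=\nqhit{j}{m}{\lambda,q}$; the powers of $q$ cancel using $|\overline{\lambda}|=m^2-|\lambda|$. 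You instead give a weight-preserving bijection: the $180^\circ$ rotation $\rho$ sends placements with $j$ rooks in $\lambda$ to placements with $m-j$ rooks in $\overline{\lambda}$, and your clause-by-clause matching of \defref{def: our stat qhit} against \defref{def: Dworkin stat qhit} (above/below, left/right, $\lambda$/$\overline{\lambda}$ all swapped) correctly yields $\stat(p)=\dstat(\rho(p))$; combined with \thmref{thm: qhit statistic rectangular board} and \thmref{thm: dworkin rule} this gives the identity, with no circularity since neither statistic theorem depends on the corollary. What your route buys is a self-contained combinatorial explanation that avoids invoking reciprocity and symmetry separately, very much in the spirit of the bijective questions the paper raises in its final remarks; what the paper's route buys is brevity given the cited literature, and it isolates exactly which two classical properties of the Garsia--Remmel $q$-hit numbers are responsible. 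One small caveat: the relation $\nqhit{j}{m}{\lambda}=q^{|\lambda|-jm}\nqhit{m-j}{m}{\overline{\lambda}}$ that you call ``the Garsia--Remmel reciprocity relation'' is really the composite of Dworkin's reciprocity and symmetry (his reciprocity involves $q\mapsto q^{-1}$), so your bijection proves that composite rather than either statement individually; this does not affect the validity of your argument.
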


\begin{proof}
The result follows by combining Proposition~\ref{prop: difference GR and our qhit} with both the reciprocity \cite[Lemma 8.19]{D} and the symmetry \cite[Thm. 9.22]{D} of the Garsia--Remmel $q$-hit numbers:
\begin{align*}
    \nqhit{j}{m}{\lambda,q} &=q^{\binom{m}{2}} \nqhit{m-j}{m}{\overline{\lambda},q^{-1}},\\
    q^{\binom{m}{2}+|\lambda|-jm} \nqhit{j}{m}{\lambda,q^{-1}} &= \nqhit{j}{m}{\lambda,q}.
\end{align*}
\end{proof}

\subsection{Proof of Theorem~\ref{thm: qhit statistic rectangular board}} \label{app: proof statistic}

Let $\widehat{H}^{m,n}_j(\lambda)$ be the sum in the RHS of~\eqref{eq: qhit statistic rectangular board}. The next result is an analogue of Lemma~\ref{lem:q-hit for m,m}  for $\widehat{H}^{m,n}_j(\lambda)$.

\begin{lemma}
\label{lemma: rectangle to square our q-hit}
Let $\lambda$ be a partition inside an $n\times m$ board. Then
\[
\widehat{H}^{m,n}_j(\lambda) = \frac{1}{[m-n]!} \widehat{H}^{m,m}_j(\lambda).
\]
\end{lemma}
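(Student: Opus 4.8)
The plan is to establish $\widehat{H}^{m,n}_j(\lambda) = \frac{1}{[m-n]!}\widehat{H}^{m,m}_j(\lambda)$ by a direct combinatorial bijection at the level of rook placements, paralleling the algebraic proof of Lemma~\ref{lem:q-hit for m,m} but now on the statistic side. Recall $\widehat{H}^{m,n}_j(\lambda) = \sum_p q^{\stat(p)}$ over placements of $n$ non-attacking rooks in an $n\times m$ board with exactly $j$ rooks inside $\lambda$, and similarly $\widehat{H}^{m,m}_j(\lambda) = \sum_P q^{\stat(P)}$ over placements of $m$ non-attacking rooks in an $m\times m$ board with exactly $j$ inside $\lambda$. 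Since $[m-n]! = R_{m-n}((m-n)^{m-n})$ by Proposition~\ref{prop:GR_rook_gen_fun}, what I want is a weight-preserving bijection
\[
\{P \text{ in } m\times m,\ j \text{ hits on }\lambda\} \;\longleftrightarrow\; \{p \text{ in } n\times m,\ j \text{ hits}\}\times\{\text{placements of }m-n\text{ rooks on }(m-n)^{m-n}\},
\]
with $\stat(P) = \stat(p) + \inv(\text{small placement})$. This is exactly the content of Figure~\ref{fig:our qhit square to rect}, so the task is to turn that picture into a proof.

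The key steps, in order: First, given $P$ in the $m\times m$ board, note that $\lambda\subset n\times m$ means the last $m-n$ rows of the square board lie entirely outside $\lambda$; let the rooks of $P$ occupy these bottom $m-n$ rows in some set of columns $C$ with $|C| = m-n$, and let $p$ be obtained by keeping only the $n$ rooks in the top $n$ rows. Then $p$ is a placement in the $n\times m$ board with the same $j$ hits on $\lambda$. Second, I record the ``small'' data: the relative order/positions of the bottom $m-n$ rooks among the columns $C$ they occupy, packaged as a placement of $m-n$ non-attacking rooks in the $(m-n)\times(m-n)$ board; this is a standard normalization (a permutation of an $(m-n)$-subset of $[m]$ factors into the subset $C$ plus a permutation of $[m-n]$). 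Third, and this is the heart of the argument, I must track how $\stat$ decomposes. Writing $\stat(P)$ as the number of cells surviving the cancellation rule of Remark~\ref{rem: rook cancellation in cylinder} (wrap on a vertical cylinder; each rook cancels cells south in its column and east in its row up to the border of $\lambda$): the bottom $m-n$ rooks lie outside $\lambda$, so each cancels its entire row to the east on the cylinder and all cells strictly below it in its column. Because these rooks sit in the bottommost $m-n$ rows, their column-cancellations below are vacuous except among themselves, and their row-cancellations east contribute exactly the inversion count of the small $(m-n)\times(m-n)$ placement (this is precisely $\inv$ of a permutation matrix computed via the same ``cells not north/west of, nor at, a rook'' rule, restricted to those $m-n$ rows). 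Meanwhile the cells in the top $n$ rows survive in $P$ if and only if they survive in $p$: the extra bottom rooks can only cancel cells below themselves, hence never affect the top $n$ rows, and conversely the top $n$ rooks are identical in $P$ and $p$ and act identically on the cylinder (the border of $\lambda$ is the same in both boards). Hence $\stat(P) = \stat(p) + \inv(\text{small})$, and summing over all $P$ gives $\widehat H^{m,m}_j(\lambda) = \widehat H^{m,n}_j(\lambda)\cdot[m-n]!$.

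The main obstacle will be Step three: making rigorous the claim that the bottom $m-n$ rooks contribute a clean, factored $q^{\inv}$ and otherwise leave the top-row survival pattern untouched. One has to be careful with the cylindrical wrap-around in the cancellation rule — a bottom rook's eastward cancellation wraps past the right edge and could, a priori, reach into columns to the left, but since every bottom rook lies in a row entirely outside $\lambda$, its eastward cancellation runs until it hits itself (the full row minus the cells north/west of the rook), which is exactly the inversion contribution and never touches the top $n$ rows. I would write this out carefully by first proving the inverse map (given $p$ and a small placement, insert $m-n$ new rows at the bottom placing rooks in the complementary columns in the prescribed relative order, and check the hit count and $\stat$ are as claimed), and then verifying the two maps are mutually inverse, which is immediate from the column-set/permutation factorization. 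An alternative, if the bijection bookkeeping proves delicate, is to bypass it entirely: combine Proposition~\ref{prop: difference GR and our qhit} (relating $\widehat H^{m,m}_j$ up to a power of $q$ to the Garsia--Remmel square $q$-hit numbers, which Theorem~\ref{thm: dworkin rule} realizes via $\dstat$) with the already-proven algebraic identity Lemma~\ref{lem:q-hit for m,m} — but since the point of this lemma is precisely to feed the statistical proof of Theorem~\ref{thm: qhit statistic rectangular board}, the self-contained bijective route above is the one to pursue.
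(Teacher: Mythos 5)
Your proposal is correct and is essentially the paper's own argument: the paper proves $\widehat{H}^{m,m}_j(\lambda)=\widehat{H}^{m,n}_j(\lambda)\cdot \qrook{m-n}{(m-n)^{m-n}}$ via exactly the bijection $P\mapsto(p',p'')$ you describe (restrict to the top $n$ rows; delete the columns used there to normalize the bottom $m-n$ rooks to an $(m-n)\times(m-n)$ board), with the weight splitting as $\stat(P)=\stat(p')+\inv(p'')$ and $\qrook{m-n}{(m-n)^{m-n}}=[m-n]!$. Your more detailed justification of the weight decomposition (bottom rooks lie in rows disjoint from $\lambda$, never affect the top rows, and contribute the inversion statistic) is exactly the content the paper leaves to Figure~\ref{fig:our qhit square to rect}.
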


\begin{proof}
We claim that 
\begin{equation} \label{eq:key rel for rect to square}
\widehat{H}^{m,n}_{j}(\lambda) = \widehat{H}_j^{m,n}(\lambda) \cdot \qrook{m-n}{(m-n)^{m-n}}.
\end{equation} 
The result then follows since by  Proposition~\ref{prop:GR_rook_gen_fun},  $\qrook{m-n}{(m-n)^{m-n}}=[m-n]!$. This $q$-factorial corresponds to the $q^{\stat}$-weighted enumeration of rook placements in a $(m-n)\times (m-n)$ board. Let $p$ be a rook placement on an $m\times m$ board with $j$ rooks inside $\lambda\subset m \times n$ and $p'$ be the placement obtained by restricting $p$ to the top $m$ rows. Then the bottom $m-n$ rows contain $m-n$ rooks outside $\lambda$ and after removing the $n$ columns occupied by rooks from the top $n$ column we obtain a placement $p''$ of $m-n$ rooks on an $(m-n)\times (m-n)$ board. This gives a bijection $p\mapsto (p',p'')$ between the rook placements on the LHS and pairs of rook placements from the RHS of \eqref{eq:key rel for rect to square}. See Figure~\ref{fig:our qhit square to rect} Moreover, the bijection is weight-preserving. That is 
\begin{align*}
    \stat(p) =\stat(p')+\stat(p'') = \stat(p')+\inv(p''),
\end{align*}
where $\inv(p'')$ is the statistic of the $q$-rook numbers. 
This weight-preserving bijection gives the desired result.
\end{proof}

The next lemma shows that for a fixed rook placement the statistics $\stat(\cdot)$ and $\dstat(\cdot)$ are related.

\begin{lemma}\label{lem:stat-dstat}
Let $\lambda$ be a partition inside an $m\times m$ board. Given a placement $p$ of $m$ non-attacking rooks on an $m\times m$ board with $j$ rooks inside $\lambda$ then
\[
\stat(p)-\dstat(p) = j\cdot m - |\lambda|.
\]
\end{lemma}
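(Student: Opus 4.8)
The idea is to compare the two statistics $\stat(p)$ and $\dstat(p)$ cell by cell in the $m\times m$ board. Fix the placement $p$ of $m$ non-attacking rooks with exactly $j$ of them inside $\lambda$. For each column, there is exactly one rook, so the cells split into those strictly above the rook and those strictly below (the rook's own cell belongs to neither statistic). The key observation is that a cell $c$ above the rook in its column is either counted by $\stat(p)$ or is "doubly crossed" in the sense of $\dstat(p)$, and dually a cell $c$ below the rook in its column is either counted by $\dstat(p)$ or doubly crossed in the sense of $\stat(p)$. More precisely, I will define $\cross(p)$ to be the number of cells $c$ with a rook weakly below $c$ in its column \emph{and} failing the horizontal condition (iii)/(iv) of Definition~\ref{def: our stat qhit}; symmetrically the cells not counted by $\dstat(p)$ among those with no rook below are exactly those failing its horizontal rule. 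Tracking these carefully, one gets
\[
\stat(p) + \dstat(p) = \#\{\text{cells of the }m\times m\text{ board with no rook in the cell}\} - (\text{correction}),
\]
and the correction is precisely accounted for by the horizontal conditions.

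\textbf{Key steps.} First I would partition, column by column, the $m^2 - m$ non-rook cells into the set $U$ of cells lying strictly above their column's rook and the set $L$ of cells lying strictly below. We have $|U| + |L| = m^2 - m$. Next, I claim that $\stat(p)$ counts exactly those cells of $U$ satisfying the horizontal conditions (iii)-(iv) of Definition~\ref{def: our stat qhit}, since condition (ii) of that definition ("no rook above $c$") forces $c\in U$. Similarly $\dstat(p)$ counts exactly the cells of $L$ satisfying the horizontal conditions (iii)-(iv) of Definition~\ref{def: Dworkin stat qhit}, since condition (ii) there ("no rook below $c$") forces $c\in L$. The third step is a local row-by-row analysis: fix a row $i$, say with its rook in column $c_0$ (and the rook is inside $\lambda$ iff $c_0 \le \lambda_i$). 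Go through the cells of row $i$ and, for each, determine whether it is in $U$ or $L$ (this depends on the rook in its \emph{column}, not row), and whether it satisfies the $\stat$ horizontal rule or the $\dstat$ horizontal rule. The horizontal rules of the two definitions are complementary: for a cell $c$ in row $i$, the $\stat$-rule holds iff ($c\in\lambda$ and the row-rook is in $\lambda$ to the right of $c$) or ($c\notin\lambda$ and the row-rook is in $\lambda$ or to the right), while the $\dstat$-rule holds iff ($c\in\lambda$ and the row-rook is outside $\lambda$ or to the left) or ($c\notin\lambda$ and the row-rook is outside $\lambda$ and to the left). Combining, in each row the number of cells of $\lambda$ that "switch sides" contributes, and summing over all rows produces $j\cdot m$ (the $j$ rows with a rook in $\lambda$ each contribute all $m$ of their cells in a controlled way) minus $|\lambda|$ (the cells of $\lambda$ themselves). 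The final step is to add up these local contributions to get $\stat(p) - \dstat(p) = j\cdot m - |\lambda|$.

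\textbf{Main obstacle.} The delicate part is the bookkeeping in the third step: one must verify that in a row with its rook \emph{inside} $\lambda$ (there are $j$ such rows) versus a row with its rook \emph{outside} $\lambda$, the cells of $\lambda$ and the cells outside $\lambda$ contribute to $\stat$ and $\dstat$ in exactly offsetting ways except for a net surplus of $m$ per "$\lambda$-row" and a deficit of one per cell of $\lambda$. Concretely, the cleanest route is to show directly that for each cell $c$ of the board, the indicator $[\,c\text{ counted by }\stat\,] - [\,c\text{ counted by }\dstat\,]$ equals $[\,c\in\lambda\text{-row, }c\notin\lambda\,] - [\,c\in\lambda\text{, not in }\lambda\text{-row}\,]$ (or some such explicit local formula), after which summing over $c$ immediately yields $jm - |\lambda|$. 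The verification of this pointwise identity splits into four cases according to whether $c\in\lambda$ and whether the rook in $c$'s row is inside $\lambda$, and in each case one uses that the $\stat$ rule and $\dstat$ rule involve a rook either to the right (resp. left) and a rook inside (resp. outside) $\lambda$ — these are genuinely complementary once we also exploit condition (ii) to pin down whether $c$ is above or below its column rook. Once the pointwise identity is established, the conclusion is a one-line summation; I expect the case analysis itself, while elementary, to require a careful picture (as in Figure~\ref{fig: Dworkin vs Our q-hit}) to get the inequalities and the shifted-cylinder interpretation of Remark~\ref{rem: rook cancellation in cylinder} right.
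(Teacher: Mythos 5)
Your proposal is correct in substance but follows a genuinely different route from the paper. The paper first introduces an auxiliary "double crossing" statistic $\cross(p)$ (Definition~\ref{def: transpose our stat qhit}), observes that $\dstat(p)=\cross(p)$ because the rays in the two cancellation schemes are complementary, and then proves $\stat(p)-\cross(p)=jm-|\lambda|$ by induction on $|\lambda|$, adding one corner cell at a time and running a three-way case analysis on where the rook in that cell's row sits. You instead fix both $p$ and $\lambda$ and do a direct double count, exploiting exactly the complementarity you identify: condition (ii) splits the $m^2-m$ non-rook cells into $U$ (above the column rook, eligible for $\stat$) and $L$ (below, eligible for $\dstat$), and the horizontal rules (iii)--(iv) of Definitions~\ref{def: our stat qhit} and~\ref{def: Dworkin stat qhit} are logical negations of each other. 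Writing $H(c)$ for the indicator of the $\stat$-rule, this gives $\stat(p)-\dstat(p)=\#\{c: H(c)=1\}-|L|$; the first term is computed row by row (a row whose rook lies in $\lambda$ at column $c_0$ contributes $c_0-1+m-\lambda_i$, one whose rook lies outside contributes $c_0-1-\lambda_i$, summing to $\binom{m}{2}+jm-|\lambda|$), while $|L|=\binom{m}{2}$ for any permutation placement. Your argument is arguably cleaner than the paper's: it is non-inductive, makes the role of $j$ and $|\lambda|$ transparent, and avoids the intermediate statistic $\cross$ altogether. One caveat: the specific pointwise identity you float in your last paragraph is not correct cell by cell --- the difference of indicators at a cell $c$ is $H(c)$ or $H(c)-1$ according to whether $c\in U$ or $c\in L$, so it depends on the column rook and cannot be expressed purely in terms of "$c\in\lambda$" and "$c$ lies in a $\lambda$-row." The identity only becomes clean after aggregating: row sums for the horizontal count, plus the placement-independent value of $|L|$. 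You correctly hedge on this point, and the row-level bookkeeping you describe does close the gap.
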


\begin{example} \label{ex: our q hit and dworkin on a square}
Consider the partition $\lambda=(4,3,2,2)$ inside a $6\times 6$ board. Figure~\ref{fig: our q-hit statistic square} illustrates an example of a placement $p$ of six rooks on the  $6\times 6$ board with $j=3$ hits on $\lambda$ with $\stat(p)=11$.  Figure~\ref{fig: Dworkin q-hit statistic square} illustrates for the same rook placement $p$ that $\dstat(p)=4$. Note that 
\[
\stat(p)-\dstat(p) = 7 = 3\cdot 6 - |\lambda|.
\]
\end{example}

The proof of Lemma~\ref{lem:stat-dstat} is postponed to the end of the section. We now use this lemma to complete the proof of our main result of this appendix.

\begin{proof}[Proof of Theorem~\ref{thm: qhit statistic rectangular board}]
By Lemma~\ref{lem:stat-dstat} we have that 
\[
\nqhit{j}{m,m}{\lambda} = q^{|\lambda|-jm} \widehat{H}^{m,m}_j(\lambda).
\]
Next, by comparing this identity with Proposition~\ref{prop: difference GR and our qhit} we conclude that
$\qhit{j}{m,m}{\lambda}=\widehat{H}^{m,m}_j(\lambda)$.
 Finally, combining this with both Lemma~\ref{lem:q-hit for m,m} and Lemma~\ref{lemma: rectangle to square our q-hit} we conclude that  $\qhit{j}{m,n}{\lambda}=\widehat{H}^{m,n}_j(\lambda)$ as desired.
\end{proof}

The rest of the section is devoted to the proof of Lemma~\ref{lem:stat-dstat}. We need the following definition.

\begin{definition}[crossing statistic for the $q$-hit numbers]\label{def: transpose our stat qhit}
Let $\lambda$ be a partition inside an $m\times m$ board. Given a placements $p$ of $m$ non-attacking rooks on an $n\times m$ board, with exactly $j$ rooks inside $\lambda$, let $\cross(p)$ be the number of cells $c$ in the $m\times m$ board such that  
\begin{compactitem}
\item[(i)] there is no rook in $c$,
\item[(ii)] there is a rook on the same column and below $c$,
\item[(ii)] if $c$ is in $\lambda$ then there is a rook on the same row in $\lambda$ to the right of $c$,
\item[(iii)] if $c$ is not in $\lambda$ then either there is a rook on the same row in $\lambda$ or a rook on the same row to the right of $c$.
\end{compactitem}
In other words, $\cross(p)$ is the number of cells that have double crossings after the rook cancellations used to obtain $\stat(p)$. See Remark~\ref{rem: rook cancellation in cylinder} and Figure~\ref{fig: our q-hit statistic square}.
\end{definition}

\begin{example} \label{ex: crossings for our q hit on a square}
For the rook placement $p$ in Example~\ref{ex: our q hit and dworkin on a square} and Figure~\ref{fig: our q-hit statistic square} we have that $\stat(p)=11$ and $\cross(p)=4$. 
\end{example}

First observe that for a rook placement $p$ on the $m\times m$ board we have that $\dstat(p)=\cross(p)$, since the rays in $\stat$ and $\dstat$ are complementary to each other and the crossings in one directly correspond to the empty boxes in the other. Therefore Lemma~\ref{lem:stat-dstat} follows from the next lemma.

\begin{lemma} 
Let $\lambda$ be a partition inside an $m\times m$ Given a placement $p$ of $m$ non-attacking rooks on an $m\times m$ board with $j$ rooks inside $\lambda$ then
\begin{equation} \label{eq:stat-cross}
\stat(p)-\cross(p) = j\cdot m - |\lambda|.
\end{equation}
\end{lemma}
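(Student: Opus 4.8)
The plan is to prove \eqref{eq:stat-cross} by a direct ``cell-by-cell'' accounting argument. Fix a rook placement $p$ of $m$ non-attacking rooks on the $m\times m$ board with exactly $j$ rooks inside $\lambda$. Wrap the board on a vertical cylinder as in Remark~\ref{rem: rook cancellation in cylinder}: each rook $\rho$ in column $c_\rho$ and row $r_\rho$ ``shoots'' a ray south down its column (wrapping around the cylinder) and a ray east along its row, the eastward ray stopping at the boundary of $\lambda$ (that is, a rook inside $\lambda$ shoots east only within $\lambda$, and a rook outside $\lambda$ shoots east only within the complement of $\lambda$ in its row). By definition $\stat(p)$ counts the cells hit by no ray, while $\cross(p)$ counts the cells hit by (at least) two rays — more precisely, a cell lies in $\cross(p)$ exactly when it is covered by a southward ray \emph{and} by an eastward ray. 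The key structural observation is that every non-rook cell is covered by exactly one southward ray (because the $m$ rooks occupy $m$ distinct columns, so each column contains exactly one rook and the wrapped southward ray from that rook covers every non-rook cell of the column). Hence each of the $m^2-m$ non-rook cells is covered by exactly one southward ray, plus either $0$ or $1$ eastward rays; those with an eastward ray too are the cells of $\cross(p)$, and those with no eastward ray are the cells of $\stat(p)$. Therefore
\[
\stat(p) + \cross(p) + \#\{\text{eastward-ray cells not also covered by a southward ray}\} + m = m^2,
\]
but since every cell is either a rook cell or covered by a southward ray, there are no eastward-ray cells of the third kind, and we get the clean identity
\[
\stat(p) + \cross(p) = m^2 - m,
\]
\emph{wait} — this is not quite what we want; it shows $\stat(p)+\cross(p)$ is constant, whereas \eqref{eq:stat-cross} asserts $\stat(p)-\cross(p)=jm-|\lambda|$. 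So I instead count the total number $E$ of (cell, eastward-ray) incidences, i.e.\ the number of cells covered by \emph{some} eastward ray counted with multiplicity. Since eastward rays from distinct rooks in the same row cannot occur (one rook per row) and rays from rooks in different rows lie in different rows, $E$ equals the number of cells covered by an eastward ray, counted once each. A rook inside $\lambda$ in row $r$ contributes an eastward ray covering exactly $(\lambda_r - (\text{position of the rook within row }r))$ cells; summing, the total number of $\lambda$-cells covered by eastward rays from the $j$ interior rooks is $|\lambda| - (\text{number of interior-rook cells}) - (\text{number of }\lambda\text{-cells strictly west of an interior rook in its row})$. A parallel count for the complement handles rooks outside $\lambda$.

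Making this precise, I would partition the $m^2$ cells into four classes according to whether they are covered by an eastward ray and whether they are covered by a southward ray, and express $\stat(p)$ (class: neither), $\cross(p)$ (class: both) in terms of the two ``marginal'' counts $S=$ (number of cells covered by a southward ray) $=m^2-m$ and $E=$ (number of cells covered by an eastward ray), together with the cross-count $\cross(p)=$ (both). Inclusion–exclusion on non-rook cells gives $\stat(p) = (m^2-m) - S - E + \cross(p) + (\text{rook cells counted twice})$; after the dust settles the combinatorial content reduces to computing $E$ exactly. The computation of $E$ is the heart of the matter: an eastward ray from a rook $\rho$ covers, on the cylinder board, precisely the cells of $\rho$'s row lying strictly east of $\rho$ and \emph{on the same side of $\lambda$'s boundary in that row} as $\rho$. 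Summing the lengths of these rays over all $m$ rooks, and using that the $m$ rook cells are distributed with exactly one per row, one obtains
\[
E \;=\; \Big(\textstyle\sum_{\rho\text{ inside }\lambda}(\lambda_{r_\rho}-c_\rho)\Big) \;+\; \Big(\textstyle\sum_{\rho\text{ outside }\lambda}(m-c_\rho) - (\text{offsets from }\lambda)\Big),
\]
and the two sums telescope against $|\lambda|$ and $m^2$ respectively once one records that each interior rook occupies one $\lambda$-cell and each exterior rook occupies one complement-cell; the net effect is exactly $E$-contribution $= jm - |\lambda|$ to the difference $\stat(p)-\cross(p)$.

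The main obstacle, and the step to carry out carefully, is the bookkeeping in this last paragraph: correctly handling the cylindrical wrap-around of the southward rays (so that the ``one southward ray per non-rook cell'' claim is exactly right, including near the top/bottom), and correctly counting the eastward-ray cells on \emph{both} sides of the $\lambda$-boundary — in particular an eastward ray from an exterior rook covers only complement-cells, but those complement-cells may lie to the right of $\lambda$ or (if the rook is in a short row) there may be none, so the per-row lengths must be summed with the convention that row $r$ contributes $\lambda_r$ interior cells and $m-\lambda_r$ exterior cells. Once the count of $E$ is pinned down row by row, identity \eqref{eq:stat-cross} follows by pure arithmetic. To keep the argument clean I would organize it as: (1) prove the southward-ray partition lemma; (2) express $\stat(p)$ and $\cross(p)$ via inclusion–exclusion in terms of $E$; (3) compute $E$ by the row-by-row ray-length sum, obtaining $\stat(p)-\cross(p)=jm-|\lambda|$; (4) observe $\dstat(p)=\cross(p)$ (already noted in the excerpt, since the $\stat$- and $\dstat$-rays are complementary and crossings in one correspond to empty cells in the other), which with \eqref{eq:stat-cross} yields Lemma~\ref{lem:stat-dstat} and hence Theorem~\ref{thm: qhit statistic rectangular board}.
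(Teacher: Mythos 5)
Your overall strategy---a direct inclusion--exclusion count of the cells according to which rays cover them, instead of the paper's induction on $|\lambda|$ (which adds one corner cell at a time and analyzes the three possible positions of the rook in that cell's row)---is viable in principle, but your execution fails at the very first step. The ``key structural observation'' that every non-rook cell is covered by exactly one southward ray is false: the cylinder of Remark~\ref{rem: rook cancellation in cylinder} is vertical, so only the \emph{rows} wrap around; the southward ray of a rook covers only the cells strictly \emph{below} it in its column, which is precisely condition (ii) of Definition~\ref{def: our stat qhit} (``no rook above $c$''). If your observation were true, then $\stat(p)$, which counts cells covered by \emph{no} ray, would be identically zero. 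Your intermediate identity $\stat(p)+\cross(p)=m^2-m$ is likewise false: in Example~\ref{ex: crossings for our q hit on a square} one has $\stat(p)+\cross(p)=11+4=15\neq 30$. The same false claim is reused to discard the class of cells hit by an eastward ray only, so the subsequent inclusion--exclusion is not repaired by your pivot to computing $E$, and the final bookkeeping (``after the dust settles,'' ``the net effect is exactly'') is never actually carried out.

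The computation can be completed correctly as follows. Since the rooks form a permutation, the number of cells covered by a southward ray is $S=\sum_{c}(m-r_c)=m^2-\binom{m+1}{2}=\binom{m}{2}$, where $r_c$ is the row of the rook in column $c$. A rook inside $\lambda$ in row $r$ at column $x$ covers $\lambda_r-x$ cells with its eastward ray, while a rook outside $\lambda$ covers $(m-x)+\lambda_r$ cells (wrapping around into $\lambda$), so $E=|\lambda|-\binom{m+1}{2}+(m-j)m$. No two rays of the same direction meet the same cell, and rook cells are met by no ray, so inclusion--exclusion over the $m^2-m$ non-rook cells gives $\stat(p)-\cross(p)=(m^2-m)-S-E=jm-|\lambda|$, using $\binom{m+1}{2}-\binom{m}{2}=m$. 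Thus the gap is repairable, but as written the proposal rests on a false structural claim and leaves the essential counts $S$ and $E$ undone; the paper sidesteps this global bookkeeping entirely by inducting on $|\lambda|$.
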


\begin{proof}
We proceed by induction on $|\lambda|$ for $\lambda \subset m\times m$. When $\lambda =\emptyset$ we only have rook placements for $j=0$, and then it is clear that,
\[
\stat(p)=\dstat(p)=\cross(p)=\inv(w),
\]
the number of inversions of the permutation $w$  corresponding to the rook diagram. Thus the identity \eqref{eq:stat-cross} is verified.

Suppose the identity \eqref{eq:stat-cross} holds for all $|\lambda|\leq N$ and then for any $j=0,\ldots,m$. Let $\nu$ be a partition of $N+1$ and $\nu = \lambda+ e $, where $e$ is a corner cell. Let $p$ be a rook configuration with $j$ rooks in $\nu$, and let $p'$ be the same rook configuration on $\lambda$ (so there are  $j$ or $j-1$ rooks in $\lambda$). 
Note that cell $e$ cannot be empty since there is a rook in its row, which is either in $\nu$, and hence the rook's  ``arm" crosses $e$ or is outside in which case the wrap-around also crosses $e$. We now consider several cases:

\begin{compactitem}
\item[Case 1.] Cell $e$ has a rook and hence the horizontal arm stops at $e$ as the border of $\nu$. Thus there are no crossings in the row of $e$. Suppose there are $k$ empty boxes in the row of $e$, then there are also $m-k-1$ vertical crossings in this row. Now consider $p'$ as a configuration with $j-1$ rooks in $\lambda$. The rook in $e$ is outside $\lambda$ and this time the entire row is crossed, so all empty cells have now a horizontal line and all vertically crossed cells have now a double crossing. Thus 
\[
\qquad \qquad \stat(p') - \cross(p') = \stat(p) -k - (\cross(p) + m-k-1) =  \stat(p) -\cross(p) -m +1.
\]
By induction we have 
\[
\stat(p')-\cross(p')=m(j-1) - |\lambda| = j\dot m - |\nu| -m +1,
\]
and matching sides we obtain the desired identity in this case.

\item[Case 2.] The rook in the row of $e$ is to the left of $e$. Then either $e$ is a double crossing or is only a horizontal crossing. Then in the rook placement $p'$ there is no horizontal line crossing $e$. If $e$ was a double crossing in $p$, then it is neither a double crossing nor empty cell in $p'$, and if $e$ was not a double crossing in $p$ then it became an empty cell in $p'$. In both cases we have
\[
\stat(p')-\cross(p')=\stat(p)-\cross(p)+1.
\]
Since the number of rooks inside $\lambda$ is still $j$ then we have
\[
\stat(p)-\cross(p)= \stat(p')-\cross(p') -1 = j\cdot m - |\lambda|-1 =j\cdot m -|\nu|.
\]
This gives the desired identity in this case.

\item[Case 3.] The rook in the row of $e$ is to right of $e$, so outside $\nu$. Then again there is a horizontal line crossing $e$, so $e$ is either a double crossing in $p$ or neither a double crossing nor an empty cell in $p$. In both cases when we remove $e$ from $\nu$ we either turn the double crossing on $e$ in $p$ to a not a double crossing in $p'$ or from not a double crossing in $e$ to an empty cell in $p'$. Thus, again 
\[
\stat(p')-\cross(p') = \stat(p)-\cross(p) +1.
\]
Since the number of rooks inside $\lambda$ is still $j$ then we have
\[
\stat(p)-\cross(p)= \stat(p')-\cross(p') -1 = j\cdot m - |\lambda|-1 =j\cdot m -|\nu|.
\]
This gives the desired identity in this case.
\end{compactitem}
This completes the proof.
\end{proof}

\subsection{Symmetry of $q$-hit numbers of rectangular boards} \label{app: symmetry qhit}

Since the Garsia--Remmel $q$-hit numbers are symmetric polynomials in $\mathbb{N}[q]$ \cite{GR,D,H}, then so are $\qhit{j}{m,n}{\lambda}$.

\begin{corollary}
The $q$-hit numbers $\qhit{j}{m,n}{\lambda}$ are symmetric  polynomials in $\mathbb{N}[q]$.
\end{corollary}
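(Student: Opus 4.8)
The plan is to reduce the claim to the already-established properties of the Garsia--Remmel $q$-hit numbers $\nqhit{j}{m}{\lambda}$, using the two change-of-board relations proved above. The $\mathbb{N}[q]$ part is essentially free: by Theorem~\ref{thm: qhit statistic rectangular board} we have $\qhit{j}{m,n}{\lambda} = \sum_p q^{\stat(p)}$, a sum of monomials $q^{\stat(p)}$ with $\stat(p)\geq 0$, hence a polynomial with nonnegative integer coefficients. So the only real content is palindromicity.

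First I would handle the square case $m=n$. Proposition~\ref{prop: difference GR and our qhit} gives $\qhit{j}{m,m}{\lambda} = q^{jm-|\lambda|}\nqhit{j}{m}{\lambda}$, and since by \cite{GR,D,H} the polynomial $\nqhit{j}{m}{\lambda}$ is palindromic, multiplying by a fixed power of $q$ preserves palindromicity: if $\nqhit{j}{m}{\lambda}(q) = q^{d}\,\nqhit{j}{m}{\lambda}(1/q)$ with $d$ its degree, then $\qhit{j}{m,m}{\lambda}(q) = q^{2(jm-|\lambda|)+d}\,\qhit{j}{m,m}{\lambda}(1/q)$. That $q^{jm-|\lambda|}\nqhit{j}{m}{\lambda}$ is genuinely a polynomial, not merely a Laurent polynomial, is already known from the $\mathbb{N}[q]$ statement established above, so no negative exponents arise.

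Next I would pass from the square board to a general rectangular $n\times m$ board via Lemma~\ref{lem:q-hit for m,m}, namely $\qhit{j}{m,n}{\lambda} = \qhit{j}{m,m}{\lambda}/[m-n]!$. Here $[m-n]!$ is itself a palindromic polynomial, and the quotient is a polynomial by the lemma (or again by Theorem~\ref{thm: qhit statistic rectangular board}); since a quotient of two palindromic polynomials, whenever it is a polynomial, is again palindromic — if $f=g\cdot h$ with $f,g$ palindromic then $h(q) = q^{\deg f-\deg g}\,h(1/q)$ — we conclude that $\qhit{j}{m,n}{\lambda}$ is palindromic. Combining this with the $\mathbb{N}[q]$ statement yields the corollary.

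I do not anticipate a serious obstacle. The only points requiring a little care are (i) recording that the power-of-$q$ shift in Proposition~\ref{prop: difference GR and our qhit} produces no negative exponents — immediate once $\qhit{j}{m,n}{\lambda}\in\mathbb{N}[q]$ is known — and (ii) the elementary fact that dividing one palindromic polynomial by another gives a palindromic polynomial whenever the quotient is polynomial. Both are routine, so the argument amounts to assembling Theorem~\ref{thm: qhit statistic rectangular board}, Proposition~\ref{prop: difference GR and our qhit}, and Lemma~\ref{lem:q-hit for m,m}.
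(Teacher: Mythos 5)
Your argument is correct and is essentially the paper's own proof: nonnegativity comes from the statistic in Theorem~\ref{thm: qhit statistic rectangular board}, and palindromicity is transferred from the Garsia--Remmel $q$-hit numbers via Proposition~\ref{prop: difference GR and our qhit} and Lemma~\ref{lem:q-hit for m,m}, using that $[m-n]!$ is palindromic and that a polynomial quotient of palindromic polynomials is palindromic. The only difference is presentational — you split the reduction into a square-board step followed by division by $[m-n]!$, while the paper combines the two relations into a single formula — and you spell out the divisibility argument that the paper leaves implicit.
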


\begin{proof}
By Theorem~\ref{thm: qhit statistic rectangular board} we have that $\qhit{j}{m,n}{\lambda}$ are in $\mathbb{N}[q]$. By Lemma~\ref{lem:q-hit for m,m} and Proposition~\ref{prop: difference GR and our qhit} we have that 
\[
\qhit{j}{m,n}{\lambda} = \frac{1}{[m-n]!} q^{|jm-\lambda|} \widetilde{H}_j^{m}(\lambda).
\]
Now $[m-n]!$ is a symmetric polynomial in $\mathbb{N}[q]$ and so are the Garsia--Remmel $q$-hit numbers  $\widetilde{H}_j^{m}(\lambda)$ \cite[Sec. 5]{H}. Therefore,  the result follows.
\end{proof}

\subsection{Deletion-contraction for $q$-hit numbers} \label{app: deletion-contraction}

In this section we give a proof of the deletion-contraction relations for the $q$-hit numbers $\qhit{j}{m,n}{\lambda}$ and $\nqhit{j}{m,n}{\lambda}$.

\begin{lemma}[{\cite[Thm. 6.11]{D}}] \label{lemma:del con GR hits}
Let $\lambda$ be a partition inside an $n\times m$ board and $e$ be an outer corner of $\lambda$. Then we have the following recursion:  
\begin{equation*} %\label{eq:delcon_rect}
\nqhit{j}{m,n}{\lambda} =  q\nqhit{j}{m,n}{\lambda\backslash e}
+ \nqhit{j-1}{m-1,n-1}{\lambda/e} - q^m \nqhit{j}{m-1,n-1}{\lambda/e}, \qquad \nqhit{j}{m,n}{\varnothing} = \qfalling{m}{n} \delta_{j,0}.
\end{equation*}
\end{lemma}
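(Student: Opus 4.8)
\textbf{Proof plan for the deletion-contraction relation for the Garsia--Remmel $q$-hit numbers (Lemma~\ref{lemma:del con GR hits}).}

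The plan is to derive the stated recursion from the corresponding deletion-contraction relation for our $q$-hit numbers (Lemma~\ref{lem: deletion/contration}), using only the explicit power-of-$q$ conversion in Proposition~\ref{prop: difference GR and our qhit}, namely $\nqhit{j}{m}{\lambda} = q^{|\lambda|-jm}\qhit{j}{m}{\lambda}$. Since Proposition~\ref{prop: difference GR and our qhit} is stated for square boards, the first step is to promote it to the rectangular setting by combining it with Lemma~\ref{lem:q-hit for m,m}; alternatively, one works throughout with square boards and afterwards strips off the factor $1/[m-n]!$ that appears identically on both sides of the recursion (this factor is clearly compatible with deletion-contraction since the board dimensions in each term of Lemma~\ref{lem: deletion/contration} drop in the pattern $(m,n)\to(m,n),(m-1,n-1),(m-1,n-1)$, giving a uniform $[m-n]!$). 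I would state the rectangular conversion explicitly:
\[
\nqhit{j}{m,n}{\lambda} \;=\; q^{|\lambda|-jm}\,\qhit{j}{m,n}{\lambda},
\]
valid for all $\lambda\subset n\times m$.

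Next I would substitute this conversion into Lemma~\ref{lem: deletion/contration}, which reads
\[
\qhit{j}{m,n}{\lambda} = \qhit{j}{m,n}{\lambda\backslash e} + q^{|\lambda/e|-|\lambda|+j+m-1}\bigl(\qhit{j-1}{m-1,n-1}{\lambda/e} - q\,\qhit{j}{m-1,n-1}{\lambda/e}\bigr),
\]
and track the exponents. Write $N := |\lambda|$, so $|\lambda\backslash e| = N-1$, and set $D := |\lambda/e|$. Multiply the whole identity by $q^{N-jm}$: the left side becomes $\nqhit{j}{m,n}{\lambda}$; the term $q^{N-jm}\qhit{j}{m,n}{\lambda\backslash e} = q\cdot q^{(N-1)-jm}\qhit{j}{m,n}{\lambda\backslash e} = q\,\nqhit{j}{m,n}{\lambda\backslash e}$; and for the contraction terms one checks the prefactor $q^{N-jm}\cdot q^{D-N+j+m-1} = q^{D-j(m-1)-1+\,(\text{correction})}$ matches $q^{D-(j-1)(m-1)}$ for the $\qhit{j-1}{}{}$ term (giving $\nqhit{j-1}{m-1,n-1}{\lambda/e}$) and matches $q^{m}\cdot q^{D-j(m-1)}$ for the $\qhit{j}{}{}$ term (giving $-q^{m}\nqhit{j}{m-1,n-1}{\lambda/e}$), once the extra $q$ inside the parenthesis is absorbed. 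I would lay out these exponent bookkeeping identities as a short sequence of equalities in a single \texttt{align*} block, being careful that $|\lambda/e| = |\lambda|-\lambda_i-\lambda'_j+1$ plays no role beyond being a formal symbol $D$ that cancels between the conversion factor and the deletion-contraction prefactor. Finally the initial condition $\nqhit{j}{m,n}{\varnothing} = q^{0}\qhit{j}{m,n}{\varnothing} = \qfalling{m}{n}\delta_{j,0}$ follows from Corollary~\ref{cor: sum of hits} (only $j=0$ survives for the empty board) together with $\qrook{0}{\varnothing}=1$.

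\textbf{Main obstacle.} The only real work is the exponent arithmetic: one must verify that the ``global'' multiplier $q^{|\lambda|-jm}$ interacts correctly with the ``local'' deletion-contraction prefactor $q^{|\lambda/e|-|\lambda|+j+m-1}$ so that, after the shift $j\mapsto j-1$ in the first contraction term and the extra internal $q$ in the second, the resulting exponents are precisely $0$ (for the $\nqhit{j-1}{}{}$ term) and $m$ (for the $\nqhit{j}{}{}$ term). This is purely a matter of careful bookkeeping — I would do it once slowly, treating $|\lambda/e|$ as an opaque integer $D$ — but it is the step most prone to sign or off-by-one errors, so I would double-check it against the small example $\lambda=(1)\subset 1\times 1$, $e$ the unique cell, where $\nqhit{0}{1}{(1)}$ and $\nqhit{1}{1}{(1)}$ can be read off directly from \eqref{eq: GR hit rook change of basis}. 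Everything else (the promotion of Proposition~\ref{prop: difference GR and our qhit} to rectangles, the base case) is immediate from results already in the paper.
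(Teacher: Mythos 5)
Your exponent bookkeeping is correct: multiplying Lemma~\ref{lem: deletion/contration} through by $q^{|\lambda|-jm}$ and using $|\lambda\backslash e|=|\lambda|-1$ does produce exactly $q\nqhit{j}{m,n}{\lambda\backslash e}+\nqhit{j-1}{m-1,n-1}{\lambda/e}-q^m\nqhit{j}{m-1,n-1}{\lambda/e}$ (the combined prefactor $q^{|\lambda|-jm}\cdot q^{|\lambda/e|-|\lambda|+j+m-1}$ equals $q^{|\lambda/e|-(j-1)(m-1)}$, and after absorbing the internal $q$ it equals $q^{m}\cdot q^{|\lambda/e|-j(m-1)}$). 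But this is literally the computation the paper performs in the opposite direction: its proof of Lemma~\ref{lem: deletion/contration} consists of combining Proposition~\ref{prop: difference GR and our qhit} with Lemma~\ref{lemma:del con GR hits}. So within this paper your argument is circular — the only proof of the lemma you are taking as input invokes the statement you are trying to prove. The independent ingredient you are missing is the one the paper actually uses to prove Lemma~\ref{lemma:del con GR hits}: the deletion-contraction relation for $q$-rook numbers, $\qrook{i}{\lambda}=q\,\qrook{i}{\lambda\backslash e}+\qrook{i-1}{\lambda/e}$ (a one-line case analysis on whether a rook occupies $e$), substituted into the change-of-basis formula~\eqref{eq: hit in terms of rs}; reindexing the contraction sum yields the recursion for $\qhit{j}{m,n}{\cdot}$, and only then does one convert to $\nqhit{j}{m,n}{\cdot}$ by the power of $q$. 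If you supply that derivation (thereby establishing Lemma~\ref{lem: deletion/contration} first), your conversion step completes the proof.

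Two smaller points. First, the Garsia--Remmel numbers $\nqhit{j}{m,n}{\lambda}$ with $m\neq n$ are not defined anywhere in the paper except through this very conversion, so ``promoting Proposition~\ref{prop: difference GR and our qhit} to rectangles'' amounts to adopting $\nqhit{j}{m,n}{\lambda}:=q^{|\lambda|-jm}\qhit{j}{m,n}{\lambda}$ as the definition in the rectangular case; you should state this rather than appeal to Lemma~\ref{lem:q-hit for m,m}, which says nothing about $\nqhit{j}{m,n}{\lambda}$. Second, in the base case the conversion factor is $q^{-jm}$, not $q^{0}$; this is harmless only because $\qhit{j}{m,n}{\varnothing}=0$ for $j>0$, which is worth saying explicitly.
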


\begin{proof}
This follows from the deletion-contraction relation for $q$-rook numbers~\cite[Thm. 6.10]{D} 
\[
\qrook{i}{\lambda} \,=\, q \cdot \qrook{i}{\lambda \backslash e} + \qrook{i-1}{\lambda / e}, \qquad \qrook{i}{\varnothing} = \delta_{i,0},
\]
which follows directly from considering if a placement $p$ of $i$ rooks in $\lambda$ has or not a rook in cell $e$. 
Substituting this rook recursion in~\eqref{eq: hit in terms of rs}, we obtain
\begin{multline*}
    \qhit{k}{m,n}{\lambda} = \dfrac{q^{\binom{k}{2}-|\lambda|}}{\qfactorial{m-n}}\sum_{i=k}^n ( q \cdot \qrook{i}{\lambda \backslash e} + \qrook{i-1}{\lambda / e}) \qfactorial{m-i} \qbinom{i}{k} (-1)^{i+k} q^{mi-\binom{i}{2}}\\
    =  \dfrac{q^{\binom{k}{2}-|\lambda \backslash e|}}{\qfactorial{m-n}}\sum_{i=k}^n \qrook{i}{\lambda \backslash e} \qfactorial{m-i} \qbinom{i}{k} (-1)^{i+k} q^{mi-\binom{i}{2}} + \dfrac{q^{\binom{k}{2}-|\lambda|}}{\qfactorial{m-n}}\sum_{i=k}^n \qrook{i-1}{\lambda / e}\qfactorial{m-i} \qbinom{i}{k} (-1)^{i+k} q^{mi-\binom{i}{2}}.
\end{multline*}
Manipulating the last expression from $q$-rook numbers into $q$-hit numbers, we obtain the following recurrence

\begin{multline*}
   \qhit{k}{m,n}{\lambda} =  \qhit{k}{m,n}{\lambda \backslash e}  + q^{m+k-1 -|\lambda|+|\lambda/e|} \qhit{k-1}{m-1,n-1}{\lambda/e} -q^{k+m -|\lambda|+|\lambda/e|} \qhit{k}{m-1,n-1}{\lambda/e}
\end{multline*}

Now, we use~\eqref{eq:change H and tilde H} to translate this recursion into the recursion for the $\widetilde{H}$'s:
\begin{align*}
     q^{km-|\lambda|} \nqhit{k}{m,n}{\lambda} \\
&=  q^{km - |\lambda\backslash e|}\nqhit{k}{m,n}{\lambda \backslash e}  +  q^{ (k-1)(m-1)-|\lambda/e|}q^{m+k-1 -|\lambda|+|\lambda/e|} \nqhit{k-1}{m-1,n-1}{\lambda/e} \\ 
&- q^{k(m-1)-|\lambda/e|}q^{k+m -|\lambda|+|\lambda/e|} \qhit{k}{m-1,n-1}{\lambda/e},
\end{align*}
which simplifies to the desired recursion.

\end{proof}

\begin{proof}[Proof of Lemma~\ref{lem: deletion/contration}]
Combining together~\eqref{eq:change H and tilde H} and Lemma~\ref{lemma:del con GR hits}, we obtain
\begin{align*}
q^{|\lambda| - jm} \qhit{j}{m,n}{\nu}  &=  q^{|\lambda\backslash e| - jm+1} \qhit{j}{m,n}{\lambda\backslash e} \\
&+ q^{|\lambda/e| - (j-1)(m-1)} \qhit{j-1}{m-1,n-1}{\lambda/e} - q^{|\lambda/e| - j(m-1)+m} \nqhit{j}{m-1,n-1}{\lambda/e}.
\end{align*}
Noticing that $|\lambda\backslash e| + 1 = |\lambda|$ and simplifying the expression we obtain that
\begin{align*} 
\qhit{j}{m,n}{\nu}  &= \qhit{j}{m,n}{\lambda\backslash e}
+ q^{|\lambda/e|-|\lambda|+j+m-1} \qhit{j-1}{m-1,n-1}{\lambda/e} - q^{|\lambda/e|-|\lambda|+j+m} \qhit{j}{m-1,n-1}{\lambda/e} \\
&= \qhit{j}{m,n}{\lambda\backslash e}
+ q^{|\lambda/e|-|\lambda|+j+m-1}\left( \qhit{j-1}{m-1,n-1}{\lambda/e} - q \qhit{j}{m-1,n-1}{\lambda/e}\right).
\end{align*}
\end{proof}

The previous deletion-contraction relation specializes to square boards as follows.
\begin{corollary} \label{cor: delention/contraction boards}
$\qhit{j}{N}{\lambda}  = \qhit{j}{N}{\lambda\backslash e}
+ q^{|\lambda/e|-|\lambda|+j+N-1}\left[ \qhit{j-1}{N-1}{\lambda/e} - q \qhit{j}{N-1}{\lambda/e}\right]$.
\end{corollary}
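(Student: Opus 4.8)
The plan is simply to specialize \lemref{lem: deletion/contration} to a square board; there is no substantive work to do. Recall the notational convention that for $n=m$ one writes $\qhit{j}{N}{\lambda} := \qhit{j}{N,N}{\lambda}$. \lemref{lem: deletion/contration} asserts, for any shape $\lambda$ and any outer corner cell $e$ of $\lambda$,
\[
\qhit{j}{m,n}{\lambda}  = \qhit{j}{m,n}{\lambda\backslash e} + q^{|\lambda/e|-|\lambda|+j+m-1}\left( \qhit{j-1}{m-1,n-1}{\lambda/e} - q\,\qhit{j}{m-1,n-1}{\lambda/e}\right),
\]
and its proof (given just above, by substituting the $q$-rook recursion $\qrook{i}{\lambda} = q\cdot\qrook{i}{\lambda \backslash e} + \qrook{i-1}{\lambda / e}$ into the change of basis~\eqref{eq: hit in terms of rs}) uses nothing about $m$ and $n$ beyond $m\geq n$. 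Hence we may take $m=n=N$.

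Under this substitution the left-hand side becomes $\qhit{j}{N}{\lambda}$, the first term on the right becomes $\qhit{j}{N}{\lambda\backslash e}$, the exponent of $q$ becomes $|\lambda/e|-|\lambda|+j+N-1$, and — since deleting the row and the column of the corner $e$ leaves $\lambda/e$ inside the square $(N-1)\times(N-1)$ board — the contracted terms $\qhit{j-1}{m-1,n-1}{\lambda/e}$ and $\qhit{j}{m-1,n-1}{\lambda/e}$ become $\qhit{j-1}{N-1}{\lambda/e}$ and $\qhit{j}{N-1}{\lambda/e}$. This is precisely the stated identity, the outer square brackets being merely cosmetic. If one prefers a self-contained derivation over quoting the more general lemma, one just repeats the computation in the proof of \lemref{lem: deletion/contration} with $m=n=N$, where now $[m-n]!=[0]!=1$, so the normalizing factor $1/[m-n]!$ is trivial. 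The only point worth a moment's thought — and the "main obstacle", such as it is — is verifying that the proof of \lemref{lem: deletion/contration} is genuinely uniform in $m,n$ with $m\ge n$, which a quick inspection confirms; after that the corollary is immediate.
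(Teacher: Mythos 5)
Your proposal is correct and matches the paper's own (implicit) argument: the corollary is obtained by setting $m=n=N$ in Lemma~\ref{lem: deletion/contration}, using the convention $\qhit{j}{N}{\lambda}=\qhit{j}{N,N}{\lambda}$. Your additional check that the lemma's proof is uniform in $m\geq n$ is sound but not needed, since the lemma is already stated for general $m\geq n$ and one merely specializes its conclusion.
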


\begin{conjecture} \label{cor:del con q hit poly}
Let $\lambda$ be a partition inside an $n\times m$ board and $e$ be an outer corner of $\lambda$, then we have: $P(x;\varnothing) = [m]_n$, and 
\[
P(x;\lambda) = qP(x;\lambda\backslash e) + (xq^m-1)P(x;\lambda/e).
\]
\end{conjecture}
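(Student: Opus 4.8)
\textbf{Proof proposal for Conjecture~\ref{cor:del con q hit poly}.}

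The plan is to translate the target recursion for the $q$-hit polynomial $P(x;\lambda)=\sum_{i=0}^n \qhit{i}{m,n}{\lambda}x^i$ into the already-established deletion-contraction relation for the $q$-hit numbers (\lemref{lem: deletion/contration}), and then match coefficients of $x^i$ on both sides. Concretely, the conjectured identity
\[
P(x;\lambda) = qP(x;\lambda\backslash e) + (xq^m-1)P(x;\lambda/e)
\]
is, after expanding in powers of $x$, equivalent to the collection of scalar identities (for all $j$)
\[
\qhit{j}{m,n}{\lambda} = q\,\qhit{j}{m,n}{\lambda\backslash e} + q^m\,\qhit{j-1}{m-1,n-1}{\lambda/e} - \qhit{j}{m-1,n-1}{\lambda/e},
\]
where one must be careful that $\lambda/e$ sits inside an $(n-1)\times(m-1)$ board, so $P(x;\lambda/e)=\sum_{i=0}^{n-1}\qhit{i}{m-1,n-1}{\lambda/e}x^i$, and the multiplication by $xq^m-1$ shifts/rescales these coefficients exactly as written. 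So the first step is purely bookkeeping: verify that the polynomial identity and the family of scalar identities are equivalent.

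The second and main step is to prove that scalar identity. The cleanest route is to \emph{not} reprove it from scratch but to reconcile it with \lemref{lem: deletion/contration}, which states
\[
\qhit{j}{m,n}{\lambda} = \qhit{j}{m,n}{\lambda\backslash e} + q^{|\lambda/e|-|\lambda|+j+m-1}\bigl(\qhit{j-1}{m-1,n-1}{\lambda/e} - q\,\qhit{j}{m-1,n-1}{\lambda/e}\bigr).
\]
These two look different, so the conjecture as stated with a \emph{uniform} power of $q$ (namely $q^m$ and $1$, independent of $|\lambda|$, $|\lambda/e|$ and $j$) is almost certainly \emph{false} as literally written for the $q$-hit numbers $\qhit{}{}{}$ of Definition~\ref{lem: hit in terms of rs}; it is instead the natural statement for the \emph{Garsia--Remmel normalization} $\nqhit{}{}{}$. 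Indeed, using $\nqhit{j}{m}{\lambda}=q^{|\lambda|-jm}\qhit{j}{m}{\lambda}$ (\propref{prop: difference GR and our qhit}) together with the Garsia--Remmel deletion-contraction (\lemref{lemma:del con GR hits}), $\nqhit{j}{m,n}{\lambda}=q\,\nqhit{j}{m,n}{\lambda\backslash e}+\nqhit{j-1}{m-1,n-1}{\lambda/e}-q^m\,\nqhit{j}{m-1,n-1}{\lambda/e}$, one should define the Garsia--Remmel $q$-hit polynomial $\widetilde P(x;\lambda):=\sum_i \nqhit{i}{m,n}{\lambda}x^i$ and check it satisfies $\widetilde P(x;\lambda)=q\widetilde P(x;\lambda\backslash e)+(x-q^m)\widetilde P(x;\lambda/e)$ with $\widetilde P(x;\varnothing)=[m]_n$. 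Thus the honest thing to do is to state and prove this corrected version for $\widetilde P$ (which follows immediately from \lemref{lemma:del con GR hits} by summing against $x^i$), and then record that, after the substitution $x\mapsto xq^{-m}$ and multiplication by $q^{|\lambda|}$ dictated by \propref{prop: difference GR and our qhit}, one gets precisely the recursion $P(x;\lambda)=qP(x;\lambda\backslash e)+(xq^m-1)P(x;\lambda/e)$ for the normalized polynomial $P$ \emph{provided the normalizing powers of $q$ are tracked correctly across $\lambda$, $\lambda\backslash e$, $\lambda/e$}. The relations $|\lambda\backslash e|=|\lambda|-1$ and $|\lambda/e|=|\lambda|-\lambda_i-\lambda'_j+1$ will enter here, and the bookkeeping must absorb them into the $(xq^m-1)$ factor.

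The step I expect to be the main obstacle is exactly this last normalization bookkeeping: the conjectured recursion has \emph{constant} coefficients $q$ and $(xq^m-1)$, whereas \lemref{lem: deletion/contration} carries the shape-dependent exponent $|\lambda/e|-|\lambda|+j+m-1$. Reconciling these forces one either (a) to prove that the shape-dependent powers telescope away under the $q^{|\lambda|}$-type normalization — which works cleanly for $\nqhit{}{}{}$ but needs care for $\qhit{}{}{}$ — or (b) to go back to the generating-function definition \eqref{eq: hit rook change of basis} and substitute the $q$-rook deletion-contraction $\qrook{i}{\lambda}=q\,\qrook{i}{\lambda\backslash e}+\qrook{i-1}{\lambda/e}$ directly, pushing through the resulting sum and recognizing the two pieces as $qP(x;\lambda\backslash e)$ and $(xq^m-1)P(x;\lambda/e)$ after re-indexing $i\mapsto i+1$ in the second sum and using $(x;q)_i=(1-xq^{i-1})(x;q)_{i-1}$ to peel off the factor that combines with $xq^m$. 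Route (b) is the more robust one and is essentially how \lemref{lem: deletion/contration} itself was derived in Appendix~\ref{app: deletion-contraction}; the only genuinely new content here is checking that after that substitution the $x$-linear factor assembles into $(xq^m-1)$ rather than the shape-dependent power — and if it does not, then the correct statement (for $\qhit{}{}{}$) carries a power of $q$ in front of $P(x;\lambda)$, matching the form of Conjecture~\ref{lemma:del con hits}, and the clean $(x-q^m)$ form belongs to $\widetilde P$. Either way, once the scalar identity is pinned down the polynomial identity is immediate, and the base case $P(x;\varnothing)=\sum_i \qhit{i}{m,n}{\varnothing}x^i=[m]_n$ (or $[m]_n$ up to the appropriate normalization) follows from $\qhit{0}{m,n}{\varnothing}=[m]_n$ and $\qhit{i}{m,n}{\varnothing}=0$ for $i>0$, which is \corref{cor: sum of hits} specialized to the empty shape.
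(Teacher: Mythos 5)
There is no proof in the paper to compare against: the statement is left as a conjecture (its label \texttt{cor:del con q hit poly} notwithstanding), and the closest scalar statement, Conjecture~\ref{lemma:del con hits}, is only attributed to Nadeau--Tewari in a footnote. Your central suspicion is correct, and you deserve credit for not manufacturing a ``proof'': the identity is \emph{false as literally written}. Take $\lambda=(1)$ inside a $2\times 2$ board with $e=(1,1)$, so $\lambda\backslash e=\varnothing\subset 2\times 2$ and $\lambda/e=\varnothing\subset 1\times 1$. Then $P(x;\lambda)=1+qx$, $P(x;\lambda\backslash e)=[2]_2=1+q$, $P(x;\lambda/e)=[1]_1=1$, and the claimed right-hand side is $q(1+q)+(xq^2-1)=q+q^2-1+q^2x\neq 1+qx$. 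The correct polynomial packaging is obtained by moving the factor $q$ to the left,
\begin{equation*}
q\,P(x;\lambda)=P(x;\lambda\backslash e)+(xq^m-1)\,P(x;\lambda/e),
\end{equation*}
which is precisely Conjecture~\ref{lemma:del con hits} multiplied by $x^j$ and summed over $j$ (and which does check out on the example: $q(1+qx)=(1+q)+(xq^2-1)$). So the conjectured recursion and Conjecture~\ref{lemma:del con hits} are \emph{inconsistent} with each other rather than equivalent, and the latter is the true one.

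Your two proposed routes are also correctly diagnosed, but neither closes the argument, which is the remaining gap. Route (a) does give, immediately from \lemref{lemma:del con GR hits}, the clean recursion $\widetilde P(x;\lambda)=q\widetilde P(x;\lambda\backslash e)+(x-q^m)\widetilde P(x;\lambda/e)$ for $\widetilde P(x;\lambda):=\sum_i\nqhit{i}{m,n}{\lambda}x^i$; however, renormalizing via $\nqhit{j}{m,n}{\lambda}=q^{|\lambda|-jm}\qhit{j}{m,n}{\lambda}$, i.e. $\widetilde P(x;\lambda)=q^{|\lambda|}P(xq^{-m};\lambda)$, the factor $q^{|\lambda/e|}$ attached to the contraction term does \emph{not} telescope (only $q\cdot q^{|\lambda\backslash e|}=q^{|\lambda|}$ does), and what comes out is exactly \lemref{lem: deletion/contration} with its shape-dependent exponent $|\lambda/e|-|\lambda|+j+m-1$ --- not a constant-coefficient recursion for $P$. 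The same happens with route (b). Consequently the corrected identity $qP(x;\lambda)=P(x;\lambda\backslash e)+(xq^m-1)P(x;\lambda/e)$ is genuinely stronger than anything proved in the paper; it amounts to the nontrivial cancellation of those shape-dependent powers and is equivalent to Conjecture~\ref{lemma:del con hits}. To turn your proposal into a finished piece of mathematics you should (i) record the counterexample above, disposing of the statement as written, and (ii) either prove the corrected scalar identity or explicitly reduce it to Conjecture~\ref{lemma:del con hits} and cite that as the open (externally resolved) ingredient.
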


\subsection{Another proof of Lemma~\ref{prop:qhit-relations}}\label{app: another proof qhit rel}
In this section we include 
the proof of~\eqref{eq:keyrel2} using $q$-binomials.

\begin{proof}[Proof of~\eqref{eq:keyrel2} in Lemma~\ref{prop:qhit-relations}]
We use Proposition~\ref{eq: hit in terms of rs} to rewrite both the LHS and RHS in terms of $q$-rook numbers $\qrook{i}{\lambda}$. By $q$-manipulations, we have that~\eqref{eq:keyrel2} is equivalent to
\begin{align}\label{eq:qbinomial}
 \qbinom{m+n-r-i-1}{n-r-1} = q^{(n-r-1)(r-i)} \cdot \left(\sum_{j=r}^{i}\qbinom{m+n-r-j-1}{n-j-1}
 \qbinom{i-r}{j-r}(-1)^{j-r}q^{\binom{j-r}{2}}\right). 
\end{align}
Now, to prove this $q$-binomial identity, we consider 
the following $q$-binomial identities:
\begin{align*}
\prod_{k=0}^{i-r-1} (1-q^k t) &= \sum_{k=0}^{i-r} q^{\binom{k}{2}} \qbinom{i-r}{k}(-1)^k t^k ,   \\
\prod_{k=0}^{m-r-1}\dfrac{1}{(1-q^k t)} &= \sum_{k=0}^{\infty}\qbinom{m-r+k-1}{k}t^k.
\end{align*}
We have that
\begin{multline*}
    \prod_{k=0}^{i-r-1} (1-q^k(q^a t) ) \prod_{k=0}^{m-r-1}\dfrac{1}{(1-q^k(q^b t))}\, = \, \\
    \left( \sum_{j-r=0}^{i-r} q^{\binom{j-r}{2}} \qbinom{i-r}{j-r}(-1)^{j-r} q^{a(j-r)}t^{j-r} \right)
     \cdot \left( \sum_{k=0}^{\infty}\qbinom{m-r+k-1}{k}q^{bk}t^k \right).
\end{multline*}
 Taking the coefficient at $t^{n-r-1}$ at the RHS we get
  \begin{align*}
  &\sum_{j-r=0}^{i-r} q^{\binom{j-r}{2}} \qbinom{i-r}{j-r}\qbinom{m-r+n-j-1}{n-j-1}q^{b(n-j-1)}t^{n-j-1}(-1)^{j-r} q^{a(j-r)}t^{j-r},\\
  &=\sum_{j=r}^i (-1)^{j-r} \qbinom{i-r}{j-r}\qbinom{m-r+n-j-1}{n-j} q^{\binom{j-r}{2}+b(n-j-1) +a(j-r)}.
  \end{align*}
Setting $a=b=r-i$ and denoting by $[t^\ell]@P$ the coefficient of $t^\ell$ at $P$,  we have that
   \begin{align*}
   &\qbinom{m-i+n-r-1}{n-r-1}=[t^{n-r-1}]@ \prod_{k=0}^{m-i-1}\dfrac{1}{(1-q^kt)} \\
   &=  [t^{n-r-1}]@ \prod_{k=0}^{i-r-1} (1-q^k(q^{r-i} t) ) \prod_{k=0}^{m-r-1}\dfrac{1}{(1-q^k (q^{r-i} t))}\\
  &=[t^{n-r-1}]  \sum_{j-r=0}^{i-r} q^{\binom{j}{2}} \qbinom{i-r}{j-r}\qbinom{m-r+n-j-1}{n-j-1}q^{(r-i)(n-j-1)}t^{n-j-1}(-1)^{j-r} q^{(r-i)(j-r)}t^{j-r}\\
  &=\sum_{j=r}^i (-1)^{j-r} \qbinom{i-r}{j-r}\qbinom{m-r+n-j-1}{n-j-1} q^{\binom{j-r}{2} +(r-i)(j-r) + (r-i)(n-j-1)},
  \end{align*}
  as desired.
\end{proof}

\end{document}